\documentclass[11pt]{amsart}

\usepackage[margin= 60 pt]{geometry}
\usepackage{graphicx} 
\usepackage{amsmath,amssymb,amsthm,bm}
\usepackage{ytableau}
\usepackage{enumitem}
\usepackage{tikz}
\usetikzlibrary{arrows.meta}
\usepackage[colorlinks=true, citecolor=blue]{hyperref}
\usepackage{cleveref}
\usepackage{caption}
\usepackage{subcaption}
\usepackage[T1]{fontenc}
\usepackage{tgtermes}
\usetikzlibrary{positioning,arrows.meta}
\usepackage[all]{xy}
\usepackage{booktabs}
\usepackage{makecell}
\numberwithin{equation}{section}
\newtheorem{theorem}{Theorem}[section] 
\newtheorem{proposition}[theorem]{Proposition} 
 
\newtheorem{corollary}[theorem]{Corollary} 
\newtheorem{lemma}[theorem]{Lemma}

\theoremstyle{definition}
\newtheorem{definition}[theorem]{Definition}
\newtheorem{remark}[theorem]{Remark} 
\newtheorem{example}[theorem]{Example}

\newcommand\jj{\mathbf{j}}
\newcommand\mm{\mathbf{m}}
\newcommand\con{\operatorname{con}}
\newcommand\Con{\operatorname{Con}}
\newcommand\ba{{\bm{\alpha}}}
\newcommand\wt{\operatorname{wt}}

\newcommand{\col}{\operatorname{Col}}
\newcommand{\row}{\operatorname{Row}}

\newcommand{\sTam}{\operatorname{sTam}}
\newcommand{\esTam}{\operatorname{esTam}}
\newcommand{\Tam}{\operatorname{Tam}}

\newcommand{\posfb}{\operatorname{esTam}}

\definecolor{darkblue}{rgb}{0.0,0,0.7} 
\definecolor{darkred}{rgb}{0.7,0,0} 

\newcommand\red[1]{\textcolor{red}{\bf #1}}
\newcommand\qand{\quad\text{and}\quad}

\renewcommand\emph[1]{\textcolor{darkblue}{\it #1}}

\newcommand{\darkred}{\color{darkred}} 
\newcommand{\defn}[1]{\emph{\darkred #1}}

\def\noteson{\gdef\baptiste##1{\noindent{\color{orange}[Baptiste: ##1]}}\gdef\jihyeug##1{\noindent{\color{violet}[Jihyeug: ##1]}}\gdef\sylvie##1{\noindent{\color{purple}[Sylvie: ##1]}}}
\noteson

\title{The extra slow Tamari lattice}

\author{Sylvie Corteel}
\address{CNRS, Institut de Mathématiques de Jussieu-Paris Rive Gauche, Sorbonne Université, Paris, France. Paris, France and Department of Mathematics, University of California Berkeley, USA.}
\email{corteel@berkeley.edu}

\author{Jihyeug Jang}
\address{Université de Genève, 7--9, rue Conseil Général, 1205 Genève, Switzerland}
\email{jihyeug.jang@unige.ch}

\author{Baptiste Rognerud}
\address{Institut de Mathématiques de Jussieu-Paris Rive Gauche, Université Paris Cité, Paris, France.}
\email{baptiste.rognerud@imj-prg.fr}

\begin{document}

\begin{abstract}
  We introduce the extra slow Tamari lattices, a new family of
  lattices defined on faithfully balanced tableaux. These tableaux
  arise naturally from the representation theory of type \( A \)
  quivers, and our construction extends the classical Tamari lattice
  and the slow Tamari lattice.

  We explicitly describe meets and joins in the extra slow Tamari
  lattices, and then prove that they are lattices. We then show that
  they are semidistributive, trim, polygonal, and congruence uniform.
  Their join-irreducible elements are described in terms of a
  three-color analogue of the positive roots of type \( A \), which
  leads to descriptions of their spines and congruence lattices. We
  also obtain several enumerative results for the extra slow Tamari
  lattices and their associated structures. Finally, we derive new
  structural and enumerative results for the slow Tamari lattices.
\end{abstract}

\maketitle
\tableofcontents

\section{Introduction}

\defn{Tamari lattices} \( \Tam_n \) are partial orders on Catalan sets
introduced by Tamari in his thesis. They are now classical and
well-studied objects in combinatorics, and they appear in a remarkable
number of recent developments in mathematics such as algebra, computer
science, category theory, geometry, topology. There are already many
generalizations of Tamari lattices: Cambrian lattices
\cite{zbMATH05056850}, framing lattices \cite{arXiv:2512.20575},
$\nu$-Tamari lattices \cite{nutamari}, s-weak order
\cite{zbMATH07949995}, torsion classes \cite{tamari_torsion,DIRRT},
transfer systems \cite{zbMATH07480514,zbMATH08165348} and many others.
The aim of this article is to present another generalization of a
slightly different flavor. Instead of producing a large family of
lattices that contains the Tamari lattices, we introduce a family of
partial orders, indexed by the positive integers, which we call the
\defn{extra slow Tamari lattices}. These new partial orders arise from
the representation theory of finite-dimensional algebras and finite
quivers, but their definition is purely combinatorial. We view them as
a generalization of Tamari lattices for two reasons. On the one hand,
they are defined on a class of objects that contains a subclass in
bijection with binary trees, and the order relation on these objects
naturally corresponds to the rotation of the binary trees. On the
other hand, these new partial orders behave like a three-color version
of Tamari lattices, as will become clearer later in the introduction.

In the representation theory of quivers, the Tamari lattices have two
classical interpretations. The best known is the one related to the
theory of cluster algebras, in which the Tamari lattices appear as
particular examples of \defn{Cambrian lattices} of type $A$.
Alternatively, the Tamari lattices appear as lattices of \defn{torsion
  classes} \cite{tamari_torsion}, or equivalently as lattices of
support $\tau$-tilting modules in the sense of \cite{tau_tilting}. On
the other hand, the Tamari lattices are also the lattices of
\defn{tilting modules} over equioriented quivers of type \( A \); see
\cite{buan_krause} and also \cite{hille}. The first interpretation is
very close to the description of the Tamari lattices using bracket
vectors. The second is closer to the one using binary trees.

It is with this second interpretation that our article begins. The
tilting modules are known to be part of the much larger class of
\defn{faithfully balanced modules}, also called modules with the
\defn{double centralizer property} (see \cite{sauter} for a recent
summary on these modules). They were recently classified for the
so-called \defn{Nakayama algebras} in \cite{CMRS2021}. For the path
algebra of an equioriented quiver of type \(A\), a faithfully balanced
module corresponds to a partial filling of the cells of a Ferrers
diagram of shape \( (n,n,n-1, \dots, 3,2)\) with dots such that:
\begin{enumerate}
\item The top-left cell and the border cells contain dots.
\item If a cell, which is not the top-left cell, contains a dot, then
  there is a cell containing a dot on its left in the same row or
  above it in the same column.
\end{enumerate}
The border cells, highlighted in gray in our figures, are the
rightmost cells of each row (the top row has no border cell). In this
article, we call such a filling a \defn{fb-tableau} of size \( n \).
It is easy to see that a fb-tableau of size \( n \) contains at least
$n$ dots outside the border cells, and we say that a fb-tableau is
\defn{small} if it contains exactly $n$ such dots. The tilting modules
correspond to fb-tableaux without a free cell, where a cell is said to
be free if it lies strictly below a dot and strictly to the left of a
dot. Such a tableau without a free cell is called a \defn{binary}
fb-tableau. Binary fb-tableaux are small, and they are in immediate
bijection with binary trees. There are $n!$ small fb-tableaux of size
\( n \) and they are in simple bijection with \defn{tree-like
  tableaux} \cite{CMRS2021,tree_like} and alternative tableaux
\cite{Vi}. There are many more fb-tableaux as they are counted by
\([n]_2!:=\prod_{i=1}^{n}(2^i - 1)\).

To explain our construction and emphasize that it is a natural
generalization of the Tamari lattice, we begin by transporting the
usual order on the binary trees (or tilting modules) to an order on
binary fb-tableaux. For a fb-tableau \(T\), we define the \defn{row
  space} \(\row(T)\) of \( T \) to be the set of cells that lie weakly
to the right of the dots. Similarly, we define the \defn{column space}
\(\col(T)\) of \( T \) to be the set of cells that lie weakly below
the dots. Let \(\leq\) be the order on binary fb-tableaux defined by
\(T_1\leq T_2\) if and only if \(\col(T_1)\subseteq \col(T_2)\), or
equivalently \(\row(T_2)\subseteq\row (T_1)\). It is easy to see that
this order is isomorphic to the Tamari lattice; for the reader's
convenience, we give a direct proof in \Cref{pro:tamari}. A binary
fb-tableau is always small. However, a small tableau can have empty
free cells. As a consequence, the inclusion of the column spaces is no
longer a partial order on the set of small tableaux. This already
occurs for \(n=3\), as shown in the following example:
\[
  \ytableausetup{boxsize = 1 em}
\begin{ytableau}
    \bullet &\bullet & \\
    \bullet & &*(gray)\bullet\\
    &*(gray)\bullet\\
  \end{ytableau}
  \quad \quad \quad
\begin{ytableau}
    \bullet &\bullet & \\
    &\bullet &*(gray)\bullet\\
    &*(gray)\bullet\\
  \end{ytableau}.
\]
To get around this problem, a partial order on the small tableaux was
introduced in \cite[Section 9]{CMRS2021} by setting $T_1\leq T_2$ if
and only if $\col(T_1)\subseteq \col(T_2)$ and
$\row(T_2)\subseteq \row(T_1)$. It was also proved in the same article
that this is in fact a lattice. This poset on small tableaux is a
first generalization of the Tamari lattice (which behaves like a
two-color version of the Tamari lattice), and we propose to call it
the \defn{slow Tamari lattice}, denoted by \( \sTam_n \).

For general fb-tableaux, free cells may contain dots. Hence, inclusion
of column spaces and row spaces is not an antisymmetric relation,
because it does not provide any constraints on the free cells. For two
fb-tableaux $T_1$ and $T_2$, we set $T_1\lhd T_2$ if
\begin{enumerate}
\item $\col(T_1)\subseteq \col(T_2)$,
\item $\row(T_2) \subseteq \row(T_1)$,
\item Let $c$ be a cell that is free in both $T_1$ and $T_2$. If $c$ contains a dot in $T_1$, then it also contains a dot in $T_2$. 
\end{enumerate}

We now state our first main result.
\begin{theorem}[\Cref{thm:selfdual,thm:estam_lattice}]
  The set of fb-tableaux of size \(n\) endowed with the relation
  \(\lhd\) is a self-dual lattice.
\end{theorem}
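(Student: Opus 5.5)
The plan is in three parts: show that $\lhd$ is a partial order, construct an order-reversing involution to get self-duality, and prove that meets exist. Since the poset is finite, existence of meets together with self-duality (which turns meets into joins) gives the lattice structure.

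\textbf{Partial order.} Reflexivity is clear. For antisymmetry, suppose $T_1\lhd T_2\lhd T_1$. Then $\col(T_1)=\col(T_2)$ and $\row(T_1)=\row(T_2)$.
\begin{enumerate}
\item First I will show that a cell is free in $T$ if and only if it lies strictly below a cell of the top boundary of $\col(T)$ and strictly left of a cell of the left boundary of $\row(T)$. So the free cells are determined by $(\col(T),\row(T))$.
\item Next I will show that a dot lying in a non-free cell is detected by the pair: it sits at a corner of $\col(T)$ or of $\row(T)$.
\end{enumerate}
Together these show that $T_1$ and $T_2$ agree off the common free cells. Condition (3), applied in both directions, then forces them to agree on those cells too.

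Transitivity is the subtle point. Condition (3) only constrains cells that are free in both tableaux being compared, so with $T_1\lhd T_2\lhd T_3$ a cell free in $T_1$ and $T_3$ need not be free in $T_2$. I will prove a monotonicity lemma: if $\col(T_1)\subseteq\col(T_2)\subseteq\col(T_3)$ and $\row(T_3)\subseteq\row(T_2)\subseteq\row(T_1)$, then a cell that is free in both $T_1$ and $T_3$ is free in $T_2$. The idea is that "strictly below a dot" only gets easier as $\col$ grows, and "strictly left of a dot" only gets easier as $\row$ shrinks. With this lemma, condition (3) propagates along the chain.

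\textbf{Self-duality.} The shape $(n,n,n-1,\dots,2)$ is self-conjugate. The module-theoretic duality $D=\operatorname{Hom}_k(-,k)$, combined with reversing the orientation of the equioriented type $A$ quiver, sends faithfully balanced modules to faithfully balanced modules. I will translate this into a combinatorial involution $\varphi$ on fb-tableaux and check the following.
\begin{enumerate}
\item $\varphi$ exchanges the roles of column spaces and row spaces, reversing both inclusions in conditions (1) and (2).
\item $\varphi$ maps free cells to free cells.
\item $\varphi$ toggles whether a free cell contains a dot. This complementation is what turns condition (3) into its reverse.
\end{enumerate}
I will verify directly that $\varphi(T)$ satisfies the two defining axioms of an fb-tableau; the module-theoretic argument serves as a guide. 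It then follows that $T_1\lhd T_2$ if and only if $\varphi(T_2)\lhd\varphi(T_1)$, so the poset is self-dual.

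\textbf{Lattice.} The poset has a bottom and a top element: the tableaux with minimal, respectively maximal, column space, suitably filled. It therefore suffices to construct meets. Given $T_1,T_2$, I will define $T=T_1\wedge T_2$ as follows.
\begin{enumerate}
\item Choose $\col(T)$ to be the largest realizable column space contained in $\col(T_1)\cap\col(T_2)$.
\item Choose $\row(T)$ to be the smallest realizable row space containing $\row(T_1)\cup\row(T_2)$.
\item Place the forced non-free dots on the corners of these two sets.
\item Put a dot in a cell $c$ that is free in $T$ if and only if, for each $i$ with $c$ free in $T_i$, $c$ carries a dot in $T_i$.
\end{enumerate}

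I expect this to be the main obstacle, for two reasons. First, I must show that the two chosen spaces are compatible, i.e. realized by a single fb-tableau. Here I would prove that realizable pairs are closed under this intersection/union operation, using the earlier fb-condition characterization of rows and columns. Second, I must show that $T$ is the greatest lower bound. If $S\lhd T_1$ and $S\lhd T_2$, the monotonicity lemma gives control over cells free in both $S$ and $T$. The delicate case is a cell free in $S$ and in $T$ but not free in some $T_i$; there I must show that the dot condition is vacuous or forced. I would handle this by a case analysis on whether $c$ lies below the column boundary or left of the row boundary of $T_i$.
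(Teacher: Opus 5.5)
Your overall plan is the paper's: show $\lhd$ is a partial order, use an order-reversing involution for self-duality, construct meets explicitly, and get joins by duality. The first two parts are essentially the paper's arguments. Your monotonicity lemma is the inclusion $F(T_1,T_3)\subseteq F(T_1,T_2)\cap F(T_2,T_3)$, which the paper derives from $F(S,T)=\col^s(S)\cap\row^s(T)$ (\Cref{lem:free}). Your $\varphi$ is the conjugation of \Cref{def:conjugate}.

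Two slips need fixing there.
\begin{itemize}
\item A free cell lies strictly to the \textit{right} of the left-arrow of its row, not to the left. So in the monotonicity lemma the row condition comes from $T_3$: the cell is strictly right of the left-arrow of $T_3$, which is weakly right of that of $T_2$ because $\row(T_3)\subseteq\row(T_2)$.
\item $\varphi$ must not toggle free border cells. These always contain a dot and cannot be emptied; condition (3) holds on them automatically.
\end{itemize}

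The genuine gap is the meet. Any configuration of left-arrows is realized by some fb-tableau (put all up-arrows in the first row). So your $\row(T)$ is just $\row(T_1)\cup\row(T_2)$. You then propose to prove that the pair $\bigl(\col(T_1)\cap\col(T_2),\ \row(T_1)\cup\row(T_2)\bigr)$ is realized by a single fb-tableau. This is false already for $n=3$:
\[
M=\begin{ytableau}
\bullet & \uparrow & \\
 & \leftarrow & *(gray)\uparrow\\
\leftarrow & *(gray)\bullet
\end{ytableau}
\qquad
N=\begin{ytableau}
\bullet & & \uparrow\\
 & & *(gray)\leftarrow\\
\leftarrow & *(gray)\uparrow
\end{ytableau}
\]
The intersection of the column spaces puts both up-arrows on border cells. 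The middle cell of row $2$ then lies above the topmost nonempty cell of its column, so it must be empty. Yet the union of the row spaces requires the left-arrow of row $2$ to sit exactly there. In fact $M\wedge N=\hat 0$, whose row space contains all of row $2$, strictly more than the union. So realizable pairs are not closed under your operation, and the row space of the meet cannot be chosen independently of its column space.

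What is missing is step (c) of \Cref{lem:meet}. In each row, take the leftmost of the two left-arrows. If that cell does not lie strictly below the up-arrow of its column in $\col(T_1)\cap\col(T_2)$, slide it left to the first cell that does; the leftmost column, lying below the root, always qualifies.

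You then need two checks that your plan does not contain.
\begin{itemize}
\item The result is an fb-tableau. Every up-arrow of the meet is an up-arrow of some $T_i$ in the same cell, so the left-arrow of $T_i$ lies strictly to its left. The slid left-arrow is weakly left of that one, so no up-arrow lies left of a left-arrow.
\item Every lower bound $S$ satisfies $\row(S)\supseteq\row(T)$. In a given row, the left-arrow of $S$ is weakly left of those of $T_1$ and $T_2$. It is also strictly below the up-arrow of $S$ (or the root), hence strictly below the weakly higher up-arrow of the meet, so it is an admissible position. The meet uses the rightmost admissible position weakly left of both, so its left-arrow is weakly right of that of $S$.
\end{itemize}

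With this in place, the dot condition you flagged as delicate is immediate. Your rule (4) imposes nothing from a $T_i$ in which the cell is not free. If the cell is free in both $S$ and $T_i$, the relation $S\lhd T_i$ already puts a dot in $T_i$.
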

We propose to call it the \defn{extra slow Tamari lattice}, denoted by
\( \esTam_n \). Almost by construction, the Tamari lattice on binary
fb-tableaux forms an induced subposet of the slow Tamari lattice,
which itself is an induced subposet of the extra-slow Tamari lattice.
In \Cref{sec:sublattices}, we prove a stronger result.

\begin{proposition}[\Cref{prop:sublattices,pro:tamari}]\label{pro:sblattice} ~
  \begin{enumerate}
  \item The slow Tamari lattice is a sublattice of the extra slow Tamari
    lattice.
  \item The poset of binary fb-tableaux is a sublattice of the slow
    Tamari lattice. Moreover, it is isomorphic to the Tamari lattice.
  \end{enumerate}
\end{proposition}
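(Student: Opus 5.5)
The plan is to use the explicit descriptions of meets and joins in \( \esTam_n \) from the later sections (the paper says meets and joins are described explicitly before \Cref{thm:estam_lattice}). By the self-duality of \Cref{thm:selfdual}, it suffices to treat joins; meets follow by applying the anti-automorphism, once we check that it preserves smallness and binarity. That check should be easy: the duality ought to exchange the roles of rows and columns, hence of \(\row\) and \(\col\), and preserve the number of non-border dots and the set of free cells.

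For part (1) we take small fb-tableaux \(T_1,T_2\) and their join \(T=T_1\vee T_2\) in \( \esTam_n \), and we must show that \(T\) is small. I expect the join to be built as follows. The column space \(\col(T)\) is the smallest "column-closed" set containing \(\col(T_1)\cup\col(T_2)\) that is compatible with a row space contained in \(\row(T_1)\cap\row(T_2)\). The dots of \(T\) are then the minimal cells of this column space, together with possibly some dots in free cells, which are forced by condition (3). We then count non-border dots. Every fb-tableau has at least \(n\) such dots, essentially one per "row/column generator". The key claim is that dots in free cells of \(T\) arise only from free dotted cells of \(T_1\) or \(T_2\) that remain free in \(T\). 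Small tableaux may have free cells, but never dotted ones: a dot in a free cell is exactly an extra dot beyond the \(n\). If this claim holds, \(T\) has no dotted free cell, and the count gives exactly \(n\) non-border dots. Once the join is shown small, the slow order on small tableaux coincides with the induced order from \(\lhd\), because condition (3) is vacuous when no free cell carries a dot. So the joins agree and \(\sTam_n\) is a sublattice.

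For part (2) we argue the same way inside \(\sTam_n\): the join of two binary tableaux has no free cell at all. The idea is to show that a free cell of the join \(T\) would have to be witnessed by a dot strictly above and a dot strictly to its right. We then trace these dots back to dots of \(T_1\) or \(T_2\), producing a free cell in one of them, which is a contradiction. This should follow from the monotonicity \(\col(T_i)\subseteq\col(T)\) and \(\row(T)\subseteq\row(T_i)\). Finally, we establish the isomorphism with the Tamari lattice (\Cref{pro:tamari}) directly. We use the standard bijection between binary fb-tableaux and binary trees: each non-border dot is a node, with the left child given by the next dot below in its column and the right child by the next dot to the right in its row. We then check that the covering relations, which add one cell to \(\col\), correspond to right rotations. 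Since both posets are finite with the same number of elements, namely the Catalan number, matching the cover relations suffices.

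The main obstacle is the key claim in part (1), that no new dotted free cell is created by the join. This depends on the precise form of the join formula, in particular on how condition (3) forces dots. I would first try to prove that every free cell of \(T\) is already free in \(T_1\) or in \(T_2\), and then use the join formula's rule that a free cell is dotted only if it is dotted in an input where it is also free.
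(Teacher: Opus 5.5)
Your join argument in part (1) is fine: by \Cref{lem:join}, a free cell of \(M\vee N\) carries a \(\bullet\) only if it does in \(M\) or \(N\), and each border cell receives the maximum of the two entries for \(\uparrow\le\bullet\le\leftarrow\). The gap is in how you get meets from joins. The anti-automorphism of \Cref{thm:selfdual} is the conjugation of \Cref{def:conjugate}, and it is not just a reflection: it swaps empty and dotted free non-border cells. So it does not preserve the number of non-border dots, and it does not preserve smallness. For example, the small tableau of size \(3\) with the yellow free cell in \Cref{fig:slow_tam} has its free cell \((2,2)\) empty, so its conjugate has a \(\bullet\) at \((2,2)\). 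Transporting ``joins of small tableaux are small'' by conjugation therefore says only that meets of tableaux whose free non-border cells are all dotted keep that property. It says nothing about \(M\wedge N\) for small \(M,N\). Closure under joins alone is not enough either: since \(\hat{0}\in\sTam_n\), it only makes \(\sTam_n\) a lattice in its own right, possibly with different meets. You need the meet formula directly, as the paper does. By \Cref{lem:meet}(1), every free cell of \(M\wedge N\) is free in \(M\) or in \(N\), so by rule (d) it is empty when \(M\) and \(N\) have no \(\bullet\). By \Cref{lem:meet}(2), each border cell receives \(\min(e_M,e_N)\in\{\uparrow,\leftarrow\}\). For binary tableaux there are no free cells, conjugation really is a pure reflection, and your duality argument does work there.

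The isomorphism with the Tamari lattice is also not established. Covers in \(\Tam_n\) need not add a single cell to \(\col\): covers of the induced subposet are not covers of \(\esTam_n\), and an up-arrow may have to jump several cells. For \(n=4\), the bracket vectors \((0,1,0)\lessdot(2,1,0)\) form a cover in \(\Tam_4\) in which an up-arrow moves from \((4,3)\) to \((2,3)\). The intermediate position \((3,3)\) would force the left-arrow of row \(3\) into \((3,4)\) and make the border cell \((3,2)\) free, so the intermediate tableau is not binary. Matching covers with rotations would therefore need a genuine description of the covers of \(\Tam_n\), which you have not given. You also have not proved that your tree map is a bijection. The paper avoids covers altogether. \Cref{lem:binary} shows that a binary tableau is determined by its up-arrows and that \(\lhd\) restricted to binary tableaux is inclusion of column spaces. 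Then the map \(T\mapsto b(T)\), which records the column lengths of \(\col(T)\), is shown to be a bijection onto Huang--Tamari bracket vectors with \(b(T_1)\le b(T_2)\) if and only if \(\col(T_1)\subseteq\col(T_2)\). An order-embedding argument of this kind is what your sketch is missing.
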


The smallest interesting Tamari lattice is the one with $5$ elements,
and the smallest interesting slow Tamari lattice is the one with $6$
elements (see \Cref{fig:slow_tam}). We see that going from the Tamari
lattice to the slow Tamari lattice amounts to replacing the cover
relation on the longer side of the pentagon by a chain of two cover
relations. In other words, when going from binary tableaux to small
tableaux, we `slow down' the usual rotation of the binary trees,
hence the name of slow Tamari lattice.
\begin{figure}
  \centering
  \ytableausetup{boxsize = 1 em}

  \begin{minipage}{0.45\textwidth}
    \centering
    \scalebox{0.75}{
      \xymatrix{
        &
        {\begin{ytableau}
            \bullet& \bullet & \bullet \\
            & & \bullet\\
            &\bullet
          \end{ytableau}}
        \ar@{<-}[ddr]\ar@{<-}[dl]
        &&&& \\
        {\begin{ytableau}
            \bullet& \bullet & \\
            & \bullet& \bullet\\
            &\bullet
          \end{ytableau}}
        \ar@{<-}[dd]
        &&&& \\
        && {\begin{ytableau}
            \bullet& &\bullet \\
            & & \bullet\\
            \bullet &\bullet
          \end{ytableau}}
        \ar@{<-}[ddl]
        && \\
        {\begin{ytableau}
            \bullet& & \\
            \bullet& \bullet& \bullet\\
            &\bullet
          \end{ytableau}}
        &&&& \\
        &{\begin{ytableau}
            \bullet& & \\
            \bullet&& \bullet\\
            \bullet&\bullet
          \end{ytableau}}
        \ar[ul]
        &&&&
      }
    }
  \end{minipage}
  \begin{minipage}{0.45\textwidth}
    \centering
    \scalebox{0.75}{
      \xymatrix{
        &
        {\begin{ytableau}
            \bullet& \bullet & \bullet \\
            & & \bullet\\
            &\bullet
          \end{ytableau}}
        \ar@{<-}[ddr]\ar@{<-}[dl]
        &&&& \\
        {\begin{ytableau}
            \bullet& \bullet & \\
            & \bullet& \bullet\\
            &\bullet
          \end{ytableau}}
        \ar@{<-}[d]
        &&&& \\
        {\begin{ytableau}
            \bullet&\bullet & \\
            \bullet&*(yellow) & \bullet\\
            &\bullet
          \end{ytableau}}
        \ar@{<-}[d]
        && {\begin{ytableau}
            \bullet& &\bullet \\
            & & \bullet\\
            \bullet &\bullet
          \end{ytableau}}
        \ar@{<-}[ddl]
        && \\
        {\begin{ytableau}
            \bullet& & \\
            \bullet& \bullet& \bullet\\
            &\bullet
          \end{ytableau}}
        &&&& \\
        &{\begin{ytableau}
            \bullet& & \\
            \bullet&& \bullet\\
            \bullet&\bullet
          \end{ytableau}}
        \ar[ul]
        &&&&
      }
    }
  \end{minipage}
  \caption{The Tamari lattice \( \Tam_3 \) and the slow Tamari lattice \(\sTam_3\) of size \( 3 \). The tableau with the free cell highlighted in yellow is the only small tableau that is not a binary fb-tableau. The sublattice consisting of the tableaux without free cells is isomorphic to the Tamari lattice \(\Tam_3\).}
  \label{fig:slow_tam}
\end{figure}

This idea continues to be the right one when moving from the slow
Tamari lattice to the extra slow Tamari lattice. Roughly speaking,
one local change is the replacement of the chain
\[
  \begin{ytableau}
    \bullet& \\
    \bullet&\bullet
  \end{ytableau}
  \rightarrow
  \begin{ytableau}
    \bullet&\bullet \\
    \bullet&
  \end{ytableau}
  \rightarrow
  \begin{ytableau}
    \bullet&\bullet \\
    &\bullet
  \end{ytableau}
  \quad \text{by} \quad
  \begin{ytableau}
    \bullet& \\
    \bullet&\bullet
  \end{ytableau}
  \rightarrow
  \begin{ytableau}
    \bullet&\bullet \\
    \bullet&
  \end{ytableau}
  \rightarrow
  \begin{ytableau}
    \bullet&\bullet \\
    \bullet&\bullet
  \end{ytableau}
  \rightarrow
  \begin{ytableau}
    \bullet&\bullet \\
    &\bullet
  \end{ytableau}.
\]
However, this local modification leads to a much larger and more
complex lattice, as shown in \Cref{fig:extra_slow}. In the rest of the
article, we study the lattice properties of the extra slow Tamari
lattice. Using \Cref{pro:sblattice}, we obtain similar results for the
slow Tamari lattice and the Tamari lattice. While these results are
well known for the Tamari lattice, they appear to be new even for the
slow Tamari lattice.

\ytableausetup{boxsize = 1 em}
\begin{figure}
  \centering
  \begin{tikzpicture}[scale=.55, transform shape]
    \def\wx{5}
    \def\wy{2}
    \def\h{5.5}
    \def\dx{3}
    \def\dy{2}

    \node (v1) at (0,0) {
      $\begin{ytableau}
        \bullet&& \\
        \bullet&&\bullet \\
        \bullet&\bullet 
      \end{ytableau}$
    };
    \node (v2) at (-\wx,\wy) {
      $\begin{ytableau}
        \bullet&& \\
        \bullet&\bullet&\bullet \\
        \bullet&\bullet 
      \end{ytableau}$
    };
    \node (v3) at (-2*\wx,2*\wy) {
      $\begin{ytableau}
        \bullet&& \\
        \bullet&\bullet&\bullet \\
        &\bullet 
      \end{ytableau}$
    };
    \node (v4) at (\dx,\dy) {
      $\begin{ytableau}
        \bullet&&\bullet \\
        \bullet&&\bullet \\
        \bullet&\bullet 
      \end{ytableau}$
    };
    \node (v5) at (-\wx+\dx,\wy+\dy) {
      $\begin{ytableau}
        \bullet&&\bullet \\
        \bullet&\bullet&\bullet \\
        \bullet&\bullet 
      \end{ytableau}$
    };
    \node (v6) at (-2*\wx+\dx,2*\wy+\dy) {
      $\begin{ytableau}
        \bullet&&\bullet \\
        \bullet&\bullet&\bullet \\
        &\bullet 
      \end{ytableau}$
    };
    \node (v22) at (-\wx,\wy+\h) {
      $\begin{ytableau}
        \bullet&\bullet& \\
        \bullet&&\bullet \\
        \bullet&\bullet 
      \end{ytableau}$
    };
    \node (v32) at (-2*\wx,2*\wy+\h) {
      $\begin{ytableau}
        \bullet&\bullet& \\
        \bullet&&\bullet \\
        &\bullet 
      \end{ytableau}$
    };
    \node (v52) at (-\wx+\dx,\wy+\dy+\h) {
      $\begin{ytableau}
        \bullet&\bullet&\bullet \\
        \bullet&&\bullet \\
        \bullet&\bullet 
      \end{ytableau}$
    };
    \node (v62) at (-2*\wx+\dx,2*\wy+\dy+\h) {
      $\begin{ytableau}
        \bullet&\bullet&\bullet \\
        \bullet&&\bullet \\
        &\bullet 
      \end{ytableau}$
    };
    \node (v23) at (-\wx,\wy+2*\h) {
      $\begin{ytableau}
        \bullet&\bullet& \\
        \bullet&\bullet&\bullet \\
        \bullet&\bullet 
      \end{ytableau}$
    };
    \node (v33) at (-2*\wx,2*\wy+2*\h) {
      $\begin{ytableau}
        \bullet&\bullet& \\
        \bullet&\bullet&\bullet \\
        &\bullet 
      \end{ytableau}$
    };
    \node (v53) at (-\wx+\dx,\wy+\dy+2*\h) {
      $\begin{ytableau}
        \bullet&\bullet&\bullet \\
        \bullet&\bullet&\bullet \\
        \bullet&\bullet 
      \end{ytableau}$
    };
    \node (v63) at (-2*\wx+\dx,2*\wy+\dy+2*\h) {
      $\begin{ytableau}
        \bullet&\bullet&\bullet \\
        \bullet&\bullet&\bullet \\
        &\bullet 
      \end{ytableau}$
    };
    \node (v24) at (-\wx,\wy+3*\h) {
      $\begin{ytableau}
        \bullet&\bullet& \\
        &\bullet&\bullet \\
        \bullet&\bullet 
      \end{ytableau}$
    };
    \node (v34) at (-2*\wx,2*\wy+3*\h) {
      $\begin{ytableau}
        \bullet&\bullet& \\
        &\bullet&\bullet \\
        &\bullet 
      \end{ytableau}$
    };
    \node (v54) at (-\wx+\dx,\wy+\dy+3*\h) {
      $\begin{ytableau}
        \bullet&\bullet&\bullet \\
        &\bullet&\bullet \\
        \bullet&\bullet 
      \end{ytableau}$
    };
    \node (v64) at (-2*\wx+\dx,2*\wy+\dy+3*\h) {
      $\begin{ytableau}
        \bullet&\bullet&\bullet \\
        &\bullet&\bullet \\
        &\bullet 
      \end{ytableau}$
    };
    \node (v45) at (\dx,\dy+4*\h) {
      $\begin{ytableau}
        \bullet&&\bullet \\
        &&\bullet \\
        \bullet&\bullet 
      \end{ytableau}$
    };
    \node (v55) at (-\wx+\dx,\wy+\dy+4*\h) {
      $\begin{ytableau}
        \bullet&\bullet&\bullet \\
        &&\bullet \\
        \bullet&\bullet 
      \end{ytableau}$
    };
    \node (v65) at (-2*\wx+\dx,2*\wy+\dy+4*\h) {
      $\begin{ytableau}
        \bullet&\bullet&\bullet \\
        &&\bullet \\
        &\bullet 
      \end{ytableau}$
    };

    \draw[->] (v1)  -- node[below]{} (v2);
    \draw[->] (v2)  -- node[below]{} (v3);
    \draw[->] (v4)  -- node[below]{} (v5);
    \draw[->] (v5)  -- node[below]{} (v6);
    \draw[->] (v1)  -- node[below]{} (v4);
    \draw[->] (v2)  -- node[below]{} (v5);
    \draw[->] (v3)  -- node[below]{} (v6);

    \draw[->] (v22)  -- node[below]{} (v32);
    \draw[->] (v22)  -- node[below]{} (v52);
    \draw[->] (v52)  -- node[below]{} (v62);
    \draw[->] (v32)  -- node[below]{} (v62);
    \draw[->] (v23)  -- node[below]{} (v33);
    \draw[->] (v23)  -- node[below]{} (v53);
    \draw[->] (v53)  -- node[above]{} (v63);
    \draw[->] (v33)  -- node[below]{} (v63);
    \draw[->] (v24)  -- node[below]{} (v34);
    \draw[->] (v24)  -- node[below]{} (v54);
    \draw[->] (v54)  -- node[below]{} (v64);
    \draw[->] (v34)  -- node[below]{} (v64);

    \draw[->] (v45)  -- node[below]{} (v55);
    \draw[->] (v55)  -- node[below]{} (v65);
    \draw[->] (v2)  -- node[left]{} (v22);
    \draw[->] (v22)  -- node[left]{} (v23);
    \draw[->] (v23)  -- node[left]{} (v24);
    \draw[->] (v3)  -- node[left]{} (v32);
    \draw[->] (v32)  -- node[left]{} (v33);
    \draw[->] (v33)  -- node[left]{} (v34);
    \draw[->] (v5)  -- node[left]{} (v52);
    \draw[->] (v52)  -- node[left]{} (v53);
    \draw[->] (v53)  -- node[left]{} (v54);
    \draw[->] (v54)  -- node[left]{} (v55);
    \draw[->] (v6)  -- node[right]{} (v62);
    \draw[->] (v62)  -- node[left]{} (v63);
    \draw[->] (v63)  -- node[right]{} (v64);
    \draw[->] (v64)  -- node[left]{} (v65);
    \draw[->] (v4)  -- node[left]{} (v45);
  \end{tikzpicture}

  \caption{The extra slow Tamari lattice $\esTam_3$ of size \( 3 \).}
  \label{fig:extra_slow}
\end{figure}

In \Cref{sec:covers}, we give a description of the cover relations in
the extra slow Tamari lattice. The cover relations are of three types:
those given by filling an empty free cell, those in which a dot moves
upward, and those in which a dot moves to another dot on its right,
resulting in a fusion. For a precise statement we refer to
\Cref{def:cover_relation}. This result is used in \Cref{sec:irred} to
describe the irreducible elements. We prove in \Cref{cor:meet_irr}
that there are $2{n\choose 2}+{n-1\choose 2}$ meet-irreducible
elements, and since the extra slow Tamari lattice is self-dual by
\Cref{thm:selfdual}, this is also the number of join-irreducible
elements. For comparison, the number of join-irreducible elements of
the Tamari lattice $\Tam_n$ is ${n \choose 2}$ and they are in
bijection with the positive roots of the root system of type $A_n$. In
other words, they are in bijection with intervals $[i,j]$ for
$1\leq i\leq j\leq n-1$. For the extra slow Tamari lattice we have the
following result:
\begin{proposition}
  The join irreducible elements of the extra slow Tamari lattice on
  fb-tableaux of size $n$ are in bijection with the set consisting of
  \begin{enumerate}
  \item \([i,j]_{\mathtt{red}}\) for \(1\leq i\leq j \leq n-1\);
  \item \([i,j]_{\mathtt{green}} \) for \(1\leq i <j \leq n-1\);
  \item \( [i,j]_{\mathtt{blue}}\) for \(1\leq i <j \leq n-1\).
  \end{enumerate}  
\end{proposition}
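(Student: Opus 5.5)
We work in the finite lattice $\esTam_n$, so an element is join-irreducible exactly when it covers a unique element. The plan is to use the description of cover relations from \Cref{def:cover_relation} to count lower covers. Each lower cover of a fb-tableau $T$ is obtained by undoing one of three moves: removing a dot from a free cell, moving a dot back down, or splitting a fusion by moving a dot back to the left. Hence $T$ is join-irreducible exactly when precisely one such reverse move is available.

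It is more convenient to use self-duality (\Cref{thm:selfdual}) and \Cref{cor:meet_irr}. The anti-automorphism sends join-irreducibles bijectively to meet-irreducibles. So it suffices to classify meet-irreducibles, that is, tableaux with exactly one upper cover, and to attach a colour and an interval to each.

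\textbf{Step 1: the three families.} A meet-irreducible $T$ has one upper cover, which is of one of the three types: filling a free cell, moving a dot up, or fusing a dot to the right. We sort meet-irreducibles by this type.
\begin{itemize}
\item In the proof of \Cref{cor:meet_irr}, the meet-irreducibles should appear as explicit tableaux: a minimal configuration of dots (the top-left cell, the border, and the dots forced by condition (2)) together with one distinguished cell. I would read off the parameters $(i,j)$ from the row and column of that cell, or from the endpoints of the hook of dots that enables the unique move.
\item The type ``dot moves up'' should produce the red family. These correspond to binary (Tamari) meet-irreducibles, which are the classical ${n\choose 2}$ elements indexed by $[i,j]$ with $1\le i\le j\le n-1$. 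I would check this against \Cref{pro:tamari}, since the Tamari sublattice's irreducibles are indexed by positive roots.
\item The fusion type and the free-cell-filling type should each give ${n-1\choose 2}$ or ${n\choose 2}$ elements. The count $2{n\choose 2}+{n-1\choose 2}$ equals ${n\choose 2}+{n-1\choose 2}+{n-1\choose 2}+(n-1)$. So I expect the remaining $n-1$ elements to sit with one of the strict families, or the red family to absorb the diagonal $i=j$.
\end{itemize}

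\textbf{Step 2: the bijection with intervals.} For each family, I will define an explicit map to intervals and check injectivity. I will also check that the image is exactly $i\le j$ for red and $i<j$ for green and blue. The strict inequality should come from the need for a free cell: a free cell needs a dot strictly above it and a dot strictly to its right, which forces a gap between the row index and the column index.

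Surjectivity then follows by cardinality from \Cref{cor:meet_irr}. We need ${n\choose 2}+2{n-1\choose 2}$ intervals in total, so I must reconcile this with the paper's count $2{n\choose 2}+{n-1\choose 2}$. These agree only if the families are split as ${n\choose 2}$, ${n-1\choose 2}$, ${n-1\choose 2}$ plus an extra $n-1$. I would recheck this: $2{n\choose 2}+{n-1\choose 2}-{n\choose 2}-2{n-1\choose 2}={n\choose 2}-{n-1\choose 2}=n-1$.

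This discrepancy is the main obstacle. Either some colour family is indexed with a shifted range (for instance $1\le i<j\le n$), or the paper's intended correspondence differs from what I expect. I would first compute the case $n=3$ directly from \Cref{fig:extra_slow}. There $2\cdot 3+1=7$ join-irreducibles should appear, whereas the stated set has $3+1+1=5$ elements. This check tells me how to shift the index ranges, and I would adjust the bijections in Step 2 accordingly before writing the general argument.
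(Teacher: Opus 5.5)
Your count is right, and the obstacle you hit comes from a typo in the statement, not from a flaw in your strategy. As printed, the set has $\binom{n}{2}+2\binom{n-1}{2}$ elements, while \Cref{cor:join_irr} gives $2\binom{n}{2}+\binom{n-1}{2}$ join-irreducibles. For $n=3$ that is five intervals against the seven join-irreducibles visible in \Cref{fig:extra_slow}.

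What the paper actually proves, in \Cref{def:join-irr} and the colour convention after it, has $[i,j]_{\mathtt{green}}$ for $1\le i\le j\le n-1$. The join-irreducibles are the tableaux $(i,j,\sigma)$: here $(i,j)$, with $1\le i\le j\le n-1$, is the position of the unique up-arrow off the border, and $\sigma$ is the content of the cell below it. The colour records the type of the unique lower cover:
red ($\sigma=\leftarrow$) for type II;
green ($\sigma=\emptyset$, or the degenerate $(i,i,\bullet)$) for type I;
blue ($\sigma=\bullet$ with $i<j$) for type III.

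The $n-1$ elements you could not place are the atoms $(i,i,\bullet)$. Each is obtained from $\hat{0}$ by raising the border up-arrow of column $i$ by one cell, which fills the vacated border cell with a dot. Only blue is strict, because a type III move fills an empty free cell and border cells are never empty.

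Your route is the paper's: classify meet-irreducibles by the type of their unique upper cover, then transport by self-duality. But what you have is a plan, not a proof. Every step of the classification is stated as an expectation, and the fact that carries the argument is missing. That fact is \Cref{thm:determined}: a fb-tableau is determined by its changeable cells and their types.

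Together with \Cref{thm:cover_relation_iff}, this identifies meet-irreducibles with a single changeable cell plus its type. It then remains to determine which positions occur:
a changeable up-arrow anywhere outside the first row and first column ($\binom{n}{2}$ cases);
a changeable left-arrow in any non-border cell outside the first row ($\binom{n}{2}$);
an empty free cell in any non-border cell outside the first row and column ($\binom{n-1}{2}$).

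Conjugation (\Cref{thm:selfdual}) reverses covers, swapping moves of type I and II and preserving type III. So these three families become the red, green and blue join-irreducibles respectively; your guess that the ``dot moves up'' family is red is correct. The interval is then just the position of the non-border up-arrow, so no ``hook of dots'' is needed.

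By contrast, identifying that family with the binary meet-irreducibles of $\Tam_n$ via \Cref{pro:tamari} would fail, for two reasons:
meet-irreducibility in the sublattice does not imply it in $\esTam_n$; for $n=3$, the binary tableau with up-arrows at $(2,2)$ and $(2,1)$ is meet-irreducible in $\Tam_3$, but both up-arrows are changeable in $\esTam_3$;
many of these meet-irreducibles of $\esTam_n$ contain dots, so they are not binary.
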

We refer to \Cref{cor:join-irr-description} and \Cref{def:join-irr}
for a more precise statement and to \Cref{fig:reflect_J_irr_poset} for
an enlightening picture.

In
\Cref{sec:semidistributivity,sec:trim,sec:polygonality,sec:lattice-congruences},
we investigate the lattice-theoretic properties of the extra slow
Tamari lattice. We summarize the results in the following theorem.
\begin{theorem}\label{thm:intro}
  The extra slow Tamari lattice is
  \begin{enumerate}
  \item semidistributive (\Cref{thm:semidistributive});
  \item extremal (\Cref{prop:extremal});
  \item trim (\Cref{thm:trim});
  \item polygonal with quadrilaterals and heptagons (\Cref{prop:polygonal});
  \item congruence uniform (\Cref{thm:HH_uniform}).
  \end{enumerate}
\end{theorem}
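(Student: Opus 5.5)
Theorem~\ref{thm:intro} bundles five properties, several of which follow from the others by standard lattice theory, so I would prove them in a chosen order and let the general theory do most of the work. The input is the explicit cover relations of \Cref{def:cover_relation}, of three kinds: filling an empty free cell, moving a dot upward, and fusing a dot into a dot on its right. I also use the description of the join-irreducibles as the three-colored intervals $[i,j]_{\mathtt{red}},[i,j]_{\mathtt{green}},[i,j]_{\mathtt{blue}}$ (\Cref{cor:join-irr-description}), and self-duality (\Cref{thm:selfdual}).

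\textbf{Step 1: edge labelling.} I would label each cover relation $T\lessdot T'$ by the unique join-irreducible $j$ with $j\le T'$ and $j\not\le T$, minimal among such. Concretely this should be a colored interval read off from the cell that changes. A filled free cell would give a green or blue label, according to whether the dot completes a row or a column condition. A dot moving up or fusing would give a red label $[i,j]$, with $i,j$ read from the row and column of the moved dot.

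\textbf{Step 2: polygonality.} I would check that every interval $[T,T_1\vee T_2]$ generated by two upper covers $T\lessdot T_1$ and $T\lessdot T_2$ is either a square or a heptagon. Since all three cover types are local, the two moves either act on disjoint regions and commute, giving a square, or they interact in a $2\times 2$ configuration. The interacting case is the local picture of the introduction, the chain of length three replacing the slow Tamari chain of length two, and there I would enumerate explicitly and find a heptagon. By self-duality, the dual statement for lower covers follows.

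\textbf{Step 3: semidistributivity and congruence uniformity.} A finite polygonal lattice whose polygons satisfy the forcing condition is congruence uniform, i.e.\ obtained by interval doublings. For this I would use Reading's criterion: the labelling must be a CN-labelling, meaning that in each polygon the top and bottom edges carry the same labels and the side labels are ``larger'' in the forcing order. I would verify this on the square and the heptagon from Step~2. Congruence uniformity then implies semidistributivity (\Cref{thm:semidistributive}), or alternatively semidistributivity follows directly from $|\mathrm{JI}|=|\mathrm{MI}|$ together with the labelling being a bijection between join- and meet-irreducibles.

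\textbf{Step 4: extremality and trimness.} I would exhibit a maximal chain of length $2\binom n2+\binom{n-1}2$, the number of join-irreducibles given by \Cref{cor:meet_irr}. The chain would first fill all free cells and move dots step by step, passing through each colored interval exactly once. This shows the lattice is extremal. Thomas's theorem then gives that an extremal semidistributive lattice is trim.

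The main obstacle is Step~2 combined with the forcing check in Step~3. Two interacting moves can involve dots several rows apart, via fusion along a row and an upward move in a column. I would first try to reduce to the case where the moved dots share a row or a column, using the locality of the conditions on $\col$, $\row$ and free cells. In that case the interval is governed by a bounded subtableau, which can be checked directly.
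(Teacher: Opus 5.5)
You have the same architecture as the paper: a join-labelling, squares and heptagons, an HH-type criterion, an explicit longest chain, and Thomas--Williams. But as written, semidistributivity is never proved, and items (1), (3) and (5) all depend on it.

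In Step~1 you label $T\lessdot T'$ by ``the unique'' minimal join-irreducible $\jj\le T'$ with $\jj\not\le T$. A minimal such $\jj$ exists in any finite lattice. It is unique only because the lattice is join-semidistributive: in $M_3$, the cover from an atom to the top has two minimal candidates. In Step~3 you then derive semidistributivity from congruence uniformity, which is circular. Moreover, the criterion the paper uses, \cite[Corollary~1]{CLM2004}, takes semidistributivity as a hypothesis, and Thomas--Williams (\Cref{pro:trim_condition}) needs it for trimness.

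Your fallback, that equal numbers of join- and meet-irreducibles plus a bijection give semidistributivity, is false as stated. $M_3$ has three of each and is not semidistributive. Any correct version of that counting argument again needs one-sided semidistributivity as input.

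The paper instead proves semidistributivity directly (\Cref{lem:sd1,lem:sd2,lem:sd3}) from the explicit formulas of \Cref{lem:meet,lem:join}:
\begin{enumerate}
\item In each column, the up-arrow of $z\wedge(x\vee y)$ equals that of $z\wedge x$.
\item In each row, the left-arrow positions reduce to $\max(a,\min(b,c))=\min(\max(a,b),\max(a,c))$ on a chain, followed by the same leftward correction.
\item Dots are controlled by the rule that a free cell of a meet is empty if and only if it is free and empty in one of the factors.
\end{enumerate}
Self-duality then gives the other half. If you want to keep your order of deduction, you would have to do three things:
\begin{enumerate}
\item define the labels concretely;
\item check $T\vee\jj=T'$ and $T\vee\jj_*=T$ directly from \Cref{lem:join} for each move type;
\item combine polygon forcing, a rank function, and $|J|=|M|$ (from self-duality).
\end{enumerate}
None of this is in your plan.

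The second missing ingredient is an acyclicity certificate. ``Side labels are larger in the forcing order'' is the conclusion, not something you can check. You need an a priori function on labels that increases strictly from the bottom and top edges of each polygon towards its side edges. The paper uses $r(i,j,\sigma)=j-i+\delta_{\sigma,\bullet}$, with $(i,i,\bullet)$ counted as type $\emptyset$. On the long side of the heptagon the ranks are $j-i'+1,\ j-i+1,\ j-i+2,\ j-i+1,\ j'-i+1$, with $i'>i$ and $j'<j$.

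For this check, your guessed labels are also wrong:
\begin{enumerate}
\item Type~I moves carry green labels $(i-1,j,\emptyset)$, or $(j,j,\bullet)$ when the up-arrow leaves the border, read from the starting cell.
\item Type~II moves carry red labels $(i-1,j,\leftarrow)$, read from the dot that absorbs the arrow. This is what makes $S\lessdot V$ and $U_4\lessdot T$ share a label.
\item Type~III moves carry blue labels $(i-1,j,\bullet)$.
\end{enumerate}

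Two smaller points. In Step~2 the interacting configuration is not a $2\times 2$ block but a cross: an empty cell with the left-arrow somewhere to its left, a dot to its right, and the up-arrow below, with all skipped cells non-free. This is why the two travelled sets meet in exactly one cell. In Step~4 you cannot ``first fill all free cells'', since $\hat 0$ has none. As in \Cref{prop:extremal}, the chain must interleave three steps: raise an up-arrow by one cell, fill the vacated cell, and push the left-arrow into it.
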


To prove congruence uniformity, we show that the extra slow Tamari
lattice is an HH-lattice in the sense of \cite{CLM2004}. This requires
the existence of an edge labeling satisfying suitable conditions. We
refer to \Cref{sec:poly-label} for a precise statement. This labeling
can be thought of as a three-color version of the usual \defn{brick
  labeling} of \cite{DIRRT}. We then give a complete description of
its lattice of congruences in~\Cref{sec:lattice-congruences}. To
explain our result, we continue the analogy with the Tamari lattice.
By a result of Geyer \cite{geyer} (see also \cite{rognerud_stors}),
the lattice of congruences of the Tamari lattice is isomorphic to the
\defn{Stanley lattice} of Dyck paths, or equivalently to the lattice
of order ideals of the poset of positive roots of type \(A\). For the
extra slow Tamari lattice, our result is very similar once we take the
colors into account.
\begin{theorem}[\Cref{thm:con_jirr}]
The congruence lattice of the extra slow Tamari lattice is isomorphic to the lattice
  of order ideals on (the opposite of) the inclusion poset of colored
  intervals.
\end{theorem}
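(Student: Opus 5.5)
The plan is to use the standard description of congruences of a finite congruence uniform lattice. By \Cref{thm:HH_uniform}, the lattice $\esTam_n$ is congruence uniform. In any finite congruence uniform lattice $L$, the map sending a join-irreducible $j$ to the principal congruence $\con(j)$ (the congruence contracting $j_*\lessdot j$) is a bijection from join-irreducibles onto join-irreducible congruences. Since $\Con(L)$ is distributive, it is isomorphic to the lattice of order ideals of its poset of join-irreducibles. Define the forcing preorder on join-irreducibles by $j \to j'$ iff $\con(j') \le \con(j)$. It suffices to show that this forcing order, transported through the bijection of \Cref{cor:join-irr-description}, is the opposite of inclusion on colored intervals. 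The exact direction of "opposite" must be matched with the direction of the order-ideal convention.

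To compute forcing, I would use the edge labeling from \Cref{sec:poly-label} together with polygonality (\Cref{prop:polygonal}). In a polygonal congruence-uniform (HH) lattice, forcing is generated by polygons. In a quadrilateral, opposite edges force each other, which gives nothing new for labels. In a heptagon with top edges labeled $a,b$, each side edge label $c$ is forced by the top label on the same side. So the forcing relation is the transitive closure of "label of a top edge of a heptagon forces the labels of the interior side edges". Moreover, in the HH/CLM framework the edge labeling identifies each label with a join-irreducible (the label of $j_*\lessdot j$ is $j$). The forcing poset on join-irreducibles is therefore read off directly from the heptagons.

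The main step, and the expected obstacle, is to classify the heptagons of $\esTam_n$ and their labels. I would start from the two covers $x\lessdot y_1$, $x\lessdot y_2$ at the bottom of a polygon and go through the three cover types of \Cref{def:cover_relation}: filling a free cell, a dot moving up, and a fusion. I expect that the only non-square polygons arise when two of these moves interact on overlapping rows and columns, that is, when the associated colored intervals $[i,j]$ and $[i',j']$ are nested or adjacent. In those cases the interior labels should be colored intervals contained in one of the top labels. Concretely, I would show two things. First, whenever $I\subseteq I'$ (with the colors allowed by the definition), some heptagon witnesses that the label $I'$ forces the label $I$, or the reverse depending on convention. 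For this I would exhibit explicit small tableaux, modeled on the $n=3$ heptagons visible in \Cref{fig:extra_slow}, embedded in larger tableaux by padding with dots. Second, conversely, every heptagon forcing relation is an inclusion of colored intervals. The delicate part is the bookkeeping of colors: the red intervals allow $i=j$ and the green/blue ones do not. Containment between differently colored intervals has to be checked against the explicit heptagon labels.

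Once forcing equals (opposite) inclusion of colored intervals, the identification $\Con(\esTam_n)\cong$ order ideals of the join-irreducible congruence poset, which holds by Birkhoff's theorem, finishes the proof. The order reversal depends only on whether one uses ideals or filters of the forcing order, and I would fix the convention so that it matches the statement.
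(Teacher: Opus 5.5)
Your framework is the paper's: congruence uniformity (\Cref{thm:HH_uniform}), Birkhoff, reduction of forcing to polygons via \Cref{pro:polygonal_forcing}, quadrilaterals contributing nothing by (PL1), and a converse obtained from explicit heptagons. For the converse the paper takes $j'=j-1$, $i'=i+1$ around an empty free cell with an up-arrow just below it, a left-arrow just to its left and a dot just to its right.

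You also need not reclassify polygons. \Cref{prop:polygonal} shows that the only non-quadrilaterals come from a type~I and a type~II move sharing one cell. \eqref{eq:heptagon_HH} gives their labels: $(i'-1,j,\emptyset)$ and $(i-1,j',\leftarrow)$ on the bottom and top edges (with $i'>i$, $j'<j$), and $(i-1,j,\emptyset),(i-1,j,\bullet),(i-1,j,\leftarrow)$ on the side edges.

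The heptagon computation is the entire content of the proof, and both things you predict about it are wrong. First, all three side edges lie on the length-$5$ chain, and \emph{both} top edges force all of them. Contracting the short-side top edge $V\lessdot T$ gives $S=U_1\wedge V\equiv U_1\wedge T=U_1$, hence $U_k=U_k\wedge T\equiv U_k\wedge V=S$ for every $k$. If each side edge were forced only by ``the top label on the same side'', you would get only the red relations $(i-1,j',\leftarrow)\to(i-1,j,\sigma)$. You would lose $(i'-1,j,\emptyset)\to(i-1,j,\sigma)$, which is exactly rule (2) of the inclusion poset.

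Second, the containment runs opposite to your guess that side labels are contained in a top label. The top labels $[i-1,j']_{\mathtt{red}}$ and $[i'-1,j]_{\mathtt{green}}$ are never nested in each other, and both are strictly contained in the side labels $[i-1,j]_\sigma$. So a smaller colored interval forces a larger one.

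This is a fact read off from the labels, not a convention you may choose so as to match the statement. Whether you speak of ideals or filters, a congruence is the same as a forcing-closed set of join-irreducibles. Here forcing-closed means closed upward under inclusion, i.e.\ an order ideal of the \emph{opposite} of the inclusion poset. With your predicted direction you would obtain order ideals of the inclusion poset itself, which is the dual lattice.

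The two lattices already differ for $n=3$. There the forcing poset consists of $[1,1]_{\mathtt{red}}$ and $[2,2]_{\mathtt{green}}$ each forcing all three $[1,2]_\sigma$, plus two isolated elements. This poset is not isomorphic to its opposite: it has three elements that are minimal but not maximal, while its opposite has two.
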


Trim lattices have a remarkable sublattice called the \defn{spine}.
This is an important sublattice: it is distributive and contains the
modular elements of the lattice. Since it is a distributive lattice,
the spine is in general much easier to understand than the rest of the
trim lattice. In~\Cref{sec:trim}, we describe the fb-tableaux in the
spine of the extra slow Tamari lattice in terms of tableaux avoiding a
specific configuration that we call a \defn{forbidden cohook}. Since
the spine is a distributive lattice, it is isomorphic to the lattice
of order ideals of its poset of join-irreducible elements. For the
Tamari lattice, this poset is isomorphic to the poset of intervals of
a total order, endowed with the product order. For the extra slow
Tamari lattice, we obtain the analogous statement once the colors are
taken into account.

\begin{theorem}[\Cref{thm:spine_colored_intervals}]
  The spine of the extra slow Tamari lattice is isomorphic to the
  lattice of order ideals on (the opposite of) the product poset of
  colored intervals.
\end{theorem}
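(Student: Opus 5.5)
The plan is to use the general theory of spines of trim lattices together with the description of the join-irreducibles already announced. By \Cref{thm:intro}, $\esTam_n$ is trim, and in particular extremal. The spine of an extremal lattice is the union of all maximum-length chains, and it is a distributive sublattice. In a trim lattice the join-irreducibles of the spine are in natural bijection with the join-irreducibles of the whole lattice (equivalently, with the labels of a maximal chain). The spine is therefore the lattice of order ideals of a poset $P$ whose underlying set is the set of colored intervals $[i,j]_{\mathtt{red}}$, $[i,j]_{\mathtt{green}}$, $[i,j]_{\mathtt{blue}}$. The work is to identify the order on $P$ with the opposite of the product order on colored intervals.

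First, I will make the spine explicit: it consists of the fb-tableaux avoiding a forbidden cohook. Concretely, I describe a maximal chain from $\hat 0$ to $\hat 1$ of length $2\binom n2+\binom{n-1}2$. Along it, the cover labels (the three-colour brick labeling from \Cref{sec:poly-label}) run through every coloured interval exactly once. For each coloured interval $[i,j]_c$, I then define $s[i,j]_c$ as the smallest element of the spine whose downward labels contain $[i,j]_c$. This is the join-irreducible of the spine attached to that label. I expect it to be a tableau of a simple shape: the bottom tableau with a prescribed block of dots moved up or fused, or with a free cell filled, according to the colour.

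Second, I compute the comparisons $s[i,j]_c\le s[i',j']_{c'}$ directly from the three conditions defining $\lhd$, namely column-space inclusion, reverse row-space inclusion, and the dot condition on common free cells. Because these are monotone set conditions on explicit shapes, the comparison should reduce to index inequalities: $i'\le i$ and $j\le j'$, read oppositely as dictated by the product order on intervals, together with a compatibility rule between colours. That rule fixes how red, green and blue interleave, matching the coloured product poset in the statement. Alternatively, I can get the order on the join-irreducibles of the distributive spine from the forcing order on labels restricted to spine covers. In a trim lattice this coincides with the order in which labels must appear along maximal chains. That reduces the question to which label sequences occur on maximal chains, which is computable from the cover description in \Cref{def:cover_relation}.

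The main obstacle is the colour interaction. For a single colour, the product-order statement is the familiar Tamari one. But I must check precisely when, say, a green interval lies below a red or blue one, and that no extra relations arise from fusion covers. I will first treat $n=3$ against \Cref{fig:extra_slow} to pin down the convention, then prove the general comparisons by induction on $j-i$, restricting to the sub-staircase spanned by rows and columns $i,\dots,j+1$.
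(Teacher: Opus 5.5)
Your skeleton matches the paper's. The spine of the trim lattice \(\esTam_n\) is distributive, so it is the lattice of order ideals of its poset \(P\) of join-irreducibles (there are \(2\binom{n}{2}+\binom{n-1}{2}\) of them), and \(P\) is computed by comparing explicit tableaux under \(\lhd\). The gap is that you never carry out the one step that is not general theory: identifying the join-irreducibles of the spine. This is exactly where the colours stop behaving uniformly. For \(\sigma\in\{\emptyset,\bullet\}\) they are the join-irreducibles \((i,j,\sigma)\) of \(\esTam_n\), but the red ones are not. The join-irreducible \((i,j,\leftarrow)\) of \(\esTam_n\) contains the forbidden cohooks \(H(i+1,m)\) for \(j<m\le n-1\), so it lies off the spine unless \(j=n-1\). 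The paper replaces it by \(S(i,j,\leftarrow)\): the left-arrow of row \(i+1\) sits in column \(j\), the up-arrows of \emph{all} columns \(j,\dots,n-1\) are raised to row \(i\), the vacated border cells are dotted and all other free cells are empty. It then checks that this tableau has no forbidden cohook and a unique lower cover in the spine, and concludes by counting. Your definition of \(s[i,j]_c\) as the smallest spine element with a lower cover labelled \([i,j]_c\) is a valid characterisation, since spine covers are covers of \(\esTam_n\) and the restricted join-labelling satisfies the diamond rule. But until it is evaluated, your comparison step has nothing to compare, and your guess at its shape does not anticipate the raised row of up-arrows. Because this element involves every column from \(j\) to \(n-1\), your induction by restriction to the sub-staircase on rows and columns \(i,\dots,j+1\) does not apply to it as stated.

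Your predictions about the outcome are also off, which matters because the statement is precisely the determination of \(P\). There is no Tamari product order within a single colour. By \Cref{lem:poset_irr_spine}, two red elements are comparable only when they share \(i\), and two green ones only when they share \(j\). The two-parameter comparisons occur across colours: for example, \(S(i,j,\emptyset)\le S(k,\ell,\leftarrow)\) if and only if \(k\le i\) and \(\ell\le j\). Both indices move in the same direction, not in the mixed pattern \(i'\le i\), \(j\le j'\) you anticipate.

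Your alternative route fails at its first step. The forcing order is not the order in which labels must appear along chains: in a finite distributive lattice, which is trim, the forcing order is an antichain. In \(\esTam_n\) the forcing order is the inclusion poset of \Cref{thm:con_jirr}, which is neither \(P\) nor its opposite. It agrees with \(P\) on \((i,j,\leftarrow)\) versus \((i,j-1,\leftarrow)\), but is reversed on \((i,j,\emptyset)\) versus \((i+1,j,\emptyset)\) when \(j-i\ge 2\). What is true is that \(p\le q\) in \(P\) exactly when \(p\) precedes \(q\) on every \emph{longest} chain. Those chains are the maximal chains of the spine, i.e.\ the linear extensions of \(P\); maximal chains that are not longest, such as the short side of a heptagon, omit labels. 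That characterisation could be turned into a proof using the unit moves of \Cref{lem:slowest_way} and the forbidden-cohook criterion, but it still needs the precedence analysis the proposal leaves undone.
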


In \Cref{sec:slow_Tamari}, we revisit the \defn{slow Tamari lattice} and we
prove the following results.

\begin{theorem}
The slow Tamari lattice is
\begin{enumerate}
\item semidistributive (\Cref{lem:sTam_trim});
\item extremal (\Cref{lem:sTam_trim});
\item trim (\Cref{lem:sTam_trim});
\item polygonal with quadrilaterals and hexagons (\Cref{lem:1});
\item congruence uniform (\Cref{lem:2}).
\end{enumerate}
\end{theorem}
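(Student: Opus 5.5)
The plan is to transport each of the five properties from the extra slow Tamari lattice to the slow Tamari lattice, using that $\sTam_n$ is a sublattice of $\esTam_n$ (\Cref{pro:sblattice}). Then we supplement the inheritance with a direct count for the properties that are not closed under sublattices.

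\textbf{Semidistributivity.} This is a universal Horn condition in terms of meets and joins. Hence it passes to any sublattice, and semidistributivity of $\sTam_n$ follows at once from \Cref{thm:semidistributive}.

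\textbf{Extremality.} Extremality is not inherited by sublattices, so it must be checked directly. A finite lattice is extremal when the length of a maximal chain equals both the number of join-irreducibles and the number of meet-irreducibles.
\begin{enumerate}
\item I would use the restriction of the cover description (\Cref{def:cover_relation}) to small tableaux. In $\sTam_n$ the covers are the ``dot moves up'' moves, and a fusion move preceded by a free-cell move; filling a free cell is forbidden since it leaves the small class.
\item From this I would count the join-irreducibles of $\sTam_n$. I expect them to correspond to the red and one further colour of intervals, giving $\binom{n}{2}+\binom{n-1}{2}=(n-1)^2$.
\item Self-duality of $\sTam_n$, via the transpose/reflection used for \Cref{thm:selfdual}, which preserves smallness, gives the same count of meet-irreducibles.
\item Finally I would exhibit an explicit chain of length $(n-1)^2$ from bottom to top. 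For instance, refine the maximal Tamari chain, inserting an extra step at each rotation that passes through an intermediate small tableau with a free cell, as in \Cref{fig:slow_tam}.
\end{enumerate}

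\textbf{Trimness.} Trim then follows from the theorem of Thomas--Williams that an extremal semidistributive lattice is trim. I would check this against $n=3$: the chain has length 4, and there are 4 join-irreducibles in the 6-element lattice.

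\textbf{Polygonality.} For polygonality (\Cref{lem:1}) and congruence uniformity (\Cref{lem:2}), I would restrict the HH-labelling of \Cref{sec:poly-label}.
\begin{enumerate}
\item Given an element covered by two elements of $\sTam_n$, take their join in $\esTam_n$; it lies in $\sTam_n$ because $\sTam_n$ is a sublattice.
\item The interval in $\esTam_n$ is a square or a heptagon by \Cref{prop:polygonal}.
\item Intersecting it with small tableaux either keeps the square or collapses the heptagon. The heptagon loses the single element obtained by filling a free cell with a dot, which leaves a hexagon, as in the replacement of the chain of length three by the chain of length two in the introduction.
\end{enumerate}

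\textbf{Congruence uniformity.} I would verify the HH conditions of \cite{CLM2004} by restricting the three-colour labelling, merging the labels on the two edges that replace the removed element. The main obstacle is making the polygon case analysis precise. One must show every polygon of $\sTam_n$ arises from a polygon of $\esTam_n$ exactly this way, since intervals in a sublattice need not be intersections of intervals of the ambient lattice. One must also confirm that the HH labelling conditions survive the merging. I would handle both by working directly with the local $2\times 2$ configurations of \Cref{def:cover_relation}.
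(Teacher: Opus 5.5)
The genuine gap is in items 4 and 5. Your route for items 1--3 follows the paper: semidistributivity inherited from $\esTam_n$, the count $(n-1)^2$ of irreducibles plus an explicit chain, then \Cref{pro:trim_condition}. There are two slips to fix.

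First, the conjugation of \Cref{def:conjugate} does not preserve smallness, because it complements the free non-border cells. On $\sTam_n$ the anti-automorphism is the bare reflection. It reverses $\lhd$ because, for small tableaux, the order involves only row and column spaces.

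Second, you cannot add a step at every rotation of a longest Tamari chain: that would give length $2\binom{n}{2}>(n-1)^2$. The up-arrow of column $j$ makes $j$ unit moves. Only the $j-1$ moves that do not leave the border cell can be refined, since otherwise the intermediate tableau needs a $\bullet$ on the border. This gives exactly $\binom{n-1}{2}$ extra steps.

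\textbf{Polygonality.} Your step 2 presumes that two covers of $x$ in $\sTam_n$ are covers in $\esTam_n$, and this fails. A left-arrow move to a non-border cell in $\sTam_n$ is a type III move followed by a type II move. Moving a border up-arrow is a type I move (creating a border $\bullet$) followed by a type II fusion. For $n=3$, both upper covers of $\hat 0$ in $\sTam_3$ are such composites and their join is $\hat 1$. So $[\hat 0,\hat 1]$ is all $21$ elements of $\esTam_3$, and \Cref{prop:polygonal} says nothing about it.

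Step 3 fails too. A heptagon of $\esTam_n$ exists only because the left-arrow is changeable, that is, its bottom $S$ has a $\bullet$ in the free cell $(i,j')$. Hence $S,U_1,\dots,U_4$ are never small, and the heptagon meets $\sTam_n$ in at most $V$ and $T$. The hexagon of $\sTam_n$ is obtained by erasing that dot from $S,U_1,U_2,U_4$ and deleting $U_3$, not by intersecting. Intervals of $\sTam_n$ are always intersections of intervals of $\esTam_n$ with $\sTam_n$; it is covers and polygons that do not restrict.

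So there are no polygons to restrict the labelling to. You need the direct analysis the paper indicates. Take two covers from \Cref{lem:sTam_cover}: you get a square when the cells they travel are disjoint, and a hexagon when they overlap. The overlap configuration is an empty cell with an up-arrow below it, a left-arrow to its left, and to its right an empty cell lying below an up-arrow. Then check the join-labels on the hexagon against $r(i,j,\sigma)=j-i$.

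\textbf{Congruence uniformity.} You can skip the labelling entirely here. Like semidistributivity, congruence uniformity passes to sublattices by Day's theorem \cite{day1979}, so it follows at once from \Cref{thm:HH_uniform}.
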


Since the slow Tamari lattice is a sublattice of the extra slow Tamari
lattice, the first and fifth results are direct corollaries of
\Cref{thm:intro}. Extremality, trimness and polygonality are not
preserved by taking sublattices. They still hold for the slow Tamari
lattice, but they require slightly different proofs.

\Cref{tab:comparison} summarizes several meaningful enumerative
results for the classical Tamari lattice, together with the
corresponding enumerations for the slow Tamari and extra slow Tamari
lattices that we compute in this paper. We next briefly discuss the
enumerations of spine elements and of congruence lattices.

\begin{table}[ht]
  \centering
  \renewcommand{\arraystretch}{1.35}
  \setlength{\tabcolsep}{10pt}
  \begin{tabular}{@{} p{4.2cm} c c c @{}}
    \toprule
     & Tamari & slow Tamari & extra slow Tamari \\
    \midrule
    elements
      & \(C_n\)
      & \(n!\)
      & \( [n]_2 !\) \\
    \addlinespace[0.4em]

    \makecell[l]{join-irreducible elements\\
    (\(=\) length of a longest chain)}
      & \(\binom{n}{2}\)
      & \makecell[l]{\Cref{lem:stamjiir}\\ \(\binom{n}{2} + \binom{n-1}{2} = (n-1)^2\)}
      & \makecell[l]{\Cref{cor:join_irr}\\ \(2\binom{n}{2} + \binom{n-1}{2}\)}\\
    \addlinespace[0.4em]

    spine elements
      & \(2^{n-1}\)
      & \makecell[l]{\Cref{pro:1}\\ \(1,2,5,16,62,280,\dots\)}
      & \makecell[l]{\Cref{thm:counting_spine}\\ \(1,3,20,288,8672,\dots\)} \\
    \addlinespace[0.4em]

    elements in the congruence lattice
      & \(C_n\)
      & \makecell[l]{\Cref{pro:2,the:1}\\ \(1,2,7,41,328,3077,\dots\)}
      & \makecell[l]{\Cref{the:congruence_enumeration,prop:esTam_cong_cf}\\ \(1,4,44,932,35788,\dots\)} \\
    \bottomrule
  \end{tabular}
  \caption{Comparison of Tamari, slow Tamari, and extra slow Tamari.}
  \label{tab:comparison}
\end{table}

In~\Cref{pro:1} and \Cref{thm:counting_spine}, we derive formulas for
the numbers of spine elements in the slow and extra slow Tamari
lattices, respectively. More precisely, we introduce a recursively
defined two-parameter sequence \( (f_{i,j})_{i,j \geq 1} \) and a
three-parameter sequence \( (f(i,j,k))_{i,j,k \geq 0} \), and show
that the numbers of spine elements in \( \sTam_n \) and \( \esTam_n \)
are given by, respectively,
\[
  \sum_{ i + j = n+1 } f_{i,j}, \qand \sum_{i+j+k=n-1} f(i,j,k).
\]
Since the number of spine elements of
\( \Tam_n \) is known to be \( 2^{n-1} \), these formulas may be
regarded as slow and extra slow analogues of the sequence
\( 2^{n-1} \).

The number of elements in the congruence lattice of \( \Tam_n \) is
the Catalan number \( C_n := \frac{1}{n+1}\binom{2n}{n} \), and hence
is counted by Dyck paths of length \( 2n \). In~\Cref{pro:2} and
\Cref{the:congruence_enumeration}, we show that the numbers of
elements in the congruence lattices of the slow and the extra slow
Tamari lattices can likewise be expressed as certain weighted sums
over Dyck paths. Moreover, we obtain their generating functions
in~\Cref{the:1} and \Cref{prop:esTam_cong_cf}.
\begin{theorem}\label{the:2}
  The generating functions for the numbers of elements in the
  congruence lattices of \( \sTam_n \) and \( \esTam_n \) can be
  written in the continued fraction form
\[
F(a_0,a_1,\ldots;\lambda_1,\lambda_2,\ldots)
:=
\cfrac{1}{1+a_0x-\cfrac{\lambda_1 x}{1+a_1x-\cfrac{\lambda_2 x}{1+a_2x-\ddots}}},
\]
where
\begin{enumerate}
\item for \( \sTam_n \),
  \[
    (a_i)_{i \geq 0} = (0,0,1,1,1, \cdots), \qand (\lambda_i)_{i \geq 1} = (1,1,4,4,4, \dots );
  \]
\item for \( \esTam_n \),
  \[
    (a_i)_{i \geq 0} = (0,1,1,2,4,8,16, \cdots), \qand (\lambda_i)_{i \geq 1} = (1,4,8,16,32,64, \dots ).
  \]
\end{enumerate}
\end{theorem}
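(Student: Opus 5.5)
The plan starts from the congruence-lattice descriptions stated earlier in the introduction. For $\esTam_n$, the congruence lattice is the lattice of order ideals of (the opposite of) the inclusion poset of colored intervals. For $\sTam_n$, the analogous description uses red intervals $[i,j]$, $1\le i\le j\le n-1$, together with one extra color on $[i,j]$ with $i<j$; this matches the count $\binom{n}{2}+\binom{n-1}{2}$ of join-irreducibles. The task is then to count order ideals of these colored interval posets and to show that the generating functions take the stated Jacobi continued fraction form.

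First I encode an order ideal of the uncolored inclusion poset of intervals of $[1,n-1]$ by its boundary. This is the classical bijection with Dyck paths of length $2n$ that gives $C_n$ for the Tamari congruence lattice. An ideal of the colored poset projects to an uncolored ideal $I$: take the intervals carrying at least one color. The maximal elements of $I$ form an antichain of intervals, which I read as the peaks of the Dyck path. Given $I$, the extra freedom is which colors occur on each interval. The constraint is that if a color appears on $[i,j]$, the colors allowed on larger intervals are restricted. I will check carefully how the colored inclusion order is defined, in particular whether a red $[i,i]$ lies below green and blue intervals. I expect the number of colored lifts to factor as a product of local weights attached to the steps of the Dyck path. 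The weights depend on height, because an interval containing $[i,j]$ is determined by how far it extends, and heights record the nesting depth. This gives the weighted Dyck path sums of \Cref{pro:2} and \Cref{the:congruence_enumeration}.

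The second step converts the weighted sum into a continued fraction via Flajolet's theorem. The generating function of Motzkin paths with level-step weight $-a_h$ at height $h$ and down-step weight $\lambda_h$ from height $h$ is $F(a_0,a_1,\ldots;\lambda_1,\ldots)$. The sign $+a_h x$ in the denominators indicates an inclusion–exclusion: the natural Dyck path weights have the form "$\lambda_h$ for a peak or valley minus a correction". I would therefore contract pairs of steps so that a Dyck path of length $2n$ becomes a Motzkin path of length $n$, with negative level weights coming from the alternating-sign corrections. Concretely, I plan to show that the per-height contributions form eventually geometric sequences: constant $(1,1,4,4,\dots)$ for $\sTam$, and powers of $2$ for $\esTam$. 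These should arise because the number of admissible color sets at depth $h$ doubles with each additional nesting level, since green and blue choose independently at each level.

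The main obstacle is this middle step: pinning down exactly which colored configurations over a fixed Dyck path are order ideals, and showing the count factors by height with the predicted weights. I would first compute the weights by brute force for $n\le 4$. For $\sTam$ the terms are $1,2,7,41$; for $\esTam$ they are $1,4,44,932$. I would check these against the first few convergents of the continued fractions to fix the initial irregular coefficients ($a_0=0$, $a_1$, $\lambda_1=1$, $\lambda_2$). Then I would prove the general recursion by a first-return decomposition at height $h$. Truncating at height $h$ gives a functional equation $F_h = 1/(1+a_hx-\lambda_{h+1}xF_{h+1})$, where $F_h$ is the weighted generating function of paths living above height $h$. The identity is established once I show the colored ideal count satisfies the same recursion.
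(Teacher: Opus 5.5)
\textbf{There is a genuine gap in your proposal.} Your outline matches the paper's route: colored order ideals, then weighted Dyck paths, then a first-return recursion $F_h=1/(1+a_hx-\lambda_{h+1}xF_{h+1})$. But the step that actually produces the coefficients is only announced (``I expect \ldots\ local weights attached to the steps''). Matching $1,2,7,41$ and $1,4,44,932$ against convergents cannot replace it, since that fixes only finitely many $a_i,\lambda_i$.

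For a fixed Dyck path, the colored lifts factor over \emph{cells}, not steps. Each cell below the path independently receives a nonempty color set, which must contain the color required by each of the (at most two) cells directly above it. So for $\esTam_n$, corner, boundary and inner cells have weights $3,2,1$ at height $1$ and $7,4,2$ above. For $\sTam_n$ the weights are $1$ at height $1$ and $3,2,1$ above. A first-return decomposition $Up_1Dp_2$ does not close on these weights. The cells at height $h+1$ under the arch have types that depend on where $p_1$ returns to level $h+1$, so your $F_h$ is not multiplicative and the recursion cannot yet be written down.

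The missing idea is the redistribution the paper uses. A non-peak up step and its matched non-peak down step have boundary cells at the same height. Move the extra factor $2$ from the boundary cell under the up step to the one under the matched down step. Then up-step boundary cells weigh the same as inner cells ($1$ at height $1$, $2$ above). Multiplying the cells on the diagonal below each down step now gives a weight depending only on the step's starting height and on whether it ends a peak:
\begin{itemize}
\item for $\esTam_n$: $1$ from height $1$; $4$ (peak: $3$) from height $2$; $8\cdot 2^{h-2}$ (peak: $7\cdot 2^{h-2}$) from height $h+1\ge 3$;
\item for $\sTam_n$: $1$, $1$, and then $4$ (peak: $3$).
\end{itemize}
Let $\lambda_{h+1}$ be the non-peak weight and $\mu_{h+1}$ the peak weight. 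Then $D_h=1+\mu_{h+1}xD_h+\lambda_{h+1}x(D_{h+1}-1)D_h$, which gives $a_h=\lambda_{h+1}-\mu_{h+1}$. This yields exactly the stated sequences.

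This derivation also shows where your heuristics are off:
\begin{itemize}
\item Corrections come from peaks only, not valleys.
\item No color count doubles with depth; every cell above height $1$ has weights $7,4,2$. The factor $2$ is the free blue choice in an inner cell, where red and green are forced, and $2^{h-2}$ counts such cells on a single diagonal.
\item The $+a_hx$ term is not a contraction of step pairs into Motzkin paths. It comes from splitting the cases $p_1$ empty and $p_1$ nonempty (equivalently, Schr\"oder paths in which a peak may be replaced by a level step of weight $-a_h$).
\end{itemize}

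Finally, for $\sTam_n$ the colored order you assume must be derived from the hexagons of $\sTam_n$ itself; it is not inherited from $\esTam_n$. There $(i,i,\leftarrow)$ lies above both $(i-1,i,\sigma)$ and $(i,i+1,\sigma)$, which is why height-$1$ cells carry a single color.
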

It is well known that the generating function for Catalan numbers is
\[
  \sum_{ n \geq 0 } C_n x^n = F(0,0,0, \dots ; 1,1,1, \dots ).
\]
Thus, the generating functions counting elements in the congruence
lattices of the Tamari, slow Tamari, and extra slow Tamari lattices
all belong to the same class
\( F(a_0,a_1,\ldots;\lambda_1,\lambda_2,\ldots) \) of continued
fractions.

In this paper, we focus on the poset structure of fb-tableaux. In a
subsequent work, we will study a more general family of tableaux and
give a bijective proof of a weighted enumeration formula. We will also
investigate connections with orthogonal polynomials and Schubert cells
of the flag variety in type \( A \).

The remainder of this paper is organized as follows. In
\Cref{sec:lattice}, we introduce the poset of fb-tableaux and prove
that it is a lattice. In \Cref{sec:sublattices}, we show that our
lattice contains several remarkable sublattices, including the Tamari
lattice and the slow Tamari lattice. In \Cref{sec:covers}, we describe
the cover relations of the lattice, and in \Cref{sec:irred}, we use
this description to characterize its join-irreducible and
meet-irreducible elements. In \Cref{sec:semidistributivity}, we prove
that the lattice is semidistributive. In \Cref{sec:trim}, we show that
it is trim, describe its spine, and enumerate the elements on the
spine. In \Cref{sec:polygonality}, we study the polygonal structure of
the lattice and show that it admits a polygonal labeling, and in
\Cref{sec:lattice-congruences}, we use this to prove that the lattice
is congruence uniform and to describe its lattice of congruences.
Finally, in \Cref{sec:slow_Tamari}, we revisit the slow Tamari lattice
and show that several properties established for the extra slow Tamari
lattice also hold for the slow Tamari lattice.

\section{A lattice of fb-tableaux}\label{sec:lattice}

Throughout this paper, we assume that \( n \) is a positive integer.

\subsection{A poset of fb-tableaux}\label{sec:def}

For representation-theoretic reasons, we denote the position of the
\( j \)th cell from the left in the \( i \)th row of the shape
\( (n,n,n-1, \dots, 3,2) \) by \( (i,n+1-j) \). That is,
\[
  \begin{tikzpicture}[x=6em, y=2em, every node/.style={font=\small}]
    \foreach \i in {1,...,5} {
      \pgfmathtruncatemacro{\len}{6-\i}
      \pgfmathtruncatemacro{\imax}{\i-1}
      \pgfmathtruncatemacro{\endj}{\len}
      \foreach \j in {1,...,\endj} {
        \draw[thick] (\j-1, -\imax) rectangle ++(1,-1);
      }
    }

    \filldraw[fill=gray, thick] (4,-1) rectangle ++(1,-1); 
    \filldraw[fill=gray, thick] (3,-2) rectangle ++(1,-1); 
    \filldraw[fill=gray, thick] (2,-3) rectangle ++(1,-1);
    \filldraw[fill=gray, thick] (1,-4) rectangle ++(1,-1);    
    
    \newcommand{\putcell}[3]{
      \node at ({(#2-1)+0.5},{-(#1-1)-0.5}) {$#3$};
    }
    \putcell{1}{1}{(1,n)}
    \putcell{1}{2}{(1,n-1)}
    \putcell{1}{3}{\cdots}
    \putcell{1}{4}{(1,2)}
    \putcell{1}{5}{(1,1)}
    \putcell{2}{1}{(2,n)}
    \putcell{2}{2}{(2,n-1)}
    \putcell{2}{3}{\cdots}
    \putcell{2}{4}{(2,2)}
    \putcell{2}{5}{(2,1)}
    \putcell{3}{1}{\vdots}
    \putcell{3}{2}{\vdots}
    \putcell{3}{3}{\reflectbox{\( \ddots \)}}
    \putcell{3}{4}{(3,2)}
    \putcell{4}{1}{(n-1,n)}
    \putcell{4}{2}{(n-1,n-1)}
    \putcell{4}{3}{\reflectbox{\( \ddots \)}}
    \putcell{5}{1}{(n,n)}
    \putcell{5}{2}{(n,n-1)}
  \end{tikzpicture}
\]
where the pair written in each cell indicates its position.
\begin{definition}
  A \defn{fb-tableau} of size \( n \) is a filling of the cells of
  shape \( (n,n,n-1,n-2,\dots,3,2) \) with dots such that
  \begin{itemize}
  \item each cell is either empty or contains a dot,
  \item the root \( (1,n) \) and border cells \( (2,1), (3,2),\dots, (n,n-1) \) contain dots, and
  \item every dot, except the one in the root \((1,n)\), has a dot either to its left or above it.
  \end{itemize}
\end{definition}
By convention, the fb-tableau of size \( 1 \) consists of a single
cell \( (1,1) \) containing a dot. We first parse any fb-tableau and
replace some of the dots and border cells with arrows or dots. Every
cell or border cell, except $(1,n)$, that has no dot to its left is
replaced by a left-arrow $\leftarrow$. Every cell or border cell,
except $(1,n)$, that has no dot above itself is replaced by an
up-arrow~$\uparrow$. The other dots are denoted by $\bullet$. For the
rest of the article, we use the following convention: the
\defn{elements} of a fb-tableaux are the entries of the nonempty
cells, the \defn{arrows} are the elements equal to $\leftarrow$ or
$\uparrow$, and the \defn{dots} are the elements equal to $\bullet$.
An element $\bullet$ has a left-arrow to its left and an up-arrow
above itself, except if it is the root. However, in the proofs, the
root will still behave like the other $\bullet$. For this reason, we
give it the same notation. 

\noindent Here we introduce the notion of the cohook of a cell. This
will be useful for verifying that a tableau is a fb-tableau.
\begin{definition}\label{def: cohook}
  We define the \defn{cohook} \( H(i,j) \) at the position \( (i,j) \)
  to be the set consisting of the cell \( (i,j) \) together with all
  cells to its left and above it. 
\end{definition}
\noindent Then a fb-tableau is a filling of a tableau with dots such that:
\begin{enumerate}
\item The root and the border cells are nonempty.
\item The cohook of each nonempty cell except the root contains at least two elements.  
\end{enumerate}

Here is an example of a fb-tableau of size $5$:
\[
\ytableausetup{boxsize = 1.2em}
  \begin{ytableau}
    \bullet & & \uparrow & \uparrow & \\
    & & \leftarrow & &*(gray)\uparrow\\
    \leftarrow &&\bullet &*(gray)\bullet \\
    \leftarrow &\uparrow &*(gray)\bullet\\
    &*(gray)\leftarrow\\
  \end{ytableau}
\]
We highlight the border cells in gray. By definition, they cannot be
empty.

\begin{remark}
  Since every border cell is nonempty, the cohook of each border cell
  contains at least two distinct elements: the topmost nonempty cell
  above it, which is an up-arrow, and the leftmost nonempty cell to
  its left, which is a left-arrow. Hence, as an fb-tableau of size
  \( n \) has \( n-1 \) border cells, it contains exactly \( n-1 \)
  up-arrows and \( n-1 \) left-arrows.
\end{remark}

For a fb-tableau \(T\), we define \(\row(T)\) to be the set of cells
lying weakly to the right of the left-arrows and the root, and call it
the \defn{row space} of \( T \). Similarly, we define \( \col(T) \) to
be the set of cells lying weakly below the up-arrows and the root, and
call it the \defn{column space} of \( T \). A cell is said to be
\defn{free} if it lies strictly below an up-arrow and strictly right
to a left-arrow. We define \( F(T) \) to be the set of free cells in
\( T \). By definition, every dot of \(T\) belongs to \(F(T)\);
however, \(F(T)\) may also contain some empty cells. For example, if
\(T\) is
\[
  \begin{ytableau}
    \bullet & & \uparrow & \uparrow & \\
    & & \leftarrow & &*(gray)\uparrow\\
    &&&*(gray)\leftarrow \\
    \leftarrow &\uparrow &*(gray)\bullet\\
    &*(gray)\leftarrow\\
  \end{ytableau},
\]
then the cells in \(\row(T)\), \(\col(T)\), and \(F(T) \) are
highlighted from left to right, in red, green, and blue respectively:
\[\begin{ytableau}
    *(red)\bullet &*(red)  &*(red) \uparrow &*(red) \uparrow &*(red) \\
    & &*(red) \leftarrow  &*(red) &*(red) \uparrow\\
    &&& *(red) \leftarrow\\
    *(red) \leftarrow &*(red) \uparrow &*(red)\bullet\\
    &*(red)\leftarrow\\
  \end{ytableau}
\quad\quad\quad\begin{ytableau}
    *(green)  \bullet & &*(green) \uparrow & *(green) \uparrow & \\
    *(green) & &*(green) \leftarrow  &*(green) &*(green)\uparrow\\
    *(green) &&*(green)&*(green) \leftarrow \\
    *(green) \leftarrow & *(green) \uparrow  &*(green)\bullet\\
    *(green) &*(green)\leftarrow\\
  \end{ytableau}
  \quad \quad \quad 
  \begin{ytableau}
    \bullet & & \uparrow & \uparrow & \\
    & & \leftarrow &*(blue) &\uparrow\\
    &&&\leftarrow \\
    \leftarrow &\uparrow &*(blue)\bullet\\
    &*(gray)\leftarrow\\
  \end{ytableau}
\]
For two fb-tableaux \(S\) and \(T\), we let \(F(S,T)\) denote the
set of cells that are free in both \(S\) and \(T\). These sets of
cells allow us to define a binary relation $S\lhd T$ on the set of
fb-tableaux. We write $S\lhd T$ if
\begin{enumerate}
    \item $\col(S)\subseteq \col(T)$;
    \item $\row(S)\supseteq \row(T)$;
    \item Let $c$ be a cell in $F(S,T)$. If $c$ contains a dot in $S$, then it contains a dot in $T$.
\end{enumerate}
Here is the poset for \(n=2\):
\[
\begin{ytableau}
    \bullet & \\
    \leftarrow&*(gray)\uparrow \\
  \end{ytableau}
\quad \lhd \quad 
\begin{ytableau}
    \bullet &\uparrow \\
    \leftarrow&*(gray)\bullet \\
  \end{ytableau}
\quad \lhd \quad 
\begin{ytableau}
    \bullet &\uparrow \\
    &*(gray)\leftarrow \\
  \end{ytableau}.
\]
Here is an example of two tableaux with the same row spaces and same
columns spaces, illustrating the third condition:
\[
\begin{ytableau}
    \bullet &\uparrow & \uparrow \\
    \leftarrow&& *(gray)\bullet \\
    \leftarrow & *(gray)\bullet \\ 
  \end{ytableau}
  \quad \lhd \quad 
  \begin{ytableau}
    \bullet &\uparrow & \uparrow \\
    \leftarrow& \bullet& *(gray)\bullet \\
    \leftarrow & *(gray)\bullet \\ 
  \end{ytableau}. 
\]
We also denote by \(\col^{s}(T)\) the set of cells that lie
strictly below the up-arrows of \(T\). Similarly, we denote by
\(\row^{s}(T)\) the set of cells that lie strictly to the right of the
left-arrows of \(T\).

\begin{lemma}\label{lem:free}
  Let \(S\) and \(T\) be two fb-tableaux such that
  \(\col(S)\subseteq \col(T) \) and \( \row(S) \supseteq \row(T)\).
  Then
  \[
    F(S,T)= \col^s(S)\cap \row^s(T).
  \]
\end{lemma}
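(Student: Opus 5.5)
By definition, a cell is free in a tableau \(U\) exactly when it lies strictly below an up-arrow of \(U\) and strictly to the right of a left-arrow of \(U\). So \(F(U)=\col^s(U)\cap\row^s(U)\), and
\[
F(S,T)=\col^s(S)\cap\row^s(S)\cap\col^s(T)\cap\row^s(T).
\]
The plan is to show that, under the hypotheses, \(\col^s(S)\subseteq\col^s(T)\) and \(\row^s(T)\subseteq\row^s(S)\). The intersection then collapses to \(\col^s(S)\cap\row^s(T)\). By the symmetry between rows and columns (transposing the roles of up-arrows and left-arrows), it suffices to prove the column inclusion.

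The key observation is that the column space determines the up-arrows. In a column containing at least one up-arrow, the up-arrow is the topmost nonempty cell of that column (the root aside), so it is unique. Hence \(\col(U)\) restricted to a column is either empty or the set of cells weakly below a single cell \(u\), and \(\col^s(U)\) in that column is \(\col(U)\) minus its top cell. In the column of the root, \(\col(U)\) is the whole column and there is no up-arrow. I need to check how \(\col^s\) is meant to treat that column. If the root counts like an up-arrow, as the paper says it should in proofs, then \(\col^s\) there is everything strictly below the root, which is the same for \(S\) and \(T\).

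Now take a non-root column. Suppose \(\col(S)\subseteq\col(T)\) and \(S\) has an up-arrow at row \(a\) in this column. Then \(T\) also has cells of \(\col(T)\) in this column, so \(T\) has an up-arrow there, at some row \(b\). The inclusion gives \(b\le a\), so the set of cells strictly below \(a\) is contained in the set of cells strictly below \(b\). If \(S\) has no up-arrow in the column, then \(\col^s(S)\) is empty there and there is nothing to prove. This gives \(\col^s(S)\subseteq\col^s(T)\). The symmetric argument, using that each row has at most one left-arrow (its leftmost nonempty cell), gives \(\row^s(T)\subseteq\row^s(S)\) from \(\row(T)\subseteq\row(S)\).

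The only delicate point is bookkeeping around the root, since the root lies in the first row and the column of \((1,n)\). Its row and column contain no arrows, and the definition of free requires an actual up-arrow above and left-arrow to the left. I will treat this case by noting that the root's row and column lie in both \(\row\) and \(\col\) for every tableau, identically for \(S\) and \(T\), so whichever convention is used, the two sides agree there. Apart from that, the argument is a straightforward comparison of sets.
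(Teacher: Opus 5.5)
Your proof is correct and is essentially the paper's argument. The paper likewise uses $F(U)=\col^s(U)\cap\row^s(U)$ and the fact that the hypotheses put the up-arrows of $S$ weakly below those of $T$ and the left-arrows of $T$ weakly to the right of those of $S$; it only phrases this as a two-sided element chase, whereas you collapse the fourfold intersection via $\col^s(S)\subseteq\col^s(T)$ and $\row^s(T)\subseteq\row^s(S)$.

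Two small remarks on your extra care. The root bookkeeping is harmless, since no cell of the root's row or column lies in $\col^s(S)\cap\row^s(T)$ under either convention. The case of a non-root column with no up-arrow is vacuous, because every such column contains a nonempty border cell.
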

\begin{proof}
  Let \(c\) be free in both \(S\) and \(T\). Then \( c \) lies below
  the up-arrow of its column in \(S\) and \(T\). The lowest of the
  two arrows is the one of \(S\), hence \(c\in \col^{s}(S)\).
  Similarly, we get \(c\in \row^s(T)\). Conversely, let
  \(c\in \col^s(S)\cap \row^s(T)\). Since
  \( \row(S) \supseteq \row(T)\), we get
  \(c\in \col^s(S)\cap \row^s(S)=F(S)\). Similarly, since
  \(\col(S)\subseteq \col(T)\), we get \(c\in F(T)\).
\end{proof}
\begin{lemma}\label{lem:poset}
  The relation \(\lhd\) is a partial order on the set of fb-tableaux
  of size \(n\).
\end{lemma}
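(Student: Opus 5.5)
We check reflexivity, antisymmetry and transitivity in turn. Reflexivity is immediate: all three conditions hold trivially for \(S\lhd S\).

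For antisymmetry, suppose \(S\lhd T\) and \(T\lhd S\). Then \(\col(S)=\col(T)\) and \(\row(S)=\row(T)\).

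The column space determines the position of the up-arrow in each column. In column \(j\), the topmost cell of \(\col(T)\) is the up-arrow, or the root when \(j=n\); a column with no up-arrow is impossible except for the root column. Likewise the row space determines the left-arrows. So \(S\) and \(T\) have the same arrows, and hence \(F(S)=F(T)=F(S,T)\).

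Every cell outside the arrows and the free cells is empty in both tableaux, since dots lie in free cells. On the free cells, condition (3) applied in both directions shows that \(c\) contains a dot in \(S\) if and only if it does in \(T\). Hence \(S=T\).

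For transitivity, let \(R\lhd S\lhd T\). Conditions (1) and (2) for \(R\lhd T\) follow from transitivity of inclusion.

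For (3), let \(c\in F(R,T)\) contain a dot in \(R\); we must show \(c\) has a dot in \(T\). By \Cref{lem:free}, \(F(R,T)=\col^s(R)\cap\row^s(T)\). The key step is to show that \(c\in F(S)\), so that \(c\in F(R,S)\cap F(S,T)\). Two inclusions are needed.

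First, \(\col(R)\subseteq\col(S)\) means the up-arrow of \(S\) in each column is weakly above that of \(R\). Hence \(\col^s(R)\subseteq\col^s(S)\), and \(c\in\col^s(S)\).

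Second, \(\row(S)\supseteq\row(T)\) means the left-arrow of \(S\) in each row is weakly left of that of \(T\). Hence \(\row^s(T)\subseteq\row^s(S)\), and \(c\in\row^s(S)\).

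Thus \(c\in F(S)\). Combined with \(c\in F(R)\) and \(c\in F(T)\), this gives \(c\in F(R,S)\cap F(S,T)\). Applying (3) to \(R\lhd S\) shows that \(c\) has a dot in \(S\), and then (3) for \(S\lhd T\) shows that \(c\) has a dot in \(T\).

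The main obstacle is the arrow-comparison step. It requires care in translating inclusion of (weak) column and row spaces into comparisons of arrow positions, and then into inclusion of strict spaces. In particular, the root column and rows without a left-arrow must be handled, using the convention that the root plays the role of both arrows. Once the reduction \(c\in F(S)\) is established, the rest follows from \Cref{lem:free} with little work.
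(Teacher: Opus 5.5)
Your proof is correct and follows the paper's argument. Reflexivity is trivial. For antisymmetry, equal row and column spaces force the same arrows, and condition (3) applied in both directions then forces the same dots. For transitivity, the cell is placed in $F(R,S)\cap F(S,T)$ and the dot is chained through $S$. The only cosmetic difference is that you show $c\in F(S)$ directly via $\col^s(R)\subseteq\col^s(S)$ and $\row^s(T)\subseteq\row^s(S)$, whereas the paper packages the same inclusions as $F(T_1,T_3)\subseteq F(T_1,T_2)\cap F(T_2,T_3)$ using \Cref{lem:free}.
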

\begin{proof}
  By definition, the relation is reflexive. To check transitivity, let
  \(T_1,T_2\) and \(T_3\) be fb-tableaux such that \(T_1\lhd T_2 \)
  and \( T_2 \lhd T_3 \). The only nontrivial part is to verify the
  condition involving free cells. By~\Cref{lem:free}, we have
\[
  F(T_1,T_3) = \col^{s}(T_1) \cap \row^{s}(T_3) \subseteq \col^s(T_2)
  \cap \row^{s}(T_3) =F(T_2,T_3).
\]
 Similarly, we have 
 \[
   F(T_1,T_3)= \col^{s}(T_1) \cap \row^{s}(T_3) \subseteq \col^s(T_1)
   \cap \row^{s}(T_2) = F(T_1,T_2).
 \]
 Let \(c\) be a cell in \(F(T_1,T_3)\) such that \(c\) contains a dot
 in \(T_1\). Since \(c\in F(T_1,T_2)\), the cell \( c \) also contains
 a dot in \(T_2\). Since \(c\in F(T_2,T_3)\), it also contains a dot
 in \(T_3\). Hence, the relation \(\lhd\) is transitive.

 To check the anti-symmetry, let \( S \) and \( T \) be fb-tableaux
 such that \(S\lhd T\) and \(T\lhd S\). Then \(\col(S)=\col(T)\),
 so in each column, the up-arrow of \(S\) is in the same position as
 the up-arrow of \(T\). Similarly, in each row, the left-arrow of
 \( S \) is in the same position as the left-arrow of \( T \). By
 \Cref{lem:free}, we have \(F(S,T)=F(T,S)\). The third condition in
 the definition of \(\lhd\) then implies that, a cell \( c \) contains
 a dot in \(S\) if and only if \( c \) contains a dot in \(T\), and
 therefore \( S = T \).
\end{proof}

\subsection{Self-duality of the poset}

\begin{definition}\label{def:conjugate}
  Given a fb-tableau \(T\), we define its \defn{conjugate} \(T'\) to
  be the unique tableau obtained as follows.
  \begin{itemize}
  \item First, reflect \( T \) about the main diagonal, thereby
    interchanging all left-arrows and up-arrows.
  \item Next, consider a free cell \( (i,j) \) in \( T \) with
    \( 1 \leq i \leq j \leq n \) that is neither \( (1,n) \) nor a
    border cell. If \( (i,j) \) contains a dot in \( T \), then the
    corresponding cell \((n+1-j,n+1-i)\) is empty in \(T'\); if
    \( (i,j) \) is empty in \(T\), then \((n+1-j,n+1-i)\) contains a
    dot in \(T'\).
  \end{itemize}
\end{definition} 

\begin{lemma}
  The map \(T \mapsto T'\) defines an involution on the set of
  \(fb\)-tableaux.
\end{lemma}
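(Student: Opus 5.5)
The proof has two parts. First, $T'$ is again a fb-tableau. Second, $(T')'=T$.

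We begin by making the geometry of the reflection explicit. The cell $(i,j)$ goes to $(n+1-j,n+1-i)$. This map sends the shape to itself. It fixes the root $(1,n)$ and permutes the border cells: $(k+1,k)$ goes to $(n+1-k,n-k)$. It also exchanges "to the left of" with "above", because a cell $(i,j')$ in the same row and to the left has $j'>j$, and it is sent to $(n+1-j',n+1-i)$, which is in the same column and above. So the cohook $H(i,j)$ is sent onto $H(n+1-j,n+1-i)$.

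Consequently, the arrows of $T$ correspond exactly to the arrows of the reflected tableau, with left- and up-arrows interchanged. The first nonempty cell of a row goes to the first nonempty cell of a column. The reflected up-arrows and left-arrows of $T'$ are precisely the images of the left-arrows and up-arrows of $T$, provided we check that the dot modification in the second step does not change which cells are the extremal nonempty cells. This holds because the modification only affects free cells of $T$. A free cell lies strictly below an up-arrow and strictly right of a left-arrow, and its image has the same property with respect to the reflected arrows. So adding or removing dots in free cells never creates or destroys the first nonempty cell of any row or column.

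With this in hand, $T'$ is a fb-tableau. The root and the border cells are nonempty: border cells are never toggled, because they are excluded in the definition, and the root is fixed. Every nonempty non-root cell of $T'$ is either an arrow or a dot in a free cell. An arrow has the matching arrow of the other kind in its cohook, by the remark on border cells, together with the reflected description of arrows. A free cell has an up-arrow above it and a left-arrow to its left. So every cohook of a nonempty non-root cell contains at least two elements.

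Next, $F(T')$ is the reflection of $F(T)$, again by the arrow correspondence. We also check that the toggle rule is applied to a consistent set. A free cell $(i,j)$ of $T$ that is a border cell always carries a dot and stays a dot in $T'$. A free non-border cell has its dot status complemented. Here I would pause to confirm the intended convention: whether non-free empty cells stay empty, and whether the "$1\le i\le j\le n$" restriction is just a normalization of indices. I read the definition as toggling exactly the non-border free cells and leaving everything else as reflected.

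Applying the construction twice then reflects twice, which gives the identity on positions. The arrows return to their original places, and the free set returns to $F(T)$. Each non-border free cell is complemented twice, so it is restored. Hence $(T')'=T$, and the map is an involution, in particular a bijection on fb-tableaux of size $n$.

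The main obstacle is the claim that the arrows of $T'$ are exactly the reflected arrows of $T$. Equivalently, complementing dots in free cells never changes the leftmost nonempty cell of a row or the topmost nonempty cell of a column. I would prove this carefully by noting that each free cell has a strictly earlier nonempty arrow in both its row and its column. Since arrows are not free, they are untouched by the toggle. The remaining steps are routine bookkeeping.
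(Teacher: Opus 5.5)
Your proof is correct and follows the same route as the paper: the diagonal reflection $(i,j)\mapsto(n+1-j,n+1-i)$ fixes the root, permutes the border cells, and sends cohooks to cohooks and free cells to free cells, so $T'$ is a fb-tableau, and $(T')'=T$ because each toggled free cell is complemented twice. Your extra check that toggling dots in free cells never moves an arrow (each free cell has an arrow strictly earlier in its row and in its column) is exactly the detail that the paper's two-line proof leaves implicit.
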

\begin{proof}
  We clearly have \((T')'=T\), so we only need to check that \( T' \)
  is a fb-tableau whenever \(T\) is. Reflecting \(T\) about the main
  diagonal sends each cohook of \(T\) to a cohook of \(T'\) and each
  free cell of \(T\) to a free cell of \( T' \). The result follows.
\end{proof}
We give an example of the conjugate of a tableau $T$:
\[
  T = 
 \begin{ytableau}
    \bullet & & \uparrow & \uparrow & \\
    & & \leftarrow &*(blue) &*(gray)\uparrow\\
    \leftarrow &&*(blue)\bullet &*(gray)\bullet \\
    \leftarrow &{\uparrow} &*(gray)\bullet\\
    \leftarrow&*(gray)\bullet
  \end{ytableau}
  \quad \Longrightarrow \quad T' = 
   \begin{ytableau}
    \bullet & & \uparrow & \uparrow & \uparrow\\
    & & &\leftarrow  &*(gray)\bullet\\
    \leftarrow &{\uparrow}&*(blue) &*(gray)\bullet \\
    \leftarrow & *(blue)\bullet &*(gray)\bullet\\
    &*(gray)\leftarrow
  \end{ytableau}
\]

\begin{theorem}\label{thm:selfdual}
  The poset \(\posfb\) is self-dual, that is, \(S\lhd T\) if and only
  if \(T'\lhd S'\).
\end{theorem}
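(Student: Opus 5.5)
Since conjugation is an involution (previous lemma), it suffices to prove one direction: if \(S\lhd T\) then \(T'\lhd S'\). The converse then follows by applying this implication to \(T'\lhd S'\) and using \((S')'=S\), \((T')'=T\). Write \(\sigma\) for the reflection about the main diagonal on cells, \(\sigma(i,j)=(n+1-j,n+1-i)\). It sends rows to columns and columns to rows, and it reverses the directions ``left'' and ``above'' appropriately. By construction of \(T'\), the up-arrows of \(T'\) are the images of the left-arrows of \(T\), and the left-arrows of \(T'\) are the images of the up-arrows of \(T\).

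The first step is to record how the three sets transform:
\[
\col(T')=\sigma(\row(T)),\qquad \row(T')=\sigma(\col(T)),\qquad F(T')=\sigma(F(T)).
\]
Indeed, a cell weakly right of the left-arrow in its row is sent by \(\sigma\) to a cell weakly below the corresponding up-arrow in the image column. The root is fixed by \(\sigma\), which handles the root convention. The same computation applied to the strict versions gives \(\col^s(T')=\sigma(\row^s(T))\) and \(\row^s(T')=\sigma(\col^s(T))\). With these in hand, conditions (1) and (2) for \(T'\lhd S'\) follow immediately from those for \(S\lhd T\):
\[
\col(T')=\sigma(\row(T))\subseteq\sigma(\row(S))=\col(S'),
\]
and similarly \(\row(T')=\sigma(\col(T))\supseteq\sigma(\col(S))=\row(S')\).

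The remaining step, and the only one needing care, is condition (3). By \Cref{lem:free}, applied first to the pair \((S,T)\) and then to the pair \((T',S')\), which satisfies its hypotheses by the previous paragraph, we get
\[
F(T',S')=\col^s(T')\cap\row^s(S')=\sigma(\row^s(T))\cap\sigma(\col^s(S))=\sigma(F(S,T)).
\]
Now let \(d\in F(T',S')\) contain a dot in \(T'\), and write \(d=\sigma(c)\) with \(c\in F(S,T)\). The subtle point is the exception in \Cref{def:conjugate}: the complementation rule only applies to free cells other than the root and the border cells. But the root and the border cells are never free, because the root lies in no \(\col^s\) and each border cell carries an arrow. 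So \(c\) is an ordinary free cell of both \(S\) and \(T\), and the dot/empty swap applies in both.

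Since \(d\) has a dot in \(T'\), the cell \(c\) is empty in \(T\). By contraposition of condition (3) for \(S\lhd T\), the cell \(c\) is empty in \(S\). Hence \(d=\sigma(c)\) contains a dot in \(S'\), which is condition (3) for \(T'\lhd S'\). I expect the main obstacle to be only the bookkeeping: checking that \(\sigma\) matches the arrows and strict spaces exactly as claimed, and that the excluded cells never lie in \(F(S,T)\).
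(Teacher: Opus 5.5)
Your approach is the same as the paper's: reflect via $\sigma$, transport row and column spaces to get conditions (1) and (2), then handle condition (3) by complementing free cells and taking the contrapositive. Conditions (1) and (2), and the non-border free cells in (3), are handled correctly. There is one false step, however. You claim the border cells are never free because ``each border cell carries an arrow.'' This is wrong: a border cell may contain a $\bullet$, and it is then free, since it lies strictly below the up-arrow of its column and strictly right of the left-arrow of its row. For example, in the middle tableau of the poset for $n=2$ the border cell $(2,1)$ is a free dot, and the paper's conjugation example has several border dots. For such a cell $c\in F(S,T)$, your deduction ``since $d=\sigma(c)$ has a dot in $T'$, the cell $c$ is empty in $T$'' fails. \Cref{def:conjugate} does not complement border cells, so a border dot of $T$ is simply reflected to a border dot of $T'$.

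The repair is short: treat border cells separately. Let $d\in F(T',S')$ be a border cell. It is nonempty in $S'$, because border cells are always filled. It is not an arrow there, because a cell carrying an arrow is never free. Hence $d$ contains a dot in $S'$, and condition (3) holds trivially for it. For all other cells your argument works as written; this is why the paper's proof restricts to free cells ``not on the border.'' As a side remark, you do not need \Cref{lem:free} here: $F(T',S')=F(T')\cap F(S')=\sigma(F(T))\cap\sigma(F(S))=\sigma(F(S,T))$ follows directly from the definition.
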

\begin{proof}
  Let \( S \) and \( T \) be two fb-tableaux such that \(S\lhd T\).
  For \(t=S,T\), a cell \((i,j)\) is in \(\col(t)\) (resp.
  \(\row(t)\)) if and only if the cell \((n+1-j,n+1-i)\) is in
  \(\row(t')\) (resp. \(\col(t')\)). From
  \(\col(S)\subseteq \col(T)\), we obtain
  \(\row(S')\subseteq \row(S)\), and from
  \(\row(T)\subseteq \row(S)\), we obtain
  \(\col(T')\subseteq \col(S')\). Now let \(c=(n+1-j,n+1-i)\) be a
  free cell (not on the border) of \(S'\) that contains a dot in
  \(T'\). Then \((i,j)\) is a free cell of both \(S\) and \(T\).
  Moreover, \( (i,j) \) is empty in \(T\). By the third condition in
  the definition of the order \(\lhd\), it is also empty in \(S\).
  Hence the cell \(c\) contains a dot in \(S'\) and \(T'\lhd S'\).
\end{proof}

\subsection{Meets and joins}

\begin{lemma}\label{lem:meet}
Let $M$ and $N$ be two fb-tableaux. Let $L$ be the tableau defined as follows:
\begin{enumerate}
\item[(a)] The root of $L$ is a dot.
\item[(b)] In each column, the up-arrow of $L$ is the lowest of the up-arrows of $M$ and $N$. 
\label{step_c}\item[(c)] In each row, the left-arrow of $L$ is the leftmost of the left-arrows of $M$ and $N$. If it is pointed by an up-arrow, we move it to the left at the first position which is not pointed by an up-arrow. 
\item[(d)] Let $c$ be free cell of $L$. It contains a dot in $L$ if
\begin{enumerate}
    \item[(i)] $c$ contains a dot in $M$ or is not free in $M$ and
    \item[(ii)] $c$ contains a dot in $N$ or is not free in $N$.
\end{enumerate}
\end{enumerate}
Then 
\begin{enumerate}
\item If a cell \(c\) is free in \(L\), then it is free in \(M\) or in
  \(N\). If a cell is free in \(M\) and in \(N\), then it is free in
  \(L\).
\item Let \(c\) be a border cell, and \(e_M\), \(e_N\), and \(e_L\) be
  the elements of \(M\), \(N\) and \(L\), respectively, contained in
  \( c \). Then \(e_L = \min(e_N,e_M)\) where the minimum is taken
  with respect to the total order
  \(\uparrow \,\leq \bullet \leq\, \leftarrow\).
\item The tableau \(L\) is a fb-tableau.
\end{enumerate}
\end{lemma}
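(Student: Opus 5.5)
The plan is to prove the three assertions in order, after first fixing a reading of step (c). I interpret ``pointed by an up-arrow'' as ``lying strictly above the up-arrow of $L$ in its column''. The move in (c) then shifts the left-arrow leftward to the first cell of the row that lies strictly below the up-arrow of its column, or to column $n$, which has no up-arrow. That cell always exists, so $L$ is well defined. It has one up-arrow in each column $j<n$, one left-arrow in each row $i\ge 2$, and dots only in free cells. The key observation is that every cell skipped during a move, including the original position if a move occurs, lies strictly above an up-arrow of $L$. Hence no such cell is free in $L$.

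For (1), let $c$ be free in $L$. Then $c$ is strictly below the up-arrow of $L$ in its column, which is the lower of the up-arrows of $M$ and $N$, so $c$ is strictly below both. It is also strictly right of the left-arrow of $L$. By the key observation, $c$ is not among the skipped cells, so $c$ is strictly right of the leftmost of the two original left-arrows. Say that left-arrow comes from $M$; then $c$ is free in $M$. Conversely, suppose $c$ is free in both $M$ and $N$. Then $c$ is strictly below both up-arrows and strictly right of both left-arrows, hence strictly right of their leftmost. Moving only pushes the arrow further left, so $c$ is free in $L$.

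For (2), take the border cell $c=(j+1,j)$ and note that $c$ can never be both an up-arrow and a left-arrow in a fb-tableau, since its cohook must contain two elements. If $e_M$ or $e_N$ is $\uparrow$, then the lowest up-arrow sits at $c$, so $e_L=\uparrow$. Otherwise $c$ lies strictly below the up-arrow of $L$, so $c$ is never skipped by a move. If both $e_M$ and $e_N$ equal $\leftarrow$, the leftmost arrow is at $c$ and is not moved, so $e_L=\leftarrow$. In the remaining case one of the left-arrows is strictly to the left, so $c$ is free in $L$. Since $c$ is nonempty in $M$ and in $N$, it is either a dot or not free in each, and (d) places a dot there.

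For (3), the root is a dot and the border cells are nonempty by (2). It remains to check the cohook condition, which I expect to be the main obstacle: the designated arrows must be genuinely the topmost, respectively leftmost, elements of their columns and rows, and no cell may carry both roles. Three facts give this.
\begin{enumerate}
\item Dots lie in free cells only, hence strictly below up-arrows and strictly right of left-arrows.
\item By the move rule, a left-arrow is never strictly above, nor equal to, the up-arrow of its column.
\item Up-arrows sit strictly below every row's left-arrow in their own column: this is the same statement as the second fact, read from the column's side.
\end{enumerate}
The delicate case is an unmoved leftmost arrow of $M$ at $(i,j)$ coinciding with the up-arrow of $N$ at $(i,j)$. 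That would force the left-arrow of $N$ to be strictly to the right of $(i,j)$. Then $(i,j)$ would be a nonempty cell of $N$ with nothing to its left or above it, contradicting the cohook condition in $N$. Once these facts hold, every up-arrow in a row $r\ge2$ has the left-arrow of row $r$ strictly to its left, and up-arrows in row $1$ have the root to their left. Every left-arrow has an up-arrow or the root weakly above it in its column, and every dot has both. So each non-root nonempty cohook contains at least two elements, and $L$ is a fb-tableau.
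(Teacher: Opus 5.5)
Your parts (1) and (2) are sound and essentially match the paper's argument. Cells skipped by the move in step (c) are not free in $L$, and border cells are handled by a case analysis along $\uparrow\le\bullet\le\leftarrow$.

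The gap is in (3). Everything there depends on a row-side statement: every up-arrow of $L$ in a row $r\ge 2$ lies strictly to the right of the left-arrow of $L$ in that row. This is what makes the designated left-arrow the leftmost nonempty cell of its row, so that the arrow labels of $L$ are coherent. It is also what supplies the second element in the cohook of each up-arrow. You assert it ``once these facts hold'', but your three facts do not imply it. Facts 1 and 2 only say that dots lie in free cells and that each left-arrow lies strictly below the up-arrow of its \emph{own} column. Fact 3 is fact 2 read from the column side, and it is misstated: up-arrows should be strictly \emph{above} the left-arrows in their column. Nothing in these facts excludes an up-arrow of $L$ lying strictly to the left of the left-arrow of its row, since fact 2 only constrains the column of the left-arrow. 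Your ``delicate case'' only excludes the two coinciding.

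The missing step is the one the paper uses, and it directly generalises your delicate-case argument. An up-arrow of $L$ at $(r,j)$ with $r\ge 2$ is an up-arrow of $M$ or of $N$, say of $X$. Since $X$ is an fb-tableau, the left-arrow of $X$ in row $r$ lies strictly to the left of $(r,j)$. Hence so does the leftmost of the left-arrows of $M$ and $N$ in row $r$. The left-arrow of $L$ is weakly to the left of that one, so it is strictly to the left of $(r,j)$ as well.

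This single observation gives you three things at once: the delicate case, the coherence of the left-arrows, and the cohook condition for up-arrows. It also justifies your key observation in its strict form, namely that no cell weakly to the left of the original leftmost left-arrow carries an up-arrow of $L$. For part (1), though, you only need that skipped cells are not free. With this step added, your proof of (3) goes through.
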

\begin{proof}
  Because of the root, there is always a position for the left-arrows
  in the step (c) of the construction. We then need to check that the
  labeling of the cells that we have chosen is coherent with their
  meaning: all cells to the left of a left-arrow are empty, and
  similarly for up-arrows. If there is a nonempty cell to the left of
  a left-arrow in \(L\), then it must be an up-arrow. Indeed, it is
  not a free cell, and there is only one left-arrow in each row. This
  up-arrow is an up-arrow in \(M\) or in \(N\), and it lies to the
  left of the left-arrows of \(M\) and \(N\). This is a contradiction.
  Similarly, if there is a nonempty cell above an up-arrow, then it
  cannot be a free cell, so it must be an arrow. Since a left-arrow
  lies below an up-arrow, this implies that \(L\) has a column with
  two up-arrows, which is again a contradiction.

  A cell \(c\) is free in \(L\) when it lies strictly to the right of
  a left-arrow of \(L\) and strictly below an up-arrow of \(L\). The
  up-arrows of \(L\) are below the up-arrows of \(M\) and \(N\), so
  \(c\in \col^{s}(M)\cap \col^s(N)\). The corresponding left-arrow of
  \(L\) is the left-arrow of \(M\) or \(N\), say of \(N\), or else it
  is at the first allowed position to its left. By construction, the
  cells between the left-arrows of \(L\) and \(N\) are not below an
  up-arrow, so they are not free. Hence \(c\in \row^s(N)\), and
  therefore \(c\) is free in \(N\). If \(c\) is free in both \(M\) and
  \(N\), then \(c\) is clearly free in \(L\), and this proves the
  first statement.

  If a border cell contains an up-arrow in one of the tableaux \(M\)
  and \(N\), then it contains an up-arrow in \(L\). If it contains a
  left-arrow in both \(M\) and \(N\), then it contains a left-arrow in
  \(L\). Otherwise, suppose that it contains a dot in \(M\). There are
  two cases: it contains a left-arrow in \(N\), or it contains a dot
  in \(N\). In the first case, it is not free in \(N\), and hence the
  cell contains a dot in \(L\). In the second case, it contains a dot
  in both \(N\) and \(M\), and so also in \(L\). This proves the second statement

  The left-arrows of \(L\) are strictly below the up-arrows, so their
  cohooks contain at least two elements. An up-arrow \(u\) of \(L\) is
  an up-arrow of \(M\) or of \(N\). Hence, in \(M\) or in \(N\), there
  is a left-arrow to its left. This left-arrow also appears in \(L\)
  (maybe not in the same position, but further to the left), so \(u\)
  has a left-arrow to its left in \(L\), and its cohook contains at
  least two elements. The cells containing a dot are in \(F(L)\), so
  their cohooks contain at least three elements. The root and the
  border cells of \(L\) are nonempty, so \(L\) is a fb-tableau.
\end{proof}

For example, consider the fb-tableaux $M$ and $N$:
\[
\begin{ytableau}
    \bullet &\uparrow &  &\uparrow & \\
     &\leftarrow&\uparrow& &*(gray)\uparrow\\
     && &*(gray)\leftarrow \\
    \leftarrow & &*(gray)\bullet\\
    \leftarrow&*(gray)\bullet\\
  \end{ytableau}\quad \quad\quad\quad
\begin{ytableau}
    \bullet & &\uparrow  & &\uparrow \\
    &   & & &*(gray)\leftarrow\\
    \leftarrow &\uparrow &\bullet &*(gray)\uparrow \\
    & &*(gray)\leftarrow\\
    &*(gray)\leftarrow\\
  \end{ytableau}
\]
To construct $L$, we first place the up-arrows, then the left-arrows
in allowed positions, and finally fill the free cells:
\[
\begin{ytableau}
    \bullet & &  &  & \\
     &&\uparrow & &*(gray)\uparrow\\
     &\uparrow & &*(gray)\uparrow \\
      & &*(gray)\\
    &*(gray)\\
  \end{ytableau}\quad \quad\quad\quad
\begin{ytableau}
    \bullet & &  &  & \\
     *(blue)\leftarrow&&\uparrow & &*(gray)\uparrow\\
     \leftarrow &\uparrow & &*(gray)\uparrow \\
      \leftarrow & &*(gray)\\
    \leftarrow &*(gray)\\
  \end{ytableau}\quad \quad\quad\quad
\begin{ytableau}
    \bullet & &  &  & \\
     \leftarrow&&\uparrow & &*(gray)\uparrow\\
     \leftarrow &\uparrow &*(red)\bullet &*(gray)\uparrow \\
      \leftarrow & &*(gray)\bullet\\
    \leftarrow & *(gray)\bullet \\
  \end{ytableau}
\]
Note that the left-arrow highlighted in blue in the second picture had
to be moved since its natural position is pointed by an up-arrow. The
free cell highlighted in red in the third picture is an example of a
cell that is empty in $M$, but is not free.

\begin{theorem}\label{thm:estam_lattice}
  The poset \((\esTam_n,\lhd)\) is a lattice.
\end{theorem}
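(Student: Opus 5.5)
The plan is to show that the tableau $L$ built in \Cref{lem:meet} is the meet $M\wedge N$ in $(\esTam_n,\lhd)$. Since the poset is finite and has a top element, existence of all pairwise meets already gives a lattice. Alternatively, self-duality (\Cref{thm:selfdual}) produces joins as $M\vee N=(M'\wedge N')'$. By \Cref{lem:meet}(3), $L$ is a fb-tableau, so two things remain: (A) $L\lhd M$ and $L\lhd N$, and (B) every fb-tableau $K$ with $K\lhd M$ and $K\lhd N$ satisfies $K\lhd L$.

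For (A), I treat $M$; the case of $N$ is symmetric. The column condition $\col(L)\subseteq\col(M)$ holds because each up-arrow of $L$ is the lower of the two up-arrows. The row condition $\row(L)\supseteq\row(M)$ holds because each left-arrow of $L$ is weakly left of the left-arrows of both $M$ and $N$; step (c) only moves arrows further left. For the free-cell condition, take $c\in F(L,M)$ containing a dot in $L$. Rule (d)(i) then says $c$ is dotted in $M$ or not free in $M$, and the second option is excluded since $c$ is free in $M$.

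For (B), let $K$ be below both $M$ and $N$. The up-arrow of $K$ in each column lies weakly above both up-arrows, so $\col(K)\subseteq\col(L)$. For rows, the left-arrow of $K$ lies weakly right of the leftmost of the left-arrows of $M$ and $N$. I must check it also lies weakly right of the shifted position from step (c). Suppose the natural position $p$ is pointed by an up-arrow of $L$, which comes from $M$ or $N$, say $M$. Since $\col(K)\subseteq\col(M)$, the up-arrow of $K$ in that column lies weakly above, so the cell at $p$ is in $\col(K)$ and strictly below the up-arrow of $K$ there, or equal to it. Then the left-arrow of $K$ cannot sit at $p$ unless it is an up-arrow, which is impossible. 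The same holds for each successive cell skipped in step (c): every skipped cell is pointed by an up-arrow of $L$, hence lies weakly below an up-arrow of $K$ in its column. Only a cell with no up-arrow above it in its column of $K$ can carry a left-arrow, so the left-arrow of $K$ is at or right of the first unpointed cell, or right of $p$ altogether. This gives $\row(K)\supseteq\row(L)$. I expect this step to be the main obstacle. The precise notion of ``pointed'' (strictly below an up-arrow in the same column) has to be compared carefully for $K$ and $L$, using that a left-arrow is never strictly below an up-arrow.

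For the free-cell condition in (B), let $c\in F(K,L)$ be dotted in $K$. By \Cref{lem:free}, $F(K,L)=\col^s(K)\cap\row^s(L)$. I must verify (d)(i) and (d)(ii), namely that if $c$ is free in $M$ then $c$ is dotted in $M$. If $c$ is free in $M$, then $c\in F(K,M)$ because it is free in $K$. Since $K\lhd M$, it is dotted in $M$, and similarly for $N$. Hence $c$ is dotted in $L$, which gives $K\lhd L$. This completes the meet, and the lattice property follows.
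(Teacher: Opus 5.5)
Your plan is the paper's: show that the tableau \(L\) of \Cref{lem:meet} is \(M\wedge N\), and get joins from self-duality. Part (A) and the free-cell half of (B) are correct. The gap is the row-space inclusion \(\row(K)\supseteq\row(L)\) in (B), which you rightly flagged as the crux; as written, that argument does not work, for three reasons.

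First, you reversed the inequalities coming from \(K\lhd M\) and \(K\lhd N\). The condition \(\col(K)\subseteq\col(M)\) means the up-arrow of \(K\) lies weakly \emph{below} that of \(M\). The condition \(\row(K)\supseteq\row(M)\) means the left-arrow of \(K\) lies weakly \emph{left} of that of \(M\). You used the correct directions in (A).

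Second, ``pointed by an up-arrow'' in step (c) means lying strictly \emph{above} an up-arrow of \(L\) in the same column. Such a cell cannot hold a left-arrow, which is why it is skipped; the example following \Cref{lem:meet} illustrates this.

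Third, your claim that only a cell with no up-arrow above it can carry a left-arrow is false. The cohook condition forces every left-arrow of an fb-tableau to lie strictly below the up-arrow of its column, or below the root.

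Consequently, your argument would actually show that the left-arrow of \(K\) is weakly right of that of \(L\). That is the opposite inclusion \(\row(K)\subseteq\row(L)\), not the one you conclude. Your column step has the same reversal: it should say ``weakly below'', although the conclusion \(\col(K)\subseteq\col(L)\) is correct.

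The correct step is short. Fix a row, let \(p\) be the leftmost of the left-arrows of \(M\) and \(N\), and let \(\ell\) be the left-arrow of \(K\) in that row.
\begin{itemize}
\item Since \(\row(K)\supseteq\row(M)\cup\row(N)\), the cell \(\ell\) is weakly left of \(p\).
\item Since \(K\) is an fb-tableau, \(\ell\) lies strictly below the up-arrow of \(K\) in its column (or below the root, where nothing is pointed).
\item That up-arrow of \(K\) lies weakly below the up-arrow of \(L\) in the same column, because \(\col(K)\subseteq\col(L)\).
\end{itemize}
Hence \(\ell\) is not pointed by an up-arrow of \(L\), so it is an admissible position in step (c). Step (c) takes the first admissible position going leftward from \(p\), so the left-arrow of \(L\) lies weakly right of \(\ell\). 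This is exactly \(\row(L)\subseteq\row(K)\), and it is how the paper argues.
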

\begin{proof}
  Using the symmetry in \Cref{thm:selfdual}, it is enough to prove
  that \((\esTam_n,\lhd)\) is a meet-semilattice; that is, every pair
  of elements of \( \esTam_n \) has a meet. For two fb-tableaux
  \( M \) and \( N \), let \(L\) be the fb-tableau defined in
  \Cref{lem:meet}. We have:
\begin{enumerate}
    \item \(\col(L)=\col(M)\cap \col(N)\).
    \item \( \row(L) \supseteq \row(M)\cup \row(N)\).
    \item Let \(c \in F(L,M)\). If \(c\) contains a dot in \(L\), then
      \(c\) contains a dot in \(M\), since \(c\) is free in \(M\).
\end{enumerate} 
It follows that \(L \lhd M\), and similarly \(L\lhd N\). If \(T\) is a
tableau such that \(T \lhd M\) and \(T\lhd N\), then
\begin{enumerate}
    \item \(\col(T) \subseteq \col(M)\cap \col(N) = \col(L)\). 
    \item \(\row(T) \supseteq \row(M) \cup \row(N)\). In a given row,
      the left-arrow \(\ell\) of \(T\) lies to the left of the
      left-arrows of \(M\) and \(N\). Since \(T\) is a fb-tableau,
      there is an up-arrow in \(T\) above \(\ell\). Since
      \(\col(T) \subseteq \col(L)\), there is also an up-arrow in
      \(L\) above \(\ell\). Thus, the position of \(\ell\) is a
      candidate for step (c) of the construction of \(L\). By
      the minimality of this step, we obtain that the left-arrow
      of \(L\) is weakly to the right of \(\ell\). Hence
      \(\row(T) \supseteq \row(L)\).
    \item Let \(c\in F(T,L)\) be a dot in \(T\). If \(c\) is free in
      \(M\) (resp. \(N\)), then it contains a dot in \(M\) (resp.
      \(N\)). Hence \(c\) contains a dot in \(M\) or is not free in
      \(M\), and similarly for \(N\). Therefore, \(c\) contains a dot
      in \(L\).
\end{enumerate}
This proves that \(T\lhd L\), and hence \(L\) is the meet of \(M\) and \(N\).
\end{proof}

It will be useful to give an explicit description of the join of two
fb-tableaux. The following lemma is obtained by applying the
anti-isomorphism of \Cref{thm:selfdual} and remarking that condition
(d) for the meet in \Cref{lem:meet} is equivalent to the following
statement: Let \(c\) be a free cell of \(L\). Then \( c \) is empty in
\(L\) if and only if it is free and empty in \(M\) or free and empty
in \(N\).

\begin{lemma}\label{lem:join}
  The join \(L = M\lor N\) of two fb-tableaux \(M\) and \(N\) is the
  unique tableau defined by
\begin{enumerate}
\item[(a)] The root of \(L\) is a dot.
\item[(b)] In each row, the left-arrow of \(L\) is the rightmost of
  the left-arrows of \(M\) and \(N\).
\item[(c)] In each column, the up-arrow of \(L\) is the highest of the
  up-arrows of \(M\) and \(N\). If it is pointed by a left-arrow, we
  move it upward to the first position that is not pointed by an
  up-arrow.
\item[(d)] Let \(c\) be a free cell of \(L\). Then \( c \) contains a
  dot in \(L\) if it contains a dot in \(M\) or in \(N\).
\end{enumerate}
\end{lemma}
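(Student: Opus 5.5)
The plan is to derive the join from the meet by duality: compute $M\lor N=\bigl((M')\wedge(N')\bigr)'$ and translate each step of \Cref{lem:meet} through the conjugation of \Cref{def:conjugate}. This works because $T\mapsto T'$ is an order-reversing involution by \Cref{thm:selfdual}, so it exchanges meets and joins, and $(M'\wedge N')'$ is the least upper bound of $M$ and $N$.

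First I will record how conjugation acts on arrows. Reflection about the antidiagonal indexing $(i,j)\mapsto(n+1-j,n+1-i)$ sends rows to columns. It turns left-arrows into up-arrows and up-arrows into left-arrows. It sends "leftmost in a row" to "highest in a column" and "lowest in a column" to "rightmost in a row".

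Next I will translate steps (b) and (c) of \Cref{lem:meet} applied to $M',N'$.
\begin{itemize}
\item Step (b) chooses, in each column of $L'$, the lowest up-arrow. After conjugating back, this gives in each row of $L$ the rightmost of the left-arrows of $M$ and $N$, which is (b).
\item Step (c) chooses the leftmost left-arrow in a row and shifts it left until it is not below an up-arrow. This becomes: choose the highest up-arrow in a column and shift it upward until it is not to the right of a left-arrow, which is (c).
\end{itemize}
One check is needed here: the reflection preserves the relation "is pointed by" between arrows, since being strictly below an up-arrow becomes being strictly right of a left-arrow. Also, since the arrows of $L$ match those of $M$ or $N$ up to the shift, the free-cell sets correspond under reflection. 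The root is fixed, which gives (a).

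The main step, and the only real obstacle, is the free cells (d). I will use the reformulation stated just before the lemma: a free cell $c$ of $L'$ is empty in $L'$ if and only if it is free and empty in $M'$ or free and empty in $N'$. This follows from (d) of \Cref{lem:meet} together with statement (1) there, which says $c$ is free in $M'$ or in $N'$, so not both alternatives "not free" can hold.

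Now conjugate back. By \Cref{def:conjugate}, a free non-border cell is empty in $T'$ exactly when its reflection is a dot in $T$, and freeness is preserved. So a free cell of $L$ is a dot exactly when it is free and a dot in $M$, or free and a dot in $N$.

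Two gaps remain before this matches the statement's (d).
\begin{itemize}
\item A cell of $M$ containing a dot is automatically free in $M$. Hence "contains a dot in $M$ or in $N$" is the same condition as the one above.
\item Conjugation complements only non-border free cells. Border cells must be handled separately, through statement (2) of \Cref{lem:meet}: the order $\uparrow\le\bullet\le\leftarrow$ reverses under conjugation, so a border cell of $L$ is the maximum of the entries of $M$ and $N$. A direct check shows this is consistent with (b)–(d).
\end{itemize}
Finally, uniqueness holds because (a)–(d) determine every entry of $L$.
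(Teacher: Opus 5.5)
Your proposal follows the paper's own route: the paper derives this lemma by applying the anti-automorphism $T\mapsto T'$ of \Cref{thm:selfdual} to \Cref{lem:meet}, after rewriting the meet's condition (d) as ``a free cell of $L$ is empty iff it is free and empty in $M$ or free and empty in $N$.'' Your separate check of the border cells, using the reversed order $\uparrow\le\bullet\le\leftarrow$, fills in a detail the paper leaves implicit.

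One wording slip, which does not affect the argument. A cell ``pointed by an up-arrow'' lies \emph{above} that up-arrow. So the meet step moves the left-arrow left until it is strictly below an up-arrow, and dually the join step moves the up-arrow up until it is strictly right of a left-arrow; your paraphrases state the reverse. The proof survives because the reflection exchanges the two relations under either reading.
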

For example, consider the following two tableaux $T$ and $U$:
\[
\begin{ytableau}
    \bullet & &  & \uparrow & \uparrow\\
     &&& &*(gray)\leftarrow\\
    \leftarrow && &*(gray)\bullet \\
    \leftarrow &\uparrow &*(gray)\uparrow\\
    &*(gray)\leftarrow\\
  \end{ytableau}\quad \quad\quad\quad
\begin{ytableau}
    \bullet & &  & &\uparrow \\
    &  \leftarrow &\uparrow & &*(gray)\bullet\\
    \leftarrow &\uparrow &\bullet &*(gray)\uparrow \\
    \leftarrow & &*(gray)\bullet\\
    \leftarrow&*(gray)\bullet\\
  \end{ytableau}
\]
To construct $T\vee U$, we first place the left-arrows, then the
up-arrows, and finally fill the free cells:
\[
\begin{ytableau}
    \bullet & &  &  & \\
     &&& &*(gray)\leftarrow\\
    \leftarrow && &*(gray) \\
    \leftarrow & &*(gray)\\
    &*(gray)\leftarrow\\
  \end{ytableau}\quad \quad\quad\quad
\begin{ytableau}
    \bullet & & \uparrow & \uparrow & \uparrow\\
     &&& &*(gray)\leftarrow\\
    \leftarrow & \uparrow &  &*(gray) \\
    \leftarrow & &*(gray)\\
    &*(gray)\leftarrow\\
  \end{ytableau}\quad \quad\quad\quad
\begin{ytableau}
    \bullet & & \uparrow & \uparrow & \uparrow\\
     &&& &*(gray)\leftarrow\\
    \leftarrow & \uparrow & \bullet &*(gray)\bullet \\
    \leftarrow & &*(gray)\bullet\\
    &*(gray)\leftarrow\\
  \end{ytableau}\]

\section{Remarkable sublattices}\label{sec:sublattices}

\begin{definition}
  Let \(T\) be a fb-tableau of size \(n\). Then
\begin{enumerate}
    \item the tableau \(T\) is \defn{small} if it contains exactly one dot, the root;
    \item the tableau \(T\) is \defn{binary} if the set of free cells of \(T\) is empty. 
\end{enumerate}
\end{definition}

We denote by \(\sTam_n\) the set of small fb-tableaux, and we use the
relation \(\lhd\) to regard \(\sTam_n\) as an induced subposet of
\(\esTam_n\). We denote by \(\Tam_n\) the set of binary fb-tableaux,
which we also regard as a subposet of \(\sTam_n\) and of \(\esTam_n\).

\begin{proposition}\label{prop:sublattices}
  The posets \(\Tam_n\) and \(\sTam_n\) are lattices.
\end{proposition}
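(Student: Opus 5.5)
The plan is to prove more than the statement: both \(\Tam_n\) and \(\sTam_n\) are closed under the meet and the join of \(\esTam_n\), described in \Cref{lem:meet} and \Cref{lem:join}. Since these are induced subposets, closure makes each of them a sublattice, and in particular a lattice. Throughout, I use the convention that the root is not counted among the dots. Every element \(\bullet\) other than the root lies in a free cell, since it has a left-arrow to its left and an up-arrow above it. Hence a tableau is small exactly when all of its free cells, including the free border cells, are empty. In particular binary implies small.

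\emph{Small tableaux.} Let \(M,N\in\sTam_n\) and let \(L=M\wedge N\) be as in \Cref{lem:meet}. Let \(c\) be a free cell of \(L\). By part (1) of \Cref{lem:meet}, \(c\) is free in \(M\) or in \(N\); say it is free in \(M\). Since \(M\) is small, \(c\) is empty in \(M\), so condition (d)(i) fails and \(c\) is empty in \(L\). Border cells need no separate treatment, because a border cell carrying a \(\bullet\) is itself a free cell. As a consistency check, part (2) of \Cref{lem:meet} gives the element of \(L\) in a border cell as the minimum of two arrows, which is an arrow. Hence \(L\) is small.

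For the join \(L=M\vee N\), I use condition (d) of \Cref{lem:join}. A free cell of \(L\) receives a dot only if it contains a dot in \(M\) or in \(N\), and neither \(M\) nor \(N\) has any dot. Hence \(L\) is small, and \(\sTam_n\) is a sublattice. I cannot use self-duality here, because conjugation toggles the contents of free cells and so does not preserve smallness.

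\emph{Binary tableaux.} Let \(M,N\in\Tam_n\). By part (1) of \Cref{lem:meet}, a free cell of \(M\wedge N\) is free in \(M\) or in \(N\), and there are no such cells. So \(M\wedge N\) has no free cells and is binary.

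For the join I use self-duality instead of redoing the construction. The conjugation of \Cref{def:conjugate} reflects the tableau, exchanging the two kinds of arrows, and sends free cells bijectively to free cells. So \(T\) is binary if and only if \(T'\) is binary. By \Cref{thm:selfdual}, conjugation is an anti-automorphism of \(\esTam_n\), so
\[
M\vee N=\bigl(M'\wedge N'\bigr)'.
\]
The right-hand side is binary by the meet case. Alternatively, one could verify directly the dual of part (1) of \Cref{lem:meet} for the join construction of \Cref{lem:join}: a cell that is free in \(M\vee N\) is free in \(M\) or in \(N\).

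The only point requiring care is this dual freeness statement for joins, in the form I would prove it if I did not use conjugation. Routing the argument through conjugation avoids it entirely, so I expect no real obstacle. This gives \(\Tam_n\subseteq\sTam_n\subseteq\esTam_n\) as a chain of sublattices, and in particular both are lattices.
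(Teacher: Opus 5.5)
Your proposal is correct and matches the paper's proof: both show that binary and small tableaux are closed under the explicit meet and join of \Cref{lem:meet} and \Cref{lem:join}, which makes \(\Tam_n\) and \(\sTam_n\) sublattices, and hence lattices. Handling the binary join through conjugation is the paper's appeal to the ``dual statement for joins'' made explicit. You are also right that the small case must use condition (d) of \Cref{lem:join} directly, because conjugation does not preserve smallness.
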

\begin{proof}
  By \Cref{lem:meet} and its dual statement for joins, we see that the
  meet and join of two tableaux without free cells are again tableaux
  without free cells. Similarly, the meet and join of two tableaux
  without dots are again tableaux without dots, except the root.
  Hence, \(\Tam_n\) and \(\sTam_n\) are sublattices of \(\esTam_n\).
\end{proof}
As an immediate corollary, we recover \cite[Proposition
9.4]{CMRS2021}, where \(\sTam_n\) was denoted by \(fb(n)\).

\begin{lemma}\label{lem:binary}
  The following statements hold for binary fb-tableaux.
\begin{enumerate}
    \item A binary fb-tableau is determined by its up-arrows.
    \item For binary fb-tableaux \( S \) and \( T \), we have
      \(S\lhd T\) if and only if \(\col(S)\subseteq \col(T)\).
\end{enumerate}
\end{lemma}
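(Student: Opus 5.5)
Binary fb-tableaux have no free cells. So every nonempty cell other than the root is an arrow, and no dot appears outside the root. My plan is to show that the positions of the left-arrows are forced once the up-arrows are fixed. Statement (2) should then follow quickly from the definition of \(\lhd\).

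For (1), fix a binary tableau \(T\) and a row \(i\ge 2\). Let \(\ell_i\) be the position of its left-arrow. Every cell of row \(i\) strictly to the right of \(\ell_i\) lies in \(\row^s(T)\). Since \(F(T)=\emptyset\), none of these cells lies in \(\col^s(T)\), so each lies weakly above the up-arrow of its column (or that column has its root at the top, which is only possible in row 1).

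The cell \(\ell_i\) itself must have an up-arrow strictly above it. This is because its cohook contains at least two elements and nothing lies to its left, as observed in the proof of \Cref{lem:meet}. So \(\ell_i\) is in \(\col^s(T)\).

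Row \(i\) contains at most one left-arrow. If \(\ell_i\) lay strictly to the left of the rightmost cell of row \(i\) belonging to \(\col^s(T)\), that cell would be free. Hence \(\ell_i\) is exactly the rightmost cell of row \(i\) lying strictly below an up-arrow, and it is determined by the up-arrows.

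Such a cell exists: the border cell of row \(i\) must lie in \(\col(T)\) or contain the left-arrow, and the structure above forces a candidate. The cells of \(T\) are therefore the root, the up-arrows, and these determined left-arrows. So \(T\) is recovered from its up-arrows. The step needing the most care is confirming that the chosen cell really carries the left-arrow and not an up-arrow. This holds because if the rightmost such cell were itself an up-arrow, it would not be strictly below an up-arrow of its own column.

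For (2), the forward direction is immediate from the definition. For the converse, assume \(\col(S)\subseteq\col(T)\). Since there are no free cells, condition (3) is vacuous, and it remains to show \(\row(T)\subseteq\row(S)\).

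By the characterization in (1), in each row \(i\) the left-arrow is the rightmost cell lying in \(\col^s\). The inclusion \(\col(S)\subseteq\col(T)\) means each up-arrow of \(T\) is weakly above that of \(S\), so \(\col^s(S)\subseteq\col^s(T)\) within each row. Therefore the rightmost cell of \(\col^s(T)\) in row \(i\) is weakly to the right of the rightmost cell of \(\col^s(S)\), which gives \(\row(T)\subseteq\row(S)\) and hence \(S\lhd T\).
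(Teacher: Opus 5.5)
Your proof is correct and follows essentially the same route as the paper. In each row the left-arrow is forced into the rightmost cell lying strictly below an up-arrow (the root plays that role for the leftmost column, a convention the paper also uses implicitly). Part (2) then follows because \(\col(S)\subseteq\col(T)\) moves this rightmost cell weakly to the right, and condition (3) is vacuous for binary tableaux.
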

\begin{proof}
  For~(1), it is enough to show that the position of the left-arrow in
  each row is uniquely determined by the positions of the up-arrows.
  Fix a row, and let \( c \) be the rightmost cell in that row lying
  below an up-arrow. We claim that the left-arrow must be placed in
  \( c \). Indeed, if it were placed strictly to the left of \( c \),
  the \( c \) would become a free cell. On the other hand, it cannot
  be placed strictly to the right of \( c \), since the left-arrow
  must lie below an up-arrow or below the root. Hence the left-arrow
  is uniquely determined in each row, proving~(1).

  For~(2), the implication
  \(S\lhd T \Rightarrow \col(S)\subseteq \col(T) \) is
  immediate from the definition. Thus, it remains to prove the
  converse. Assume \(\col(S)\subseteq \col(T)\). Fix a row, and
  let \( c \) be the rightmost cell in that row lying below an
  up-arrow of \( S \). By~(1), this is the position of the
  left-arrow of \( S \). Since \(\col(S)\subseteq \col(T)\), the
  up-arrow above \( c \) in \( S \) lies weakly below an up-arrow of
  \( T \) in the same column. Hence the rightmost cell in that row
  lying below an up-arrow of \( T \) is weakly to the right of
  \( c \). Again by~(1), this is the position of the left-arrow of
  \( T \). Therefore, \(\row(T)\subseteq \row(S)\), and so
  \(S\lhd T\).
\end{proof}

A \defn{bracket vector} of length \( n-1 \) is a tuple
\( (a_1, \dots, a_{n-1}) \) of \( n-1 \) integers satisfying
\( 0 \leq a_i \leq n-i \), and \( j + a_{i+j} \leq a_i \) for all
\( j \leq a_i \). The set of all bracket vectors of length \( n-1 \)
is partially ordered by
\( (a_1, \dots, a_{n-1}) \leq (b_1, \dots, b_{n-1}) \) if
\( a_i \leq b_i \) for \( i=1, \dots, n-1 \). Huang and Tamari
\cite{zbMATH03392515} proved that this poset is a lattice and is
isomorphic to the Tamari lattice.

\begin{proposition}\label{pro:tamari}
  The poset \((\Tam_n,\lhd)\) is isomorphic to the Tamari lattice on
  the bracket vectors of length \(n-1\).
\end{proposition}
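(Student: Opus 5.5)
By \Cref{lem:binary}, a binary fb-tableau is determined by its up-arrows, and on $\Tam_n$ the order $\lhd$ is just inclusion of column spaces. So I will encode a binary tableau by the positions of its up-arrows and show two things: the admissible position vectors are exactly the bracket vectors, and inclusion of column spaces becomes the componentwise order. By the conventions of \Cref{sec:def}, column $j$ (for $1\le j\le n-1$) consists of the cells $(1,j),\dots,(j+1,j)$, ending at the border cell $(j+1,j)$. Column $n$ contains the root and has no up-arrow. Hence each $T\in\Tam_n$ has exactly one up-arrow in each column $j\le n-1$, say in row $r_j(T)\in\{1,\dots,j+1\}$.

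I set $i=n-j$ and $a_i(T)=j+1-r_j(T)$. This is the number of cells strictly below the up-arrow in column $j$. It satisfies $0\le a_i\le j=n-i$, which is exactly the range allowed for a bracket vector. The part of $\col(T)$ in column $j$ consists of the rows $r_j(T),\dots,j+1$. Therefore $\col(S)\subseteq\col(T)$ holds if and only if $r_j(S)\ge r_j(T)$ for all $j$, that is, if and only if $a_i(S)\le a_i(T)$ for all $i$. By \Cref{lem:binary}(2), the map $T\mapsto(a_1(T),\dots,a_{n-1}(T))$ is then an order embedding of $(\Tam_n,\lhd)$ into the componentwise order. Injectivity comes from \Cref{lem:binary}(1).

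It remains to identify the image with the set of bracket vectors; this is the main obstacle. Given up-arrow rows $(r_j)$, the proof of \Cref{lem:binary}(1) forces the left-arrow of each row $r\ge 2$: it sits at the rightmost cell of the row lying weakly below an up-arrow or the root. The configuration is a binary fb-tableau exactly when two conditions hold.
\begin{enumerate}
\item No left-arrow coincides with an up-arrow.
\item Every up-arrow has a left-arrow strictly to its left.
\end{enumerate}
Since a left-arrow always lies below an up-arrow or the root, these ensure every cohook contains two elements. By construction there are no free cells. I expect this to translate into a non-crossing (nesting) condition on the vertical segments below the up-arrows. Concretely: if the segment in column $j$ reaches the row of the up-arrow in a column $j'<j$ further to the right, then it must pass that whole segment. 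In terms of $a$, for $i$ and $k\le a_i$, the column $n-i-k$ lies within the reach of column $n-i$ and so must be nested inside it, which gives $k+a_{i+k}\le a_i$. I will prove both directions by checking, row by row, where the forced left-arrow lands relative to the up-arrows. If the bracket inequality fails for some pair, one of conditions (1) or (2) is violated in the row of the shorter up-arrow. Conversely, if all inequalities hold, the forced left-arrows are well placed.

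If this direct translation becomes messy, I will use the standard route instead. First, read a binary tree off $T$ recursively, with the root split determined by the up-arrow in the first row. Then check that $a_i$ records the size of a right subtree, which is one standard description of bracket vectors, and invoke Huang--Tamari \cite{zbMATH03392515}. Once the image is identified, the order isomorphism follows from the second paragraph.
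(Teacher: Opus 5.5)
Your encoding is the paper's: $a_i(T)$, the number of cells strictly below the up-arrow of column $n-i$, is exactly the paper's $b(T)$. \Cref{lem:binary} does give you the order embedding and injectivity.

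The gap is the identification of the image with the bracket vectors. That is the only part of the proposition not already contained in \Cref{lem:binary}, and you only announce it (``I expect'', ``I will prove''). The criterion you set up for it is also wrong as written. The left-arrow forced by the proof of \Cref{lem:binary}(1) sits at the rightmost cell of its row lying \emph{strictly} below an up-arrow or the root, since that is the cell which would otherwise be free. With ``weakly below'', your condition (1) rejects genuine binary tableaux. In $\hat{0}$, all up-arrows are on the border, so the rightmost cell of row $j+1$ weakly below an up-arrow is the border up-arrow $(j+1,j)$ itself. More generally, in any binary tableau, the rightmost up-arrow of a row $r\ge 2$ is the rightmost cell of that row weakly below an up-arrow. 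With ``strictly'', condition (1) is automatic and (2) is the only constraint. Your remarks on cohooks and free cells then go through. Border cells are nonempty because a border cell not carrying its column's up-arrow is the rightmost cell of its row and lies strictly below that up-arrow.

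The missing computation is short and gives both directions at once, as in the paper. An up-arrow at $(r_j,j)$ with $r_j\ge 2$ violates (2) if and only if some cell $(r_j,j')$ with $j'<j$ lies strictly below the up-arrow of column $j'$, i.e.\ $r_{j'}<r_j\le j'+1$. Put $i=n-j$ and $k=j-j'$. Then $r_j\le j'+1$ reads $k\le a_i$, and $r_{j'}<r_j$ reads $a_i<k+a_{i+k}$. A failing inequality forces $a_i<n-i$, so it does occur in a row $r_j\ge 2$. Hence the forced configuration is a binary fb-tableau exactly when $(a_1,\dots,a_{n-1})$ is a bracket vector.

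The paper phrases the forward direction as the creation of a free cell in column $n-i-k$, and surjectivity via the same inequality. That free cell lies in the row of the \emph{lower} of the two up-arrows, the one in column $n-i$. That up-arrow need not have the shorter segment, so your ``row of the shorter up-arrow'' should be ``row of the lower up-arrow''. Your verbal nesting condition should say that the segment of column $j$ reaches the border row of column $j'$, not the row of its up-arrow. Your formula $k+a_{i+k}\le a_i$ for $k\le a_i$ is the right one. The binary-tree fallback is unnecessary.
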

\begin{proof}
  Let \( T \) be a binary fb-tableau, and consider its column space
  \(\col(T)\). We associate with \( T \) an \(n\)-tuple whose entries
  are, from left to right, the numbers of non-empty non-border cells in the
  columns of \(\col(T)\). Removing the entry corresponding to the
  first column, which is always equal to \(n\), gives an
  \((n-1)\)-tuple \((a_1,\dots,a_{n-1})\) satisfying
  \(0\leq a_i \leq n-i\). We denote it by \(b(T)\).

  Consider \( i \) and \( j \) with \(0\leq j\leq a_i\). In column
  \(i\), there are \(a_i\) non-border cells weakly below the up-arrow.
  We label the cells in each column from bottom to top by
  \(1,2,\dots\). Then, in column \(i\), the cell labeled by \(j\) lies
  in the same row as the border cell of column \(i+j\). Hence, in
  column \(i+j\), the cell labeled by \(a_{i+j}\) lies in the same row
  as the cell labeled by \(j+a_{i+j}\) in column \(i\). In the figure
  below, this cell is marked by a star. If \(a_{i}<j+a_{i+j}\), then
  the up-arrow in column \(i+j\) lies strictly above the up-arrow in
  column \(i\). Since every up-arrow has a left-arrow on its left in
  the same row, this creates a free cell in column \(i+j\), marked by
  \(F\) in the figure:
\[
\begin{array}{c c}
i & i+j \\[0.5em]

\begin{ytableau}
~ \\ \star \\ ~ \\ \uparrow \\ ~ \\ j \\ ~\\ ~ \\ 
\end{ytableau}
&
\begin{ytableau}
~ \\ \uparrow \\ ~ \\ F \\ ~ \\ *(gray)~
\end{ytableau}
\end{array}
\]

\noindent Since a binary fb-tableau has no free cells, we have
\( j + a_{i+j} \leq a_i \) for all \( j \leq a_i \). Thus \(b(T)\) is
a bracket vector. Moreover, if \(T_1\) and \(T_2\) are two binary
tableaux, then \(b(T_1)\leq b(T_2)\) if and only if
\(\col(T_1)\subseteq \col(T_2)\). By \Cref{lem:binary}~(2), we have
\(b(T_1)\leq b(T_2)\) if and only if \(T_1\lhd T_2\) in \(\Tam_n\). It
remains to show that the map \(T\mapsto b(T)\) is surjective.

Let \(v=(a_1,\dots,a_{n-1})\) be a bracket vector. We use \( v \) to
construct a tableau containing the root and \(n-1\) up-arrows. We then
place one left-arrow in each row except the first, under the rightmost
up-arrow or under the root, exactly as we did in the proof of
\Cref{lem:binary}~(1). By construction, every left-arrows have at
least two elements in its cohook. Now suppose there is an up-arrow in
column \(i\) to the left of a left-arrow in the column \(i+j\),
illustrated as follows:
\[
\begin{array}{c c}
i & i+j \\[0.5em]

\begin{ytableau}
~ \\ \star \\ ~ \\ \uparrow \\ ~ \\ j \\ ~\\ ~ \\ 
\end{ytableau}
&
\begin{ytableau}
~ \\ \uparrow \\ ~ \\ \leftarrow \\ ~ \\ *(gray)~
\end{ytableau}
\end{array}
\]
Then \(j+a_{i+j}> a_{i}\), contradicting the assumption that \(v\) is
a bracket vector. Therefore, there are no elements to the left of a
left-arrow. Moreover, this also shows that every up-arrow has a
left-arrow to its left. Since there is no element above an up-arrow,
all labels are placed correctly, and the resulting tableau is a
fb-tableau.

Finally, suppose there is a free cell \( c \). Then \(c\) lies to the
right of a left-arrow and strictly below an up-arrow. But then \( c \)
would be a possible position for the left-arrow, contrary to the
construction. Hence there are no free cells, and the tableau is
binary.
\end{proof}

\begin{definition}\label{def:sTam}
  The lattice \((\sTam_n,\lhd)\) is called the \defn{slow Tamari lattice}.
\end{definition}


In \cite{CMRS2021}, the notion of \defn{minimal faithfully balanced
  modules} is discussed. In our terminology, a fb-tableau \(T\) is
\defn{minimal} if removing any element from a cell other than the root
and the border cells yields a tableau that no longer satisfies the
definition of a fb-tableaux. The first example of minimal tableau that
is not small occurs for \(n=4\), see below:
\[
\begin{ytableau}
    \bullet & & \uparrow  &  \\
     & &\leftarrow &*(gray)\uparrow\\
    \leftarrow &\uparrow&*(gray)\bullet \\
    &*(gray) \leftarrow \\
  \end{ytableau}
\]
If \(n\geq 5\), the minimal tableaux do not form a sublattice of
\(\esTam_n\), as can be seen in the following example:
\[
\begin{ytableau}
    \bullet & & & \uparrow  &  \\
     & & &\leftarrow &*(gray)\uparrow\\
    \leftarrow && \uparrow&*(gray)\bullet \\
    & &*(gray) \leftarrow \\
    \leftarrow&*(gray)\uparrow\\
  \end{ytableau}
  \qquad 
  \begin{ytableau}
    \bullet & & \uparrow  & & \uparrow  \\
     & &&  &*(gray)\leftarrow\\
     & & \leftarrow & *(gray)\uparrow \\
    \leftarrow & \uparrow & *(gray)\bullet\\
    & *(gray)\leftarrow
  \end{ytableau}
  \qquad 
  \begin{ytableau}
\bullet & &&& \\
\leftarrow & &&  & *(gray)\uparrow \\
\leftarrow & & *(red)\uparrow & *(gray)\uparrow\\
\leftarrow & & *(gray)\bullet\\
\leftarrow & *(gray)\uparrow\\
  \end{ytableau}
\]
on the left we have two minimal fb-tableaux and on the right their
meet. The red arrow in the meet can be removed without breaking the
fb-condition. Hence the meet of two minimal tableaux needs not to be a
minimal tableau. Computer evidences suggest that the poset of minimal
tableaux is not a lattice when \(n\geq 5\).

Let \(T\) be a fb-tableau of size \(n\), we denote by \(\nu_T\) the
word in \(\{ \uparrow, \leftarrow,\bullet \}^{n-1}\) obtained by
reading the contents of the border cells of \(T\) from top to bottom.
The word \(\nu_T\) is simply called the \defn{border} of \(T\). If
\(T'\) denotes the \defn{conjugate} of \(T\) in \Cref{def:conjugate},
then the border of \(T'\) is obtained by reversing the border of \(T\)
and exchanging the two types of arrows. A \defn{congruence} on a
lattice \( L \) is an equivalence relation \( \ba \) on \( L \) such
that if \( x_1 \equiv_\ba x_2 \) and \( y_1 \equiv_\ba y_2 \), then
\( x_1 \wedge y_1 \equiv_\ba x_2 \wedge y_2 \) and
\( x_1 \vee y_1 \equiv_\ba x_2 \vee y_2 \).

\begin{lemma}\label{lem:cong}
  The relation defined by \(T_1\sim T_2\) if the tableaux \(T_1\) and
  \(T_2\) have the same border is a congruence relation on
  \(\esTam_n\).
\end{lemma}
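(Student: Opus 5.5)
The plan is to show that the border map $T\mapsto \nu_T$ is compatible with meets and joins: the border entry of $M\wedge N$ (resp. $M\vee N$) at each border cell should depend only on the border entries of $M$ and $N$ at that cell. Then if $T_1\sim T_2$ and $S_1\sim S_2$, the borders of $T_1\wedge S_1$ and $T_2\wedge S_2$ agree cell by cell, and likewise for joins. Since $\sim$ is clearly an equivalence relation, this gives the claim.

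For meets, \Cref{lem:meet}(2) already gives this. With the total order $\uparrow\,\leq\bullet\leq\,\leftarrow$, the entry of $M\wedge N$ in a border cell $c$ is $\min(e_M,e_N)$. Hence $\nu_{M\wedge N}$ is the letterwise minimum of $\nu_M$ and $\nu_N$.

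For joins, I would avoid redoing the case analysis for \Cref{lem:join} and instead use the anti-isomorphism $T\mapsto T'$ of \Cref{thm:selfdual}. Since it is an order anti-automorphism, $M\vee N=(M'\wedge N')'$. Two facts about conjugation are needed.

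\begin{enumerate}
\item By the remark preceding the statement, $\nu_{T'}$ is obtained from $\nu_T$ by reversing the word and swapping $\uparrow$ and $\leftarrow$. This needs one check against \Cref{def:conjugate}. The reflection sends border cell $(i,i-1)$ to border cell $(n+2-i,n+1-i)$, and a dot in a border cell is not complemented, because the complementation step excludes border cells. So a border dot stays a dot and the arrows are swapped.
\item Consequently, $\nu_{T'}$ is a function of $\nu_T$ alone.
\end{enumerate}

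Then $\nu_{M\vee N}$ is obtained by conjugating the borders, taking the letterwise minimum, and conjugating back. Swapping the arrows reverses the order $\uparrow\,\leq\bullet\leq\,\leftarrow$, so $\nu_{M\vee N}$ is the letterwise maximum of $\nu_M$ and $\nu_N$. Either way, it depends only on $\nu_M$ and $\nu_N$.

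The only delicate point is confirming that border cells are untouched by the dot-complementation in \Cref{def:conjugate}. That definition explicitly excludes border cells and the root, so a border dot stays a dot. Otherwise the argument is formal. As a sanity check, the join description in \Cref{lem:join} agrees: a border cell is a left-arrow in the join if it is one in either tableau, and it contains a dot if it is a dot in one tableau and not pointed to by a left-arrow.
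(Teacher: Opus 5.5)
Your proof is correct and follows the paper's argument. Meets are handled by \Cref{lem:meet}(2), and joins by the self-duality $M\vee N=(M'\wedge N')'$, using that conjugation acts on borders by reversing the word and swapping the two arrows. You go slightly beyond the paper by checking explicitly that border cells are sent to border cells and untouched by the dot-complementation, and by identifying $\nu_{M\vee N}$ as the letterwise maximum, which the paper only uses in its next lemma on the quotient $[3]^{n-1}$.
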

\begin{proof}
  Let \(T_1,T_2\) and \(T\) be three fb-tableaux such that
  \(T_1\sim T_2\). By the second item of \Cref{lem:meet}, the border
  of \(T_1\land T\) depends only on the borders of \(T_1\) and \(T\).
  Hence \((T_1\land T) \sim (T_2 \land T)\). Moreover, if
  \(T_1 \sim T_2\), then their conjugates \(T_1'\) and \(T_2'\) have
  the same border. Therefore, \(T_1 \lor T = (T_1'\land T')'\) and
  \(T_2 \lor T = (T_2'\land T')'\) also have the same border.
\end{proof}

Given a word \(\nu\) in \(\{\uparrow,\leftarrow,\bullet\}^{n-1}\), we
denote by \((\esTam_\nu,\lhd)\) the subposet of \((\esTam_n,\lhd)\)
consisting of all tableaux with border \(\nu\). Since \(\esTam_\nu\)
extends the notion of \(\nu\)-Tamari introduced by Preville-Ratelle
and Viennot \cite{nutamari}, we call it the \defn{extra slow
  \(\nu\)-Tamari} lattice.

\begin{corollary}
\begin{enumerate}
\item \(\esTam_\nu\) is an interval in \(\esTam_n\).
\item \(\esTam_\nu\) is a sublattice of \(\esTam_n\). 
\item \(\esTam_n = \sqcup_\nu \esTam_\nu\).
\end{enumerate}
\end{corollary}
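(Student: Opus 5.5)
The plan is to read off all three statements from \Cref{lem:cong} together with the description of meets and joins in \Cref{lem:meet} and \Cref{lem:join}.

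Item (3) is immediate. Every fb-tableau has exactly one border word \(\nu_T\in\{\uparrow,\leftarrow,\bullet\}^{n-1}\), so the classes \(\esTam_\nu\) partition \(\esTam_n\). Here some \(\esTam_\nu\) may be empty.

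For (2), we use that \(\esTam_\nu\) is a congruence class of the congruence \(\sim\) from \Cref{lem:cong}, and congruence classes are sublattices. Concretely, if \(T_1,T_2\) both have border \(\nu\), then by the second item of \Cref{lem:meet} the border of \(T_1\wedge T_2\) is, letter by letter, \(\min(\nu_i,\nu_i)=\nu_i\) for the total order \(\uparrow\le\bullet\le\leftarrow\). The dual statement, obtained through conjugation as in the proof of \Cref{lem:cong}, gives the same for joins: the border of a join is the letterwise maximum. Hence \(T_1\vee T_2\) also has border \(\nu\).

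For (1), the plan has three steps.
\begin{enumerate}
\item Since \(\esTam_\nu\) is finite, nonempty and closed under meets and joins, it has a minimum \(m=\bigwedge\esTam_\nu\) and a maximum \(M=\bigvee\esTam_\nu\), both lying in \(\esTam_\nu\).
\item It remains to show convexity: if \(m\lhd T\lhd M\), then \(T\) has border \(\nu\).
\item Convexity follows from the monotonicity of the border. By the previous paragraph, \(S\lhd T\) means \(S=S\wedge T\), so \(\nu_S=\min(\nu_S,\nu_T)\) letterwise. Thus \(S\lhd T\) implies \(\nu_S\le\nu_T\) letterwise. Applying this to \(m\lhd T\lhd M\) gives \(\nu\le\nu_T\le\nu\) in each letter, so \(\nu_T=\nu\), because the order on letters is a total order.
\end{enumerate}
Therefore \(\esTam_\nu=[m,M]\).

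The only point needing care is that the minimum in the second item of \Cref{lem:meet} is computed in the total order \(\uparrow\le\bullet\le\leftarrow\). This is what turns monotonicity of the border into the antisymmetry argument above, so no separate case analysis on the border letters is required.
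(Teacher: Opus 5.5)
Your argument is correct and is essentially the paper's. The paper simply says the corollary follows from \Cref{lem:cong}, that is, from the general fact that a congruence class of a finite lattice is a convex sublattice and hence an interval; your monotonicity argument via the letterwise minimum of borders makes this explicit. One small point: in (1) you use that \(\esTam_\nu\) is nonempty, while in (3) you suggest it might be empty. In fact every word \(\nu\) is the border of some fb-tableau (put left-arrows in the first column of the rows whose border letter is \(\uparrow\) or \(\bullet\), and up-arrows in the first row of the columns whose border letter is \(\bullet\) or \(\leftarrow\)), as the paper shows in the lemma right after the corollary.
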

\begin{proof}
This follows immediately from \Cref{lem:cong}. 
\end{proof}
%


We denote by \([3]^{n-1}\) the lattice obtained as the product of
\(n-1\) copies of the total order
\(\uparrow \leq \bullet \leq \leftarrow\).

\begin{lemma}
  The quotient of \(\esTam_n\) by the congruence relation \(\sim\) is
  isomorphic to \([3]^{n-1}\).
\end{lemma}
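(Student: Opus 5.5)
The plan is to show that the border map \(\nu\colon \esTam_n \to [3]^{n-1}\), \(T\mapsto \nu_T\), is a surjective lattice homomorphism whose kernel is exactly \(\sim\). The quotient \(\esTam_n/\sim\) is then isomorphic to the image of \(\nu\) with the induced operations, that is, to \([3]^{n-1}\).

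The first step is compatibility with meets. By the second item of \Cref{lem:meet}, the border of \(M\wedge N\) is the componentwise minimum of \(\nu_M\) and \(\nu_N\) for the total order \(\uparrow\,\leq\bullet\leq\,\leftarrow\). Hence \(\nu(M\wedge N)=\nu(M)\wedge\nu(N)\) in \([3]^{n-1}\).

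For joins, we use the conjugation of \Cref{thm:selfdual}. The border of \(T'\) is the reverse of \(\nu_T\) with the two arrows exchanged. This operation is an order-reversing involution \(\phi\) of \([3]^{n-1}\): it reverses each coordinate chain and permutes the coordinates. Then
\[
\nu(M\vee N)=\nu\bigl((M'\wedge N')'\bigr)=\phi\bigl(\phi(\nu_M)\wedge\phi(\nu_N)\bigr)=\nu_M\vee\nu_N .
\]
So \(\nu\) is a lattice homomorphism, and by definition its kernel is \(\sim\).

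The remaining step, which is the only one needing a construction, is surjectivity: every word \(w\in\{\uparrow,\leftarrow,\bullet\}^{n-1}\) must be the border of some fb-tableau. A first option is to build, for each single border cell \((k+1,k)\) and each letter \(x\), an fb-tableau \(T_{k,x}\) whose border is \(\leftarrow\) everywhere except \(x\) at position \(k\). The all-\(\leftarrow\) border is realized by putting up-arrows along the whole first row, so that each border cell is a left-arrow. Taking meets of the \(T_{k,x}\) over \(k\) then yields \(w\), since border meets are componentwise. Each \(T_{k,x}\) is given explicitly:
\begin{itemize}
\item for \(x=\uparrow\), remove the up-arrow in column \(k\) of the top row, so that the border cell becomes the top element of its column, and put a left-arrow in the first column of row \(k+1\) so that its cohook has two elements;
\item for \(x=\bullet\), use the same configuration with the up-arrow kept in the first row and the left-arrow placed at the far left, so that the border cell is free and can hold a dot.
\end{itemize}

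A simpler alternative is to construct the tableau for \(w\) directly: put the root and up-arrows in the top row for the columns whose border letter is not \(\uparrow\), and left-arrows in column \(n\) for the rows whose border letter is not \(\leftarrow\). Then check the cohook condition cell by cell. For a border letter \(\leftarrow\), the cell is a left-arrow with an up-arrow above it. For \(\uparrow\), it is an up-arrow with a left-arrow to its left. For \(\bullet\), it is a free dot.

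The only delicate point, and the expected main obstacle, is the requirement that every up-arrow have a left-arrow somewhere to its left and every left-arrow an element above it. Top-row up-arrows are covered by the root. Left-arrows in column \(n\) sit under the root, so the direct construction should work. Column \(n\) carries no border cell for \(n\ge2\), so there is no conflict there, and rows \(\geq 2\) with letter \(\leftarrow\) need no extra left-arrow because their border cell is itself the left-arrow.

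Once surjectivity holds, the homomorphism theorem for lattices gives \(\esTam_n/\sim\;\cong[3]^{n-1}\).
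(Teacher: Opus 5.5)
Your proposal is correct and is essentially the paper's proof. Both show that the border map is compatible with meets by the second item of \Cref{lem:meet}, and with joins via conjugation and the order-reversing involution of \([3]^{n-1}\). Both then prove surjectivity with exactly the same explicit tableau: the root, top-row up-arrows in the columns whose letter is not \(\uparrow\), and left-arrows in the leftmost column in the rows whose letter is not \(\leftarrow\). Your first surjectivity option, taking meets of tableaux with a single non-\(\leftarrow\) border letter, also works but is not needed.
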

\begin{proof}
  The map \(T\mapsto \nu_T\) from \(\esTam_n\) to \([3]^{n-1}\) has
  fibers given by the equivalence classes of tableaux with respect to
  the relation \(\sim\). Moreover, the second item of \Cref{lem:meet}
  shows that this map is a homomorphism of meet-semilattices. If
  \(T_1\) and \(T_2\) are two fb-tableaux, then
  \(T_1 \lor T_2 = (T_1'\land T_2')'\). Therefore,
  \(\nu_{T_1\lor T_2}\) is obtained from \(\nu_{T_1'\land T_2'}\) by
  reversing the word and exchanging the two types of arrows.
  Exchanging the two types of arrows is a lattice anti-isomorphism on
  \([3]\). Hence, \(\nu_{T_1 \lor T_2}= \nu_{T_1}\lor \nu_{T_2}\), and
  therefore \(T\mapsto \nu_T\) is a lattice homomorphism. It remains
  to show that this map is surjective. Let \( \nu \) be a word in
  \([3]^{n-1}\). We construct a tableau as follows. Place the letters
  of \(\nu\) in the border cells of a tableau \(T\), from top to
  bottom, and place a dot in the root of \(T\). If a letter is an
  up-arrow or a dot, place a left-arrow in the same row and in the
  first column below the root of \(T\). If a letter is a dot or a
  left-arrow, place an up-arrow in the same column and in the first
  row, to the right of the root. The resulting tableau is a fb-tableau
  with a border equal to \(\nu\).
\end{proof}

\section{Cover relations}\label{sec:covers}

An up-arrow is called \defn{changeable of type I} if there is a cell
above it that is not pointed by a left-arrow. A left-arrow is called
\defn{changeable of type II} if the first free cell to its right
contains a \(\bullet\). A free cell is called \defn{changeable of type
  III} if it is empty.

Changeable cells of type I can occur in cells \((i,j)\) with
\(2\le i\le n\), \(i-1\le j\le n-1\). Changeable cells of type II can
occur in cells \((i,j)\) with \(2\le i\le n\), \(i\le j\le n\).
Changeable cells of type III can occur in cells \((i,j)\) with
\(2\le i\le n\), \(i\le j\le n-1\).

\begin{definition}\label{def:cover_relation}
  Let \(S\) be a fb-tableau, and let \(c\) be a changeable cell. The
  tableau \( T \) is obtained from \( S \) by applying one of the
  following three types of moves:
  \begin{itemize}
  \item A \defn{move of type I} moves a changeable \(\uparrow\) to the
    closest cell above it that is not pointed by a left-arrow. It
    creates a new free cell. If this cell is a border cell, then we
    fill it with a \(\bullet\); otherwise we leave it empty. For
    example,
    \[
      \begin{ytableau}
        \bullet & & \uparrow & \uparrow & \\
        & & \leftarrow & &*(gray)\uparrow\\
        \leftarrow &&\bullet &*(gray)\bullet \\
        \leftarrow &*(red){\uparrow} &*(gray)\bullet\\
        &*(gray)\leftarrow
      \end{ytableau}
      \quad{\rm \Rightarrow}\quad
      \begin{ytableau}
        \bullet & & \uparrow & \uparrow & \\
        & & \leftarrow & &*(gray)\uparrow\\
        \leftarrow &*(red){\uparrow}&\bullet &*(gray)\bullet \\
        \leftarrow & &*(gray)\bullet\\
        &*(gray)\leftarrow
      \end{ytableau}\quad{\rm \Rightarrow}\quad
      \begin{ytableau}
        \bullet &*(red){\uparrow} & \uparrow & \uparrow & \\
        & & \leftarrow & &*(gray)\uparrow\\
        \leftarrow &&\bullet &*(gray)\bullet \\
        \leftarrow & &*(gray)\bullet\\
        &*(gray)\leftarrow
      \end{ytableau}
    \]
  \item A \defn{move of type II} moves a changeable \(\leftarrow\) to
    the closest free cell to its right. For example,
    \[
      \begin{ytableau}
        \bullet & & \uparrow & \uparrow & \\
        & & \leftarrow & &*(gray)\uparrow\\
        *(red)\leftarrow &&\bullet &*(gray)\bullet \\
        \leftarrow &{\uparrow} &*(gray)\bullet\\
        &*(gray)\leftarrow
      \end{ytableau}
      \quad{\rm \Rightarrow}\quad
      \begin{ytableau}
        \bullet & & \uparrow & \uparrow & \\
        & & \leftarrow & &*(gray)\uparrow\\
        & &*(red)\leftarrow &*(gray)\bullet \\
        \leftarrow &{\uparrow}  &*(gray)\bullet\\
        &*(gray)\leftarrow
      \end{ytableau}\quad{\rm \Rightarrow}\quad
      \begin{ytableau}
        \bullet & & \uparrow & \uparrow & \\
        & & \leftarrow & &*(gray)\uparrow\\
        && &*(red)\leftarrow \\
        \leftarrow &\uparrow &*(gray)\bullet\\
        &*(gray)\leftarrow
      \end{ytableau}
    \]
  \item A \defn{move of type III} modifies an empty free cell by
    inserting a \( \bullet \) into it. For example,
    \[
      \begin{ytableau}
        \bullet & & \uparrow & \uparrow & \\
        & & \leftarrow &*(red){} &*(gray)\uparrow\\
        \leftarrow& &   \bullet &*(gray)\bullet\\ \leftarrow &{\uparrow} &*(gray)\bullet\\
        &*(gray)\leftarrow
      \end{ytableau}\quad{\rm \Rightarrow}\quad
      \begin{ytableau}
        \bullet & & \uparrow & \uparrow & \\
        & & \leftarrow & *(red)\bullet &*(gray)\uparrow\\
        \leftarrow &&\bullet &*(gray)\bullet \\
        \leftarrow &{\uparrow} &*(gray)\bullet\\
        &*(gray)\leftarrow
      \end{ytableau}\]
  \end{itemize}
\end{definition}

We say that two tableaux have the \defn{same up-arrows} if the
up-arrows in both tableaux are in the same position. We define
\defn{same left-arrows} and \defn{same arrows} similarly.

\begin{lemma}\label{lem:cover1}
  Let \(S\) be a fb-tableau and \(T\) the tableau obtained by
  applying one of the moves in \Cref{def:cover_relation}. Then,
\begin{enumerate}
\item The tableau \(T\) is a fb-tableau.
\item We have \(S\lhd T\).
\item The relation \(S\lhd T\) is a cover relation; that is, if
  \(U\) is a fb-tableau such that \(S\lhd U \lhd T\), then \(U=S\) or
  \(U=T\).
\end{enumerate}
\end{lemma}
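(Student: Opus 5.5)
I will treat the three move types separately; in each case (1), (2) and (3) are checked in turn. The key point for (3) is that every move changes exactly one of the three pieces of data defining $\lhd$ by a minimal amount: the column space (type I), the row space (type II), or the set of dots among the common free cells (type III). The other two pieces are left unchanged.

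\emph{Type III.} Here $S$ and $T$ have the same arrows. The inserted dot lies in a free cell, so its cohook already contains an up-arrow and a left-arrow, and $T$ is a fb-tableau. Since the arrows agree, $\col(S)=\col(T)$ and $\row(S)=\row(T)$, and $F(S,T)=F(S)$. Dots only increase, so $S\lhd T$. Now take $U$ with $S\lhd U\lhd T$. Then $U$ is sandwiched between two tableaux with equal column and row spaces, so it has the same arrows. Condition (3) of $\lhd$ then shows that $U$ contains the dots of $S$ and is contained in the dots of $T$. Since $T$ has exactly one extra dot, $U\in\{S,T\}$.

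\emph{Type I.} The up-arrow in column $j$ moves from row $r$ to row $r'<r$, where $r'$ is the lowest row above $r$ not pointed by a left-arrow. Every cell strictly between $r'$ and $r$ in that column is pointed by a left-arrow, and those cells stay non-free. The new up-arrow still has a left-arrow to its left or the root, because row $r'$ lies strictly above row $r$ and we need a left-arrow somewhere left: I will have to argue this from the fact that the cell is "not pointed by a left-arrow" only in the sense that no left-arrow is directly below it. This is a point I must check carefully, likely using that column $1$ in rows $\ge 2$ and the first row supply an arrow or root in the cohook. The old position becomes free. It is filled with a dot if it is a border cell, which keeps the border nonempty; otherwise it is left empty, and emptiness of a non-border cell is allowed. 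Cells below it keep their cohooks. The result is that $\col(S)\subsetneq\col(T)$ with the difference being the cells of column $j$ between rows $r'$ and $r-1$, and $\row(S)=\row(T)$.

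For condition (3), Lemma~\ref{lem:free} gives $F(S,T)=\col^s(S)\cap\row^s(T)\subseteq F(S)$. On $F(S)$ the dots of $S$ and $T$ agree, so (3) holds. For the cover property, take $S\lhd U\lhd T$. Then $U$ has the same left-arrows as $S$, and its up-arrows agree with those of $S$ except possibly in column $j$. There, the up-arrow of $U$ lies between rows $r'$ and $r$. The positions strictly between these rows are pointed by left-arrows, so the fb-condition (an up-arrow cannot sit below a left-arrow, by the argument in Lemma~\ref{lem:meet}) forces the up-arrow of $U$ to be in row $r$ or row $r'$. Free cells other than the new one agree in $S$ and $T$ and lie in $F(S,U)$ or $F(U,T)$, so (3) pins down their dots. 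The new cell is a border dot or a non-border empty cell, and this is fixed by the fb-condition in the border case. In the non-border case I expect a possible ambiguity: could $U$ have a dot there? In that case $U$ would equal $T$ with a dot added, and $U\lhd T$ would require that cell, which is in $F(U,T)$, to be a dot in $T$, which it is not. So $U=T$.

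\emph{Type II.} This case is handled symmetrically, or by duality. The conjugation of Theorem~\ref{thm:selfdual} should send a type II move to a type I move. The condition that the first free cell to the right contains a dot corresponds, after complementing the free cells, to the jumped-over cell being empty, which matches the type I convention that the freed cell is left empty except on the border. I expect this matching to be the main obstacle: the border-cell case and the "closest free cell" versus "closest cell not pointed" discrepancy must be verified. If the correspondence fails, I will redo the type I argument directly for rows, with the roles of $\col$ and $\row$ exchanged.
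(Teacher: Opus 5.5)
Your type III argument and your direct type I argument are correct. Together with duality they form the mirror image of the paper's proof, which does types III and II directly and obtains type I by conjugation. Two slips in type I are harmless but should be fixed:
\begin{itemize}
\item A cell is pointed by a left-arrow when it lies to the \emph{left} of the left-arrow of its row. So ``$(r',j)$ not pointed'' means the left-arrow of row $r'$ (or the root, if $r'=1$) lies strictly to its left. That is exactly the missing second element of its cohook.
\item What rules out the intermediate rows for $U$ is that an up-arrow cannot lie to the left of a left-arrow, not ``below'' one.
\end{itemize}

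The genuine gap is type II, which you leave conditional. The direction of duality you chose is exactly the one where the matching is not automatic. A type I move only crosses empty cells, since everything above an up-arrow is empty, so its conjugate is automatically a type II move; this is why the paper goes that way. The converse fails under the literal wording ``first free cell to its right'', because that cell may lie beyond an up-arrow. In a row $\leftarrow\,\uparrow\,\bullet$, as in the tableau $U_1$ of the heptagon, the dot is the first free cell, and moving the left-arrow onto it does not even give a fb-tableau. So you need two things:
\begin{itemize}
\item State explicitly that the cells strictly between $c$ and $d$ are empty cells lying above up-arrows. Equivalently, $d$ is the first cell to the right of $c$ not pointed by an up-arrow, and it is a free dot.
\item Check (1) for type II directly, since conjugating $T$ presupposes that $T$ is a fb-tableau. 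This is easy: the new left-arrow has an up-arrow above $d$ because $d$ was free, and only empty cells remain to its left.
\end{itemize}

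With this reading your correspondence goes through, and it also settles your worry about border cells:
\begin{itemize}
\item The cell $c$ lies below an up-arrow or the root, because a left-arrow needs one in its cohook. Hence its transpose is not pointed by a left-arrow in $T'$.
\item The crossed cells lie above up-arrows, so they transpose to cells pointed by left-arrows.
\item The free dot at $d$ becomes an empty cell of $S'$ if $d$ is not a border cell, since free cells are complemented. It stays a dot if $d$ is a border cell. This is precisely the type I rule for the vacated cell.
\end{itemize}
Hence $S'$ is obtained from $T'$ by a type I move, and (2) and (3) for $S\lhd T$ follow from your type I case together with \Cref{thm:selfdual}.
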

\begin{proof}
  The moves do not empty the cohooks of the border cells, and the new
  position of the arrow is always compatible with the definition of fb-tableaux. Hence \(T\) is a fb-tableau.

  A move of type I preserves the row space and increases the column
  space. Moreover, it creates a new free cell, and this cell does not
  belong to \(F(S,T)\). A move of type II preserves the column space
  and decreases the row space. It remove one dot from the tableau
  \(S\), but this dot does not belong to \(F(S,T)\). A move of type
  III preserves both the row and column spaces, and adds a dot in an
  empty free cell of \(F(S,T)\). Therefore, in all three cases, we
  have \(S\lhd T\).

  It remains to prove that \(S\lhd T\) is a cover relation.
  Let \(S\lhd U \lhd T\).

  First, assume that \(T\) is obtained from \(S\) by a move of type
  III. Then \(S\) and \(T\) have the same column and row spaces, so
  the same is true for \(U\). In other words, the three tableaux have
  the same arrows, and hence the same free cells:
  \[
    F(S,U)=F(U,T)=F(S)=F(U)=F(T).
  \]

  The tableaux \(S\) and \(T\) differ at exactly one free cell
  \( c \), which is empty in \(S\) and contains a dot in \(T\). Let
  \( d \neq c \) be another free cell. If \( d \) contains a dot in
  \(S\), then it also contains a dot in \(U\). If \( d \) is empty in
  \(T\), it is also empty in \(U\). Thus, all free cells of \(U\)
  except \(c\) coincide with those of \(S\) and \(T\). Therefore,
  \( U = S \) or \( U = T \).

  Next, assume that \(T\) is obtained by a move of type II, and the
  left-arrow in cell \( c \) of \( S \) is moved to cell \( d \) in
  \( T \). Then the three tableaux have the same up-arrows, and
  \( S \) and \( T \) differ only in the row containing \( c \) and
  \( d \). Hence \( U \) coincides with \( S \) and \( T \) in every
  other row. Since \( S \) and \( T \) are identical on the part of
  this row strictly to the right of \( d \), the tableau \( U \) must
  also be identical to them there. Thus any difference must occur
  between \( c \) and \( d \). However, by definition, all cells
  between \( c \) and \( d \), except \( d \), are not free. Therefore
  \( U = S \) or \( U = T \).

  Finally, assume that \(T\) is obtained from \( S \) by a move of
  type I. Applying conjugation, we get \(T'\lhd U'\lhd S'\) and \(S'\)
  is obtained from \( T' \) by a move of type II. By the previous
  case, we obtain \(U'=S'\) or \(U'=T'\). Therefore, \( U = S \) or
  \( U = T \).
\end{proof}

\begin{lemma}\label{pro:cover}
  The cover relations of $\esTam_n$ are those described
  in~\Cref{def:cover_relation}.
\end{lemma}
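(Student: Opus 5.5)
By \Cref{lem:cover1}, every move of \Cref{def:cover_relation} produces a cover relation. It therefore remains to show the converse: whenever \(S\lhd T\) with \(S\neq T\), there is a tableau \(S_1\), obtained from \(S\) by a single move, with \(S\lhd S_1\lhd T\). Applying this to a cover \(S\lessdot T\) forces \(S_1=T\), which proves the claim. I will find \(S_1\) by comparing the arrows of \(S\) and \(T\) and splitting into three cases, ordered so that the easiest certificate comes first.

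\textbf{Case 1: \(S\) and \(T\) have the same arrows.} Then they have the same free cells, and condition (3) together with \(S\neq T\) gives a free cell \(c\) that is empty in \(S\) and contains a dot in \(T\). Let \(S_1\) be the result of the type III move at \(c\). Then \(S_1\lhd T\) holds immediately.

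\textbf{Case 2: the left-arrows differ.} Since \(\row(T)\subseteq\row(S)\), there is a row in which the left-arrow \(\ell\) of \(S\) lies strictly to the left of that of \(T\). I first check that \(\ell\) is changeable of type II, and I expect this to be the main obstacle. Let \(d\) be the first free cell of \(S\) to the right of \(\ell\). Such a \(d\) must exist: otherwise the left-arrow of \(T\) could not sit further right, because it has to lie below an up-arrow of \(T\) and \(\col(T)\supseteq\col(S)\). I argue that \(d\) lies weakly left of the left-arrow of \(T\), since every intermediate cell is not free in \(S\), hence not below an up-arrow of \(S\). I then split on whether \(d\) is strictly left of the left-arrow of \(T\) or equal to it:
\begin{itemize}
\item If \(d\) is strictly left, then \(d\in\col^s(T)\) but is not in \(\row^s(T)\); in particular \(d\notin F(S,T)\), and condition (3) gives no direct control over \(d\).
\item To handle this, I choose the row carefully (for instance, the lowest or rightmost discrepancy) and use the fb-condition on \(T\) in the column of \(d\).
\end{itemize}
If \(d\) turns out to be empty in \(S\), I do not move \(\ell\) at all; instead I fall back to a type III move at \(d\) and check directly that the resulting tableau \(S_1\) satisfies \(S_1\lhd T\). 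In the main subcase, I let \(S_1\) be the result of the type II move of \(\ell\) to \(d\), and verify conditions (1)--(3) for \(S_1\lhd T\) using \Cref{lem:free}, which gives \(F(S_1,T)\subseteq F(S,T)\cup\{\text{cells near }d\}\).

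\textbf{Case 3: the left-arrows agree and the up-arrows differ.} Here I use conjugation instead of repeating the argument. By \Cref{thm:selfdual}, \(T'\lhd S'\), and the up-arrows of \(S,T\) become the left-arrows of \(T',S'\). Case 2 applied to \(T'\lhd S'\) produces \(U\), obtained from \(S'\) by the inverse of a type II move, with \(T'\lhd U\lhd S'\). Conjugating back, \(U'\) is obtained from \(S\) by a type I move, since type I and type II moves are exchanged by conjugation as noted in the proof of \Cref{lem:cover1}. This gives \(S\lhd U'\lhd T\).

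Once \(S_1\) exists in every case, the argument is complete: if \(S\lessdot T\), then \(S\lhd S_1\lhd T\) with \(S_1\neq S\) forces \(S_1=T\), so \(T\) is obtained from \(S\) by one of the moves. The delicate part is the choice of row, and the handling of an empty \(d\), in Case 2, where compatibility with condition (3) must be checked for cells that are free in \(S_1\) but not in \(S\).
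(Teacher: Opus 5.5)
Your Case 2 has a genuine gap, at exactly the point you flag as the main obstacle, and no choice of row closes it. As formulated, Case 2 covers every pair \(S\lhd T\) whose left-arrows differ, including pairs whose up-arrows also differ. For such pairs, the claim that some type II or type III move from \(S\) stays below \(T\) is false.

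Your argument that \(d\) exists uses \(\col(S)\subseteq\col(T)\) in the wrong direction. The left-arrow of \(T\) lies below an up-arrow of \(T\). The up-arrows of \(S\) are lower, so this cell need not lie below an up-arrow of \(S\), i.e.\ it need not be free in \(S\). Already for \(n=2\), take \(S=\hat{0}\) (left-arrow in \((2,2)\), up-arrow in the border cell \((2,1)\)) and \(T=\hat{1}\). The left-arrows differ, and the only cell to the right of the left-arrow of \(S\) is \((2,1)\). That cell holds the up-arrow of \(S\) and is not free, so \(S\) has no free cell and hence no move of type II or III at all; its only cover is of type I.

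Even when \(d\) exists, a cell strictly between \(\ell\) and \(d\) can hold the up-arrow of its column in \(S\) (non-free but nonempty). Moving \(\ell\) to \(d\) then leaves that up-arrow with a one-element cohook, so \(S_1\) is not an fb-tableau. The missing case is precisely the one where both the up-arrows and the left-arrows differ. Conjugation maps this class to itself, so it must be treated directly, and building upward from \(S\) through left-arrows is the wrong direction for it.

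The paper treats this case through the up-arrows, where the monotonicity works. If the column spaces differ, choose a column where the up-arrow \(u\) of \(S\) lies strictly below the up-arrow \(v\) of \(T\). Since \(\row(T)\subseteq\row(S)\), the cell \(v\) has a left-arrow (or the root) of \(S\) to its left. So raising \(u\) to the closest cell not pointed by a left-arrow, which lies weakly below \(v\), is a legitimate type I move producing \(S_1\) with \(S\lhd S_1\lhd T\). The row space is unchanged, and the column space stays inside \(\col(T)\). The only possible new dot is on the border cell \(u\), and that cell is a dot in \(T\) whenever it is free in \(T\). The row-space case then follows by conjugation.

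With this reorganization, the rest of your material is fine. When the up-arrows agree, the left-arrow cell of \(T\) is strictly below a common up-arrow, hence free in \(S\). The cells between \(\ell\) and \(d\) are empty in \(T\), so they cannot be up-arrows and are empty in \(S\). Finally \(d\notin\row^{s}(T)\), so \(d\notin F(S,T)\). Hence both your type II move and your type III fallback land below \(T\), with no special choice of row.

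In Case 3 the bookkeeping is reversed. Applying Case 2 to \(T'\lhd S'\) modifies \(T'\), so after conjugating you get \(S\lhd U'\lhd T\) with \(T\) obtained from \(U'\) by a type I move. For a cover this still forces \(U'=S\).
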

\begin{proof}
  Let \(T_1\lhd T_2\) be a cover relation. Since \(T_1\neq T_2\),
  there are several cases to consider. In each case, we construct a
  tableau \(T\), obtained from \( T_1 \) by applying one of the moves
  in \Cref{def:cover_relation}, and show that \(T_1\lhd T \lhd T_2\).
  Since \(T_1 \lhd T_2\) is a cover relation, we obtain \(T=T_2\),
  which proves the lemma.

  We first assume that \(\col(T_1) =\col(T_2)\) and
  \(\row(T_1)=\row(T_2)\). In other words, the two tableaux have the
  same arrows. Then, we have \(F(T_1)= F(T_2) = F(T_1,T_2)\). The
  third condition in the definition of \( \lhd \) implies that \(T_1\)
  has strictly less dots than \(T_2\). Choose a free cell \(c\) that
  is empty in \(T_1\) and contains a \( \bullet \) in \(T_2\). Let
  \(T\) the tableau obtained from \(T_1\) by filling \(c\) with a
  \(\bullet\). Then \(T_1 \lhd T\lhd T_2\), so \(T = T_2\). Thus, in
  this case, \( T_2 \) is obtained from \( T_1 \) by a move of
    type III.

  Next, assume that \(\col(T_1) \neq \col(T_2)\). Then in some column
  the up-arrow of \( T_1 \) lies below that of \( T_2 \). Let \( u \)
  and \( v \) be the corresponding cells in \( T_1 \) and \( T_2 \),
  respectively. We claim that \(v\) is not pointed by a left-arrow in
  \(T_1\). Since \(T_2\) is a fb-tableau, there is a left-arrow (or
  the root) to the left of \(v\) in \(T_2\). Because
  \(\row(T_2)\subseteq \row(T_1)\), there is also a left-arrow (or the
  root) to the left of \(v\) in \(T_1\). Moreover, all cells strictly
  between \(v\) and \(u\) in that column are empty in \(T_1\). Hence
  we can move the up-arrow from \(u\) upward to \(v\), obtaining a
  tableau \(T\). By construction, \(T\) is obtained from \(T_1\) by
  applying a move of type I, so \(T_1\lhd T\). We also have
  \[
    \row(T_1)=\row(T)\supseteq \row(T_2).
  \]
  The up-arrows of \(T_1\) and \(T\) coincide except in the chosen
  column, where the up-arrow of \(T\) is at \(v\), as in \(T_2\). Thus
  \[
    \col(T_1)\subseteq \col(T)\subseteq \col(T_2).
  \]
  Finally, \(T_1\) and \(T\) also have the same dots. Let
  \(c\in F(T,T_2)\) be a cell containing a dot in \(T\). Then \(c\)
  does not lie between \(v\) and \(u\), so \(c\in F(T_1,T_2)\). Since
  \(T_1\lhd T_2\), the cell \(c\) must also contain a dot in \(T_2\).
  Therefore \(T_1\lhd T\lhd T_2\), so \(T=T_2\). Thus \(T_2\) is
  obtained from \(T_1\) by a move of type I.

  Finally, assume that \(\row(T_1)\neq \row(T_2)\). Let \(\sigma\) be
  the symmetry from \Cref{thm:selfdual}. Then
  \(\sigma(T_2)\lhd \sigma(T_1)\) is a cover relation, and
  \(\col(\sigma(T_2))\neq \col(\sigma(T_1))\). By the previous case,
  \(\sigma(T_1)\) is obtained from \(\sigma(T_2)\) by a move of
    type I. Applying \(\sigma\) again, we conclude that \(T_2\) is
  obtained from \(T_1\) by a move of type II.
\end{proof}

By the previous two lemmas, we obtain a characterization of the cover
relation.

\begin{theorem}\label{thm:cover_relation_iff}
  For two fb-tableaux \( S \) and \( T \), the relation \( S \lhd T \)
  is a cover relation if and only if \( T \) is obtained from \( S \)
  by one of the moves in~\Cref{def:cover_relation}.
\end{theorem}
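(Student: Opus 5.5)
This theorem is the combination of \Cref{lem:cover1} and \Cref{pro:cover}, and the proof will simply assemble them.

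For the ``if'' direction, I would invoke \Cref{lem:cover1}. If \(T\) is obtained from \(S\) by a move of type I, II or III, then parts (1) and (2) of that lemma show that \(T\) is a fb-tableau with \(S\lhd T\). Part (3) shows that no fb-tableau lies strictly between them. Since a move always changes at least one cell, \(S\neq T\), and so \(S\lhd T\) is a cover relation.

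For the ``only if'' direction, I would invoke \Cref{pro:cover}. Given a cover \(T_1\lhd T_2\), its proof splits into three cases:
\begin{itemize}
\item the two tableaux have the same arrows;
\item their column spaces differ;
\item their row spaces differ.
\end{itemize}
In each case one builds a tableau \(T\) from \(T_1\) by a single move with \(T_1\lhd T\lhd T_2\), and the cover property then forces \(T=T_2\).

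The only point needing care is the third case. It is reduced to the second by applying the conjugation of \Cref{thm:selfdual}, and one must check that conjugation exchanges moves of type I and type II. This holds because conjugation reflects the diagram across the diagonal, swaps the two kinds of arrows, and preserves free cells. The same observation is already used in part (3) of \Cref{lem:cover1}, so I expect no real obstacle here.
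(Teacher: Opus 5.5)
Your proposal is correct and matches the paper's own derivation: the theorem is stated there as the immediate combination of \Cref{lem:cover1} (every move yields a cover) and \Cref{pro:cover} (every cover arises from a move). In \Cref{pro:cover}, the row-space case is reduced to the column-space case via the conjugation of \Cref{thm:selfdual}, exactly as you describe.
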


Given a tableau \(T\), the number of tableaux that cover \(T\) is
exactly the number of its changeable cells. In the example below,
changeable cells are highlighted in green, and the free cells are
highlighted in blue:
 \[
 \begin{ytableau}
    \bullet & & \uparrow & \uparrow & \\
    & & \leftarrow & *(green){}&*(green)\uparrow\\
    *(green)\leftarrow &&\bullet &*(gray)\bullet \\
    \leftarrow &*(green)\uparrow &*(gray)\bullet \\
    &*(gray)\leftarrow
  \end{ytableau}\quad \quad
   \begin{ytableau}
    \bullet & & \uparrow & \uparrow & \\
    & & \leftarrow & *(blue){}&\uparrow\\
    \leftarrow &&*(blue)\bullet &*(blue)\bullet \\
    \leftarrow &\uparrow &*(blue)\bullet \\
    &*(gray)\leftarrow
  \end{ytableau}
\]

The following lemmas follow directly from the definitions.

\begin{lemma} \label{lem:changeable_up_iff}
An up-arrow is changeable if and only if it is not in the first row.
\end{lemma}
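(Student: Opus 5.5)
Both directions come down to unpacking the definition of a changeable up-arrow of type I: there must be a cell above it that is not pointed by a left-arrow.

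\emph{Only if.} If an up-arrow lies in the first row, there is no cell above it in its column. It therefore cannot be changeable.

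\emph{If.} Let \(u\) be an up-arrow in cell \((i,j)\) with \(i\geq 2\). It suffices to exhibit one cell above \(u\) that is not pointed by a left-arrow. I take the cell \((1,j)\) in the first row of the same column.

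The first row contains no left-arrows. Indeed, the root \((1,n)\) is the leftmost cell of that row, so every other cell of the first row has the root to its left. Hence such a cell is never replaced by a left-arrow in the parsing convention.

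I read ``pointed by a left-arrow'' as meaning that the cell itself carries a left-arrow. Under that reading, \((1,j)\) is not pointed by a left-arrow, and \(u\) is changeable. Note that \(u\) sits in a column \(j\neq n\), since the first column below the root carries no up-arrows, so \((1,j)\) is a genuine non-root cell. Moreover, every cell strictly above an up-arrow is empty, so \((1,j)\) is empty; it is simply a valid target position for the move.

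\emph{Main point to check.} The only real obstacle is interpretive: confirming that ``pointed by'' refers to the arrow's own position, rather than to a cell lying to the right of some left-arrow. The consistency check is the proof of \Cref{pro:cover}. There, the new position \(v\) of an up-arrow only needs a left-arrow or the root somewhere to its left. This matches the reading in which a cell is unavailable exactly when it contains a left-arrow, since an up-arrow cannot be placed on top of a left-arrow. With this reading, the argument above is complete.
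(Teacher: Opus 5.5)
Your argument is correct in substance. It is the direct check the paper has in mind; the paper gives no proof beyond saying the lemma follows from the definitions.
\begin{itemize}
\item An up-arrow in row $1$ has no cell above it, so it is not changeable.
\item For an up-arrow at $(i,j)$ with $i\geq 2$, the cell $(1,j)$ works, because the first row contains no left-arrow.
\end{itemize}
You prove that fact about the first row correctly, and it is all that is needed.

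However, your interpretive paragraph is wrong and should be replaced, not relied on. In the paper, a cell is pointed by an arrow when it lies in the direction the arrow points. So ``pointed by a left-arrow'' means lying strictly to the left of the left-arrow of its row. Likewise, in step (c) of \Cref{lem:meet}, ``pointed by an up-arrow'' means lying strictly above an up-arrow.

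Your reading makes the condition vacuous, since every cell above an up-arrow is empty. It would also let a type I move produce an invalid tableau. In the example after \Cref{def:cover_relation}, the up-arrow at $(3,4)$ jumps to $(1,4)$. It skips the empty cell $(2,4)$ precisely because $(2,4)$ lies to the left of the left-arrow at $(2,3)$. Placed at $(2,4)$, the up-arrow would have a one-element cohook.

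Your consistency check with \Cref{pro:cover} actually supports the correct reading. There, $v$ is shown not to be pointed by exhibiting a left-arrow or the root to its left. A cell strictly left of the row's left-arrow fails that criterion even though it does not contain the arrow.

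Since row $1$ contains no left-arrow at all, $(1,j)$ is not pointed under the correct reading either. So the proof goes through once the justification is rephrased: $(1,j)$ lies to the left of no left-arrow.
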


\begin{lemma} \label{lem:changeable_left_iff}
A left-arrow is changeable if and only if the first free cell to its right contains a dot. 
\end{lemma}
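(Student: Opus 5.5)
Since ``changeable of type II'' is defined by the condition in the statement, the content of \Cref{lem:changeable_left_iff} is the following. Let \(\ell\) be the left-arrow of \(S\) in some row, sitting in cell \(c\). Then there is a cover relation \(S\lhd T\) in which \(\ell\) moves, that is, in which \(\row\) shrinks in that row, if and only if the first free cell \(f\) of \(S\) to the right of \(c\) exists and contains a \(\bullet\). I will prove the two directions separately.

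For the ``if'' direction, moving \(\ell\) to \(f\) is exactly a move of type II. \Cref{lem:cover1} then gives a fb-tableau \(T\) with \(S\lhd T\) a cover. Nothing more is needed here.

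For the ``only if'' direction, I will first pin down where \(\ell\) can go. Suppose \(S\lhd T\) is a cover and the left-arrow of this row lies at \(d\neq c\) in \(T\). Since \(\row(T)\subseteq\row(S)\), the cell \(d\) lies strictly to the right of \(c\). By \Cref{pro:cover} and its proof, a cover that changes \(\row\) changes nothing else, so \(T\) has the same up-arrows as \(S\). The cell \(d\) must lie strictly below an up-arrow of \(T\), hence of \(S\), since a left-arrow needs an element above it. So \(d\) is a free cell of \(S\). Moreover, no up-arrow of \(S\) lies strictly between \(c\) and \(d\): such an up-arrow would lose its only possible left neighbour, because each row other than the first has exactly one left-arrow. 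It follows that \(f\) exists and lies weakly left of \(d\).

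The main step is then to show that \(d=f\) and that \(f\) contains a dot, by exhibiting an intermediate element in every other case. There are two such cases.
\begin{enumerate}
\item Suppose \(d\) is strictly right of \(f\). Let \(T_0\) be obtained from \(S\) by moving \(\ell\) to \(f\), keeping all dots except possibly the one at \(f\). I will check directly from the definition of \(\lhd\) that \(S\lhd T_0\lhd T\) with \(T_0\neq S,T\). The free-cell condition holds because \(F(T_0,T)\subseteq F(S,T)\), by \Cref{lem:free}.
\item Suppose \(d=f\) and \(f\) is empty in \(S\). Let \(S^+\) be \(S\) with a \(\bullet\) inserted at \(f\), a move of type III. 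Then \(S\lhd S^+\). Also \(S^+\lhd T\), because \(f\notin F(S^+,T)\) and the other free cells agree. Hence \(S^+\) is an intermediate element distinct from \(S\) and \(T\).
\end{enumerate}
In both cases the relation \(S\lhd T\) is not a cover, which is a contradiction. Therefore a covering move of \(\ell\) forces \(f\) to exist and to contain a dot, and this proves the equivalence.

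I expect the main obstacle to be the bookkeeping of the free-cell condition (3) in these intermediate comparisons. I plan to handle it uniformly with \Cref{lem:free}, writing \(F(X,Y)=\col^s(X)\cap\row^s(Y)\) in each case.
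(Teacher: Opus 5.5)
Your argument is correct, but it proves a stronger statement than the paper does, by a different route. In the paper, \Cref{lem:changeable_left_iff} is literally the definition of a changeable cell of type II, and the authors only remark that it follows directly from the definitions.

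You instead read ``changeable'' as ``movable by some cover relation'' and show that this happens exactly when the first free cell to the right of the left-arrow exists and holds a dot. This is the type II part of \Cref{thm:cover_relation_iff}.

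If you invoke \Cref{pro:cover} in full, your only-if direction is one line. A cover that changes \(\row\) is neither a type I nor a type III move, since both preserve \(\row\). So it is a type II move, which by definition moves a changeable left-arrow, and your cases 1 and 2 would be redundant.

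The cases become worthwhile if you take from the proof of \Cref{pro:cover} only its first half: a cover with \(\col(S)\neq\col(T)\) is a type I move and therefore preserves \(\row\). This gives that \(S\) and \(T\) have the same up-arrows, which is all your cases actually use. Your phrase ``changes nothing else'' claims more than you need, and is slightly off, since the absorbed dot disappears.

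Used this way, your intermediate-element argument is a self-contained proof of the type II case. It avoids the conjugation step in the paper's proof of \Cref{pro:cover}. That step relies, without spelling it out, on conjugation turning a type I move into a reversed type II move.

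One detail to add in case 1: check that \(T_0\) is an fb-tableau. This holds because each cell strictly between \(c\) and \(f\) is not free and lies strictly right of \(c\). By your observation that no up-arrow of \(S\) lies strictly between \(c\) and \(d\), it is not an up-arrow. So it lies strictly above the up-arrow of its column and is empty in \(S\).
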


\begin{lemma}
  If, in the same row, there is a changeable cell of type II and
  another of type III or I, then there must be a dot between
  them.
\end{lemma}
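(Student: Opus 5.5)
The plan is a short argument by contradiction, organised by which two types occur in the row. Fix a row containing a changeable left-arrow $\ell$ (type II) and another changeable cell $c$ of type III or I. Every nonempty cell of the row lies weakly to the right of $\ell$, since nothing sits to the left of a left-arrow. By \Cref{lem:changeable_left_iff}, the first free cell $f$ to the right of $\ell$ contains a $\bullet$.

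We may assume $c$ lies to the right of $\ell$. A type III cell is free, hence strictly right of $\ell$. A type I cell is an up-arrow, which is nonempty and so lies right of $\ell$ in the row as well. We aim to show that $f$ lies strictly between $\ell$ and $c$, or at least that some dot does; that dot is then the required one.

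\emph{Type III.} Here $c$ is an empty free cell. Since $f$ is the first free cell right of $\ell$ and $c$ is free, $f$ lies weakly left of $c$. As $f$ contains a dot and $c$ is empty, $f\neq c$. Hence $f$ lies strictly between $\ell$ and $c$.

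\emph{Type I.} Here $c$ is an up-arrow in this row, changeable, so by \Cref{lem:changeable_up_iff} it is not in the first row. This case needs more care, since $c$ is not a free cell. The key step is to show that the dot $f$ lies left of $c$. Suppose instead that no free cell lies strictly between $\ell$ and $c$. Then every cell strictly between them is non-free. Since these cells are to the right of $\ell$, each is non-free because no up-arrow lies strictly above it in its column.

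Now use the fb-condition and the definition of changeable of type I. There is a cell above $c$ not pointed by a left-arrow, so the cells above $c$ in its column are empty. I expect the argument here to go as follows: the move of type II sends $\ell$ to $f$, and the only nonempty cells between $\ell$ and $f$ are non-free elements. An up-arrow in a cell $x$ with no up-arrow above it makes $x$ itself non-free, while every cell to its right in a column with an up-arrow higher up would be free. So I will show $f$ precedes $c$ by checking the column immediately to the right of $\ell$ with an up-arrow strictly above row level. Either the first such cell is free and lies left of $c$, or $c$ is the first nonempty cell after $\ell$.

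\emph{Main obstacle.} The last possibility, $c$ being the first nonempty cell after $\ell$, is the step I expect to be hardest. To rule it out, I will use that $f$ exists and contains a $\bullet$: it must lie to the right of $\ell$. If $f$ lay right of $c$, I would need a dot between them or a contradiction. I will try to exploit that cells right of the up-arrow $c$ in columns having up-arrows above are free. If this ordering fails, I will check whether the statement intends "between" to allow the dot $f$ on either side. Under that reading, $f$ being a dot strictly separating $\ell$ from $c$, or $c$ from subsequent cells, suffices.
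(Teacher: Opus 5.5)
Your Type III case is correct, but the Type I case is left open. The configuration you call the ``main obstacle'' cannot be excluded from the definition as you read it (``the first free cell to the right of $\ell$ contains a $\bullet$''), because it actually occurs. Take the fb-tableau of size $3$
\[
\begin{ytableau}
\bullet & & \uparrow \\
\leftarrow & \uparrow & *(gray)\bullet \\
& *(gray)\leftarrow
\end{ytableau}
\]
In row $2$, the cell $(2,2)$ is not free, since it is the up-arrow of its column. So the first free cell to the right of the left-arrow $(2,3)$ is $(2,1)$, which holds a $\bullet$. Under your reading, $(2,3)$ is therefore changeable of type II. The up-arrow $(2,2)$ is not in the first row, so it is changeable of type I by \Cref{lem:changeable_up_iff}. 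The two cells are adjacent, so there is no dot between them. No argument starting only from that reading can close your gap. Falling back on a looser meaning of ``between'' would prove a different statement; here the only dot in the row lies beyond both cells anyway.

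The missing idea is that type II changeability must be read as the mirror image of type I under conjugation. That is how type II covers arise in the proof of \Cref{pro:cover}: the left-arrow $\ell$ slides to $f$ only through empty cells pointed by up-arrows. Equivalently, $f$ is the first cell to the right of $\ell$ not pointed by an up-arrow, and $f$ contains a $\bullet$. This is also the only reading under which the type II move of \Cref{def:cover_relation} gives an fb-tableau, as \Cref{lem:cover1} asserts. In the example, moving $(2,3)$ to $(2,1)$ would leave the up-arrow $(2,2)$ with nothing above it and nothing to its left. Consistently, this tableau has a single upper cover in \Cref{fig:extra_slow}, obtained by raising $(2,2)$ to $(1,2)$.

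With this reading the lemma is immediate, and both cases are handled at once. An up-arrow of the row and an empty free cell of the row are both cells to the right of $\ell$ not pointed by an up-arrow, so each lies weakly to the right of $f$. Neither equals $f$, because $f$ contains a $\bullet$. Hence $f$ is a dot strictly between $\ell$ and the other changeable cell.
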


\begin{theorem}\label{thm:determined}
  A fb-tableau \(T\) is uniquely determined by its changeable cells
  and their type.
\end{theorem}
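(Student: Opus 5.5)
The plan is to reconstruct $T$ in three stages: first its up-arrows, then its left-arrows row by row, and finally its dots. At each stage we show that the data (the changeable cells together with their types I, II, III) leave exactly one possibility.

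\textbf{Up-arrows.} By the Remark, $T$ has exactly $n-1$ up-arrows. The topmost nonempty cell of each column $1,\dots,n-1$ is an up-arrow, so each of these columns contains exactly one up-arrow, and the root column contains none. By \Cref{lem:changeable_up_iff}, the changeable cells of type I are exactly the up-arrows not in the first row. Hence in each column $j\le n-1$ the up-arrow sits at the type I changeable cell if the column has one, and in row $1$ otherwise. This determines $\col(T)$, and therefore the set of cells lying strictly below an up-arrow or in the root column. Call such cells \emph{admissible}.

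\textbf{Left-arrows.} Fix a row $i\ge 2$; it contains exactly one left-arrow. If the row contains a changeable cell of type II, that cell is the left-arrow. Otherwise the left-arrow $\ell$ is not changeable, so by \Cref{lem:changeable_left_iff} either the first free cell to its right is empty, or there is no free cell to its right.

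In the first case this empty free cell is a type III changeable cell. Every type III cell in the row is free, hence lies to the right of $\ell$, so the first free cell is the leftmost type III cell $c$ in the row. The cells strictly between $\ell$ and $c$ are not free, and since they lie to the right of $\ell$, they are not strictly below an up-arrow. Moreover $\ell$ itself is admissible, because it has an up-arrow or the root above it. So $\ell$ is the rightmost admissible cell strictly to the left of $c$.

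In the second case, no cell to the right of $\ell$ is admissible, so $\ell$ is the rightmost admissible cell of the row. This is the same argument as in \Cref{lem:binary}(1).

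In both cases $\ell$ is determined. The case of a row with no type II cell is where I expect the main care to be needed. One has to check carefully that up-arrows lying in the row itself, and the border cell, do not break the characterization "rightmost admissible cell". The key point is that an up-arrow in row $i$ is not strictly below an up-arrow, so it is not admissible and cannot be mistaken for the position of $\ell$.

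\textbf{Dots.} Once all arrows are known, $F(T)$ is known. Every free cell contains a dot unless it is empty, and by definition the empty free cells are exactly the changeable cells of type III. This fixes the content of every free cell, including the border cells. Every cell that is neither free, nor an arrow, nor the root is empty. Therefore $T$ is uniquely determined by its changeable cells and their types.
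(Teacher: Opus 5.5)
Your proof is correct and follows the same three-stage reconstruction as the paper: up-arrows go at the type~I cells with the remaining ones in the first row, left-arrows go at the type~II cells or else at a forced position, and dots fill every free cell except the type~III cells. For rows without a type~II cell you take the rightmost admissible cell strictly to the left of the leftmost type~III cell, or of the whole row when it has no type~III cell (which is exactly your second case, so the case split is readable from the data); this is a more precise and fully justified form of the paper's rule ``the rightmost cell not pointed by an up-arrow such that every cell to its left is empty and not changeable''.
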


\begin{proof}
  Recall that a fb-tableau contains \(n-1\) up-arrows and \(n-1\)
  left-arrows, together with a dot in cell \( (1,n) \).

  We first place the up-arrows. We put an up-arrow in each changeable
  cell of type I. By~\Cref{lem:changeable_up_iff}, every other
  up-arrow must be placed in the first row.

  Next we place the left-arrows. We put a left-arrow in each
  changeable cell of type II. In each of the remaining row, the
  left-arrow is then placed in the rightmost cell that is not pointed
  by an up-arrow and such that every cell to its left is empty and not
  changeable.

  Finally, we determine the free cells. Each changeable cell of type
  III is left empty, and every other free cell is filled with a dot.
\end{proof}

We present three examples illustrating the construction of the tableau once the positions and types of the changeable cells are given.
\[
 \begin{ytableau}
    \bullet & &  &  & \\
    & &  & *(green){}&*(green)\uparrow\\
    *(green)\leftarrow && &*(gray) \\
     &*(green)\uparrow &*(gray) \\
    &*(gray)
  \end{ytableau}
\quad\quad\quad
\begin{ytableau}
    \bullet & & \uparrow & \uparrow & \\
    & &  & *(green){}&*(green)\uparrow\\
    *(green)\leftarrow && &*(gray) \\
     &*(green)\uparrow &*(gray) \\
    &*(gray)
  \end{ytableau}\quad\quad\quad
\begin{ytableau}
    \bullet & & \uparrow & \uparrow & \\
    & &  \leftarrow & *(green){}&*(green)\uparrow\\
    *(green)\leftarrow && &*(gray) \\
     \leftarrow &*(green)\uparrow &*(gray) \\
    &*(gray)\leftarrow
  \end{ytableau}\quad\quad\quad
\begin{ytableau}
    \bullet & & \uparrow & \uparrow & \\
    & &  \leftarrow & *(green){}&*(green)\uparrow\\
    *(green)\leftarrow &&\bullet &*(gray) \bullet\\
     \leftarrow &*(green)\uparrow &*(gray)\bullet \\
    &*(gray)\leftarrow
  \end{ytableau}\]

\[
 \begin{ytableau}
    \bullet & &  &  & \\
    & &   *(green){\leftarrow}&&*(green)\uparrow\\
     && &*(gray) \\
     &*(green) &*(gray) \\
    &*(gray)
  \end{ytableau}
\quad\quad\quad \begin{ytableau}
    \bullet &\uparrow & \uparrow & \uparrow & \\
    & &   *(green){\leftarrow}&&*(green)\uparrow\\
     && &*(gray) \\
     &*(green) &*(gray) \\
    &*(gray)
  \end{ytableau}\quad\quad\quad \begin{ytableau}
    \bullet &\uparrow & \uparrow & \uparrow & \\
    & &   *(green){\leftarrow}&&*(green)\uparrow\\
     && &*(gray)\leftarrow \\
     \leftarrow &*(green) &*(gray) \\
    &*(gray)\leftarrow
  \end{ytableau}
\quad\quad\quad \begin{ytableau}
    \bullet &\uparrow & \uparrow & \uparrow & \\
    & &   *(green){\leftarrow}&\bullet&*(green)\uparrow\\
     && &*(gray)\leftarrow \\
     \leftarrow &*(green) &*(gray)\bullet \\
    &*(gray)\leftarrow
  \end{ytableau}
\]
\[
 \begin{ytableau}
    \bullet & &  &  & \\
    & *(green){}& *(green){} &*(green){} &*(gray)\\
    &*(green){}&*(green){} &*(gray) \\
     &*(green){} &*(gray) \\
    &*(gray)
  \end{ytableau}
\quad\quad\quad  
\begin{ytableau}
    \bullet &\uparrow & \uparrow & \uparrow & \uparrow\\
    & *(green){}& *(green){} &*(green){} &*(gray)\\
    &*(green){}&*(green){} &*(gray) \\
     &*(green){} &*(gray) \\
    &*(gray)
  \end{ytableau}\quad\quad\quad  
\begin{ytableau}
    \bullet &\uparrow & \uparrow & \uparrow & \uparrow\\
     \leftarrow& *(green){}& *(green){} &*(green){} &*(gray)\\
     \leftarrow &*(green){}&*(green){} &*(gray) \\
      \leftarrow &*(green){} &*(gray) \\
      &*(gray)\leftarrow
  \end{ytableau}\quad\quad\quad  
\begin{ytableau}
    \bullet &\uparrow & \uparrow & \uparrow & \uparrow\\
     \leftarrow& *(green){}& *(green){} &*(green){} &*(gray)\bullet\\
     \leftarrow &*(green){}&*(green){} &*(gray)\bullet \\
      \leftarrow &*(green){} &*(gray)\bullet \\
    &*(gray) \leftarrow 
  \end{ytableau}\]

\section{Irreducible elements}\label{sec:irred}

A tableau \( T \) is \defn{meet-irreducible} if it is covered by a
unique tableau \( S \). Dually, a tableau \( T \) is
\defn{join-irreducible} if it covers a unique tableau \( S \).

\begin{lemma}
  There is a bijection between the set of changeable cells together
  with their types and the meet-irreducible elements.
\end{lemma}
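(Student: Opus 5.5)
We construct the bijection by sending a meet-irreducible tableau to its unique changeable cell together with that cell's type. Since \(\esTam_n\) is finite, every tableau other than the top is covered by at least one tableau. By \Cref{thm:cover_relation_iff}, the tableaux covering \(T\) are in bijection with the changeable cells of \(T\), each changeable cell giving one move of type I, II or III. So \(T\) is meet-irreducible exactly when it has a single changeable cell. The map \(\Phi\colon T\mapsto (c,\text{type})\) is then well defined on meet-irreducible elements.

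Injectivity of \(\Phi\) follows from \Cref{thm:determined}: a fb-tableau is determined by the set of its changeable cells and their types, so two meet-irreducible tableaux with the same single changeable cell of the same type are equal.

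Surjectivity is the real content. For each admissible position \(c\) and type, I will build a tableau whose only changeable cell is \(c\), with that type. I follow the reconstruction procedure of \Cref{thm:determined}, as in the worked examples after it.

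\textbf{Type I at \(c=(i,j)\).} Put an up-arrow at \(c\) and all other up-arrows in the first row. Place each left-arrow as far right as allowed: under the rightmost up-arrow or root in its row, avoiding pointed cells. Fill every free cell with a dot.

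\textbf{Type III at \(c\).} Put all up-arrows in the first row and all left-arrows in the first column, which makes every non-border cell below row one free. Then fill every free cell except \(c\) with a dot. This matches the last example.

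\textbf{Type II at \(c\).} Put the left-arrow at \(c\) and make the first free cell to its right a dot. Put all other up-arrows in row one and place the other left-arrows so that they are not changeable. Fill the remaining free cells with dots.

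In each case I then check three things using \Cref{lem:changeable_up_iff,lem:changeable_left_iff}. First, no other up-arrow is changeable, since all others lie in row one. Second, no empty free cell other than \(c\) remains. Third, every other left-arrow is unchangeable, because no free cell lies to its right or the first free cell to its right is empty. The last point forces the left-arrows to sit at the rightmost allowed position, so that they have no free cell to their right.

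The main obstacle is exactly this last point: making the other left-arrows unchangeable without destroying the fb-condition or creating new empty free cells, especially near border cells. Wherever a left-arrow must be pushed right, it may land on a border cell, and we also need the designated cell \(c\) to actually be changeable of its type. For types I and II this is where the stated ranges of possible positions for changeable cells come in: they guarantee that the required configuration exists. I would verify the construction case by case on the relative position of \(c\) and the border cell of its row, using the explicit examples after \Cref{thm:determined} as templates.
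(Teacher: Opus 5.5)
Your first two steps (meet-irreducibility means exactly one changeable cell by \Cref{thm:cover_relation_iff}, and injectivity by \Cref{thm:determined}) are exactly the paper's proof. The paper stops there and leaves the realizability of each admissible position and type implicit in the list that follows. The surjectivity half you add, however, does not work as written. In your type III construction, every left-arrow is in the first column and every free cell other than \(c=(i,j)\) is dotted. So each row \(r\neq i\) has its left-arrow at \((r,n)\), immediately followed by the dotted free cell \((r,n-1)\), and that left-arrow is changeable of type II. Already for \(n=3\) and \(c=(2,2)\), the left-arrow at \((3,3)\) can move onto the border dot \((3,2)\), so your tableau has two upper covers. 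The last example after \Cref{thm:determined} is not a template for this: it has six changeable cells, and even there the bottom left-arrow sits on the border.

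In your type I construction with \(c=(i,j)\) off the border (\(j\ge i\)), the rule ``rightmost cell under an up-arrow'' puts the row-\(i\) left-arrow on the border cell \((i,i-1)\), to the right of \(c\). Then the up-arrow at \(c\) has nothing to its left, and the fb-condition fails. The left-arrow of row \(i\) has to sit immediately left of \(c\), at \((i,j+1)\). But then \((i,j-1),\dots,(i,i-1)\) are free and must be dotted. Your check ``no free cell to its right, or the first one is empty'' (that is, \Cref{lem:changeable_left_iff} read literally) therefore declares this left-arrow changeable. It is not changeable: the up-arrow at \(c\) lies between it and the dots, and jumping over that up-arrow would leave it without a left-arrow. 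See the third type I example in the paper, or the element of \(\esTam_3\) with up-arrows at \((2,2),(1,1)\) and left-arrows at \((2,3),(3,2)\), which has a single upper cover. You need the sharper description obtained by conjugating type I moves: a type II move passes only over empty cells. With it, one uniform construction works:
\begin{enumerate}
\item put every up-arrow in row \(1\), except the one at \(c\) in type I;
\item put the left-arrow of each row \(r\neq i\) on its border cell \((r,r-1)\);
\item in row \(i\), put the left-arrow at \(c\) in type II and at \((i,j+1)\) in types I and III;
\item dot every free cell, leaving \(c\) empty in type III.
\end{enumerate}
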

\begin{proof}
  By \Cref{thm:cover_relation_iff}, a fb-tableau is meet-irreducible
  if and only if it has exactly one changeable cell. By
  \Cref{thm:determined}, the tableau is uniquely determined by this
  changeable cell and its type.
\end{proof}

As a consequence, we obtain the following description of the
meet-irreducible tableaux.

\begin{proposition}
  The meet-irreducible tableaux of size \(n\) are exactly the following:
\begin{itemize}
\item Tableaux with a unique changeable up-arrow. Such an up-arrow can
  be placed in any cell except those in the first row or first column.
  Hence there are \( \binom{n}{2} \) such tableaux. For example,
  \[
    \begin{ytableau}
    \bullet & & \uparrow & \uparrow & \uparrow\\
     &&& &*(gray)\leftarrow\\
     & &&*(gray)\leftarrow \\
     & &*(gray)\leftarrow\\
    \leftarrow&*(green)\uparrow\\
  \end{ytableau}\quad \quad \begin{ytableau}
    \bullet &  &\uparrow & \uparrow & \uparrow\\
    & && &*(gray)\leftarrow\\
     && &*(gray)\leftarrow \\
     \leftarrow& *(green)\uparrow& *(gray)\bullet\\
    &*(gray)\leftarrow\\
  \end{ytableau}
  \quad \quad \begin{ytableau}
    \bullet &  \uparrow && \uparrow & \uparrow\\
    & \leftarrow&*(green)\uparrow& \bullet &*(gray)\bullet\\
     & &&*(gray)\leftarrow \\
     & &*(gray)\leftarrow\\
    &*(gray)\leftarrow\\
  \end{ytableau}
  \]
\item Tableaux with a unique changeable left-arrow. Such a left-arrow
  can be placed in any non-border cell outside the first row. Hence
  there are again \( \binom{n}{2} \) such tableaux. For example,
  \[
    \begin{ytableau}
    \bullet &\uparrow & \uparrow & \uparrow & \uparrow\\
     &&& &*(gray)\leftarrow\\
     & &&*(gray)\leftarrow \\
     & &*(gray)\leftarrow\\
    *(green)\leftarrow&*(gray)\bullet\\
  \end{ytableau}\quad \quad \begin{ytableau}
    \bullet &  \uparrow &\uparrow& \uparrow & \uparrow\\
    & *(green)\leftarrow&\bullet& \bullet&*(gray)\bullet\\
     & &&*(gray)\leftarrow \\
     & &*(gray)\leftarrow\\
     &*(gray)\leftarrow\\
  \end{ytableau}
  \quad \quad \begin{ytableau}
    \bullet &  \uparrow &\uparrow& \uparrow & \uparrow\\   
    & & &&*(gray)\leftarrow \\
     *(green)\leftarrow&\bullet& \bullet&*(gray)\bullet\\
     & &*(gray)\leftarrow\\
    &*(gray)\leftarrow\\
  \end{ytableau}
  \]
\item Tableaux with a unique changeable free cell. This cell can be
  any empty free cell that is neither in the first row nor in the
  first column, and is not a border cell. Hence there are
  \( \binom{n-1}{2} \) such tableaux. For example,
  \[
    \begin{ytableau}
    \bullet &\uparrow & \uparrow & \uparrow & \uparrow\\
     &&& &*(gray)\leftarrow\\
     & &&*(gray)\leftarrow \\
    \leftarrow&*(green)&*(gray)\bullet\\
      &*(gray)\leftarrow\\  
  \end{ytableau}\quad \quad \begin{ytableau}
    \bullet &  \uparrow &\uparrow& \uparrow & \uparrow\\
    & \leftarrow&*(green) & \bullet&*(gray)\bullet\\
     & &&*(gray)\leftarrow \\
     & &*(gray)\leftarrow\\
    &*(gray)\leftarrow\\
  \end{ytableau}
  \quad \quad \begin{ytableau}
    \bullet &  \uparrow &\uparrow& \uparrow & \uparrow\\   
    & & &&*(gray)\leftarrow \\
     \leftarrow&*(green) & \bullet&*(gray)\bullet\\
     & &*(gray)\leftarrow\\
    &*(gray)\leftarrow\\
  \end{ytableau}
  \]
  \end{itemize}
\end{proposition}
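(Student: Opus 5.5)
By the preceding lemma, a meet-irreducible tableau is the same thing as a fb-tableau with exactly one changeable cell. By \Cref{thm:determined}, such a tableau is determined by the position and type of that cell. So the plan is, for each of the three types, to determine exactly which positions can occur as the unique changeable cell, and to check that the reconstruction in the proof of \Cref{thm:determined} then produces a genuine fb-tableau in which no other cell is changeable.

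\textbf{Type I.} If the unique changeable cell is an up-arrow at $c=(i,j)$, then by \Cref{lem:changeable_up_iff} every other up-arrow lies in the first row. Since $c$ is not in the first row, $i\ge 2$. The reconstruction places left-arrows as far right as allowed.
\begin{itemize}
\item No left-arrow may be changeable. So each row's left-arrow must either have no free cell to its right, or have an empty free cell first. The latter is impossible, since an empty free cell would be changeable of type III.
\item No empty free cell may exist.
\end{itemize}
I would then show that for every $c$ not in the first row or the first column, the reconstruction yields a valid tableau: left-arrows sit directly under the rightmost up-arrow of their row, and the only free cells are those to the right of $c$'s left-arrow in row $i$, which get dots.
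For $c$ in the first column (position $(i,n)$), the up-arrow would have no left-arrow to its left, which is impossible.
The available cells are rows $2,\dots,n$ with columns $j$ from $i-1$ to $n-1$, giving $\sum_{i=2}^n (n-i+1)=\binom n2$.

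\textbf{Type II.} This is handled by conjugation. By \Cref{thm:selfdual}, conjugation exchanges meet- and join-irreducibles, so it is not a direct symmetry here. I would therefore argue directly instead. All up-arrows are in the first row, occupying columns $1,\dots,n-1$, because an up-arrow outside the first row would be changeable. So every non-first-row cell lies below an up-arrow.

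Each row other than the distinguished one has its left-arrow at the border cell, since otherwise there would be a free cell to its right, making it or that cell changeable. In the distinguished row, the left-arrow at $c$ has all cells to its right free and filled with dots. This works for every non-border cell outside the first row, and there are $\binom n2$ such cells.

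\textbf{Type III.} Again all up-arrows are in the first row. The empty free cell $c$ must have a left-arrow to its left in its row, with all intermediate cells empty and non-changeable, hence not free. So the left-arrow is immediately left of $c$. Every other row has its left-arrow at the border cell.

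The cell $c$ cannot be in the first column, since it must be strictly right of a left-arrow. It cannot be a border cell, since border cells are nonempty. It cannot be in the first row, since it must be strictly below an up-arrow. Conversely, any remaining cell works: put the left-arrow just to its left and dots to its right. Counting the non-border cells of rows $2,\dots,n$ outside column $n$ gives $\sum_{i=2}^{n}(n-i)=\binom{n-1}2$.

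The main obstacle is the validity check in type I. One must ensure that after placing the single low up-arrow, the left-arrows in the rows between the first row and row $i$ do not create an empty free cell or a changeable left-arrow. I expect this to be handled by placing each left-arrow directly under the rightmost up-arrow of its row, as in \Cref{lem:binary}. This forces all free cells to lie to the right of $c$'s column in rows below the first, and these are filled with dots.
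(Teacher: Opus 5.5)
Your overall strategy (meet-irreducible $\iff$ exactly one changeable cell, uniqueness from \Cref{thm:determined}, then a type-by-type determination of the admissible positions) is the paper's, and your Type~II and Type~III analyses are fine. The gap is in Type~I, at the step you yourself identify as the main obstacle, and the fix you propose does not work.

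In row $i$ the left-arrow must sit immediately to the left of $c=(i,j)$, at $(i,j+1)$. It must lie to the left of $c$ because $c$ has nothing above it and needs a left-arrow in its cohook. It cannot lie further left, because the cell just to its right would then be free: either empty, hence changeable of type~III, or dotted, hence making the left-arrow changeable. Your rule of placing left-arrows ``directly under the rightmost up-arrow of their row, as in \Cref{lem:binary}'' would instead put it on the border cell $(i,i-1)$ whenever $j>i-1$. That leaves $c$ with a one-element cohook.

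More seriously, when $j>i-1$ the cells $(i,j-1),\dots,(i,i-1)$ to the right of $c$ are free and must carry dots. By your own criterion (every left-arrow must have no free cell to its right), the left-arrow at $(i,j+1)$ is then changeable. Taken at face value, your argument therefore only produces the $n-1$ positions with $c$ on the border, not $\binom{n}{2}$.

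The missing idea is that a move of type~II can never carry a left-arrow past an up-arrow in its row. That up-arrow would be left with an empty cohook, so the result would not be an fb-tableau; this is implicit in \Cref{lem:cover1}(1). Dually to type~I, where the up-arrow goes to the closest cell above not pointed by a left-arrow, a left-arrow is changeable exactly when the first cell to its right that is not pointed by an up-arrow is a free cell containing a $\bullet$. The phrase ``first free cell to its right'' in the definition has to be read in this sense. Otherwise the second and third type~I examples in the statement would each have two changeable cells.

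In row $i$ that first cell is $c$ itself, which is not free. So the left-arrow at $(i,j+1)$ is not changeable, in spite of the dots beyond $c$. In every other row $k\ge 2$, the first unpointed cell to the right of a non-border left-arrow is free, which forces the left-arrow onto the border cell. With these placements one checks directly that the result is an fb-tableau whose only changeable cell is $c$, for every $c$ outside the first row and first column.
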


\begin{corollary}\label{cor:meet_irr}
  There are \( 2 \binom{n}{2} + \binom{n-1}{2} \) meet-irreducible
  elements.
\end{corollary}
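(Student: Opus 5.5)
The count follows from the lemma giving a bijection between meet-irreducible tableaux and single changeable cells with their type, combined with the preceding proposition, which lists which positions each type can occupy. So the plan is to justify the three counts in that proposition and add them: $\binom{n}{2}$ for type I, $\binom{n}{2}$ for type II, and $\binom{n-1}{2}$ for type III.

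First, I will use the construction in the proof of \Cref{thm:determined} to show that every candidate position really is realized. For a prescribed single changeable cell $c$ of a given type, I place the remaining up-arrows in the first row and the remaining left-arrows at the forced rightmost allowed positions. I then fill the other free cells with dots and check the following.
\begin{itemize}
\item The result is a fb-tableau.
\item $c$ is indeed changeable of the prescribed type.
\item No other cell is changeable.
\end{itemize}
For type I, \Cref{lem:changeable_up_iff} says an up-arrow is changeable exactly when it lies outside the first row. So it suffices to count the cells outside the first row and first column where an up-arrow can sit in a fb-tableau. These are the cells $(i,j)$ with $2\le i\le n$ and $i-1\le j\le n-1$, which number $\sum_{i=2}^{n}(n-i+1)=\binom{n}{2}$.

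For type II, the admissible cells are $(i,j)$ with $2\le i\le n$ and $i\le j\le n$, which again gives $\binom{n}{2}$. For type III, the admissible cells are $(i,j)$ with $2\le i\le n$ and $i\le j\le n-1$, excluding border cells. This gives $\sum_{i=2}^{n}(n-i)=\binom{n-1}{2}$.

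The main obstacle is the realizability check for type II and type III: every other left-arrow must be non-changeable, and every other free cell must be filled. Putting all other up-arrows in the first row makes every other left-arrow sit directly under the first row's arrows with no free cell to its right. I would verify this from the explicit examples in the proposition. Summing the three counts gives $2\binom{n}{2}+\binom{n-1}{2}$.
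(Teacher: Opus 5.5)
Your proposal is correct and follows the paper's route: the corollary is just the sum $\binom{n}{2}+\binom{n}{2}+\binom{n-1}{2}$ of the three families in the preceding proposition, counted through the bijection between meet-irreducible tableaux and single changeable cells with their type. Your explicit realizability construction (other up-arrows in the first row, other left-arrows on the border, remaining free cells filled with dots) goes slightly beyond the paper, which only illustrates the admissible positions by examples. One point you skipped in the type~I case: for a non-border up-arrow at $(i,j)$, the row-$i$ left-arrow sits at $(i,j+1)$, and the first free cell to its right, $(i,j-1)$, contains a dot. That left-arrow is therefore non-changeable only under the intended reading that a type~II move cannot carry a left-arrow past an up-arrow in its row.
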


By \Cref{thm:selfdual}, the self-duality of the lattice gives an
immediate characterization of the join-irreducible elements.

\begin{corollary}\label{cor:join_irr}
  There are \( 2 \binom{n}{2} + \binom{n-1}{2} \) join-irreducible
  elements.
\end{corollary}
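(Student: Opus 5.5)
The plan is to transport \Cref{cor:meet_irr} along the conjugation map \(T\mapsto T'\). By \Cref{thm:selfdual}, conjugation is an order-reversing involution of \((\esTam_n,\lhd)\), that is, an anti-automorphism of the lattice. An anti-automorphism exchanges meet-irreducible and join-irreducible elements, so it gives a bijection between the two sets. The count of join-irreducibles then equals the count of meet-irreducibles, namely \(2\binom{n}{2}+\binom{n-1}{2}\).

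The key steps are as follows.

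First, I check that conjugation reverses cover relations. Suppose \(S\lhd T\) is a cover and \(T'\lhd U\lhd S'\). Applying conjugation again gives \(S\lhd U'\lhd T\). Since the cover has no intermediate element, \(U'\in\{S,T\}\), and hence \(U\in\{S',T'\}\). So \(S\lhd T\) is a cover if and only if \(T'\lhd S'\) is a cover.

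Second, I relate irreducibility in \(T\) and \(T'\). The elements covered by \(T\) are the \(S\) with \(S\lhd T\) a cover. By the first step, these correspond bijectively, via \(S\mapsto S'\), to the elements covering \(T'\). Therefore \(T\) covers exactly one element if and only if \(T'\) is covered by exactly one element. In other words, \(T\) is join-irreducible if and only if \(T'\) is meet-irreducible.

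Third, since conjugation is an involution, the map \(T\mapsto T'\) is a bijection from the join-irreducible elements onto the meet-irreducible elements. Applying \Cref{cor:meet_irr} then gives the stated count.

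There is no real obstacle here. The only point needing care is the first step, that conjugation maps covers to covers. This follows directly from \Cref{thm:selfdual} because conjugation is an involution, so no recomputation with the moves of \Cref{def:cover_relation} is needed. If an explicit description were wanted, one could add that conjugation exchanges moves of type I and type II and preserves type III. Conjugating the three families of meet-irreducibles listed in the preceding proposition would then give the join-irreducibles explicitly, but this is not needed for the count.
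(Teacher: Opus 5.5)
Your proposal is correct and follows the paper's own route: the paper gets this count immediately by applying the self-duality of \Cref{thm:selfdual} to the meet-irreducible count in \Cref{cor:meet_irr}. Your check that conjugation sends covers to reversed covers, and hence sends the elements covered by \(T\) bijectively to the elements covering \(T'\), simply spells out the step the paper leaves implicit.
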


\begin{corollary}\label{cor:join-irr-description}
The join-irreducible elements are the tableaux such that:
\begin{itemize}
\item All left-arrows are in the first column. All up-arrows but one are in the border cells. The cell right below the up-arrow that is not in a border cell can contain a $\bullet$ or be empty and the others under are empty except the border cell that must contain a $\bullet$,
\item All left-arrows but one are in the first column. All up-arrows but one are in the border cells. The cell right above the left-arrow that is not the first column can contain an up-arrow.
The border cell under the left-arrow  must contain a $\bullet$.
\end{itemize}
\end{corollary}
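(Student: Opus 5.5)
The plan is to obtain the join-irreducibles as conjugates of the meet-irreducibles. By \Cref{thm:selfdual}, conjugation $T\mapsto T'$ is an order anti-automorphism of $\esTam_n$. It therefore exchanges elements covering exactly one element with elements covered by exactly one element. So $J$ is join-irreducible if and only if $J'$ is meet-irreducible, and it suffices to apply \Cref{def:conjugate} to each of the three families in the preceding proposition. This gives two resulting shapes, matching the two bullets of the statement.

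For the meet-irreducibles, I first record their full shape, which I would extract from the reconstruction procedure in \Cref{thm:determined}. With a unique changeable cell, every non-changeable up-arrow lies in the first row. In each row without a changeable left-arrow, the left-arrow sits in the rightmost admissible cell with nothing to its left. Every free cell other than a type III changeable cell holds a dot.

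\textbf{Type I.} There is a unique up-arrow $u$ outside the first row. The left-arrows are then forced: in rows above $u$'s column they fall on the border, and in rows meeting $u$'s column below $u$ they also fall on the border, except that the row of $u$ has its left-arrow just left of $u$. All remaining free cells are dotted.

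\textbf{Type II.} All up-arrows are in the first row, one left-arrow $\ell$ is off the border, and the free cells to the right of $\ell$ are dotted.

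\textbf{Type III.} All up-arrows are in the first row and there is one empty free cell, whose row has its left-arrow just to its left.

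Next I conjugate these. Reflection turns "all up-arrows in the first row" into "all left-arrows in the first column", and "left-arrows on the border" into "up-arrows on the border". Filled free cells become empty free cells and vice versa, except that border cells keep their content.

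\textbf{Type III.} The conjugate has all left-arrows in column one. Its up-arrows are on the border except one, whose row position is the reflected position of the type III left-arrow; the cell just below it is the reflected empty cell. After complementation, that cell is a dot, the other free cells are empty, and the border cell below is a dot.

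\textbf{Type II.} This gives the same shape, with the cell below the up-arrow empty. Together these give the first bullet, the $\bullet$/empty alternative being exactly types III/II.

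\textbf{Type I.} The unique non-first-row up-arrow becomes a unique left-arrow outside the first column. Its neighbouring left-arrow becomes an up-arrow directly above it, present unless the original up-arrow was in row $2$, which is the "can contain an up-arrow" clause. The border cell below it becomes a dot after complementation, and the other free cells become empty. This gives the second bullet.

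The main obstacle is bookkeeping rather than ideas: the definition of conjugation complements only non-border free cells. The border cells' contents must therefore be tracked separately, using the rule that the border word is reversed with arrows swapped. One must also verify that each of the three families lands in exactly one bullet, with counts $\binom{n-1}{2}+\binom n2$ for the first bullet and $\binom n2$ for the second. I would check these counts against \Cref{cor:join_irr}, and check the whole description against $n=3$ in \Cref{fig:extra_slow}, to make sure no case is misassigned.
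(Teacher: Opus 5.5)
Your route is the paper's. The paper justifies this corollary only by the self-duality of \Cref{thm:selfdual}, i.e.\ by conjugating the three families of meet-irreducibles. Your explicit conjugation correctly sends types II and III to the first bullet and type I to the second, with the right counts.

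One step of your type I bookkeeping is wrong, however. The up-arrow directly above the conjugated left-arrow is always there, not ``present unless the original up-arrow was in row $2$''.
\begin{itemize}
\item If $u=(i,j)$ with $2\le i\le j+1$, the left-arrow of row $i$ is at $(i,j+1)$.
\item Conjugation sends $u$ to $(n+1-j,\,n+1-i)$ and sends $(i,j+1)$ to $(n-j,\,n+1-i)$. The second cell is directly above the first, for every $i$.
\item That up-arrow lies in the first row exactly when $j=n-1$. For instance, $n=4$ and $u=(2,3)$ yield $(1,3,\leftarrow)$, whose non-border up-arrow is $(1,3)$.
\end{itemize}
So the conjugate of the type I tableau with up-arrow $(i,j)$ is precisely $(n-j,\,n+1-i,\leftarrow)$, and the ``can'' in the statement should be read as ``does''.

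You should also separate two boundary cases that your sentences gloss over.
\begin{itemize}
\item If $u$ is itself a border cell ($i=j+1$), the conjugate left-arrow is on the border. There is then no dotted border cell below it; these are the $(m,m,\leftarrow)$.
\item If the type II left-arrow is at $(r,r)$, immediately left of its border cell, the cell right below the conjugate up-arrow is the dotted border cell. These $n-1$ tableaux $(m,m,\bullet)$ realise the ``$\bullet$'' option rather than the ``empty'' one, so the $\bullet$/empty split is not exactly types III/II.
\end{itemize}
Finally, border dots are carried over unchanged by conjugation, not created by complementation, as your own description of the conjugation rule already says.
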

See the tableaux in \Cref{fig:J_irr_poset} for examples of the
join-irreducible elements of \( \esTam_n \).
By~\Cref{cor:join-irr-description}, each join-irreducible has exactly
one up-arrow not on the border. We now introduce a notation for
join-irreducible elements.

\begin{definition}\label{def:join-irr}
  Let \( 1 \leq i \leq j < n \). We consider the following two cases.
  If \( i < j \), then
  \( \sigma \in \{ \emptyset, \leftarrow, \bullet \} \); if \( i=j \),
  then \( \sigma \in \{ \leftarrow, \bullet \} \). In either case, we
  define the tableau \( (i,j,\sigma) \) as follows. All up-arrows are
  placed on the border, except one at position \( (i,j) \). The cell
  immediately below \( (i,j) \) contains the entry \( \sigma \). All
  remaining left-arrows are placed in the leftmost column.
\end{definition}

Accordingly, we denote these join-irreducible elements by
\( (i,j,\emptyset) \), \( (i,j,\leftarrow) \), and
\( (i,j,\bullet) \). This notation will be used frequently in
Sections~\ref{sec:polygonality} and
\ref{sec:lattice-congruences}. For a join-irreducible element of the
form \( (i,j,\sigma) \) with \( i<j \), the third entry
\( \sigma \in \{\emptyset, \leftarrow, \bullet\} \) is called its
\defn{type}. When \( i=j \), the only possible join-irreducible
elements are \( (i,i,\bullet) \) and \( (i,i,\leftarrow) \). We regard
\( (i,i,\bullet) \) as a degenerate case of type \( \emptyset \), and
\( (i,i,\leftarrow) \) as being of type \( \leftarrow \). This will
occur in \Cref{prop:join-irred_poset}, and also in
\Cref{sec:lattice-congruences}. In order to visualize them easily, we
say that a join-irreducible of type~$\leftarrow$ is a \defn{red} join-irreducible.
The \defn{green} join-irreducibles are the ones of type $\emptyset$ and the
join-irreducibles $(i,i,\bullet)$. The other join-irreducibles of type $\bullet$ are
the \defn{blue} join-irreducible.

We can now describe the poset of join-irreducible elements.
\begin{proposition}\label{prop:join-irred_poset}
  The poset of join-irreducible elements of \(\esTam_n\) has \(n-1\)
  connected components, in bijection with the columns \(1,\dots,n-1\).
  The connected component corresponding to the column \(j\) contains
  all the join-irreducible elements \((i,j,\sigma)\) for
  \(i=1,\dots,j\). Moreover,
  \begin{enumerate}
  \item \((i,j,\emptyset)\lhd (i,j,\bullet) \lhd (i,j,\leftarrow)\);
  \item \((j,j,\bullet) \lhd (j-1,j,\emptyset)\lhd \dots \lhd (1,j,\emptyset)\).
  \end{enumerate}
\end{proposition}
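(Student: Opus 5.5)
We will compare join-irreducibles directly through the three conditions defining \(\lhd\), reading the arrow positions of each \((i,j,\sigma)\) from \Cref{def:join-irr}. First we record the row and column spaces of \((i,j,\sigma)\). All up-arrows except the one at \((i,j)\) lie on border cells, so \(\col((i,j,\sigma))\) is the column space of the bottom element (up-arrows on all border cells, left-arrows in the first column) enlarged by the cells of column \(j\) from row \(i\) down to the border cell \((j+1,j)\). All left-arrows lie in the first column except possibly one. When \(\sigma={\leftarrow}\), the left-arrow at \((i+1,j)\) removes from the row space the cells of row \(i+1\) strictly left of column \(j\). The free cells are contained in column \(j\), strictly below row \(i\) (and to the right of the left-arrows). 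They are the cells \((i+1,j),\dots,(j+1,j)\) when \(\sigma\neq{\leftarrow}\), and the cells strictly below \((i+1,j)\) when \(\sigma={\leftarrow}\). The entries there are \(\sigma\) at \((i+1,j)\), empty cells, and a dot on the border.

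Incomparability across different columns \(j\neq j'\) then follows from condition (1). The column space of \((i,j,\sigma)\) strictly contains that of the bottom element only in column \(j\). Moreover, the row space of any join-irreducible can only shrink in the row of its non-border up-arrow. Hence \(\col(x)\subseteq\col(y)\) forces the columns to agree, unless \(x\) is the bottom element, which is not join-irreducible. This gives the \(n-1\) classes. Connectedness of each class will follow from the chains in (1) and (2): every \((i,j,\sigma)\) is linked by (1) to \((i,j,\emptyset)\), or to \((j,j,\bullet)\) when \(i=j\), and these are linked by (2).

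For (1) we fix \(i<j\). Then \((i,j,\emptyset)\) and \((i,j,\bullet)\) have the same arrows, and they differ only by a dot in a common free cell, so the relation is a move of type III. Going from \((i,j,\bullet)\) to \((i,j,\leftarrow)\), the column spaces agree and the row space shrinks by the part of row \(i+1\) left of column \(j\). Condition (3) concerns only the common free cells, which by \Cref{lem:free} are the cells of column \(j\) strictly below row \(i+1\). These are empty in both tableaux except for the border dot, which is shared. This is really a move of type II, since the first free cell to the right of the left-arrow in row \(i+1\) is \((i+1,j)\), which contains a dot.

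For (2) we compare \((i,j,\emptyset)\) with \((i-1,j,\emptyset)\); the degenerate case \((j,j,\bullet)\lhd(j-1,j,\emptyset)\) is handled identically. Raising the up-arrow from \((i,j)\) to \((i-1,j)\) is a move of type I. The newly created free cell \((i,j)\) is not a border cell, so it stays empty, and the result is exactly \((i-1,j,\emptyset)\). The border dot persists, so \((j,j,\bullet)\lhd(j-1,j,\emptyset)\) also holds. Transitivity then gives the chains.

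The main obstacle is the claim, implicit in ``Moreover'', that these relations generate the whole order within a component. We should also check the cross relations, for instance whether \((i,j,\leftarrow)\) and \((i',j,\sigma')\) with \(i'<i\) are comparable. We plan to settle this by checking conditions (1)–(3) directly. Condition (1) forces the up-arrow of the smaller element to be weakly lower. Condition (2) forbids a left-arrow in the larger element unless it also occurs at the same place in the smaller one. This gives, for example, that \((i,j,\leftarrow)\) lies below no element \((i',j,\sigma')\) with \(i'<i\) other than \((i',j,\leftarrow)\) itself (if even that); we confirm this against \Cref{fig:extra_slow} for \(n=3\).
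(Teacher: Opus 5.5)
The parts you actually carry out are correct. Condition (1) of \(\lhd\) alone separates different columns. Realizing each step of (1) and (2) as an explicit move is a valid alternative to the paper's direct comparison of arrows and dots: a type III move from \((i,j,\emptyset)\) to \((i,j,\bullet)\), a type II move to \((i,j,\leftarrow)\), and a type I move from \((i,j,\emptyset)\) to \((i-1,j,\emptyset)\) (resp.\ from \((j,j,\bullet)\) to \((j-1,j,\emptyset)\)). By \Cref{lem:cover1}, your route even shows these are cover relations of \(\esTam_n\). The step \((j,j,\bullet)\lhd(j,j,\leftarrow)\), which you need for connectedness, is covered by the same type II argument with \(F(S,T)=\emptyset\). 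One side remark is inaccurate but unused: the row space of \((i,j,\leftarrow)\) shrinks in row \(i+1\), not in the row of the up-arrow.

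The gap is the part you defer, namely that (1), (2) and transitivity give \emph{all} relations inside a component. This is where the paper's proof does its real work, and it is later invoked for the ``only if'' directions in \Cref{lem:poset_irr_spine}. Your sketch of it reads condition (2) backwards. The relation \(S\lhd T\) requires \(\row(T)\subseteq\row(S)\). So it is a left-arrow of the \emph{smaller} element outside the first column that must be matched, weakly to its right, in the larger element; your own proof of \((i,j,\bullet)\lhd(i,j,\leftarrow)\) uses exactly this.

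Done correctly, the argument runs as follows. If \((i,j,\sigma)\lhd(k,j,\sigma')\), then column spaces force \(i\ge k\). Suppose \(i>k\). Then \(\row((k,j,\sigma'))\) contains all of row \(i+1\), because its left-arrow there is in the first column (\(k+1\neq i+1\)), whereas \(\row((i,j,\leftarrow))\) does not. Hence \((i,j,\leftarrow)\) lies below no \((k,j,\sigma')\) with \(k<i\), not even \((k,j,\leftarrow)\); your hedge ``(if even that)'' resolves negatively.

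The case \(\sigma=\bullet\) with \(k<i<j\) passes conditions (1) and (2), so you need condition (3). The cell \((i+1,j)\) is free in both tableaux, holds a dot in \((i,j,\bullet)\), and is empty in \((k,j,\sigma')\).

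Hence, for \(i>k\), only \((i,j,\emptyset)\) and \((j,j,\bullet)\) can lie below \((k,j,\sigma')\), and those relations are exactly (2) followed by (1). For \(i=k\), the three elements form the chain (1). This is the missing piece needed to pin down the order within each component.
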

\begin{proof}
  Let \((i,j,\sigma)\) and \((k,\ell,\sigma')\) be two
  join-irreducible elements. If \( \ell \neq j \), then by comparing
  their column spaces, we see that they are incomparable. Hence the
  join-irreducible elements can only be comparable when their unique
  non-border up-arrows lie in the same column.

  Now consider the case \( \ell = j \). Suppose that
  \((i,j,\sigma)\lhd (k,j,\sigma')\). Then the up-arrows of
  \( (i,j,\sigma) \) must lie weakly below those of
  \( (k,j,\sigma') \), and hence \( i \geq k \).

  If \( i=k \), then we immediately obtain
  \[
    (i,j,\emptyset)\lhd (i,j,\bullet) \lhd (i,j,\leftarrow).
  \]
  Now assume that \( i > k \).
  Then \( (i,j,\bullet) \) is not smaller than
  \( (k,j,\sigma') \), except in the case \( i=j \). Indeed,
  the cell containing the dot in \( (i,j,\bullet) \) is free in
  \( (k,j,\sigma') \), and therefore it must contain a dot there as
  well. Similarly, by comparing row spaces, we see that
  \( (i,j,\leftarrow) \) is not smaller than \( (k,j,\sigma') \)
  whenever \( i > k \).

  Thus the only remaining possible relations with \( i > k \) are
  among the tableaux of type \( \emptyset \) and \( (j,j,\bullet) \).
  It remains to prove that
  \[
    (j,j,\bullet) \lhd (j-1,j,\emptyset)\lhd \dots \lhd (1,j,\emptyset).
  \]
  This is immediate, since these tableaux have the same row spaces and
  no non-border dots.
\end{proof}

See~\Cref{fig:poset of join-irre} for an illustration of one connected
component.
\begin{figure}
   \[
    \xymatrix{
 & (i-1,i,\leftarrow) & (i-2,i,\leftarrow) &  & (1,i,\leftarrow) \\
(i,i,\leftarrow) & (i-1,i,\bullet) \ar@{->}[u] & (i-2,i,\bullet) \ar@{->}[u] &  & (1,i,\bullet) \ar@{->}[u] \\
(i,i,\bullet) \ar@{->}[u] \ar@{->}[r] & (i-1,i,\emptyset) \ar@{->}[u] \ar@{->}[r] & (i-2,i,\emptyset) \ar@{->}[r] \ar@{->}[u] & \dots \ar@{->}[r] & (1,i,\emptyset) \ar@{->}[u]
}
    \]
    \caption{A connected component in the poset of join-irreducible elements of $\esTam_n$.}
\label{fig:poset of join-irre}
\end{figure}

\section{Semidistributivity}\label{sec:semidistributivity}

A lattice \( L \) is said to be \defn{semidistributive} if, for all
\( x,y,z \in L \), the following hold:
\begin{itemize}
\item (join-semidistributivity) if \( x \vee z = y \vee z \), then
  \( (x \wedge y) \vee z = x \vee z \), and
\item (meet-semidistributivity) if \( x \wedge z = y \wedge z \), then
  \( (x \vee y) \wedge z = x \wedge z \).
\end{itemize}

In this section, we prove that \(\esTam_n\) is semidistributive. We
will use \Cref{lem:meet} and \Cref{lem:join}, which give explicit
description of the meets and joins in our lattice. 

To prove that our lattice is semidistributive, we first analyze
carefully what the condition \(x\land z = y\land z\) means. The result
will then follow from the fact that total orders are \defn{distributive
  lattices}.

\begin{lemma}\label{lem:sd1}
  Let \(x,y,z\) be fb-tableaux such that
  \[
    \col(z)\cap \col(x) = \col(z)\cap \col(y).
  \]
  Then the tableaux \(z\land x\), \(z\land y\) and
  \(z \land (x\lor y)\) have the same up-arrows.
\end{lemma}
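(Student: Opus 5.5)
The plan is to reduce everything to column spaces, column by column. By \Cref{lem:meet}(b), the up-arrow of \(z\land w\) in a given column is the lower of the up-arrows of \(z\) and \(w\). Equivalently, \(\col(z\land w)=\col(z)\cap\col(w)\), as already observed in the proof of \Cref{thm:estam_lattice}. A tableau's up-arrows are determined by its column space: the up-arrow of a column sits at the top of that column's part of the column space, and there is exactly one up-arrow in each column except the root column. So it suffices to show
\[
\col(z)\cap\col(x)=\col(z)\cap\col(y)=\col(z)\cap\col(x\lor y).
\]
The first equality is the hypothesis.

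For the second equality, I would first use \Cref{lem:join}(c) to describe \(\col(x\lor y)\). In each column, the up-arrow of \(x\lor y\) is placed at the highest of the up-arrows of \(x\) and \(y\), and then possibly pushed further upward. Hence \(\col(x\lor y)\supseteq\col(x)\cup\col(y)\), and this inclusion may be strict. Fix a column and view each column space in it as a "height" in a total order. Write \(h_z,h_x,h_y,h_{\vee}\) for the heights of the up-arrows of \(z,x,y,x\lor y\). The hypothesis says \(\min(h_z,h_x)=\min(h_z,h_y)\). The claim is \(\min(h_z,h_\vee)=\min(h_z,h_x)\), which is the distributivity remark made just before the lemma.

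I would split into two cases. If \(h_z\le \max(h_x,h_y)\), then since \(h_\vee\ge\max(h_x,h_y)\ge h_z\), we get \(\min(h_z,h_\vee)=h_z\). Also, the hypothesis forces \(\min(h_z,h_x)=\min(h_z,h_y)\), and since one of \(h_x,h_y\) is \(\ge h_z\), both minima equal \(h_z\). If instead \(h_z>\max(h_x,h_y)\), then the hypothesis gives \(h_x=h_y\). Everything then hinges on whether the upward push in \Cref{lem:join}(c) can make \(h_\vee\) strictly larger than \(\max(h_x,h_y)\). If it can, then \(\min(h_z,h_\vee)\) might exceed \(h_x\).

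This push is the main obstacle. I would show that a push by \(x\lor y\) beyond \(\max(h_x,h_y)\) never passes \(h_z\) in a way that matters. The push only happens when the cell at height \(\max(h_x,h_y)\) is pointed by a left-arrow of \(x\lor y\). That left-arrow is, by \Cref{lem:join}(b), the rightmost of those of \(x\) and \(y\), so it is a left-arrow of \(x\) or \(y\) in that row. Suppose \(h_x=h_y\) and the common up-arrow position \(u\) is pointed, in that row, by a left-arrow of \(x\) (say) at a cell \(\ell\) strictly to the right of \(u\)'s column. Then \(u\), being an up-arrow of \(x\), lies strictly left of \(x\)'s left-arrow in its row. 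That contradicts the fb-condition in \(x\), since the cohook of the left-arrow \(\ell\) can contain nothing to its left. More precisely, the cell \(u\) is nonempty and lies to the left of \(\ell\) in the same row, which is impossible for a left-arrow of \(x\).

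So in the case \(h_x=h_y\), no push occurs, and \(h_\vee=h_x\). The final check is to confirm carefully that "pointed by a left-arrow" in \Cref{lem:join}(c) means exactly a left-arrow in the same row to the right, which is then ruled out as above. With that, \(\min(h_z,h_\vee)=\min(h_z,h_x)\) holds in every column, and the three meets have the same up-arrows.
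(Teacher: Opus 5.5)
Your proposal is correct and is essentially the paper's own proof. The paper makes the same column-by-column case split: either the up-arrow of \(z\) lies weakly below both \(u_x\) and \(u_y\), or \(u_x=u_y\) lies strictly below \(u_z\); your reformulation through heights and \(\min/\max\) only repackages this. In the second case the paper uses your key observation: the left-arrow of \(x\lor y\) in the row of \(u_x\) is a left-arrow of \(x\) or of \(y\), so it lies strictly to the left of \(u_x\), and the upward adjustment in the join construction never happens.
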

\begin{proof}
  Fix an arbitrary column \( C \). For a tableau \( t \), let
  \( u_t \) denote the cell of \( C \) containing the up-arrow of
  \( t \). For two cells \( u \) and \( v \) in the same column, we
  write \( u < v \) if \( u \) lies below \( v \), and \( u \leq v \)
  if \( u \) lies weakly below \( v \).

  We first use the assumption of the lemma. Suppose that
  \( u_z \leq u_x \). Then \( u_z \) is the highest nonempty cell of
  \( C \) in \( \col(z) \cap \col(x) \). Since
  \( \col(z)\cap \col(x) = \col(z)\cap \col(y) \), the cell \( u_z \)
  is also the highest nonempty cell of \( C \) in
  \( \col(z) \cap \col(y) \). Hence \( u_z \leq u_y \).

  Next, suppose that \( u_x < u_z \). Then \( u_x \) is the highest
  nonempty cell of \( C \) in
  \( \col(z)\cap \col(x) = \col(z)\cap \col(y) \). Therefore
  \( u_x = u_y \).
  
  Thus exactly one of the following two cases occurs:
  \[
    u_z \leq u_x, \ u_z \leq u_y, \qquad \text{or} \qquad u_x = u_y < u_z.
  \]
  \begin{enumerate}
  \item Suppose that \( u_z \leq u_x \) and \( u_z \leq u_y \). Then
    \(u_{z\land x} = u_{z\land y}=u_z\). By construction,
    \(u_{x\lor y}\) lies weakly above both \( u_x \) and \( u_y \).
    Hence \(u_{z\land (x\lor y)} = u_z\).
  \item Suppose that \(u_x = u_y < u_z\). To determine
    \(u_{x\lor y}\), we look at the higher of the two up-arrows in
    \( x \) and \( y \), which is \( u_x = u_y \). In both \( x \) and
    \( y \), there is a left-arrow to the left of \( u_x \).
    Therefore, in \(x\lor y\), there is also a left-arrow to the left
    of \( u_x \). Hence the up-arrow of \( x \lor y \) is also in the
    cell \( u_x \). Thus \(u_{x\lor y} = u_{x} = u_y\). It follows
    that
    \(u_{z\land(x\lor y)} = u_x = u_y = u_{z\land x}=u_{z\land y}\).
  \end{enumerate}
  In either case, the tableaux \(z\land x\), \(z\land y\), and
  \(z \land (x\lor y)\) have their up-arrows in the same position in
  the column \( C \). This proves the lemma.
\end{proof}

\begin{lemma}\label{lem:sd2}
  Let \(x,y,z\) be fb-tableaux such that \(x\land z = y\land z\). Then
  the tableaux \(z\land (x\lor y)\) and \(z\land x\) have the same
  arrows.
\end{lemma}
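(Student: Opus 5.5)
The plan is to show separately that the up-arrows and the left-arrows of \(z\land(x\lor y)\) coincide with those of \(z\land x\) (equivalently \(z\land y\), since \(x\land z=y\land z\)). For the up-arrows I would apply \Cref{lem:sd1}. By \Cref{lem:meet}(b), \(\col(z\land x)=\col(z)\cap\col(x)\) and \(\col(z\land y)=\col(z)\cap\col(y)\). Since \(z\land x=z\land y\), these two sets are equal, which is exactly the hypothesis of \Cref{lem:sd1}. Hence \(z\land x\), \(z\land y\) and \(z\land(x\lor y)\) have the same up-arrows.

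For the left-arrows I would work row by row, using step (c) of \Cref{lem:meet} and step (b) of \Cref{lem:join}. Fix a row and write \(\ell_t\) for the left-arrow of \(t\) in that row. In \(x\lor y\), the left-arrow is the rightmost of \(\ell_x\) and \(\ell_y\), say \(\ell_{x\lor y}=\ell_x\) (the other case is symmetric, since \(x\) and \(y\) play symmetric roles). The left-arrow of \(z\land(x\lor y)\) is obtained by taking the leftmost of \(\ell_z\) and \(\ell_x\), then sliding left to the first cell not pointed by an up-arrow of \(z\land(x\lor y)\). The left-arrow of \(z\land x\) is produced by the same recipe from the same two candidate cells. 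The sliding is governed by the up-arrows of the meet, and these agree by the first step. So both constructions start from \(\min(\ell_z,\ell_x)\), use the same set of forbidden cells, and therefore output the same cell. In the other case, \(\ell_{x\lor y}=\ell_y\), the same argument yields the left-arrow of \(z\land y\), which equals that of \(z\land x\).

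The main obstacle I expect is making sure the sliding rule in step (c) really depends only on the up-arrows of the resulting meet and on the starting cell. The procedure must move left exactly to the first cell that is not pointed by an up-arrow of \(L\), and \(L\)'s up-arrows have already been fixed in step (b) before step (c) is applied. I would check this carefully against the wording of \Cref{lem:meet}, and also confirm that taking the ``leftmost'' of the two candidates commutes correctly with the max in the join. That amounts to the observation that \(\min(\ell_z,\max(\ell_x,\ell_y))\) equals either \(\min(\ell_z,\ell_x)\) or \(\min(\ell_z,\ell_y)\), which holds in a total order. Combining both parts, \(z\land(x\lor y)\) and \(z\land x\) have the same arrows.
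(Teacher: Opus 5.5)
Your proof is correct and follows essentially the paper's argument. The up-arrows come from \Cref{lem:sd1}, and you rightly verify its hypothesis via \(\col(z\land x)=\col(z)\cap\col(x)\), a step the paper leaves implicit. For the left-arrows, both proofs compare row by row the pre-slide position (the leftmost of \(\ell_z\) and of the rightmost of \(\ell_x,\ell_y\)) and then slide it using the common up-arrows. The paper rewrites this position with the identity \(\max(a,\min(b,c))=\min(\max(a,b),\max(a,c))\). Your case split \(\ell_{x\lor y}\in\{\ell_x,\ell_y\}\) makes it clearer that only the post-slide equality \(\ell_{z\land x}=\ell_{z\land y}\) is needed, not equality of the pre-slide positions.
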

\begin{proof}
  By~\Cref{lem:sd1}, the tableaux \(z\land (x\lor y)\) and
  \(z\land x\) have the same up-arrows. Thus it remains to show that
  they have the same left-arrows.

  Fix an arbitrary row \( R \), viewed as a total order from right to
  left. For a tableau \( t \), let \( \ell_t \) denote the cell of
  \( R \) containing the left-arrow of \( t \). With this order, the
  join is given by the minimum, so \( \ell_{x\lor y} = \min(\ell_x, \ell_y) \).

  For a tableau \( t \), define \( m_{z\land t} = \max(\ell_z,\ell_t) \),
  that is, the leftmost of \( \ell_z \) and \( \ell_t \). Then we have
  \[
    m_{z\land (x\lor y)} = \max(\ell_z,\ell_{x\lor y})= \max(\ell_z, \min(\ell_x,\ell_y)).
  \]
  The operations \( \min \) and \( \max \) satisfy
  \( \max(a, \min(b,c)) = \min(\max(a,b), \max(a,c)) \).
  Hence
  \[
    m_{z\land (x\lor y)} = \min(\max(\ell_z,\ell_x), \max(\ell_z,\ell_y)) = \min(m_{z\land x},m_{z\land y}).
  \]
  Since \( z\land x = z \land y \), we have
  \( m_{z\land (x\lor y)} = m_{z\land x} \).

  Let \( u \) be the first cell weakly to the left of
  \(m_{z\land(x\lor y)}\) that is not pointed by an up-arrow. Then, by
  the description of the meet in~\Cref{lem:meet},
  \( u = \ell_{z\land (x\lor y)} \). Since, by~\Cref{lem:sd1},
  \(z\land(x\lor y)\), \(z\land x\) have the same up-arrows, the cells
  of \( R \) not pointed by up-arrows are the same in both tableaux.
  Therefore, in the tableau $z\land x$, the first cell weakly to the left of \( m_{z\land x} \)
  that is not pointed by an up-arrow is also \( u \). Hence
  \[
    \ell_{z\land(x\lor y)} = \ell_{z\land x}.
  \]
  Since the row \(R\) was arbitrary, the tableaux \(z\land(x\lor y)\)
  and \(z\land x\) have the same left-arrows. This completes the
  proof.
 \end{proof}

\begin{lemma}\label{lem:sd3}
  Let \(x,y,z\) be three fb-tableaux be such that
  \(x\land z = y\land z\). Then the tableaux \(z\land (x\lor y)\) and
  \(z\land x\) have the same dots.
\end{lemma}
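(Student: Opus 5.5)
By \Cref{lem:sd2}, the tableaux \(L_1 := z\land(x\lor y)\) and \(L_0 := z\land x\) have the same arrows, hence the same free cells \(F := F(L_1)=F(L_0)\). Since \(z\land x = z\land y\), we also have \(L_0 = z\land y\). It therefore suffices to show that, for every \(c\in F\), the cell \(c\) contains a dot in \(L_1\) if and only if it contains a dot in \(L_0\).

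I would use the meet rule (d) of \Cref{lem:meet}. A free cell \(c\) of \(L_1\) contains a dot if and only if two conditions hold: \(c\) is not an empty free cell of \(z\), and \(c\) is not an empty free cell of \(x\lor y\). Likewise, \(c\) contains a dot in \(L_0\) if and only if \(c\) is not an empty free cell of \(z\) and not an empty free cell of \(x\); the same holds with \(y\) in place of \(x\). The \(z\)-condition is common to both sides. So, restricting to cells \(c\in F\) that are not empty free cells of \(z\), I need to show that \(c\) is an empty free cell of \(x\lor y\) if and only if it is an empty free cell of \(x\). By the equality \(L_0=z\land y\), this is in turn equivalent to \(c\) being an empty free cell of \(y\).

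The direction ``\(\Leftarrow\)'' goes as follows. Suppose \(c\) is free and empty in both \(x\) and \(y\); I want \(c\) free and empty in \(x\lor y\). Freeness of \(c\) in \(x\lor y\) follows from the dual of statement (1) of \Cref{lem:meet} (free in both implies free in the join), which I would verify from the join construction in \Cref{lem:join} by conjugation. Emptiness then follows from \Cref{lem:join}(d), since the dot of the join comes only from a dot of \(x\) or \(y\) at a free cell.

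The direction ``\(\Rightarrow\)'' is the main obstacle. Suppose \(c\) is free and empty in \(x\lor y\). By the dual of \Cref{lem:meet}(1), \(c\) is free in \(x\) or in \(y\), and wherever it is free it must be empty, otherwise \Cref{lem:join}(d) would put a dot in the join. Say \(c\) is free in \(x\), so it is an empty free cell of \(x\); then \(c\) is empty in \(L_0=z\land x\), and therefore also in \(L_0 = z\land y\). Running the \(L_0\) criterion with \(y\), and using that \(c\) is not an empty free cell of \(z\), we conclude that \(c\) is an empty free cell of \(y\). This gives the equivalence for both \(x\) and \(y\) simultaneously, which completes the argument. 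The point needing most care is the freeness transfer for joins, and checking that the root and border cells, where rule (d) is modified, behave consistently. For border cells I would instead invoke \Cref{lem:meet}(2) together with its dual for joins: the border entry of a meet is a minimum in the chain \(\uparrow\,\leq\bullet\leq\,\leftarrow\), and the distributivity of this total order handles those cells directly.
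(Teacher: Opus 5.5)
Your proposal is correct and follows essentially the same argument as the paper. You use \Cref{lem:sd2} to identify the free cells, then compare emptiness cell by cell via rule (d) of \Cref{lem:meet}, and your two directions are exactly the paper's: a cell that is free and empty in \(x\lor y\) is free, hence empty, in \(x\) or in \(y\), and this transfers through \(z\land x=z\land y\); a cell that is free and empty in both \(x\) and \(y\) is free and empty in \(x\lor y\). Your extra treatment of border cells is harmless but unnecessary: a border cell is free exactly when it holds a dot, so it is never an empty free cell, and its entry is already fixed by \Cref{lem:sd2}.
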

\begin{proof}
Since the tableaux $x\land (y\lor z)$ and $z\land x=z\land y$ have the same arrows, they also have the same free cells. Let $c$ be such a free cell. We show that $c$ is empty in $x\land (y\lor z)$ if and only if it is empty in $z\land x=z\land y$. Recall that a free cell in the meet of two fb-tableaux is empty if and only if it is free and empty in at least one of the tableaux.  

We first assume that $c$ is empty in $z\land (x\lor y)$. If $c$ is empty and free in $z$, then it is also empty in $z\land x=z\land y$. If $c$ is empty and free in $x\lor y$, then it is empty in both $x$ and $y$. Since $c$ is free in $x$ or in $y$, then $c$ is empty in $z\land x$ or in $z\land y$.  

Conversely, the cell $c$ is empty in $z\land x$ if and only if it is free and empty in $z$ or in $y$. In the first case, it is empty in $z\land (x\lor y)$. If it is free and empty in $x$, we consider $z\land y$ and we can assume that it is free and empty in $y$. Then, it is empty in both $x$ and $y$, so it is empty in $x\lor y$. Since it is free in $x$ and in $y$, it is also free in $x\lor y$. Hence, it is empty in $z\land (x\lor y)$. 
\end{proof}

\begin{theorem}\label{thm:semidistributive}
The lattice $\esTam_n$ is semidistributive.
\end{theorem}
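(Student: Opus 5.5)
The plan is to prove meet-semidistributivity directly from \Cref{lem:sd2} and \Cref{lem:sd3}, and then obtain join-semidistributivity for free from the self-duality of \Cref{thm:selfdual}.

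\textbf{Meet-semidistributivity.} Let \(x,y,z\) be fb-tableaux with \(x\land z = y\land z\). We must show that \(z\land(x\lor y) = z\land x\).
\begin{enumerate}
\item By \Cref{lem:sd2}, the tableaux \(z\land(x\lor y)\) and \(z\land x\) have the same arrows. In particular, they have the same root and the same up-arrows and left-arrows, hence the same free cells.
\item By \Cref{lem:sd3}, these two tableaux have the same dots.
\item A fb-tableau is determined by the positions of its arrows together with the contents of its free cells. Non-free, non-arrow cells are necessarily empty, and border cells are arrows or dots in free positions.
\end{enumerate}
Hence \(z\land(x\lor y) = z\land x\).

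One small point to check is the statement of \Cref{lem:sd3}. Its proof contains typos such as \(x\land(y\lor z)\), but the statement itself is exactly what we need. I will also check that the border-cell dots are covered by the free-cell argument. Border cells containing a \(\bullet\) lie strictly below an up-arrow and strictly right of a left-arrow, so they are free cells, and \Cref{lem:sd3} handles them.

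\textbf{Join-semidistributivity.} Apply the conjugation \(T\mapsto T'\), which by \Cref{thm:selfdual} is an order-reversing involution and hence exchanges meets and joins: \((a\lor b)' = a'\land b'\). Suppose \(x\lor z = y\lor z\). Conjugating gives \(x'\land z' = y'\land z'\). By the meet case,
\[
(x'\lor y')\land z' = x'\land z'.
\]
Conjugating back gives
\[
(x\land y)\lor z = x\lor z,
\]
as required.

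I do not expect a genuine obstacle, since the substantive work is contained in \Cref{lem:sd1,lem:sd2,lem:sd3}. The only care needed is the reconstruction step: confirming that equality of arrows and of dots in free cells forces equality of the tableaux, including the root and the border cells.
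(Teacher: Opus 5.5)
Your proposal is correct and follows the paper's own proof. The paper also obtains meet-semidistributivity from \Cref{lem:sd1,lem:sd2,lem:sd3}, and you rightly note that a tableau is recovered from its arrows and the contents of its free cells, and that the proof of \Cref{lem:sd3} has harmless typos such as \(x\land(y\lor z)\). Join-semidistributivity then follows from the self-duality of \Cref{thm:selfdual}; your explicit computation \(((x'\lor y')\land z')' = (x\land y)\lor z\) simply spells out a step the paper leaves implicit.
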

\begin{proof}
Using \Cref{lem:sd1,lem:sd2,lem:sd3} we see that $\esTam_n$ is meet-semidistributive. We can now use the self-duality of \Cref{thm:selfdual} of our lattice to conclude. 
\end{proof}

\section{Trimness and spine}\label{sec:trim}

The notion of \defn{trim lattice} was introduced by Thomas in
\cite{Thomas2006}, as a natural non-graded analogue of distributive
lattices, preserving key structural properties. It has since been
shown that various generalizations of the Tamari lattice are trim. For
instance, all \defn{Cambrian lattices} of Dynkin types are trim, and
more generally, the lattice of \defn{torsion classes} of a
representation-directed algebra is trim \cite{thomas_williams}. In
this section, we show that the extra slow Tamari lattice is trim. We
also describe its spine and give a formula for the number of elements
on the spine.

\subsection{Trimness of the extra slow Tamari lattice.}

Let \( L \) be a lattice. An element \( x \in L \) is said to be
\defn{left modular} if
\begin{equation}\label{eq:condition_leftmodular}
  (y \wedge x) \vee z = y \vee (x \wedge z) \quad\text{for all \( y \leq z \)}.
\end{equation}
A lattice is called \defn{left modular} if it has a maximal chain of
elements that are all left modular.

In general, if a poset has a longest chain of \( m+1 \) elements, then
it has at least \( m \) join-irreducible elements and at least \( m \)
meet-irreducible elements. A lattice is said to be \defn{extremal} if the
length of its longest chain is equal to both the number of its
join-irreducible elements and the number of its meet-irreducible elements. A lattice
is called \defn{trim} if it is both left modular and extremal.

\begin{example}\label{exa:trim}
  Consider the poset given by
\[
 \begin{tikzpicture}[scale=.6]
   \tikzset{
     dot/.style={circle, fill=black, inner sep=0pt, minimum size=6pt},
     elabel/.style={midway, font=\scriptsize, inner sep=3pt} 
   }
  \def\h{5}
  \def\w{1}
  \node[dot] (x) at (0,0) {}; 
  \node[dot, label=left:\( x \)] (u1) at (-\w,\h/5) {};
  \node[dot] (u2) at (-\w,2*\h/5) {};
  \node[dot] (u3) at (-\w,3*\h/5) {};
  \node[dot] (u4) at (-\w,4*\h/5) {};
  \node[dot, label=right:\( z \)] (y) at (0,\h) {};
  \node[dot, label=right:\( y \)] (v) at (\w,\h/2) {};
  \draw (x) -- (u1) -- (u2) -- (u3) -- (u4) -- (y);
  \draw (x) -- (v) -- (y);
\end{tikzpicture}.
\]
The element \( x \) is left modular; this can be verified by checking
the defining condition~\eqref{eq:condition_leftmodular} for all pairs
\( y \leq z \). For example, for one particular choice of \( y \) and
\( z \) in the figure, we have
\[
  (y \wedge x) \vee z = y \vee (x \wedge z) = z.
\]
Checking this condition for all elements in a maximal chain shows that
the poset is left modular. Moreover, the length of a longest chain and
the number of join-irreducible elements are both equal to \( 5 \).
Hence, this poset is left modular and extremal, and therefore trim.
\end{example}

In \cite{thomas_williams}, Thomas and Williams proved the following
result, showing that to verify that a lattice is trim, it suffices to
check semidistributivity instead of left modularity.
\begin{proposition}[{\cite[Theorem~1.4]{thomas_williams}}]\label{pro:trim_condition}
  A lattice is trim if it is both semidistributive and extremal.
\end{proposition}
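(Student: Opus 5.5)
This is \cite[Theorem~1.4]{thomas_williams}, which we only need to quote. If one wanted a self-contained argument, I would proceed as follows. By \Cref{pro:trim_condition}'s hypotheses we have extremality, so only left modularity remains. Fix a maximal chain $\hat 0=x_0\lessdot x_1\lessdot\dots\lessdot x_m=\hat 1$ of maximal length $m$. By Markowsky's theory of extremal lattices, the join-irreducibles and meet-irreducibles can be indexed $j_1,\dots,j_m$ and $m_1,\dots,m_m$ along this chain. Here $j_i$ is an element with $j_i\le x_i$ and $j_i\not\le x_{i-1}$, and $m_i$ is an element with $m_i\ge x_{i-1}$ and $m_i\not\ge x_i$; each cover $x_{i-1}\lessdot x_i$ then receives exactly one join-irreducible and one meet-irreducible. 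The goal is to show that every $x_k$ is left modular.

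For this I would use the standard reformulation of left modularity. An element $x$ is left modular if and only if there is no pair $y<z$ with $x\wedge y=x\wedge z$ and $x\vee y=x\vee z$. Suppose such a pair exists for $x=x_k$. Replacing $z$ by any element $z'$ with $y\lessdot z'\le z$ preserves both equalities, because $x\wedge y\le x\wedge z'\le x\wedge z$ and $x\vee y\le x\vee z'\le x\vee z$. So we may assume $y\lessdot z$.

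Next I would use semidistributivity to label this cover. In a semidistributive lattice there is a bijection $\kappa$ from join-irreducibles to meet-irreducibles: $\kappa(j)$ is the largest element that is $\ge j_*$ but not $\ge j$, where $j_*$ is the unique lower cover of $j$. Moreover, every cover $y\lessdot z$ admits a join-irreducible $j$ with $j\le z$, $j\not\le y$, $y\vee j=z$ and $y\le\kappa(j)$. By extremality $j=j_i$ for some $i$.

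\emph{Case $i\le k$.} Then $j_i\le x_i\le x_k$, so $j_i\le x_k\wedge z=x_k\wedge y\le y$, a contradiction.

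\emph{Case $i>k$.} Here I would show that $\kappa(j_i)=m_i$, which gives $x_k\le x_{i-1}\le m_i=\kappa(j_i)$. Together with $y\le\kappa(j_i)$ this yields $z\le x_k\vee z=x_k\vee y\le\kappa(j_i)$, hence $j_i\le\kappa(j_i)$, again a contradiction.

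The main obstacle is the identification $\kappa(j_i)=m_i$, that is, that $\kappa$ is compatible with the Markowsky indexing along the longest chain. To prove it, I would compare how $\kappa(j_i)$ and $m_i$ sit relative to the chain. The element $\kappa(j_i)$ does not lie above $x_i$, since $x_i\ge j_i$. I would then argue that $\kappa(j_i)\ge x_{i-1}$ by induction on $i$, using join-semidistributivity applied to $x_{i-1}=x_{i-2}\vee j_{i-1}$. Once that holds, $\kappa(j_i)$ is a meet-irreducible that lies above $x_{i-1}$ but not above $x_i$. Since the number of meet-irreducibles equals the length $m$ of the chain, each cover $x_{i-1}\lessdot x_i$ determines a unique such meet-irreducible, and this must be $m_i$.
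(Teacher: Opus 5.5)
Quoting \cite[Theorem~1.4]{thomas_williams} is exactly what the paper does. The paper supplies no argument of its own.

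Your optional self-contained sketch is sound in outline. The reformulation of left modularity, the reduction to a cover $y\lessdot z$, the choice of a join-irreducible $j$ with $j_*\le y$ and $y\vee j=z$ (hence $y\le\kappa(j)$), and both cases $i\le k$ and $i>k$ are all correct.

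What is missing is the step you flag yourself, namely $x_{i-1}\le\kappa(j_i)$. Your plan for it is an induction on $i$ using join-semidistributivity applied to $x_{i-1}=x_{i-2}\vee j_{i-1}$. That plan is not carried out, and join-semidistributivity is not the relevant tool here. By the definition of $\kappa$, it suffices to know $(j_i)_*\le x_{i-1}$, and this comes from extremality.

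That fact is immediate:
\begin{itemize}
\item A minimal element $j$ of $\{w\le x_i : w\not\le x_{i-1}\}$ is join-irreducible and satisfies $j_*\le x_{i-1}$; this is the same observation you used for $y\lessdot z$. Uniqueness in Markowsky's indexing then forces $j=j_i$.
\item Equivalently, every join-irreducible below $(j_i)_*$ lies below $x_i$ and differs from $j_i$. By bijectivity of the indexing, it first appears at some index $<i$, and hence lies below $x_{i-1}$.
\end{itemize}
Either way, $x_{i-1}\ge (j_i)_*$ and $x_{i-1}\not\ge j_i$. Maximality in the definition of $\kappa(j_i)$ then gives $x_{i-1}\le\kappa(j_i)$ directly.

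The identification $\kappa(j_i)=m_i$ follows from this, but your argument never needs it. So both that detour and the induction can be dropped.
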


We prove that our poset is trim. Since the semidistributivity is
proved in \Cref{thm:semidistributive}, it remain only to show that it
is extremal.

\begin{proposition}\label{prop:extremal}
  The lattice $\esTam_n$ is extremal.
\end{proposition}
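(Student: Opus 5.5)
Since \Cref{cor:meet_irr} and \Cref{cor:join_irr} give $2\binom{n}{2}+\binom{n-1}{2}$ irreducibles of each kind, and a longest chain can never be longer than the number of join-irreducibles, it suffices to exhibit a maximal chain from $\hat 0$ to $\hat 1$ with exactly $N:=2\binom{n}{2}+\binom{n-1}{2}$ cover relations. Every cover is a move of type I, II or III by \Cref{thm:cover_relation_iff}. I would build the chain explicitly, using one move per join-irreducible, ordered column by column in the spirit of \Cref{prop:join-irred_poset}.

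The bottom element $\hat 0$ has all left-arrows in the first column, all up-arrows on the border, and the border cells pointed by left-arrows. It is the tableau of \Cref{fig:extra_slow} bottom for $n=3$. The top $\hat 1$ has all up-arrows in the first row and left-arrows on the border. The count to aim for is $\binom{n}{2}$ type I moves, $\binom{n}{2}$ type II moves and $\binom{n-1}{2}$ type III moves. This matches the bookkeeping: in total, the up-arrow in column $j$ (counting columns so that the border cell of column $j$ lies in row $j+1$, i.e. $n-j$ non-root rows are available) must travel from its border position to row $1$. Type I moves one step at a time, provided it is never blocked by a left-arrow, which gives $\binom{n}{2}$ moves. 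Dually, the left-arrows travel from column 1 to the border, which gives another $\binom{n}{2}$ moves if each type II move advances exactly one free cell. Type III moves fill each non-border free cell once. The $\binom{n-1}{2}$ non-border cells strictly inside the staircase can each be filled exactly once, if the chain is arranged so that every such cell becomes free while empty and is filled before it stops being free.

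Concretely, I would proceed row by row from the bottom, as in $n=3$. For each row, I first raise the relevant up-arrows by single-step type I moves, keeping the left-arrows in column 1. Each newly created non-border free cell is empty and gets filled by a type III move, while newly freed border cells receive a $\bullet$ automatically. Then I sweep the left-arrow of the lowest unfinished row rightwards by type II moves. Each move passes exactly one $\bullet$, which is changeable by \Cref{lem:changeable_left_iff} because every free cell to its right was filled. For $n=3$ this reproduces the length-$8=2\cdot 3+1$ chain $v1\to v2\to v22\to v23\to v24\to v54\to v55\to v65$ in \Cref{fig:extra_slow}. I would then verify by induction on $n$: removing the first column, or the last row, reduces to size $n-1$ and adds $2(n-1)+(n-2)$ moves, which matches $N(n)-N(n-1)$.

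The main obstacle is the ordering. A type I move is blocked, or jumps several cells, when a left-arrow points at the intermediate cells. A type II move jumps over non-free cells, and filling must precede the left-arrow passing. So I must check that in my chosen order every move advances exactly one unit: up-arrows are moved while the left-arrows in the relevant rows are still in column 1, and every non-border cell is free and empty exactly once before being filled. If the row-by-row order fails, I would instead order moves by the linear extension of the join-irreducible poset of \Cref{prop:join-irred_poset}, taking the covers labeled by $(i,j,\emptyset)$, $(i,j,\bullet)$, $(i,j,\leftarrow)$ in sequence.
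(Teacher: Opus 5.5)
Your proof is correct and has the same structure as the paper's. The irreducible count from \Cref{cor:meet_irr,cor:join_irr} bounds the length of every chain, and an explicit chain of $2\binom{n}{2}+\binom{n-1}{2}$ unit moves from $\hat 0$ to $\hat 1$ attains the bound.

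Only the order of the moves differs. The paper works column by column, starting from the leftmost up-arrow. It raises that up-arrow cell by cell to the first row, and after each raise it fills the vacated cell and advances that row's left-arrow by one. A column with $m$ cells therefore contributes $2(m-1)+(m-2)$ covers. You work row by row from the bottom. In stage $r=n,n-1,\dots,2$, the up-arrows of columns $r-1,\dots,n-1$ all sit in row $r$ and are raised to row $r-1$. The $n-r$ vacated non-border cells are filled, and then the left-arrow of row $r$ is swept across the $n-r+1$ dots.

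The ordering issue you flag as the main obstacle does resolve for this order:
\begin{itemize}
\item during stage $r$, rows $2,\dots,r-1$ still have their left-arrows in the first column, so each raise moves exactly one cell;
\item each vacated cell $(r,c)$ with $c\geq r$ is free and empty when you fill it;
\item each cell met by the sweeping left-arrow is free and holds a dot, so each type II move advances one cell.
\end{itemize}
Stage $r$ contributes $2(n-r+1)+(n-r)$ covers, and these sum directly to the required number. So neither the induction remark nor the fallback via a linear extension is needed. The induction remark is also inaccurate: to reach the size-$(n-1)$ shape you must delete a row \textit{and} a column. For your order these are the top row and the rightmost column, the one containing $(2,1)$.

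There are two slips in your $n=3$ illustration. First, $2\cdot 3+1=7$, not $8$; your listed chain has $7$ covers and $8$ elements. Second, that chain is not the one your rule produces, since it raises the column-$2$ up-arrow twice before the row-$3$ left-arrow moves. Using your node names, your row-by-row rule gives $v1\to v2\to v3\to v32\to v62\to v63\to v64\to v65$. The paper's column-by-column chain is $v1\to v2\to v3\to v32\to v33\to v34\to v64\to v65$. All three are longest chains of length $7$.
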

\begin{proof}
  By Corollaries~\ref{cor:meet_irr} and \ref{cor:join_irr}, the
  numbers of join-irreducible and meet-irreducible elements are both equal to
  \( 2 \binom{n}{2} + \binom{n-1}{2} \). Since the length of a longest
  chain is, in general, at most the number of join- or
  meet-irreducible elements, it suffices to show the existence of a 
  chain with exactly \( 2 \binom{n}{2} + \binom{n-1}{2} \) cover
  relations.

  Starting from \( \hat{0} \), that is, the tableau where all the left-arrows are in column $n$ and all the up-arrows are in border cells:
 \[
   \begin{ytableau}
    \bullet & & &  & \\
     \leftarrow&&& &*(gray)\uparrow\\
     \leftarrow& &&*(gray)\uparrow \\
     \leftarrow& &*(gray)\uparrow\\
    \leftarrow&*(gray)\uparrow\\
  \end{ytableau}.
 \] 
 We move the leftmost up-arrow to the first row and all the left-arrows to the $(n-1)st$ column using cover relations.
 If the up-arrow is at a border cell,
 then moving it up simultaneously creates a bullet, so we can move right after the left-arrow in row $n$:
 \[
   \begin{ytableau}
    \bullet & & &  & \\
     \leftarrow&&& &*(gray)\uparrow\\
     \leftarrow& &&*(gray)\uparrow \\
     \leftarrow& &*(gray)\uparrow\\
    \leftarrow&*(gray)\uparrow\\
  \end{ytableau}
  \quad\lessdot\quad
   \begin{ytableau}
    \bullet & & &  & \\
     \leftarrow&&& &*(gray)\uparrow\\
     \leftarrow& &&*(gray)\uparrow \\
     \leftarrow& \uparrow &*(gray)\uparrow\\
    \leftarrow&*(gray)\bullet\\
  \end{ytableau}
  \quad\lessdot\quad
   \begin{ytableau}
    \bullet & & &  & \\
     \leftarrow&&& &*(gray)\uparrow\\
     \leftarrow& &&*(gray)\uparrow \\
     \leftarrow& \uparrow &*(gray)\uparrow\\
    &*(gray)\leftarrow\\
  \end{ytableau}
 \] 
 Otherwise, at each step we perform three actions in order: move the
 up-arrow up by one, place a bullet in its previous position, and move
 the left-arrow into the position of the bullet.
 \[
   \begin{ytableau}
    \bullet & & &  & \\
     \leftarrow&&& &*(gray)\uparrow\\
     \leftarrow& &&*(gray)\uparrow \\
     \leftarrow& \uparrow &*(gray)\uparrow\\
    &*(gray)\leftarrow\\
  \end{ytableau}
  \quad\lessdot\quad
   \begin{ytableau}
    \bullet & & &  & \\
     \leftarrow&&& &*(gray)\uparrow\\
     \leftarrow& \uparrow &&*(gray)\uparrow \\
     \leftarrow&  &*(gray)\uparrow\\
    &*(gray)\leftarrow\\
  \end{ytableau}
  \quad\lessdot\quad
   \begin{ytableau}
    \bullet & & &  & \\
     \leftarrow&&& &*(gray)\uparrow\\
     \leftarrow& \uparrow &&*(gray)\uparrow \\
     \leftarrow& \bullet  &*(gray)\uparrow\\
    &*(gray)\leftarrow\\
  \end{ytableau}
  \quad\lessdot\quad
   \begin{ytableau}
    \bullet & & &  & \\
     \leftarrow&&& &*(gray)\uparrow\\
     \leftarrow& \uparrow &&*(gray)\uparrow \\
     & \leftarrow  &*(gray)\uparrow\\
    &*(gray)\leftarrow\\
  \end{ytableau}
  \quad\lessdot \quad\cdots 
 \]
We repeat this process for all the up-arrows.

 For a column of length \( m \geq 2 \), a total of
 \( 2(m-1) + (m-2) \) cover relations are applied to move the lowest
 up-arrow to the first row. Hence, we obtain a chain from
 \( \hat{0} \) to \( \hat{1} \) by performing
 \[
   \sum_{ m = 2 }^{ n } (2(m-1) + (m-2)) = 2\binom{n}{2} + \binom{n-1}{2}
 \]
 cover relations, which completes the proof.
\end{proof}

Using \Cref{thm:semidistributive} and \Cref{prop:extremal}, we obtain
the following result immediately from \Cref{pro:trim_condition}.

\begin{theorem}\label{thm:trim}
  The lattice $\esTam_n$ is trim.
\end{theorem}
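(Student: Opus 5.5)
The theorem is obtained by combining results already proved in the paper, so the plan is to check the two hypotheses of \Cref{pro:trim_condition}. That proposition, due to Thomas and Williams, says that a lattice which is both semidistributive and extremal is trim. This lets us avoid verifying left modularity directly, which would mean exhibiting a maximal chain of left modular elements. That route would be awkward here, because the meet and join in \Cref{lem:meet,lem:join} move arrows in ways that are hard to control globally.

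The first step is semidistributivity, which is \Cref{thm:semidistributive}. Its proof rests on \Cref{lem:sd1,lem:sd2,lem:sd3}. Together these show that whenever \(x\land z=y\land z\), the tableaux \(z\land(x\lor y)\) and \(z\land x\) have the same up-arrows, the same left-arrows and the same dots, hence are equal. This gives meet-semidistributivity. Join-semidistributivity then follows from the self-duality of \Cref{thm:selfdual}, since conjugation exchanges meets and joins.

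The second step is extremality, which is \Cref{prop:extremal}. By \Cref{cor:meet_irr,cor:join_irr}, the lattice has \(2\binom{n}{2}+\binom{n-1}{2}\) join-irreducible elements and the same number of meet-irreducible elements. The length of any maximal chain is bounded above by each of these counts. The explicit chain from \(\hat 0\) to \(\hat 1\) built in the proof of \Cref{prop:extremal} attains this bound. It raises the up-arrows one column at a time, and between consecutive up-moves it inserts a dot (type III) and then moves a left-arrow (type II). Each step is a genuine cover relation by \Cref{thm:cover_relation_iff}.

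With both hypotheses verified, \Cref{pro:trim_condition} gives that \(\esTam_n\) is trim. No real obstacle remains at this stage. The only delicate point was already handled in \Cref{prop:extremal}: checking that each step of the constructed chain is a legal move of type I, II or III, in particular that the raised up-arrow is changeable and that the next free cell to the right of the moved left-arrow contains a dot. With this, the chain has exactly \(\sum_{m=2}^{n}\bigl(2(m-1)+(m-2)\bigr)\) cover relations.
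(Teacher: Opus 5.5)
Your proposal is correct and is exactly the paper's argument. Trimness follows immediately from the Thomas--Williams criterion (\Cref{pro:trim_condition}), applied to the semidistributivity of \Cref{thm:semidistributive} and the extremality of \Cref{prop:extremal}; your summaries of those two results also match how the paper proves them.
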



\subsection{Elements on the spine}
\label{sec:spines}

In the previous subsection, we proved extremality by considering a
specific longest chain and its elements. We now consider the elements
in all longest chains. Let \( L \) be a trim lattice. The \defn{spine}
of \( L \) consists of the elements that belong to some longest chain
in \( L \). In this subsection, we give the description of the spine
of our poset. This notion, introduced in \cite{Thomas2006}, is
important for two main reasons: the spine is always a distributive
sublattice and its elements are the left modular elements of the
lattice.

Roughly speaking, the spine in our poset consists of the elements
encountered when moving from the least element \( \hat{0} \) to the
greatest element \( \hat{1} \) in the `slowest possible way'.
A precise description appears in the following lemma.

\begin{lemma}\label{lem:slowest_way}
  In $\esTam_n$, let a longest chain be
  \[
    \hat{0} = x_0 \lessdot x_1 \lessdot\cdots\lessdot x_{m-1} \lessdot x_m = \hat{1}. 
  \]
  Then, for every \( i = 1, \dots, m \), the cover relation
  \( x_{i-1} \lessdot x_i \) is one of the following three moves:
  \begin{enumerate}
  \item a move of type I, in which the changeable up-arrow moves upward by exactly one cell;
  \item a move of type II, in which the changeable left-arrow moves to the right by exactly one cell;
  \item a move of type III.
  \end{enumerate}
\end{lemma}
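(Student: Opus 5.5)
We argue with a potential function that grows by exactly one along each of the three slow moves and by at least two along any other cover. Since a longest chain has length $m = 2\binom{n}{2}+\binom{n-1}{2}$ (\Cref{prop:extremal}), the result then follows by counting.

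For a fb-tableau $T$, set
\[
  \Phi(T) = U(T) + L(T) + D(T),
\]
where:
\begin{itemize}
\item $U(T)$ is the total number of cells lying strictly between each up-arrow and its border cell, counted over all columns; equivalently, it is the total height gained by the up-arrows relative to $\hat 0$.
\item $L(T)$ is the total number of cells each left-arrow lies to the right of column $n$; equivalently, it is the total rightward displacement of the left-arrows relative to $\hat 0$.
\item $D(T)$ is the number of non-border dots of $T$ (root excluded).
\end{itemize}
The aim is to adjust the definition of $D$ so that $\Phi(\hat 0)=0$, $\Phi(\hat 1)=m$, and every cover $S\lessdot T$ satisfies $\Phi(T)-\Phi(S)\ge 1$, with equality exactly for the three moves in the statement.

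Dots on border cells need care, because type I and type II moves change them. When an up-arrow leaves a border cell, that cell becomes $\bullet$, and when a left-arrow reaches the border, a border $\bullet$ disappears. Dots in general are created or destroyed by these moves. So the first step is to fix the weights so that the single-step moves each contribute exactly $1$. A type III move raises $D$ by one and leaves $U$ and $L$ fixed. A type I move by one cell raises $U$ by one and creates an empty non-border free cell, or a border $\bullet$. A type II move by one cell raises $L$ by one and destroys a dot. It is therefore more natural to take
\[
  \Phi = U + L + D + \#\{\text{empty free cells}\},
\]
which is what the example chain in \Cref{prop:extremal} suggests. The check to carry out is that this weighting gives increment exactly $1$ for the three slow moves; this should be verifiable using the row and column spaces. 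After that one computes $\Phi(\hat 1)$ and confirms it equals $m$.

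Next we handle long moves. A type I move jumping $k\ge 2$ cells skips $k-1$ cells pointed by left-arrows, so each skipped cell is a left-arrow. These skipped cells still raise $U$ by $k$, while the free-cell change is only $+1$; hence the increment is at least $2$. A type II move jumping $k\ge 2$ cells passes over non-free cells, so $L$ increases by $k\ge 2$ and the free-cell change is bounded. Alternatively, the type II case follows from the type I case by conjugation, using \Cref{thm:selfdual}, provided $\Phi$ is shown to satisfy $\Phi(T')=m-\Phi(T)$.

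With these facts, a chain from $\hat 0$ to $\hat 1$ of length $m$ has total increment $m$ spread over $m$ covers, each at least $1$. Therefore every cover has increment exactly $1$, and so each step is a slow move.

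The main obstacle is bookkeeping: after a type I or type II move, the set of free cells changes in several places, not just in the moved cell. Cells between arrows can become free or cease to be free. One has to verify that $\Phi$, and in particular the empty-free-cell term, changes exactly as claimed. I would first verify this for the single-step moves, then compare long moves with a hypothetical sequence of short steps.
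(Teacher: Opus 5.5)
Your proof has a genuine gap: the potential $\Phi=U+L+D+\#\{\text{empty free cells}\}$ does not have the properties you need, and the whole counting step rests on it. Every non-root, non-border dot lies in a free cell, and border cells are never empty. So $D(T)+\#\{\text{empty free cells}\}$ is just the number of non-border free cells of $T$. Neither $\hat 0$ nor $\hat 1$ has any free cells, so $\Phi(\hat 0)=0$ but $\Phi(\hat 1)=U(\hat 1)+L(\hat 1)=2\binom{n}{2}<m$ for $n\ge 3$.

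The increments are wrong as well:
\begin{enumerate}
\item A type III move raises $D$ by one and lowers the number of empty free cells by one, so $\Delta\Phi=0$.
\item A one-cell type II move onto a non-border dot raises $L$ by one and lowers $D$ by one, so again $\Delta\Phi=0$.
\item A one-cell type I move from a non-border cell raises $U$ by one and creates an empty free cell, so $\Delta\Phi=2$.
\end{enumerate}
No reweighting of $U$, $L$, $D$ and the empty free cells repairs this. Comparing one-cell type I moves from border and non-border cells, one-cell type II moves onto border and non-border dots, and type III moves forces contradictory coefficients. The underlying problem is that dots created by type III moves are later swallowed by type II moves. Any quantity depending only on the current free cells forgets them, and that is exactly the missing $\binom{n-1}{2}$. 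Separately, the cells skipped by a long type I move are empty cells lying to the left of left-arrows, not left-arrows.

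The paper avoids potentials and instead counts moves by type along a chain. Up-arrows only move up and left-arrows only move right, so there are at most $\binom n2$ type I moves and at most $\binom n2$ type II moves. Equality holds only if every such move has length one. An interior cell (row $\ge 2$, not in the leftmost column, not a border cell) receives at most one type III move. Indeed, along a chain $\col^s$ grows and $\row^s$ shrinks, so the cell is free during an interval of the chain, and there it can only pass from empty to dot. This bounds the type III moves by $\binom{n-1}{2}$, and a chain of length $m$ must attain all three bounds.

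If you want to keep a potential, replace $D+\#\{\text{empty free cells}\}$ by $W(T)$. Define $W(T)$ as the number of interior cells that contain a dot or do not lie strictly to the right of the left-arrow of their row. Then $W(\hat 0)=0$ and $W(\hat 1)=\binom{n-1}{2}$. Type I moves leave $W$ unchanged and type III moves raise it by one. A type II move of length $k$ raises $L$ by $k$ and $W$ by the number of skipped interior cells. So $U+L+W$ has exactly the properties your argument needs.
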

\begin{proof}
  First, by extremality in \Cref{prop:extremal} together with the
  enumeration of meet- and join-irreducible elements in
  Corollaries~\ref{cor:meet_irr} and \ref{cor:join_irr}, we know that
  \( m = 2 \binom{n}{2} + \binom{n-1}{2} \). Note that the elements
  \( \hat{0} \) and \( \hat{1} \) are of the forms
 \[
   \begin{ytableau}
    \bullet & & &  & \\
     \leftarrow&&& &*(gray)\uparrow\\
     \leftarrow& &&*(gray)\uparrow \\
     \leftarrow& &*(gray)\uparrow\\
    \leftarrow&*(gray)\uparrow\\
  \end{ytableau} \qand
   \begin{ytableau}
    \bullet &\uparrow &\uparrow &\uparrow  &\uparrow \\
     &&& &*(gray)\leftarrow\\
     & &&*(gray)\leftarrow \\
     & &*(gray)\leftarrow\\
    &*(gray)\leftarrow\\
  \end{ytableau}.
 \] 

 A move of type I raises the up-arrow by at least one cell. More
 precisely, consider a column containing \( j \) cells. In
 \( \hat{0} \) the up-arrow lies on the border of this column, and in
 \( \hat{1} \) it lies in the top row; hence it can be raised at most
 \( j-1 \) times within that column, with equality only if each raise
 is by one cell. Hence, in any chain from \( \hat{0} \) to
 \( \hat{1} \), the number of type I moves is at most
 \( \sum_{ j = 1 }^{ n-1 } j = \binom{n}{2} \). Similarly, a move of
 type II shifts the left-arrow to the right by at least one cell, so
 the number of type II moves is also at most \( \binom{n}{2} \).
 Finally, type III moves can occur at most once in each interior cell,
 excluding the leftmost column, the top row, and the border cells,
 yielding at most \( \binom{n-1}{2} \) such moves in total.

 Therefore, the total number of cover relations in any chain is at
 most \( 2\binom{n}{2} + \binom{n-1}{2} \). Since our chosen chain is
 longest and has length \( m = 2\binom{n}{2} + \binom{n-1}{2} \), each
 of the three upper bounds above must be attained. In particular,
 every type I (resp. type II) cover must move the up-arrow (resp.
 left-arrow) by exactly one cell. This proves the claim.
\end{proof}

Let us recall from \Cref{def: cohook}, that the cohook \( H(i,j) \) at
the position \( (i,j) \) is the set consisting of the cell \( (i,j) \)
together with all cells to its left and above it.
\begin{definition}\label{def: forbidden cohook}
  In a fb-tableau, we say that
  \( H(i,j) \) is a \defn{forbidden cohook} if every cell in
  \( H(i,j) \) is empty.
\end{definition}

For example, there is a forbidden cohook \( H(2,4) \) in
\[
  \begin{ytableau}
\bullet & *(yellow) & \uparrow &  \uparrow &  \uparrow\\
*(yellow) & *(yellow) & & &*(gray)\leftarrow\\
& & &*(gray)\leftarrow\\
\leftarrow  & \uparrow &*(gray)\bullet\\
 &*(gray)\leftarrow
\end{ytableau}.
\]

\begin{theorem}\label{thm:spine_iff}
  A fb-tableau is in the spine if and only if it does not contain a
  forbidden cohook.
\end{theorem}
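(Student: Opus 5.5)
The plan is to characterize the spine through the lengths of chains built from "slow" moves. By \Cref{lem:slowest_way}, every longest chain from $\hat{0}$ to $\hat{1}$ uses only three kinds of covers: unit type I moves (an up-arrow rises one cell), unit type II moves (a left-arrow moves one cell right onto an adjacent free dot), and type III moves. Call these \emph{slow covers}. A tableau $T$ lies on the spine if and only if there is a chain of slow covers from $\hat{0}$ to $T$ and one from $T$ to $\hat{1}$ whose lengths add up to $m=2\binom{n}{2}+\binom{n-1}{2}$.

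First I will introduce a potential $\varphi(T)=a(T)+b(T)+c(T)$. Here $a(T)$ is the total number of cells by which the up-arrows sit above their border positions. Next, $b(T)$ is the total displacement of the left-arrows to the right of the first column. Finally, $c(T)$ counts the interior cells (not in the first row, the first column, or the border) that have already received their type III move: those containing a free dot, plus those lying weakly left of the left-arrow that sit below the up-arrow of their column.

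I will check that each slow cover raises $\varphi$ by exactly $1$, and that $\varphi(\hat{0})=0$ and $\varphi(\hat{1})=m$. The counting in the proof of \Cref{lem:slowest_way} already shows that no chain can exceed these contributions. Consequently, any chain of slow covers through $T$ from $\hat{0}$ to $\hat{1}$ is automatically a longest chain, and $T$ is in the spine exactly when such a chain through $T$ exists.

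For the direction ``spine $\Rightarrow$ no forbidden cohook'', I will show that the property of having no forbidden cohook holds at $\hat{0}$ and is preserved by each slow cover. Then every element on an initial segment of a longest chain avoids forbidden cohooks.
\begin{itemize}
\item For a unit type I move, the vacated cell has the raised up-arrow directly above it, so its cohook is not empty.
\item For a unit type II move, the vacated cell lies in the column of an earlier position of the arrow. I must argue that its column above still contains an up-arrow or a dot; otherwise this cell and everything to its left and above would be empty.
\item A type III move only adds elements, so it cannot create a forbidden cohook.
\end{itemize}
I also need the converse local fact: the non-slow covers are exactly the ones that skip cells, and a tableau reachable from $\hat{0}$ only through such covers has an empty cohook. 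This matches the example given before the theorem. Conversely, if $T$ contains a forbidden cohook $H(i,j)$, I expect to show directly that any chain from $\hat{0}$ to $T$ must contain a non-slow cover. The reason is that the cells of $H(i,j)$ are never visited: no up-arrow passed through them one step at a time, and no left-arrow swept across them one step at a time.

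For the direction ``no forbidden cohook $\Rightarrow$ spine'', I will construct slow chains downward from $T$ to $\hat{0}$ and upward from $T$ to $\hat{1}$ by induction on $\varphi$. For the downward chain I will pick an inverse slow move that keeps the tableau free of forbidden cohooks. The options are:
\begin{itemize}
\item remove a free dot lying in a region where no left-arrow is waiting to absorb it (inverse type III);
\item move a left-arrow one step left, leaving a dot behind (inverse unit type II), whenever the cell to its left is free, meaning it lies under an up-arrow;
\item lower an up-arrow by one cell (inverse unit type I) when the cell below it is free and empty.
\end{itemize}
The upward chain should follow by the symmetric argument, or from the conjugation of \Cref{thm:selfdual} after checking how conjugation acts on forbidden cohooks.

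The main obstacle is this existence step: showing that whenever $T\neq\hat{0}$ has no forbidden cohook, at least one of the three inverse slow moves is available and the result again has no forbidden cohook. My first attempt will be to look at the lowest row, and in it the rightmost position, where $T$ differs from $\hat{0}$. There I will do a case analysis on whether that cell holds an up-arrow, a left-arrow, or a dot, using the no-forbidden-cohook hypothesis to exclude the bad configurations in each case.
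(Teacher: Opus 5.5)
Your set-up is sound, but the reverse implication (no forbidden cohook $\Rightarrow$ spine) has a genuine gap. That direction consists entirely of showing that an inverse slow move preserving the absence of forbidden cohooks always exists, and you leave this open. The localisation you suggest cannot work. For $n=4$ take
\[
  \begin{ytableau}
    \bullet & & \uparrow & \\
    \leftarrow & \uparrow & & *(gray)\uparrow \\
    & & *(gray)\leftarrow \\
    \leftarrow & *(gray)\bullet
  \end{ytableau}
\]
It has no forbidden cohook. In the bottom-most row where it differs from $\hat{0}$, the only difference is the border dot at $(4,3)$, and nothing slow can be undone there:
the dot is on the border; the left-arrow of row $4$ is already in the first column; and the up-arrow of column $3$ cannot descend one cell, since $(3,3)$ lies left of the left-arrow of row $3$. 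That left-arrow must first be slid back, which is an operation in another row. Your three options are also incomplete as stated. The middle element of $\esTam_2$ admits none of them, because the cell below its up-arrow is a border dot rather than an empty cell.

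The missing idea is a reformulation of the hypothesis. Cells strictly left of a left-arrow are empty, and $H(i,j)$ contains the left-arrow of row $i$ whenever $(i,j)$ is weakly to its right. Hence $T$ has no forbidden cohook if and only if every cell weakly left of a left-arrow, outside the first column, lies strictly below the up-arrow of its column. After that, only an ordering of moves is needed.
Your second option is then available whenever some left-arrow is off the first column. It empties no cell, so it preserves the hypothesis. Once all left-arrows are in the first column, no cohook can be empty, so you can delete the interior dots and then lower the up-arrows one cell at a time (onto an empty cell or a border dot).

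This is the paper's argument run backwards. The paper climbs from $\hat{0}$ in three stages:
it raises the up-arrows by unit moves (unobstructed while all left-arrows sit in the first column); it sweeps each left-arrow one cell at a time onto a freshly inserted dot, which is exactly what the reformulated hypothesis permits; and it adds the remaining dots. It gets the chain from $T$ to $\hat{1}$ by conjugation. The check you flagged does hold: a cohook containing no arrow and not the root contains no free cell, and conjugation changes emptiness only at free interior cells.

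The other parts of your plan are correct, and partly sharper than the paper. The potential $\varphi$ rises by exactly one along slow covers, which justifies that such chains from $\hat{0}$ to $\hat{1}$ are longest; the paper uses this tacitly. Forward invariance proves the direct implication; for the type II case, the fact you need is just that a left-arrow has its column's up-arrow, or the root, strictly above it. The ``converse local fact'' is unnecessary, and inaccurate as phrased: non-unit type I covers never empty a cohook; only non-unit type II covers do.
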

\begin{proof}
  If a tableau contains a forbidden cohook \( H(i,j) \), then the
  left-arrow in the row \( i \) must have moved from the left of
  \( (i,j) \) to the right of \( (i,j) \) without passing through the
  cell \( (i,j) \). Hence, along any path from \( \hat{0} \) to this
  tableau, that left-arrow makes a move of length at least two.
  By~\Cref{lem:slowest_way}, any tableau with a forbidden cohook is
  not in the spine.

  Conversely, let \( T \) be a tableau with no forbidden cohook. We
  construct a path from \( \hat{0} \) to \( T \), and another from
  \( T \) to \( \hat{1} \), that proceeds in the slowest possible way.
  A path from \( \hat{0} \) to \( T \) is constructed as follows.
  First, move all up-arrows to positions of the up-arrows in \( T \)
  using only unit moves. Then repeat the following two unit steps
  until all left-arrows in their positions in \( T \): add a dot in
  the cell immediately to the right of a left-arrow, and move that
  left-arrow one cell to the right. These two steps are always
  possible since \(T\) does not contain a forbidden cohook. Finally,
  add dots to all cells that contain a dot in \( T \), thereby
  producing \( T \). The path from \(T\) to \( \hat{1} \) is obtained
  by applying the self-duality of \Cref{thm:selfdual} together with
  the previous construction.
\end{proof}

\subsection{A representation of the spine}\label{sec:spine_rep}

The spine of a trim lattice is a distributive lattice by \cite[Lemma
7]{Thomas2006}. Hence, by the Birkhoff's representation theorem, it is
isomorphic to the lattice of order ideals of its poset of
join-irreducible elements. This poset is the called \defn{Galois
  poset} of the trim lattice; see \cite[Proposition
2.4]{thomas_williams}.

\begin{lemma}
  Let \(L\) be a trim lattice. Then \(L\) and its spine have the same
  number of join- and meet-irreducible elements.
\end{lemma}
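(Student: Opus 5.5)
The plan is to use the general structure theory of trim lattices from \cite{Thomas2006} and \cite{thomas_williams}. Let $L$ be trim. Then $L$ is extremal, so it has a longest chain $\hat 0 = x_0 \lessdot x_1 \lessdot \cdots \lessdot x_m = \hat 1$. Here $m$ equals the number of join-irreducibles of $L$ and also the number of meet-irreducibles of $L$. Denote the spine by $S$; it contains every element of every longest chain. By \cite[Lemma 7]{Thomas2006}, $S$ is a distributive sublattice of $L$.

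The first step is to bound the number of irreducibles of $S$ from below. The chain $x_0 \lessdot \cdots \lessdot x_m$ lies in $S$. Since $S$ is a sublattice, covers in $L$ between elements of $S$ remain covers in $S$, so this chain is also a maximal chain of $S$ of length $m$. Hence $S$ has at least $m$ join-irreducibles and at least $m$ meet-irreducibles.

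The second step, which I expect to be the main point, is to show that a distributive lattice has exactly as many join-irreducibles as the length of any maximal chain. By Birkhoff's theorem, $S$ is isomorphic to the lattice of order ideals of its poset of join-irreducibles $J(S)$. Every maximal chain of ideals adds one element at a time, so it has length exactly $|J(S)|$. Dually, the number of meet-irreducibles of $S$ is also $|J(S)|$. Combining this with the chain of length $m$ from the first step gives $|J(S)| = |M(S)| = m$.

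It remains to match $m$ with the counts in $L$, and this is exactly extremality of $L$. So $L$ and $S$ both have $m$ join-irreducibles and $m$ meet-irreducibles.

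If one wants more than equality of numbers, one can add that the $i$th join-irreducible of $S$ corresponds to the label of the cover $x_{i-1} \lessdot x_i$, following \cite[Proposition 2.4]{thomas_williams}. This refinement is not needed for the statement.
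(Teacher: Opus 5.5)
Your proof is correct and follows the paper's argument: extremality of $L$, distributivity of the spine, and the fact that the spine contains a longest chain of $L$ together force both counts to equal $m$. The one difference is that you derive the extremality of the distributive lattice $S$ directly from Birkhoff's theorem (every maximal chain of order ideals has length $|J(S)|$), whereas the paper cites Markowsky for this. Your preliminary ``at least $m$'' bound is then redundant but harmless.
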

\begin{proof} 
  Since \( L \) is trim, it is extremal. By definition, this means
  that the length of a longest chain in \( L \) is equal to both the
  number of join- and meet-irreducible elements of \( L \).

  On the other hand, the spine of a trim lattice \( L \) is a
  distributive lattice. Every distributive lattice is extremal
  by~\cite{Markowsky1992}, so the length of a longest chain in the
  spine is equal to both the number of join- and meet-irreducible
  elements of the spine.

  By construction, the spine contains a longest chain of \( L \), so
  the spine and \( L \) have longest chains of the same length. The
  proof follows from extremality.
\end{proof}

As an immediate corollary, we obtain the following. 
\begin{corollary}\label{prop:irr_spine}
  The number of join- and meet-irreducible elements of the spine of
  \(\esTam_n\) is \(2{n\choose 2}+{n-1\choose 2}\).
\label{count}
\end{corollary}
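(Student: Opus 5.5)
This follows by combining the preceding lemma with the enumeration already obtained for \(\esTam_n\). The only inputs are that \(\esTam_n\) is trim and that its numbers of join- and meet-irreducible elements are known.

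First, \(\esTam_n\) is trim by \Cref{thm:trim}, so the preceding lemma applies with \(L=\esTam_n\). It shows that the spine of \(\esTam_n\) has as many join-irreducible elements, and as many meet-irreducible elements, as \(\esTam_n\) itself. Second, \Cref{cor:meet_irr} and \Cref{cor:join_irr} give these numbers for \(\esTam_n\): both equal \(2\binom{n}{2}+\binom{n-1}{2}\). Transporting them to the spine gives the statement.

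As a consistency check, the same value also arises as the length of a longest chain. \Cref{prop:extremal} exhibits a chain in \(\esTam_n\) with \(\sum_{m=2}^{n}\bigl(2(m-1)+(m-2)\bigr)=2\binom{n}{2}+\binom{n-1}{2}\) cover relations. Every element of such a longest chain lies in the spine by definition. The spine is distributive, hence extremal, so its number of irreducibles equals this chain length. This agrees with the direct argument.

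There is no real obstacle here. The substance is contained in the trimness theorem and the irreducible count, both established earlier. One subtlety is worth recording: the join-irreducible elements of the spine are generally not the join-irreducible elements of \(\esTam_n\), since irreducibility is relative to the sublattice. Only the counts coincide, and the corollary asserts only the count, so nothing more is needed.
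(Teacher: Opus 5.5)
Your proposal is correct and is the paper's own argument: the corollary follows from the trimness of \(\esTam_n\) (\Cref{thm:trim}), the preceding lemma that a trim lattice and its spine have the same numbers of join- and meet-irreducible elements, and the counts in \Cref{cor:meet_irr,cor:join_irr}. Your longest-chain consistency check just unwinds the proof of that lemma. Your remark that only the counts transfer to the spine, not the irreducible elements themselves, is accurate, as \Cref{lem:jirr_spine} confirms.
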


If a tableau \(T\) is join-irreducible in \(\esTam_n\) and lies on the
spine, then it is also join-irreducible in the spine.
By~\Cref{cor:join-irr-description} and \Cref{def:join-irr}, the
join-irreducible elements labeled by \((i,j,\sigma)\) with
\(\sigma \neq \leftarrow\) lie on the spine. To avoid confusion, when
we view them as elements of the spine, we denote them by
\(S(i,j,\sigma)\).

For \(1\leq i\leq j <n \), let \(S(i,j,\leftarrow)\) be the following fb-tableau:
\begin{enumerate}
\item the left-arrow of the row \(i+1\) is in column \(j\), and all
  other left-arrows are in the leftmost column;
\item the up-arrows in columns \(n-1,n-2,\dots, j\) are in row \(i\),
  and all other up-arrows are in the border cells; all remaining
  border cells contain dots;
\item all remaining free cells are empty.
\end{enumerate}
Note that if \(i=n-1\), then this tableau coincides with the
join-irreducible \((i,j,\leftarrow)\); otherwise, the two tableaux are
different.
 
\[ 
S(3,4,\leftarrow) = \begin{ytableau}
    \bullet & & & & & \\
    \leftarrow &   & &  & &*(gray)\uparrow \\
     \leftarrow&  \uparrow& *(red)\uparrow & &  *(gray)\uparrow\\
      & & *(red)\leftarrow & *(gray)\uparrow \\
     \leftarrow& & *(gray)\bullet\\
     \leftarrow &*(gray)\bullet\\
  \end{ytableau}
\] 

\begin{lemma}\label{lem:jirr_spine}
  The join-irreducible elements of the spine are the following
  tableaux:
  \begin{enumerate}
  \item \(S(i,j,\emptyset)\) for \(1\leq i < j< n\);
  \item \(S(i,j,\bullet)\) for \(1\leq i\leq j < n\);
  \item \(S(i,j,\leftarrow)\) for \(1\leq i\leq j <n\). 
  \end{enumerate}
\end{lemma}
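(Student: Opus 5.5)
The plan has three parts: count, check membership in the spine, and check join-irreducibility within the spine. By \Cref{prop:irr_spine}, the spine has exactly \(2\binom{n}{2}+\binom{n-1}{2}\) join-irreducible elements, which equals \(\binom{n-1}{2}+2\binom{n}{2}\), the number of tableaux listed. It therefore suffices to show three things: the listed tableaux are pairwise distinct, each lies on the spine, and each is join-irreducible in the spine. Distinctness is clear from the positions of the arrows. For \(S(i,j,\emptyset)\) and \(S(i,j,\bullet)\), the remaining two points are already settled. These tableaux are join-irreducible in \(\esTam_n\), and, as noted before the statement, they lie on the spine. A join-irreducible element of \(\esTam_n\) that lies in a sublattice covers at most one element of that sublattice, since \(x\) is join-irreducible iff it is not the join of elements strictly below it. Each is also different from \(\hat 0\), so it is join-irreducible in the spine. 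To confirm that they lie on the spine, I would check directly with \Cref{thm:spine_iff} that \((i,j,\sigma)\) with \(\sigma\neq\leftarrow\) has no forbidden cohook: all left-arrows are in column \(n\), so every cohook below row \(1\) meets the first column in a nonempty cell, and cohooks in row \(1\) contain the root.

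For \(S(i,j,\leftarrow)\), I would first check that it is a fb-tableau and has no forbidden cohook. Rows other than \(i+1\) have their left-arrow in the first column, so their cohooks are nonempty. In row \(i+1\), the cells to the left of column \(j\) lie below the up-arrows placed in row \(i\) of columns \(n-1,\dots,j\), or below border up-arrows. A cohook \(H(i+1,k)\) with \(k>j\) therefore contains an up-arrow above it. Thus \(S(i,j,\leftarrow)\) lies on the spine by \Cref{thm:spine_iff}.

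The main obstacle is showing that \(S(i,j,\leftarrow)\) covers exactly one element of the spine. Spine covers are spine elements reachable by a single cover of \(\esTam_n\) (the spine is a sublattice containing longest chains, and by \Cref{lem:slowest_way} its covers are unit moves). I would list the possible downward unit moves of \(S(i,j,\leftarrow)\): moving its left-arrow in row \(i+1\) one cell left (leaving a dot), moving down one of the up-arrows in row \(i\) or a raised border up-arrow, or emptying a dot. Then I would show that all but one produce either a non-fb tableau or a forbidden cohook. The intended unique lower cover is the reverse of the last unit step of type II in the construction of \Cref{thm:spine_iff}: put a \(\bullet\) back at the left-arrow's cell and move the left-arrow one cell left.

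Lowering the up-arrow in column \(j\) (resp. \(k>j\)) should create a forbidden cohook at a cell of row \(i\) or \(i+1\), or break the cohook condition of the left-arrow in row \(i+1\). I expect this local case analysis to be the only real work, especially near \(i=j\) and \(i=n-1\), where the tableau degenerates to \((i,j,\leftarrow)\). If the case analysis becomes messy, the fallback is to identify the Galois poset directly: join-irreducibles of a distributive lattice correspond to the labels of its covers, and each \(S(i,j,\leftarrow)\) is the smallest spine element in which the left-arrow of row \(i+1\) has reached column \(j\). That minimality gives join-irreducibility immediately.
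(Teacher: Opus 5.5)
Your proposal is correct and follows the paper's proof. The paper also counts via \Cref{prop:irr_spine}, treats the first two families as already join-irreducible on the spine, and shows that $S(i,j,\leftarrow)$ has no forbidden cohook and has a unique spine lower cover: the tableau obtained by moving its row-$(i+1)$ left-arrow one cell left. The alternative lower covers, which lower an up-arrow in a column $k>j$ down to row $i+2$, leave the forbidden cohook $H(i+1,k)$. Your remark that spine covers are genuine covers of $\esTam_n$ is a step the paper leaves implicit; it needs the fact that the spine is distributive, hence graded of the same length as $\esTam_n$.
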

\begin{proof}
  By \Cref{prop:irr_spine}, we already know the number of
  join-irreducible elements of the spine, and we know that the first
  two families are join-irreducible in the spine. It therefore remains
  to check that \(S(i,j,\leftarrow)\) is on the spine and is
  join-irreducible.

  Since \(S(i,j,\leftarrow)\) has no forbidden cohook, it is on the
  spine. The only entries that can move downward are the up-arrows in
  columns \(n-1,\dots, j+1\) and the left-arrow in column \(j\).
  Moving the left-arrow one step to the left creates another
  fb-tableau on the spine. By contrast, moving one of those up-arrows
  downward to the first cell that is not pointed to by a left-arrow
  creates a smaller tableau with a forbidden cohook. Hence
  \(S(i,j,\leftarrow)\) covers a unique tableau of the spine, so it is
  join-irreducible.
\end{proof}

It remains to determine the poset induced by these elements. The
elements of type \(\emptyset\) and \(\bullet\) are join-irreducible in
\(\esTam_n\), so their relations are described
in~\Cref{prop:join-irred_poset}. We will show that the new elements of
type \(\leftarrow\) behave similarly to their counterparts
in~\(\esTam_n\), but they are no longer maximal. See \Cref{fig:spine}
for an illustration.

\begin{lemma}\label{lem:poset_irr_spine}
  Poset of join-irreducible elements of the spine is described as
  follows.
  \begin{enumerate}
  \item \(S(i,j,\leftarrow)\leq S(k,\ell,\sigma)\) if and only if
    \(\sigma = \leftarrow\), \(i=k\) and \(\ell\leq j\).
  \item \(S(i,j,\emptyset) \leq S(k,\ell,\leftarrow)\) if and only if
    \(k\leq i\) and \(\ell \leq j\).
  \item \(S(i,i,\bullet) \leq S(k,\ell,\leftarrow)\) if and only if
    \(k\leq i\) and \(\ell \leq i\).
  \item If \(i\neq j\), we have
    \(S(i,j,\bullet) \leq S(k,\ell,\leftarrow)\) if and only if
    \(i=k\) and \(\ell\leq j\).
  \item If \(\sigma \neq \leftarrow\), We have
    \(S(i,j,\emptyset) \leq S(k,\ell,\sigma)\) if and only if
    \(k \leq i\) and \(\ell = j\).
  \item If \(\sigma\neq \leftarrow\), we have
    \(S(i,j,\bullet) \leq S(k,\ell,\sigma)\) if and only if
    \(k= i, \ell = j\) and \(\sigma =\bullet\).
  \end{enumerate}
\end{lemma}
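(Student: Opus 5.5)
Since the spine is an induced subposet of \(\esTam_n\), every comparison \(S\le S'\) is just \(S\lhd S'\), so I will prove each item by testing the three conditions in the definition of \(\lhd\) directly: column spaces, row spaces, and dots on common free cells (\Cref{lem:free}). I first write down the arrows of each family explicitly. The tableau \(S(i,j,\emptyset)\) or \(S(i,j,\bullet)\) has all left-arrows in the first column and all up-arrows on the border except one at \((i,j)\). In \(S(i,j,\bullet)\) the cell \((i+1,j)\) carries a dot, or is the border dot when \(i=j\). The tableau \(S(i,j,\leftarrow)\) has up-arrows in row \(i\) in columns \(n-1,\dots,j\) and a left-arrow at \((i+1,j)\). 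Items (5) and (6) concern only join-irreducibles of \(\esTam_n\), so they follow from \Cref{prop:join-irred_poset}. There one reads off that \((i,j,\emptyset)\) lies below \((k,j,\emptyset)\) and \((k,j,\bullet)\) exactly when \(k\le i\). For \(i\neq j\), the element \((i,j,\bullet)\) lies below no other non-red join-irreducible except itself, because its dot sits in a cell that is free and empty in the others. I will also check the degenerate case \(S(i,i,\bullet)\), which is green and so falls under item (5)-type behaviour, against the statement.

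Next I handle items (2)–(4), where a non-red element lies below \(S(k,\ell,\leftarrow)\). Column space: \(\col(S(i,j,\sigma))\subseteq\col(S(k,\ell,\leftarrow))\) holds iff the single raised up-arrow at \((i,j)\) lies weakly below the up-arrow of column \(j\) in \(S(k,\ell,\leftarrow)\). That up-arrow is in row \(k\) if \(j\ge \ell\), and on the border otherwise. So the condition is \(\ell\le j\) and \(k\le i\), plus the trivial case \(i=j\), where the arrow is on the border. For \(S(i,i,\bullet)\), whose up-arrows are all on the border, the real constraint instead comes from row \(i+1\) or from free cells; I will recompute it to get \(k\le i,\ \ell\le i\). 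Row space: the only left-arrow of \(S(k,\ell,\leftarrow)\) not in column one is at \((k+1,\ell)\), so \(\row\) containment is automatic. Free cells: by \Cref{lem:free}, \(F(S,S(k,\ell,\leftarrow))=\col^s(S)\cap\row^s(S(k,\ell,\leftarrow))\). The free cells of \(S(k,\ell,\leftarrow)\) are empty, so this set must contain no dot of \(S\). For type \(\emptyset\) there are no non-border dots. For type \(\bullet\) with \(i<j\), the dot at \((i+1,j)\) lies in \(\col^s\) of \(S\), and it avoids \(\row^s(S(k,\ell,\leftarrow))\) only if it is not strictly right of a first-column left-arrow. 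That happens exactly when row \(i+1\) is row \(k+1\) and \(\ell\le j\), which gives item (4). Border dots are handled the same way, checking whether each border dot of \(S\) stays free and becomes empty.

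Finally, item (1), comparisons starting from \(S(i,j,\leftarrow)\). I apply the row-space condition first. \(S(i,j,\leftarrow)\) has a left-arrow in column \(j<n\) in row \(i+1\), while every non-red target has all left-arrows in column \(n\). Hence \(\row(S(k,\ell,\sigma))\subseteq\row(S(i,j,\leftarrow))\) fails unless \(\sigma=\leftarrow\) and the target's moved left-arrow is in the same row, so \(k=i\), and weakly right of column \(j\), so \(\ell\le j\) in the reversed column labelling. Conversely, when \(k=i\) and \(\ell\le j\), the up-arrows of \(S(i,j,\leftarrow)\) in row \(i\) form a subset of those of \(S(i,\ell,\leftarrow)\), and all free cells are empty, so the relation holds.

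The main obstacle will be the bookkeeping around border cells. This matters in item (3), where the dot of \(S(i,i,\bullet)\) is a border dot, and in deciding which border cells of \(S(k,\ell,\leftarrow)\) carry dots versus up-arrows. I will carefully draw small cases, for example \(n=4,5\), to make sure the inequalities, especially the reversed column indexing, come out exactly as stated.
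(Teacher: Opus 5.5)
Your strategy is the paper's: test the column, row and free-cell conditions of \(\lhd\) directly for comparisons involving \(S(k,\ell,\leftarrow)\) or \(S(i,j,\leftarrow)\), and read items (5) and (6) off \Cref{prop:join-irred_poset}. Items (1), (2) and (4) go through as you sketch them.

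\textbf{The gap is item (3).} Your plan rests on a false description of \(S(i,i,\bullet)\). That tableau does not have all its up-arrows on the border. By \Cref{def:join-irr}, its up-arrow in column \(i\) sits at \((i,i)\), one cell above the border cell \((i+1,i)\) that carries the dot. You say this yourself in your first paragraph. So the column condition is not ``trivial'' when \(i=j\); it is in fact the only source of the constraint:
\begin{itemize}
\item The row condition is automatic, because \(S(i,i,\bullet)\) has all left-arrows in the leftmost column.
\item The free-cell condition cannot exclude \(\ell>i\). In that case column \(i\) of \(S(k,\ell,\leftarrow)\) has its up-arrow on the border cell \((i+1,i)\), so this cell is not a common free cell and nothing is tested there.
\end{itemize}
Looking for the constraint ``from row \(i+1\) or from free cells'', as you propose, therefore cannot yield the ``only if'' direction of (3).

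\textbf{The repair} is what the paper does: treat \(S(i,i,\bullet)\) exactly like type \(\emptyset\) with \(j=i\). The up-arrow at \((i,i)\) must lie weakly below the up-arrow of column \(i\) in \(S(k,\ell,\leftarrow)\). That up-arrow is in row \(k\) if \(\ell\le i\) and on the border otherwise, which forces \(\ell\le i\) and \(k\le i\). Conversely, the border dot \((i+1,i)\) causes no trouble. If \(k=\ell=i\), that cell holds the left-arrow of \(S(i,i,\leftarrow)\) and is not free. Otherwise it is a dot of \(S(k,\ell,\leftarrow)\).

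\textbf{Smaller points.}
\begin{itemize}
\item Your sentence ``the free cells of \(S(k,\ell,\leftarrow)\) are empty'' is false for border cells, since border dots are free. For the converse directions of (1)--(4) you need the check you mention at the end: every border dot of the smaller tableau that is free in the larger one must be a dot there. This holds because the border cells of \(S(k,\ell,\leftarrow)\) in columns \(\ge\ell\) are dots, except \((k+1,k)\) when \(k=\ell\), which holds the left-arrow and is not free. The paper glosses over this point too.
\item The leftmost column is column \(n\) in the paper's labelling, not ``column one''.
\item Your planned check of the degenerate case will show that item (6) must be read with \(i\neq j\). Indeed \(S(j,j,\bullet)\le S(k,j,\emptyset)\) for \(k<j\), so \(S(j,j,\bullet)\) belongs with item (5).
\end{itemize}
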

\begin{proof}
  For this proof, we ignore the top row, the leftmost column, and all
  border cells when considering row and column spaces. We refer to
  \Cref{sec:def} for the definitions of row and column spaces. We say
  that a row space is full if it contains every cell.

  The row and column spaces of \(S(i,j,\sigma)\) are described as
  follows. If \(\sigma \neq \leftarrow\), then its row space is full,
  and its column space is \(\{(m,j)\ |\  m \geq i\}\). If
  \(\sigma = \leftarrow\), then its row space is the complement of
  \(\{(i+1,m) \ |\ m \geq j\}\), and its column space is
  \(\{(m,m')\ |\ m \geq i \ \text{and}\ m' \geq j \}\).
\begin{enumerate}
\item Comparing row spaces, we see that
  \(S(i,j,\leftarrow) \leq S(k,\ell,\sigma)\) implies that
  \(\sigma= \leftarrow\) and \(i=k\) and \(j\geq \ell\). Conversely,
  if \(j\geq \ell\), then we have
  \[
    \row(S(i,\ell,\leftarrow)\subseteq \row(S(i,j,\leftarrow)),
    \qand
    \col(i,j,\leftarrow) \subseteq \col(i,\ell,\leftarrow).
  \]
  Since these tableaux do not contain free cells with a dot outside
  the border, this proves the first statement.
\item The row space of \(S(i,j,\emptyset)\) is full, and all its dots
  lie on the border. Hence
  \(S(i,j,\emptyset) \leq S(k,\ell,\leftarrow)\) holds if and only if
  \(\col(S(i,j,\sigma)) \subseteq \col(S(k,\ell,\leftarrow))\), which
  is equivalent to \( k \leq i \) and \( \ell \leq j \).
\item First assume that \(i\neq j\). If
  \(S(i,j,\bullet) \leq S(k,\ell,\leftarrow)\), then
  \(\col(S(i,j,\bullet)\subseteq \col(k,\ell,\leftarrow)\). Hence we
  have \(k\leq i\) and \(\ell\leq j\). In \(S(i,j,\bullet)\), there is
  a dot at \((i+1,j)\), whereas \(S(k,\ell,\leftarrow)\) has no dot
  there. Hence \((i+1,j)\) is not free in \(S(k,\ell,\leftarrow)\).
  All cells southwest of \((k,\ell)\) are free except those in row
  \(k+1\). Therefore \(i+1=k+1\), so \(i=k\). Conversely, these
  conditions imply \(S(i,j,\bullet)\leq S(k,\ell,\leftarrow)\).

  If \(i=j\), then the dot lies on the border. Since this cell also
  contains a dot in \(S(k,\ell,\leftarrow)\), we obtain the same
  condition as for join-irreducible of type \(\emptyset\).
\item Since the tableau \(S(i,j,\sigma)\) for
  \(\sigma\neq \leftarrow\) are join-irreducible elements of
  \(\esTam_n\), the last two statements are nothing but a
  reformulation of \Cref{prop:join-irred_poset}.
\end{enumerate}
\end{proof}

We consider the set of \defn{colored intervals} \([i,j]_\sigma\) of
size \( n \), where \( 1 \leq i \leq j \leq n-1 \) and
\(\sigma \in \{\mathtt{green},\mathtt{blue},\mathtt{red}\}\), which is
in bijection with the join-irreducible elements of the spine of
\(\esTam_n\). Using the opposite of \Cref{lem:poset_irr_spine}, we
endow it with the following partial order: for
\(1\leq i \leq j\leq n-1\),
\begin{enumerate}
\item \([i,j]_{\mathtt{red}}\preceq [k,\ell]_{\mathtt{red}}\) if and only
  if \(i=k\) and \(j\leq \ell\).
\item \([i,j]_{\mathtt{red}}\preceq [k,\ell]_\sigma\) for
  \(\sigma\neq\mathtt{red}\) if and only if \(i\leq k\) and
  \(j \leq \ell\).
\item \([i,j]_{\mathtt{blue}}\preceq [k,\ell]_\sigma\) if and only if
  \(\sigma\neq \mathtt{red}\), \(i\leq k\) and \(j=\ell\).
\item \([i,j]_{\mathtt{green}} \preceq [k,\ell]_\sigma\) if and only if
  \(\sigma=\mathtt{green}\), \(i\leq k\) and \(j=\ell\).
\end{enumerate}
We call this poset the \defn{product poset of colored intervals} of
size \(n\). We use the term `product' to emphasize that this poset is
a complicated version of the usual product order on the intervals:
\([i,j]\leq [k,\ell]\) if and only \(i\leq k\) and \(j\leq \ell\). In
\Cref{sec:poly-label}, we consider another partial order on the same
set, related to inclusion of intervals.

\begin{figure}[h!]
\begin{tikzpicture}[
>=Stealth,
x=1.6cm,
y=1.2cm,
every node/.style={font=\small}
]

\node (r34) at (1,0) {\color{red}(3,4,$\leftarrow$)};
\node (r24) at (2,0) {\color{red}(2,4,$\leftarrow$)};
\node (r14) at (3,0) {\color{red}(1,4,$\leftarrow$)};

\node (r44) at (0,-1) {\color{red}(4,4,$\leftarrow$)};
\node (b34) at (1,-1) {\color{blue}(3,4,$\bullet$)};
\node (b24) at (2,-1) {\color{blue}(2,4,$\bullet$)};
\node (b14) at (3,-1) {\color{blue}(1,4,$\bullet$)};

\draw[->] (b34) -- (r34);
\draw[->] (b24) -- (r24);
\draw[->] (b14) -- (r14);

\node (g44) at (0,-2) {\color{green}(4,4,$\bullet$)};
\node (g34) at (1,-2) {\color{green}(3,4,$\emptyset$)};
\node (g24) at (2,-2) {\color{green}(2,4,$\emptyset$)};
\node (g14) at (3,-2) {\color{green}(1,4,$\emptyset$)};

\draw[->] (g44) -- (r44);
\draw[->] (g34) -- (b34);
\draw[->] (g24) -- (b24);
\draw[->] (g14) -- (b14);

\draw[->] (g44) -- (g34);
\draw[->] (g34) -- (g24);
\draw[->] (g24) -- (g14);


\node (r23) at (2,-3) {\color{red}(2,3,$\leftarrow$)};
\node (r13) at (3,-3) {\color{red}(1,3,$\leftarrow$)};

\node (r33) at (1,-4) {\color{red}(3,3,$\leftarrow$)};
\node (b23) at (2,-4) {\color{blue}(2,3,$\bullet$)};
\node (b13) at (3,-4) {\color{blue}(1,3,$\bullet$)};

\node (g33) at (1,-5) {\color{green}(3,3,$\bullet$)};
\node (g23) at (2,-5) {\color{green}(2,3,$\emptyset$)};
\node (g13) at (3,-5) {\color{green}(1,3,$\emptyset$)};

\draw[->] (b23) -- (r23);
\draw[->] (b13) -- (r13);

\draw[->] (g33) -- (r33);
\draw[->] (g23) -- (b23);
\draw[->] (g13) -- (b13);

\draw[->] (g33) -- (g23);
\draw[->] (g23) -- (g13);


\node (r12) at (3,-6) {\color{red}(1,2,$\leftarrow$)};
\node (r22) at (2,-7) {\color{red}(2,2,$\leftarrow$)};
\node (b12) at (3,-7) {\color{blue}(1,2,$\bullet$)};
\node (g22) at (2,-8) {\color{green}(2,2,$\bullet$)};
\node (g12) at (3,-8) {\color{green}(1,2,$\emptyset$)};

\draw[->] (b12) -- (r12);
\draw[->] (g22) -- (r22);
\draw[->] (g12) -- (b12);
\draw[->] (g22) -- (g12);

\node (r11) at (3,-9) {\color{red}(1,1,$\leftarrow$)};
\node (g11) at (3,-10) {\color{green}(1,1,$\bullet$)};

\draw[->] (g11) -- (r11);

\draw[->, bend right=35] (r34) to (r33);
\draw[->, bend right=35] (r24) to (r23);
\draw[->, bend right=35] (r14) to (r13);

\draw[->, bend right=35] (r23) to (r22);
\draw[->, bend right=35] (r13) to (r12);

\draw[->, bend right=35] (r12) to (r11);

\end{tikzpicture}
\caption{Poset of the join-irreducible elements of the spine for n=4.}\label{fig:spine}
\end{figure}

\begin{theorem}\label{thm:spine_colored_intervals}
  The spine of \(\esTam_n\) is isomorphic to the lattice of order
  ideals on the opposite of the product poset of colored intervals of
  size \(n\).
\end{theorem}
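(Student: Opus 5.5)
The plan is to combine Birkhoff's representation theorem with the explicit description of the join-irreducible elements of the spine already obtained. The spine of a trim lattice is distributive by \cite[Lemma 7]{Thomas2006}, and it is finite. Birkhoff's theorem therefore identifies it with the lattice of order ideals of its poset $J$ of join-irreducible elements, ordered as in the spine. So the proof reduces to showing that $J$ is isomorphic to the opposite of the product poset of colored intervals. Equivalently, we need a bijection between $J$ and the colored intervals that reverses order.

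By \Cref{lem:jirr_spine}, $J$ consists of $S(i,j,\emptyset)$ for $i<j$, $S(i,j,\bullet)$ for $i\le j$, and $S(i,j,\leftarrow)$ for $i\le j$. We use the color convention already fixed:
\begin{itemize}
\item $S(i,j,\leftarrow)$ is sent to a red interval;
\item $S(i,j,\emptyset)$ and $S(j,j,\bullet)$ are sent to green intervals;
\item $S(i,j,\bullet)$ with $i<j$ is sent to a blue interval.
\end{itemize}
The order in \Cref{lem:poset_irr_spine} weakens when the first index grows, since $S(i,j,\emptyset)\le S(k,\ell,\sigma)$ requires $k\le i$. So the interval endpoints should be relabelled, for instance $(i,j)\mapsto [\,n-j,\,n-i\,]$ or a similar involution, chosen so that the relations of \Cref{lem:poset_irr_spine} become exactly items (1)--(4) in the definition of $\preceq$. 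I expect the definition of colored intervals to be written precisely as the opposite of \Cref{lem:poset_irr_spine} under the naive identification $S(i,j,\sigma)\leftrightarrow[i,j]_\sigma$. In that case only the direction needs checking, together with the degenerate green elements $S(j,j,\bullet)$ being counted among the green intervals $[j,j]$, and the bijection is literally $S(i,j,\sigma)\mapsto[i,j]_{\mathrm{color}(\sigma)}$.

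The main step is the case-by-case verification that $S(i,j,\sigma)\le S(k,\ell,\tau)$ in the spine holds if and only if the corresponding colored intervals satisfy $[k,\ell]_{\tau}\preceq[i,j]_{\sigma}$, after any index relabelling. This runs through the six items of \Cref{lem:poset_irr_spine}:
\begin{itemize}
\item red below anything forces red, with the same first index and nested second indices;
\item green or blue below red is governed by product-type inequalities;
\item green below green, and blue below blue or green, live in a single column $j$.
\end{itemize}
Two points need care. First, the degenerate element $S(i,i,\bullet)$ behaves like type $\emptyset$ (item (3)), which is why it is colored green. Second, the relations in the spine are the relations in $\esTam_n$ restricted to the spine, since the spine is a sublattice; hence \Cref{lem:poset_irr_spine}, which was computed with $\lhd$, applies directly.

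The expected obstacle is purely bookkeeping. The inequalities in \Cref{lem:poset_irr_spine} run in mixed directions ($k\le i$ but $\ell\le j$), while in the definition of $\preceq$ both run upward. So the correct reindexing of intervals must be pinned down so that all four clauses match simultaneously, including the equality constraints ($i=k$ for red--red, $j=\ell$ for blue and green). Once this matching is confirmed clause by clause, Birkhoff's theorem finishes the proof.
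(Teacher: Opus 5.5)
Your plan is the paper's own argument. The spine is distributive, Birkhoff reduces the statement to its poset of join-irreducibles, and \Cref{lem:jirr_spine} and \Cref{lem:poset_irr_spine} describe that poset. The product poset of colored intervals is introduced precisely as the opposite of that poset under the naive identification $S(i,j,\sigma)\mapsto[i,j]$, with $S(j,j,\bullet)$ colored green, so no reindexing of endpoints is needed.

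Be careful with the bookkeeping you defer, though. By items (4) and (6) of \Cref{lem:poset_irr_spine}, a blue $S(i,j,\bullet)$ with $i<j$ lies below a red $S(k,\ell,\leftarrow)$ only when $k=i$ and $\ell\le j$, and it lies below no other blue element. For example, in $\esTam_4$ the tableau $S(2,3,\bullet)$ lies below neither $S(1,3,\bullet)$ nor $S(1,3,\leftarrow)$: the cell $(3,3)$ is free in all three tableaux but carries a dot only in $S(2,3,\bullet)$. So your summary that blue-below-red is governed by product-type inequalities is wrong.

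Consequently, the clause-by-clause match holds only with $\preceq$ read as the exact opposite of that lemma, which is what the paper intends. Clauses (2) and (3) as printed allow $i\le k$ in the blue cases. They therefore add relations such as $[1,3]_{\mathtt{blue}}\preceq[2,3]_{\mathtt{blue}}$, and would give $250$ order ideals for $n=4$ instead of the $288$ spine elements.
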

\begin{remark}
  It is classical that the spine of the usual Tamari lattice
  \(\Tam_n\) is isomorphic to the lattice of order ideals of the
  product poset on the intervals of a total order with \(n-1\)
  elements; see, for example, \cite{thomas_williams} and
  \cite[Proposition 8.2]{zbMATH08165348}.
\end{remark}

\subsection{Counting the elements of the spine}

In this subsection, we count the number of elements in the spine using
the description obtained in the previous subsection.

For a fb-tableau, we associate its border to a word in
\(\{\bullet,\uparrow,\leftarrow\}\), obtained by listing in order the
entries in the border cells
\[
  (2,1),(3,2),\dots,(n,n-1),
\]
as in \Cref{sec:sublattices}. Define \(f(i,j,k)\), for \(i,k>0\) and
\(j\ge 0\) with \(i+j+k=n-1\), to be the number of tableaux in the
spine whose border has the form
\[
  \alpha_1\cdots \alpha_i\beta_1\cdots \beta_j\gamma_1\cdots \gamma_k,
\]
where \(\alpha_p\in\{\bullet,\uparrow\}\),
\(\beta_p=\bullet\), and
\(\gamma_p\in\{\bullet,\leftarrow\}\) for all \(p\), with the
additional conditions \(\alpha_i=\uparrow\) and
\(\gamma_1=\leftarrow\). Similarly, \(f(0,j,k)\), \(f(i,j,0)\), and
\(f(0,j,0)\) are defined in the same way, except that the first,
third, or both segments are omitted.

\begin{lemma}\label{lem:spine_uparrows}
  The number of fb-tableaux in the spine of size \(n\) with no
  \(\leftarrow\) on the border and whose border ends with
  \(\uparrow\) is equal to
  \[
    f(n-1,0,0)=2^{(n-1)(n-2)/2}.
  \]
\end{lemma}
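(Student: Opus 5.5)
The plan is to show that, under the two border hypotheses, the tableau is rigid in its left-arrows and unconstrained in its up-arrows and free dots. After that, the count becomes a product over columns.

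\textbf{Step 1: all left-arrows lie in the leftmost column.} Since the border ends with \(\uparrow\), the border cell \((n,n-1)\) holds the up-arrow of column \(n-1\). Hence the cells \((1,n-1),\dots,(n-1,n-1)\) are all empty. Fix a row \(i\) with \(2\le i\le n-1\). Because no border cell contains a \(\leftarrow\), the left-arrow of row \(i\) lies in a non-border cell. Suppose it is not in column \(n\). Then \((i,n)\) and \((i,n-1)\) are empty, since cells to the left of a left-arrow are empty. The cohook \(H(i,n-1)\) consists of \((i,n)\) and \((1,n-1),\dots,(i,n-1)\), so it would be entirely empty, i.e.\ a forbidden cohook. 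By \Cref{thm:spine_iff} this contradicts membership in the spine. Row \(n\) is forced anyway, since its only non-border cell is \((n,n)\). So every left-arrow is in column \(n\).

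\textbf{Step 2: the converse direction.} Conversely, take a fb-tableau whose left-arrows are all in column \(n\). Every cohook \(H(i,j)\) contains \((i,n)\), which is a left-arrow if \(i\ge 2\) and the root if \(i=1\). Hence no cohook is forbidden, and every such tableau is on the spine.

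\textbf{Step 3: describe the tableaux to count.} By Steps 1 and 2, we must count fb-tableaux with the following properties:
\begin{itemize}
\item all left-arrows are in the first column;
\item no border cell holds a \(\leftarrow\);
\item the up-arrow of column \(n-1\) is on the border.
\end{itemize}

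\textbf{Step 4: the columns are independent.} Consider column \(j\) with \(1\le j\le n-2\). It has cells in rows \(1,\dots,j+1\), with border cell \((j+1,j)\). Its up-arrow may sit in any row \(r\in\{1,\dots,j+1\}\), because there is always a left-arrow or the root to its left. The cells above the up-arrow are empty. Each non-border cell strictly below it is free and may independently hold a dot or be empty, since such a dot has \(\leftarrow\) to its left and \(\uparrow\) above. The border cell is \(\uparrow\) if \(r=j+1\), and otherwise is forced to be \(\bullet\), since it can be neither empty nor \(\leftarrow\). The fb-conditions are checked cell by cell, so the choices in different columns are independent.

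\textbf{Step 5: count column by column.} In column \(j\) the number of choices is
\[
  1 + 1 + 2 + 4 + \dots + 2^{j-1} = 2^{j},
\]
where the terms correspond to \(r=j+1,\ j,\ j-1,\ \dots,\ 1\) respectively. Multiplying over \(j=1,\dots,n-2\) gives
\[
  2^{1+2+\dots+(n-2)} = 2^{(n-1)(n-2)/2}.
\]
This matches \(f(n-1,0,0)\), because a border with no \(\leftarrow\) that ends in \(\uparrow\) is exactly the shape \(\alpha_1\cdots\alpha_{n-1}\) with \(\alpha_{n-1}=\uparrow\).

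\textbf{Main obstacle.} The main obstacle is Step 1, the rigidity of the left-arrows. The key point is to exploit the empty column \(n-1\) via the cohook \(H(i,n-1)\). After that, one only has to confirm that no hidden fb-constraint couples different columns.
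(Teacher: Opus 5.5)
Your proof is correct and follows essentially the same route as the paper: the empty column \(n-1\) forces every left-arrow into the leftmost column via the forbidden cohook \(H(i,n-1)\), and then each column \(j\le n-2\) independently contributes \(1+1+2+\dots+2^{j-1}=2^{j}\) fillings. You also make explicit two steps the paper leaves implicit: the converse, that every such tableau avoids forbidden cohooks and so lies on the spine, and the check that the fb-conditions do not couple different columns.
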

\begin{proof}
  A fb-tableau counted by \(f(n-1,0,0)\) has border of the form
  \( \alpha_1\alpha_2\cdots \alpha_{n-1} \), where
  \(\alpha_p\in\{\bullet,\uparrow\}\) and \(\alpha_{n-1}=\uparrow\).
  Since the cell \((n,n-1)\) contains an up-arrow, all left-arrows
  must lie in the first column in order to avoid creating a forbidden
  cohook.

  For \(1\le i<n-1\), consider the column containing the border cell
  \((i+1,i)\). The cell \((i+1,i)\) either contains an up-arrow, or it
  contains a bullet with an up-arrow somewhere above it in the same
  column. If it contains a bullet and it contain an up-arrow in cell
  \((i-j,i)\) for \(0\le j< i\), the number of fillings of its column
  is \( 2^j\) if \( j \neq i-1\) and \(2^{i-1}\) if
  \( j = i-1 \). The total number is therefore
  \[
    1 + \sum_{j=0}^{i-2}2^j + 2^{i-1}=2^i.
  \]
  Multiplying the contributions obtained from each column yields the
  desired result.
\end{proof}

\begin{proposition}\label{pro:counting_spine}
  Define \(g(i,j,k)\) by the following recurrence relations. For
  \(i,k\ge 0\) and \(j>1\),
  \begin{equation}\label{eq:rec1_spine}
    g(i,j,k)
    =(2^{i+1}-1)g(i+1,j-1,k)
    +(2^{k+1}-1)g(i,j-1,k+1)
    -(2^{i+1}-1)(2^{k+1}-1)g(i+1,j-2,k+1).
  \end{equation}
  For \(i,k\ge 0\) and \(j=1\),
  \begin{equation}\label{eq:rec2_spine}
    g(i,1,k)
    =(2^{i+1}-1)g(i+1,0,k)
    +(2^{k+1}-1)g(i,0,k+1)
    -(2^{i+1}-1)(2^{k+1}-1)g(i,0,k).
  \end{equation}
  For \( i,j \geq 0 \) and \(j=0\),
  \begin{equation}\label{eq:rec3_spine}
    g(i,0,k)
    =(2^{i+1}-1)g(i,0,k-1)
    +(2^{k+1}-1)g(i-1,0,k) 
    -2(2^i-1)(2^k-1)g(i-1,0,k-1),
  \end{equation}
  with initial conditions \(g(i,j,k)=0\) for \(i<0\), \(j<0\), or
  \(k<0\), and \( g(k,0,0)=g(0,0,k)=1 \) for \(k\ge 0\). Then
  \begin{equation}\label{eq:f_wrt_g}
      f(i,j,k) = 2^{i(i-1)/2 + k(k-1)/2}\,g(i,j,k).
  \end{equation}
\end{proposition}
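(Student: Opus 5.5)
The plan is to reduce everything to a transfer-matrix style decomposition of spine tableaux along the border, using \Cref{thm:spine_iff}: a tableau lies on the spine if and only if it has no forbidden cohook. I will first isolate the contribution of the two ``outer'' segments. For the \(\alpha\)-segment (border letters in \(\{\bullet,\uparrow\}\), ending with \(\uparrow\)), the argument of \Cref{lem:spine_uparrows} shows that the rows of the first \(i\) border cells have their left-arrows forced into the first column. The columns through these cells can then be filled independently, giving a factor \(2^{i(i-1)/2}\) up to a correction coming from the interface with the \(\beta\)-segment. Applying the conjugation of \Cref{def:conjugate} reverses the border and swaps arrows, and it preserves cohooks (hence the spine, by \Cref{thm:selfdual}). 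This gives the symmetric factor \(2^{k(k-1)/2}\) for the \(\gamma\)-segment. I then define \(g(i,j,k)\) as the quotient \(f(i,j,k)/2^{i(i-1)/2+k(k-1)/2}\), so that \eqref{eq:f_wrt_g} becomes the statement that this quotient satisfies \eqref{eq:rec1_spine}--\eqref{eq:rec3_spine} with the stated initial conditions. The initial values \(g(k,0,0)=g(0,0,k)=1\) follow from \Cref{lem:spine_uparrows} and its conjugate.

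For \(j\ge 1\), I will look at the first border cell of the \(\beta\)-segment, which contains a \(\bullet\). Being a dot, it has a left-arrow to its left and an up-arrow above it. The absence of a forbidden cohook at the neighbouring positions forces, roughly, one of two situations.
\begin{itemize}
\item The left-arrow of its row sits in the first column. Then this cell behaves like an extra \(\alpha\)-cell, and the count reduces to the case \((i+1,j-1,k)\). The new column contributes \(2^{i+1}-1\) admissible fillings instead of \(2^{i+1}\), because the filling that would make the cell an \(\uparrow\) is excluded.
\item The up-arrow of its column sits in the top row. Then, dually, the cell is absorbed into the \(\gamma\)-segment, giving \((2^{k+1}-1)\,g(i,j-1,k+1)\).
\end{itemize}
The configurations satisfying both conditions are counted twice, and I expect them to factor as \((2^{i+1}-1)(2^{k+1}-1)\) times the count for the border with that cell and the next one absorbed on both sides. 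This gives the subtracted term in \eqref{eq:rec1_spine}. When \(j=1\) the doubly absorbed configuration has no remaining \(\beta\)-cell to spend, which is why the last term in \eqref{eq:rec2_spine} becomes \(g(i,0,k)\). I will check this degenerate bookkeeping directly.

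The case \(j=0\), recurrence \eqref{eq:rec3_spine}, is the one I expect to be the main obstacle. Here the \(\uparrow\) at the end of the \(\alpha\)-segment is adjacent to the \(\leftarrow\) starting the \(\gamma\)-segment, and the interaction region is the rectangle of cells lying both in \(\alpha\)-columns and \(\gamma\)-rows. I would do inclusion–exclusion on the last \(\alpha\)-column and the first \(\gamma\)-row. Removing the last \(\alpha\)-column gives the term with \((2^{i+1}-1)g(i,0,k-1)\), and removing the first \(\gamma\)-row gives the term with \((2^{k+1}-1)g(i-1,0,k)\). Both removals have to be performed after renormalizing by the powers of \(2\), which is where the shifted exponents \(i+1\), \(k+1\) appear. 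The overlap term must then be shown to be exactly \(2(2^i-1)(2^k-1)g(i-1,0,k-1)\). The factor \(2\) should come from the corner cell at the intersection, which can be empty or dotted without creating a forbidden cohook. I would first verify this overlap count on small \(n\) against the sequence \(1,3,20,288,\dots\), and then prove it by a direct bijection on the corner.
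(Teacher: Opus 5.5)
Your overall architecture matches the paper's proof: split off the $\alpha$-columns and the $\gamma$-rows, then run inclusion--exclusion on two boundary lines of the remaining region. The degenerate cases $j=1$ and $j=0$ and the factor $2$ from the corner cell also match. However, the dichotomy that drives the inclusion--exclusion for $j\ge 1$ is wrong. It is not true that, for the first $\beta$-cell $(i+2,i+1)$, either its row has its left-arrow in the first column or its column has its up-arrow in the top row. For $n=4$ and border $\uparrow\bullet\bullet$ (so $i=1$, $j=2$, $k=0$), the tableau
\[
\begin{ytableau}
\bullet & \uparrow & & \\
\leftarrow & & \uparrow & *(gray)\uparrow \\
 & \leftarrow & *(gray)\bullet \\
\leftarrow & *(gray)\bullet
\end{ytableau}
\]
has no forbidden cohook. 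Yet row $3$ has its left-arrow at $(3,3)$ and column $2$ has its up-arrow at $(2,2)$. Your first condition is also not what lets column $i+1$ split off with $2^{i+1}-1$ fillings. What is needed is that \emph{this column} contains no left-arrow of a row $r\le i+1$, which has nothing to do with where the left-arrow of row $i+2$ sits. Your second case cannot yield $(2^{k+1}-1)g(i,j-1,k+1)$ either. When $j\ge 2$ the first $\beta$-cell is not adjacent to the $\gamma$-segment, so the dual operation must act on the \emph{last} $\beta$-cell.

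The paper uses the following split. Take the middle region consisting of rows $1,\dots,i+j+1$ and columns $i+1,\dots,n$. Either its rightmost column $i+1$ contains no left-arrow, or its bottom row $i+j+1$ contains no up-arrow. Otherwise a left-arrow at $(r,i+1)$ with $r\le i+1$ and an up-arrow at $(i+j+1,c)$ with $c\ge i+j+1$ make $H(r,c)$ entirely empty. These two cases give the first two terms of \eqref{eq:rec1_spine}. Their intersection absorbs the first \emph{and the last} $\beta$-cell, giving $g(i+1,j-2,k+1)$; your ``that cell and the next one'' is right only when $j=2$.

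Two further points need fixing.
\begin{enumerate}
\item The left-arrows of the $\alpha$-rows are not forced into the first column. For instance, with $n=3$ and border $\uparrow\leftarrow$, row $2$ may have its left-arrow at $(2,2)$. They are only forced into columns $\ge i+1$, since otherwise $H(r,i)$ is empty. This is exactly what makes the $\alpha$-columns contribute $2^{i(i-1)/2}$ with no interface correction. It also identifies $g$ as the count of the middle region, which is what gives a meaning to terms like $g(i+1,j-1,k)$ when the absorbed cell is a $\bullet$ rather than an $\uparrow$.
\item For $j=0$ the lines to remove are column $i+1$ and row $i+1$. The ``last $\alpha$-column'' and ``first $\gamma$-row'' you name are rigidly filled by the border arrows. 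Also, no cell lies in both an $\alpha$-column and a $\gamma$-row.
\end{enumerate}
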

\begin{proof}

 \begin{figure}
   \centering
\[
 \begin{ytableau}
   *(orange)\bullet &*(orange)&*(orange)&*(orange)&*(orange)&*(orange)&*(yellow)&*(yellow)&*(yellow)& *(yellow)  \\
   *(orange)&*(orange)&*(orange)&*(orange)&*(orange)&*(orange)&*(yellow)&*(yellow)&*(yellow)& *(yellow)*  \\
   *(orange)&*(orange)&*(orange)&*(orange)&*(orange)&*(orange)&*(yellow)&*(yellow)& *(yellow)* \\
   *(orange)&*(orange)&*(orange)&*(orange)&*(orange)&*(orange)&*(yellow)&*(yellow)* \\
   *(orange)&*(orange)&*(orange)&*(orange)&*(orange)&*(orange)&*(yellow)\uparrow \\
   *(orange)&*(orange)&*(orange)&*(orange)&*(orange)&*(orange)\bullet\\
   *(orange)&*(orange)&*(orange)&*(orange)&*(orange)\bullet\\
   *(green) &*(green)&*(green)&*(green)\leftarrow\\
   *(green) &*(green)&*(green)*\\
   *(green) &*(green)*
 \end{ytableau} 
\]
\caption{Illustration of the border cells of a tableau counted by
  \(f(i,j,k)\) with \(i=4\), \(j=2\), and \(k=3\). The tableau is
  divided into three regions according to \(i\), \(j\), and \(k\). A
  \( * \) in a yellow cell is either \(\bullet\) or \(\uparrow\),
  while a \( * \) in a green cell is either \(\bullet\) or
  \(\leftarrow\).}
   \label{fig:fixed_border}
 \end{figure}

  For the columns containing the border cells
  \[
    (2,1),(3,2),\dots,(i+1,i),
  \]
  if the cell \((i+1,i)\) contains an up-arrow, then by
  \Cref{lem:spine_uparrows} the number of possibilities for that
  portion of the tableau, highlighted in yellow in
  \Cref{fig:fixed_border}, is \( 2^{i(i-1)/2} \). Dually, for the rows
  containing the border cells
  \[
    (n,n-1),(n-1,n-2),\dots,(n-k+1,n-k),
  \]
  if the cell \((n-k+1,n-k)\) contains a left-arrow, then the number
  of possibilities for the green part in \Cref{fig:fixed_border} is
  \( 2^{k(k-1)/2} \). Let \(g(i,j,k)\) denote the number of
  possibilities for the remaining part, highlighted in orange in
  \Cref{fig:fixed_border}, after removing these two portions. This
  gives the formula \eqref{eq:f_wrt_g} for \( f(i,j,k) \) in terms of
  \( g(i,j,k) \).

  We now derive the recurrence for \(g(i,j,k)\). If both the rightmost
  column and the bottommost row contain more than one arrow, then the
  left-arrow in the rightmost column and the up-arrow in the bottommost
  row form a forbidden cohook. Hence at least one of them must contain
  exactly one arrow.

  \begin{enumerate}
  \item If the rightmost column contains exactly one arrow, then it is
    necessarily an up-arrow. The contribution of this case is
    \[
      (2^{i+1}-1)g(i+1,j-1,k).
    \]

  \item Similarly, if the bottommost row contains exactly one arrow,
    then it is necessarily a left-arrow. The contribution of this case is
    \[
      (2^{k+1}-1)g(i,j-1,k+1).
    \]

  \item If both conditions hold simultaneously, then the contribution
    of is
    \[
      (2^{i+1}-1)(2^{k+1}-1)g(i+1,j-2,k+1).
    \]
  \end{enumerate}

  Combining these three cases yields the
  recurrence~\eqref{eq:rec1_spine}. The
  recurrences~\eqref{eq:rec2_spine} and \eqref{eq:rec3_spine} can be
  obtained similarly.

  When \( j = 1 \), the first two cases are the same as before, while,
  in the third case, the rightmost column and the bottommost row share
  a border cell, which gives \eqref{eq:rec2_spine}.

  When \( j = 0 \), no border cell is involved. In the first case, the
  number of ways to fill the rightmost part of the orange region is
  again \( 2^{i+1}-1 \), and the number of possibilities for the
  remaining part is \( g(i,0,k-1) \). Thus the contribution is
  \( (2^{i+1}-1)g(i,0,k-1) \). Similarly, the second case contributes
  \( (2^{k+1}-1)g(i-1,0,k) \). In the third case, the rightmost column
  and the bottommost row again share a single cell. This cell contains
  a bullet or is empty, and the remaining parts contribute the factor
  \( (2^i-1)(2^k-1)g(i-1,0,k-1) \). This yields \eqref{eq:rec3_spine}.
\end{proof}

\begin{theorem}\label{thm:counting_spine}
  The number of fb-tableaux of size \(n\) in the spine is
  \[
    \sum_{i+j+k=n-1} f(i,j,k),
  \]
  where the numbers \(f(i,j,k)\) are defined in
  \Cref{pro:counting_spine}.
\end{theorem}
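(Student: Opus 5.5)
The plan is to partition the spine according to the border word, using \Cref{thm:spine_iff}: the spine consists exactly of the fb-tableaux without a forbidden cohook, so we only need to count these tableaux. First, we show that the border of a spine tableau has the shape described in the definition of \(f(i,j,k)\). Namely, it is a (possibly empty) block of letters in \(\{\bullet,\uparrow\}\) ending with \(\uparrow\), then a block of \(\bullet\)'s, then a (possibly empty) block of letters in \(\{\bullet,\leftarrow\}\) beginning with \(\leftarrow\).

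The key observation is that the border cannot contain a \(\leftarrow\) strictly above an \(\uparrow\). Suppose the border cell \((p+1,p)\) contains \(\leftarrow\) and a lower border cell \((q+1,q)\), with \(q>p\), contains \(\uparrow\). Then row \(p+1\) is empty to the left of column \(p\). Column \(q\) is empty above row \(q+1\). Consider the cohook \(H(p+1,q)\); it lies weakly left of \((p+1,p)\) since \(q>p\) (columns are numbered from the right). Its horizontal part is empty because the left-arrow of row \(p+1\) is at the border. Its vertical part lies in column \(q\) above row \(q+1\), hence is empty. The cell \((p+1,q)\) itself is empty as well. So \(H(p+1,q)\) is a forbidden cohook, which contradicts \Cref{thm:spine_iff}.

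It follows that all \(\uparrow\)'s on the border precede all \(\leftarrow\)'s. Now let \(i\) be the position of the last \(\uparrow\) (with \(i=0\) if there is none), and let \(n-1-k+1\) be the position of the first \(\leftarrow\) (with \(k=0\) if there is none). The letters strictly between these positions are then all \(\bullet\), and we set \(j=n-1-i-k\). Letters before position \(i\) lie in \(\{\bullet,\uparrow\}\), and letters after the first \(\leftarrow\) lie in \(\{\bullet,\leftarrow\}\). The triple \((i,j,k)\) is uniquely determined by the border, and the degenerate cases \(i=0\) or \(k=0\) are exactly those covered by the conventions \(f(0,j,k)\), \(f(i,j,0)\) and \(f(0,j,0)\).

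Every spine tableau therefore falls into exactly one class counted by some \(f(i,j,k)\) with \(i+j+k=n-1\), and these classes are disjoint. Summing over all such triples gives the stated formula. The only real work is the forbidden-cohook argument excluding a \(\leftarrow\) above an \(\uparrow\). Care is needed with the indexing convention for \((i,j)\), and with the edge case where the relevant cohook touches the top row (the root is a dot, but the cohook chosen avoids column \(n\) unless \(q=n\), which is impossible since \(q\le n-1\)).
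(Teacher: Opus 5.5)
Your proof is correct and matches the argument the paper leaves implicit. The theorem is the partition of the spine by border type, and the one nontrivial point, that no border $\leftarrow$ lies above a border $\uparrow$, follows as you show from the empty cohook $H(p+1,q)$ and \Cref{thm:spine_iff}; the paper uses this kind of argument inside the proofs of \Cref{lem:spine_uparrows} and \Cref{pro:counting_spine}. One small slip in your final remark: $H(p+1,q)$ does meet column $n$, at the empty cell $(p+1,n)$, but this is harmless, since all that matters is that it avoids the root $(1,n)$, which holds because its horizontal arm lies in row $p+1\ge 2$ and its vertical arm in column $q\le n-1$.
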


Using \Cref{thm:counting_spine}, we can explicitly compute the number
of fb-tableaux of size \(n\) in the spine for some fixed $n$. For the first few values of
\(n \geq 1 \), these numbers are
\[
  1, \ 3,\ 20,\ 288,\ 8672,\ 535680,\ 67162112,\ 16987521824.
\]

\section{Polygonality and congruence uniformity}
\label{sec:polygonality}

\subsection{Polygonality}

An interval \([x,y]\) in a lattice is the set
\(\{z\ |\ x\le z\le y\}\). A \defn{polygon} in a lattice is an interval
\( [x,y] \) that can be written as the union of two maximal chains
from \( x \) to \( y \), which intersect only at \( x \) and \( y \).
A lattice is said to be \defn{polygonal} if the following two
conditions hold:
\begin{enumerate}
\item If $y_1$ and $y_2$ are distinct and cover an element
$x$, then $[x, y_1 \vee y_2]$ is a polygon.
\item If $y_1$ and $y_2$ are distinct and are covered by an
element $x$, then $[y_1 \wedge y_2, x]$ is a polygon.
\end{enumerate}

By \Cref{thm:selfdual}, the lattice $\esTam_n$ is self-dual, we just need to prove the first condition.

\begin{proposition}\label{prop:polygonal}
The extra slow Tamari lattice $\esTam_n$ is polygonal.
\end{proposition}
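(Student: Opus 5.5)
By \Cref{thm:selfdual} it suffices to check the first condition. Fix $x$ and two distinct upper covers $y_1,y_2$. By \Cref{thm:cover_relation_iff}, each $y_k$ is obtained from $x$ by a single move of type I, II or III at a changeable cell $c_k$. The plan is a case analysis on the pair of types and on the relative position of $c_1$ and $c_2$. In each case we compute $y_1\vee y_2$ with \Cref{lem:join}, exhibit two maximal chains of covers from $x$ to $y_1\vee y_2$ that meet only at the endpoints, and check that every element of the interval lies on one of them. The polygons that appear should be squares and heptagons. The heptagon is the extra slow version of the pentagon, with the long side replaced by the four-step chain shown in the introduction.

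\textbf{Commuting case.} If the two moves act on disjoint data, the moves commute and $[x,y_1\vee y_2]$ is a square. Disjoint data means: different rows and columns for arrows; a type III cell not in the segment swept by a type I or II move; and the row of a type II move not affected by where a type I up-arrow lands. Concretely, we apply move $c_2$ to $y_1$ and move $c_1$ to $y_2$ and check, using \Cref{lem:changeable_up_iff,lem:changeable_left_iff}, that both remain changeable and yield the same tableau $z$. By \Cref{lem:join}, $z=y_1\vee y_2$. The interval $[x,z]$ has length two, and it has only two atoms because $z$ has only two lower covers above $x$, so it is exactly the square.

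\textbf{Interacting cases.} These occur when the up-arrow of a type I move and the left-arrow of a type II move are linked by a ``pointing'' relation. The up-arrow lands in a cell that, after the left-arrow moves, becomes pointed, or the left-arrow's next free cell is created or destroyed by the up-arrow move. This is a local $2\times 2$ configuration: an up-arrow at $(r,s)$, a left-arrow in the row below, and the corner cell. Two interacting moves of the same type cannot occur because different columns or rows are independent, and type III moves only interact through the free cells they fill. So the essential configuration should be a type I move together with a type II move whose targets interact at one cell $d$. For this configuration we write down the two chains explicitly. One side is
\[
x\lessdot y_1\lessdot \cdot\lessdot y_1\vee y_2,
\]
possibly passing through a type III step. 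The other side is the four-step chain fill, raise, fill, move, as in the introduction. Together these give a heptagon.

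\textbf{Main obstacle.} The hardest step is to show that no further elements lie in $[x,y_1\vee y_2]$ beyond the two chains, and that no longer, slower configurations occur. To do this, we bound the interval using the three conditions of $\lhd$. Row spaces, column spaces and dots on common free cells are all squeezed between those of $x$ and $y_1\vee y_2$, and these differ only in a bounded local window of two rows and two columns. An element of the interval is therefore determined by finitely many local choices, which we enumerate. We also need to confirm that the join does not move an up-arrow further than expected through step (c) of \Cref{lem:join}. The key argument is that the intermediate cells are pointed by left-arrows of $x$, which also point in the join. Finally, we verify that the two chains meet only at $x$ and $y_1\vee y_2$.
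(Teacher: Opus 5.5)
Your case split is the same as the paper's. By self-duality only upper covers matter. Two moves whose travelled cells are disjoint commute and give a square. The only non-commuting pair is a type~I move and a type~II move meeting in a single cell, which gives a heptagon.

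The gap is in that last case, the only nontrivial one: the configuration and the two chains you write down are wrong. The left-arrow is not in the row below the up-arrow. The actual configuration in $x$ is an empty, non-free cell $c=(i,j)$ with the following properties:
\begin{itemize}
\item the up-arrow of column $j$ lies below $c$, and its type~I move lands exactly on $c$;
\item the left-arrow of row $i$ lies to the left of $c$, is changeable, and its type~II move passes over $c$ onto a dotted free cell $d$.
\end{itemize}
Let $y_1$ be the type~I move and $y_2$ the type~II move. The two sides of $[x,z]$ are
\[
x\lessdot y_2\lessdot z \qquad\text{and}\qquad x\lessdot y_1\lessdot u_2\lessdot u_3\lessdot u_4\lessdot z .
\]
On the short side, $y_2$ moves the left-arrow over $c$ onto $d$. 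Then $c$ is pointed, so the up-arrow jumps over $c$. On the long side:
\begin{itemize}
\item $y_1$ puts the up-arrow in $c$, which blocks the left-arrow;
\item $u_2$ raises the up-arrow past $c$;
\item $u_3$ puts a dot in $c$, which is now free;
\item $u_4$ moves the left-arrow into $c$;
\item the last step moves the left-arrow onto $d$.
\end{itemize}
So the sides have lengths $2$ and $5$, not $3$ and $4$. There is no type~III step on the short side, and the sequence ``fill, raise, fill, move'' does not occur. A side cannot begin with a fill, because $c$ is not free in $x$ and the only covers of $x$ below $z$ are $y_1$ and $y_2$.

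Once the chains are right, you do not need the local enumeration of your last paragraph. Its premise is also inaccurate: each arrow may pass over arbitrarily many pointed cells. What keeps the interval small is that the only admissible intermediate positions are the start, $c$, and the end.

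The atom argument you use for the square suffices here too.
\begin{itemize}
\item Every element of $[x,z]\setminus\{x\}$ lies above $y_1$ or $y_2$.
\item $[y_2,z]=\{y_2,z\}$, since $y_2\lessdot z$.
\item Each of $y_1,u_2,u_3,u_4$ has exactly one upper cover below $z$. A move elsewhere changes an arrow or fills a free cell that $z$ leaves unchanged. The row-$i$ left-arrow is blocked in $y_1$ by the up-arrow in $c$, and in $u_2$ by the empty free cell $c$. Hence $[y_1,z]$ is a chain.
\end{itemize}
This also gives $z=y_1\vee y_2$ without computing the join via \Cref{lem:join}. The paper itself only exhibits the two chains and leaves this check implicit.
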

\begin{proof}
  Let \(T\) be a tableau that has at least two changeable cells. Let
  us call two of those cells \(c_1\) and \(c_2\). Let \(T_1\) (resp.
  \(T_2\)) be the tableau obtained when we change \(c_1\) (resp.
  \(c_2\)). For any change started at \(c\), we call \(S_c\) the set
  of cells "travelled" by the move. For example if a left-arrow moves
  from \((i,j)\) to \((i,j-k)\) then
  \(S_c=\{(i,j),(i,j-1),\ldots ,(i,j-k)\}\). If \(S_{c_1}\) and
  \(S_{c_2}\) are disjoint then \(T_1\vee T_2\) is the tableau where
  both \(c_1\) and \(c_2\) are changed. In this case
  \([T,T_1\vee T_2]\) is a quadrilateral.

  Otherwise, the only possibility is that \(c_1\) is a change of type
  I and \(c_2\) is a change of type II and
  \(|S_{c_1}\cap S_{c_2}|=1\). On the following example, we just show
  the closest cells where the arrows can move. There could be other
  cells on the way but these are not free. Let $c_1$ (resp. $c_2$ be
  the cell filled by an up-arrow (resp. left-arrow). If we first move
  cell \(c_1\), we get
  \[
    \begin{ytableau}
      \none{}& &\none{}\\
      \leftarrow & & \bullet\\
      \none[] &\uparrow  \\
    \end{ytableau}\quad\Rightarrow\quad
    \begin{ytableau}
      \none{}& &\none{}\\
      \leftarrow &\uparrow & \bullet\\
      \none[] & \\
    \end{ytableau}\quad\Rightarrow\quad
    \begin{ytableau}
      \none{}& \uparrow&\none{}\\
      \leftarrow & & \bullet\\
      \none[] & \\
    \end{ytableau}\quad\Rightarrow\quad
    \begin{ytableau}
      \none{}& \uparrow&\none{}\\
      \leftarrow &\bullet & \bullet\\
      \none[] & \\
    \end{ytableau}\quad\Rightarrow\quad
    \begin{ytableau}
      \none{}& \uparrow&\none{}\\
      &\leftarrow  & \bullet\\
      \none[] & \\
    \end{ytableau}\quad\Rightarrow\quad
    \begin{ytableau}
      \none{}& \uparrow&\none{}\\
      &&\leftarrow \\
      \none[] &  \end{ytableau}\]
  If we first move cell $c_2$, we get
  \[
    \begin{ytableau}
      \none{}& &\none{}\\
      \leftarrow & & \bullet\\
      \none[] &\uparrow  \\
    \end{ytableau}\quad\Rightarrow\quad
    \begin{ytableau}
      \none{}& &\none{}\\
      &&\leftarrow\\
      \none[] &\uparrow \\
    \end{ytableau}\quad\Rightarrow\quad
    \begin{ytableau}
      \none{}& \uparrow&\none{}\\
      &&\leftarrow \\
      \none[] & \\
    \end{ytableau}
  \]
  Therefore locally $T_1$, $T_2$ and $T_1\vee T_2$ look like:
  \[
    \begin{ytableau}
      \none{}& &\none{}\\
      \leftarrow & \uparrow & \bullet\\
      \none[] &  \\
    \end{ytableau},\quad\quad
    \begin{ytableau}
      \none{}& &\none{}\\
      &&\leftarrow\\
      \none[] &\uparrow \\
    \end{ytableau},\quad\quad
    \begin{ytableau}
      \none{}& \uparrow&\none{}\\
      &&\leftarrow \\
      \none[] & \\
    \end{ytableau}
  \]
  and $[T,T_1\vee T_2]$ is a heptagon, made of a path of length 2 and
  a path of length 5. \Cref{fig:polygons} illustrates the two types of
  polygons in our poset.
\begin{figure}
    \centering
    \begin{minipage}[t]{0.45\textwidth}
      \centering
    \begin{tikzpicture}[scale=.6]
      \tikzset{
        poset node/.style={inner sep=3pt, align=center, text height=2.5ex, text depth=0.8ex}, 
        lab/.style={midway, fill=white, inner sep=1pt, font=\scriptsize}
      }
      \def\h{10}
      \def\w{2}
      \node[poset node] (T) at (0,0) {$T$};
      \node[poset node] (T1) at (-\w,\h/2) {$T_1$};
      \node[poset node] (T2) at (\w,\h/2) {$T_2$};
      \node[poset node] (T12) at (0,\h) {$T_1\vee T_2$};
      \draw (T)  --  (T1) -- (T12);
      \draw (T)  --  (T2) -- (T12);
    \end{tikzpicture}
    \end{minipage}
    \hfill
    \begin{minipage}[t]{0.45\textwidth}
      \centering
    \begin{tikzpicture}[scale=.6]
      \tikzset{
        poset node/.style={inner sep=3pt, align=center, text height=2.5ex, text depth=0.8ex}, 
        lab/.style={midway, fill=white, inner sep=1pt, font=\scriptsize}
      }
      \def\h{10}
      \def\w{2}
      \node[poset node] (T) at (0,0) {$T$}; 
      \node[poset node] (T1) at (-\w,\h/5) {$T_1$};
      \node[poset node] (S1) at (-\w,2*\h/5) {$T_1'$};
      \node[poset node] (S2) at (-\w,3*\h/5) {$T_1''$};
      \node[poset node] (S3) at (-\w,4*\h/5) {$T_1'''$};
      \node[poset node] (T2) at (\w,\h/2) {$T_2$};
      \node[poset node] (T12) at (0,\h) {$T_1\vee T_2$};
      \draw (T)  --  (T1) -- (S1) -- (S2) -- (S3) -- (T12);
      \draw (T)  --  (T2) -- (T12);
    \end{tikzpicture}
    \end{minipage}
    \caption{The two types of polygons in our lattice; a quadrilateral and a heptagon.}
    \label{fig:polygons}
\end{figure}
\end{proof}

\begin{remark}
  As a result of this subsection, our poset is polygonal and consists
  of quadrilaterals and heptagons (made of a path of length \( 2 \)
  and a path of length \( 5 \)) as in \Cref{fig:polygons}. The first
  cover relation of the path of length \( 2 \) within a heptagon
  produces a new cohook. Consequently, it follows immediately from
  \Cref{thm:spine_iff} that a tableau is in the spine if and only if
  it does not use any of the length 2 paths of the heptagons.
\end{remark}

\subsection{Polygonal labeling}
\label{sec:poly-label}

In this subsection, we prove that the extra slow Tamari lattice has a
polygonal labeling. This will be used in the next section to prove
that our poset is congruence uniform. Let \( L \) be a lattice, and
let \( \eta \) be a labelling of the cover relations of \( L \). Let
$r$ be a map from the labels of the cover relations to the set of
nonnegative integers. We say that \( (\eta,r) \) is a \defn{polygonal
  labeling} if the following condition holds: for every polygon
\( [x,y] \) of \( L \) and its two maximal chains
\[
  x = u_0 \lessdot u_1 \lessdot \cdots \lessdot u_{p+1} = y, \qand  x = v_0 \lessdot v_1 \lessdot \cdots \lessdot v_{q+1} = y,
\]
let \( \eta(u_i \lessdot u_{i+1}) = s_i \) and \( \eta(v_i \lessdot v_{i+1}) = t_i \).
Then 
we have
\begin{enumerate}
\item[(PL1)] \( \eta(x\lessdot u_1) = \eta(v_q\lessdot y)\) and \(\eta(x\lessdot v_1) = \eta(u_p \lessdot y) \), 
\item[(PL2)]
\(   r(s_0), r(s_p) < r(s_1), r(s_{p-1}) < r(s_2), r(s_{p-2}) < \cdots < r(s_{\lfloor p/2 \rfloor }), r(s_{p-\lfloor p/2 \rfloor}),
\)
\item[(PL3)] \(r(t_0), r(t_q) < r(t_1), r(t_{q-1}) < r(t_2), r(t_{q-2}) < \cdots < r(t_{\lfloor q/2 \rfloor}), r(t_{q - \lfloor q/2 \rfloor}) \).
\end{enumerate}
Here, the notation \( a,b < c,d \) means that \( a<c \), \( a<d \),
\( b<c \), and \( b<d \). The map $r$ is called the \defn{rank function} of the labeling. 

For example, consider the given polygon in which each edge is labelled
by the value of the corresponding rank function:
\[
 \begin{tikzpicture}[scale=.6]
   \tikzset{
     dot/.style={circle, fill=black, inner sep=0pt, minimum size=6pt},
     elabel/.style={midway, font=\scriptsize, inner sep=3pt} 
   }
  \def\h{10}
  \def\w{2}
  \node[dot] (x) at (0,0) {}; 
  \node[dot] (u1) at (-\w,\h/5) {};
  \node[dot] (u2) at (-\w,2*\h/5) {};
  \node[dot] (u3) at (-\w,3*\h/5) {};
  \node[dot] (u4) at (-\w,4*\h/5) {};
  \node[dot] (y) at (0,\h) {};
  \node[dot] (v1) at (\w,\h/8) {};
  \node[dot] (v2) at (\w,2*\h/8) {};
  \node[dot] (v3) at (\w,3*\h/8) {};
  \node[dot] (v4) at (\w,4*\h/8) {};
  \node[dot] (v5) at (\w,5*\h/8) {};
  \node[dot] (v6) at (\w,6*\h/8) {};
  \node[dot] (v7) at (\w,7*\h/8) {};
  \draw (x)
  -- node[elabel, left] {\( r(s_0) \)} (u1)
  -- node[elabel, left] {\( r(s_1) \)} (u2)
  -- node[elabel, left] {\( r(s_2) \)} (u3)
  -- node[elabel, left] {\( r(s_3) \)} (u4)
  -- node[elabel, left] {\( r(s_4) \)} (y);
  \draw (x)
  -- node[elabel, right] {\( r(t_0) \)} (v1)
  -- node[elabel, right] {\( r(t_1) \)} (v2)
  -- node[elabel, right] {\( r(t_2) \)} (v3)
  -- node[elabel, right] {\( r(t_3) \)} (v4)
  -- node[elabel, right] {\( r(t_4) \)} (v5)
  -- node[elabel, right] {\( r(t_5) \)} (v6)
  -- node[elabel, right] {\( r(t_6) \)} (v7)
  -- node[elabel, right] {\( r(t_7) \)} (y);
\end{tikzpicture}
\]
In this case, the inequality conditions become
\[
  r(s_0), r(s_4) > r(s_1), r(s_3) > r(s_2),
\]
and
\[
  r(t_0),r(t_7) > r(t_1),r(t_6) > r(t_2),r(t_5) > r(t_3),r(t_4).
\]
The first condition become $s_0 =t_7$ and $t_0=s_4$. 

In the rest of the section, we construct a polygonal labeling for the
extra slow Tamari lattice. By~\Cref{thm:semidistributive}, our lattice
is semidistributive, hence by \cite[Lemma~1.8]{AGT2003} the
cover relations can be labeled by the join-irreducibles of the
lattice. We start by recalling this labeling.

By definition, for a join-irreducible \( \jj \), there exists a unique element \( \jj_* \) such that \( \jj_* \lessdot \jj \). If \( L \) is a finite join-semidistributive lattice, then for each cover relation
\( x \lessdot y \) there exists a unique join-irreducible \( \jj \) of
\( L \) satisfying
\[
  y = x \vee \jj, \qand x = x \vee \jj_*.
\]
We then label the edge \( x \lessdot y \) by \( \jj \), and write
\( \eta(x\lessdot y) = \jj \). Let us call the assignment of such labels
to all cover relations the \defn{join-labelling} \( \eta \). Recall \Cref{def:join-irr} that the join-irreducible elements are written as
\[
  (i,j,\emptyset), (i,j,\leftarrow), \qand (i,j,\bullet),
\]
where \( (i,j) \) is the position of the unique up-arrow not on the
border, and the third entry indicates whether the cell directly below
\( (i,j) \) is empty, contains a left-arrow, or contains a dot.

\begin{lemma}\label{lem:join-HH-labelling}
  The join-labelling \( \eta \) of a finite semidistributive lattice
  satisfies the condition (PL1).
\end{lemma}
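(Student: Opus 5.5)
We have to show that in any polygon $[x,y]$ of a finite semidistributive lattice with maximal chains $x=u_0\lessdot u_1\lessdot\cdots\lessdot u_{p+1}=y$ and $x=v_0\lessdot\cdots\lessdot v_{q+1}=y$, we have $\eta(x\lessdot u_1)=\eta(v_q\lessdot y)$. The other equality follows by exchanging the roles of the two chains. The plan is to use the characterisation of the join-label recalled above: $\eta(a\lessdot b)$ is the unique join-irreducible $\jj$ with $b=a\vee\jj$ and $a=a\vee \jj_*$, i.e.\ $\jj\le b$, $\jj_*\le a$ and $\jj\not\le a$. A standard equivalent form, valid in join-semidistributive lattices, is that $\jj$ is the unique minimal element of $\{z\in L : a\vee z = b\}$. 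I would first record this reformulation, citing \cite{AGT2003} or deriving it from join-semidistributivity.

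\textbf{Step 1: set up the labels.} Let $\jj=\eta(x\lessdot u_1)$, so $u_1=x\vee\jj$ and $\jj_*\le x$. We aim to show $v_q\vee\jj=y$ and $v_q\vee\jj_*=v_q$.

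\textbf{Step 2: the second condition.} This is immediate: $\jj_*\le x\le v_q$, so $v_q\vee\jj_*=v_q$.

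\textbf{Step 3: the first condition.} Since $v_q\lessdot y$, it suffices to show $\jj\not\le v_q$, because then $v_q< v_q\vee\jj\le y$ forces $v_q\vee \jj=y$. Suppose instead $\jj\le v_q$. Then $u_1=x\vee \jj\le v_q$. But $u_1\le v_q$ together with $u_1\le y$ contradicts the polygon structure: the interval $[x,y]$ is the union of the two chains, which meet only at $x$ and $y$. The element $u_1$ lies on the $u$-chain, is different from $x$, and is different from $y$ (if $u_1=y$ the polygon degenerates, and then $v_q=x$ also gives the claim trivially). Since $u_1\le v_q<y$, $u_1$ cannot be comparable below $v_q$ unless $u_1$ lies on the $v$-chain. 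Here I will use that every element of $[x,v_q]$ is in $[x,y]$, hence on one of the chains; and $u_1\in[x,v_q]$. Elements of the $v$-chain in $[x,v_q]$ are $v_0,\dots,v_q$, and $u_1\ne x$ is not among them unless $u_1=v_i$, which is excluded. Elements of the $u$-chain are $u_1,\dots,u_p$; one must rule out $u_1\le v_q$ directly.

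\textbf{Main obstacle.} The main difficulty is justifying that $u_1\not\le v_q$, i.e.\ that elements on the two sides of a polygon are incomparable. I would argue as follows. If $u_1\le v_q$, then $v_q\ge u_1$ and $v_q\ge v_1$, so $v_q\ge u_1\vee v_1$. Moreover $u_1\vee v_1\in[x,y]$, and it is above both $u_1$ and $v_1$, which lie on different chains. It therefore cannot be on either chain except as $y$: an element of the $u$-chain above $v_1$ would make $v_1$ comparable with a $u$-element, and we recurse. To make this airtight, I would prove the general fact that for $1\le i\le p$ and $1\le k\le q$, the elements $u_i$ and $v_k$ are incomparable. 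Suppose $u_i\le v_k$ with $i$ minimal. Then $u_1\le v_k$, and the interval $[x,v_k]$ contains $u_1$. Taking the join $u_1\vee v_1\le v_k<y$, this join lies in $[x,y]\setminus\{y\}$ and is on one chain, say the $v$-chain. Then $u_1\le v_m$ for some $m$. Choosing $m$ minimal with $u_1\le v_m$ gives $u_1\not\le v_{m-1}$, and hence $v_m=v_{m-1}\vee u_1$. This forces $u_1\le v_m$ while also $x\lessdot u_1$, and $v_{m-1}\wedge u_1=x$. Iterating down, we reach $u_1\le v_1$, hence $u_1=v_1$ since both cover $x$, which is a contradiction. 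The symmetric case, where the join lies on the $u$-chain, is handled the same way. Once incomparability is established, Step 3 and hence (PL1) follow.
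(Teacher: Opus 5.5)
Your Steps 1--3 are correct. They reduce (PL1) to the single claim $u_1\not\le v_q$: an element strictly inside one side of the polygon never lies below an element strictly inside the other side. The paper's proof uses exactly this fact without comment (``$u_p$ is not smaller than any of the $v_k$''). Granted that fact, your route is slightly more direct than the paper's, since you only need $\jj\not\le v_q$ and the cover $v_q\lessdot y$, rather than locating $u_p\vee\jj$ on the $v$-chain.

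The gap is in your last paragraph, where you try to \emph{derive} this incomparability from the wording ``$[x,y]$ is the union of two maximal chains meeting only at $x$ and $y$''. The step ``iterating down, we reach $u_1\le v_1$'' has nothing behind it. You chose $m$ minimal with $u_1\le v_m$, so $u_1\not\le v_{m-1}$ by construction. The facts $v_m=v_{m-1}\vee u_1$ and $v_{m-1}\wedge u_1=x$ are true but yield no contradiction. The earlier sentence ``$u_1$ cannot be comparable below $v_q$ unless $u_1$ lies on the $v$-chain'' is likewise only asserted.

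This cannot be patched, because under that bare wording both the incomparability and (PL1) itself fail. Take the six-element lattice with $x<a_1<a_2<y$, $x<b_1<b_2<y$ and the one extra relation $a_1<b_2$. It is a semidistributive lattice. The interval $[x,y]$ is the union of the maximal chains $x\lessdot a_1\lessdot a_2\lessdot y$ and $x\lessdot b_1\lessdot b_2\lessdot y$, which meet only at $x$ and $y$. Yet $\eta(x\lessdot a_1)=a_1$, while $\eta(b_2\lessdot y)=a_2$ (the unique minimal $z$ with $b_2\vee z=y$). This happens precisely because $u_1=a_1\le b_2=v_q$.

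So the incomparability has to be supplied from outside, in one of two ways. You can build it into the definition of a polygon: in the standard notion, $[x,y]\setminus\{x,y\}$ is a disjoint union of two chains as a poset, and this is what the paper tacitly assumes. Alternatively, use the fact that the polygons in the definition of a polygonal lattice satisfy $y=u_1\vee v_1$ or $x=u_p\wedge v_q$, and then incomparability is immediate. If $u_i\le v_k$ with $1\le i\le p$ and $1\le k\le q$, then $y=u_1\vee v_1\le v_k<y$, respectively $u_i\le u_p\wedge v_q=x$, and both are absurd. Your own opening move, $v_q\ge u_1\vee v_1$, was this argument; it only needed $u_1\vee v_1=y$.
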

\begin{proof}
Let \( [x,y] \) be a polygon, and let
\[
  x = u_0 \lessdot u_1 \lessdot \cdots \lessdot u_{p+1} = y, \qand  x = v_0 \lessdot v_1 \lessdot \cdots \lessdot v_{q+1} = y
\]
be its two maximal chains.

Let \(\jj\) be the join-label of \(x\lessdot v_1\). Then
\(x\lor \jj = v_1\) and \(\jj_* \leq x\). Hence
\[
  v_1 = x\lor \jj \leq u_p \lor \jj \leq y.
\]
It follows that \(u_p \lor \jj\) belongs to the maximal chain
\(v_1\lessdot v_2\lessdot \dots \lessdot y\). If
\(u_p \lor \jj= v_k\), then \(u_p \leq v_k\). Since \(u_p\) is not
smaller than any of the \(v_k\) for \(k\leq q\), we get
\(u_p\lor \jj = y\). Moreover, since \(\jj_* \leq x\leq u_p\), it
follows that \(\jj\) is the join-label of \(u_p \lessdot y\).

For the same reason, the join-label of \( x\lessdot u_1 \) is the same
as that of \( v_q \lessdot y \), which completes the proof.
\end{proof}
We now define the rank function for $\jj=(i,j,\sigma)$ by 
\begin{equation}\label{eq:rank_func}
r(\jj) = r(i,j,\sigma) = j-i +\delta_{\sigma,\bullet}.
\end{equation}
\begin{lemma}\label{lem:HH-rank_func}
  The pair \( (\eta,r) \) is a polygonal labeling for $\esTam_n$.
\end{lemma}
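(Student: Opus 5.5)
By \Cref{lem:join-HH-labelling}, the join-labelling \(\eta\) already satisfies (PL1). So it remains to check (PL2) and (PL3) for \(r\) as defined in \eqref{eq:rank_func}. By \Cref{prop:polygonal} and self-duality, every polygon of \(\esTam_n\) is either a quadrilateral or a heptagon made of a path of length \(2\) and a path of length \(5\). For a quadrilateral, both chains have \(p=q=1\). For a heptagon, the short chain has length \(2\). In both situations the rank conditions only involve the two end labels \(s_0,s_p\), and there is nothing to compare, so the conditions are vacuous. The only real work is therefore the length-\(5\) chain of a heptagon (\(p=4\)). There we must show
\[
  r(s_0), r(s_4) < r(s_1), r(s_3) < r(s_2).
\]

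First I would fix the local picture of a heptagon from the proof of \Cref{prop:polygonal}. In row \(a\) there is a left-arrow at \((a,b)\), the next free cell to its right is \((a,j)\), and it is followed by a dot. The changeable up-arrow of column \(j\) sits below \((a,j)\), and the cell \((a-1,j)\) above it is available. The five covers of the long chain are then:
\begin{enumerate}
\item the up-arrow moves into \((a,j)\);
\item it moves on to \((a-1,j)\);
\item a dot is inserted in the now free cell \((a,j)\);
\item the left-arrow moves into \((a,j)\);
\item the left-arrow moves past \((a,j)\) to the dot.
\end{enumerate}
By (PL1), \(s_0=t_1\) and \(s_4=t_0\), so the end labels agree with the two labels of the short chain. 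This gives a useful consistency check.

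Next I would compute each join-label explicitly. The label of \(x\lessdot y\) is the unique join-irreducible \(\jj=(i,j',\sigma)\) with \(\jj\le y\), \(\jj\not\le x\) and \(\jj_*\le x\). Equivalently, it is the minimal join-irreducible below \(y\) and not below \(x\), which is available by semidistributivity. The comparisons \(\jj\lhd y\) are read off from \Cref{cor:join-irr-description}, \Cref{def:join-irr} and the definition of \(\lhd\): column spaces, row spaces, and dots in common free cells. I expect the following dictionary.
\begin{itemize}
\item A type I move of an up-arrow in column \(j\) to row \(i\), creating a free cell below it, gets a label \((i,j,\sigma)\) with \(\sigma\in\{\emptyset,\bullet\}\). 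Here \(\sigma=\bullet\) exactly when the vacated cell is a border cell, matching \((i,i,\bullet)\).
\item A type III move filling a free cell \((i+1,j)\) whose column's up-arrow is at row \(i\) gets \((i,j,\bullet)\).
\item A type II move of a left-arrow arriving at or fusing with cell \((i+1,j)\) gets the red label \((i,j,\leftarrow)\). Its \(i\) is governed by the row of the left-arrow and the relevant up-arrow in column \(j\).
\end{itemize}
Plugging in, the middle label \(s_2\) should be \((a-1,j,\bullet)\), of rank \(j-a+2\). The labels \(s_1\) and \(s_3\) should be \((a-1,j,\emptyset)\) and a red label \((a-1,j,\leftarrow)\), both of rank \(j-a+1\). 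The end labels \(s_0\) and \(s_4\) involve row \(a\) (the first up-move) or the final fusion. I expect them to have rank at most \(j-a\), because their row index is at least \(a\) or their column index is smaller.

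The main obstacle is this label computation, especially for the end edges. For \(s_0\), whether the vacated cell is a border cell changes \(\sigma\) and hence \(\delta_{\sigma,\bullet}\). For \(s_4\), the final fusion move's label has to be pinned down through its equality with \(t_0\) and \(t_1\). I would handle it by computing the labels of the short chain \(T\lessdot T_2\lessdot T_1\vee T_2\) directly, since those tableaux are simpler, and then transporting them via (PL1). If a border case makes an end rank tie with a middle rank, I would check it separately using the constraint that border cells always contain an element.
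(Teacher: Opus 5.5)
\textbf{Verdict: same route as the paper, but with a genuine gap at the two end edges of the heptagon.} You get (PL1) from \Cref{lem:join-HH-labelling}, note there is nothing to check on sides of length at most $2$, and compute the five join-labels on the long side. Your middle labels $(a-1,j,\emptyset)$, $(a-1,j,\bullet)$, $(a-1,j,\leftarrow)$ are exactly the paper's $(i-1,j,\emptyset)$, $(i-1,j,\bullet)$, $(i-1,j,\leftarrow)$. The gap is that you leave $s_0$ and $s_4$ as expectations, and your plan for the border case would not work.

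First, the label of a type~I move is determined by the cell the up-arrow \textbf{leaves}, not the row it reaches. An up-arrow leaving $(k,j)$ gives $(k-1,j,\emptyset)$ if $k\le j$, and $(j,j,\bullet)$ if $k=j+1$. The same rule gives $s_1=(a-1,j,\emptyset)$ however far the second up-move jumps, and likewise the type~III label depends only on the filled cell. The up-arrow of column $j$ may start at any $(i',j)$ with $i'>a$, the cells in between being pointed by left-arrows. So $s_0=(i'-1,j,\emptyset)$, of rank $j-i'+1\le j-a$. With your arrival-row dictionary, a jump out of the border would instead give the blue label $(a,j,\bullet)$, whose rank equals $r(s_1)$. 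The last edge needs no detour through $t_0,t_1$: the left-arrow fuses with the dot at $(a,j')$, so $s_4=(a-1,j',\leftarrow)$ and $r(s_4)=j'-a+1\le j-a$ because $j'<j$.

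Second, the border case is a real obstruction if \eqref{eq:rank_func} is read literally; no separate check will dispel it. If the middle cell is $(j,j)$, i.e.\ $a=j$, then necessarily $i'=j+1$ (the up-arrow starts on the border) and the dot is the border cell $(j,j-1)$. Then $s_0=(j,j,\bullet)$, and the literal value $r(j,j,\bullet)=1$ equals $r(s_1)=j-a+1=1$, so (PL2) fails. This already happens in $\esTam_3$. Take the heptagon with bottom $S=(1,1,\bullet)$ and short side through $(1,1,\leftarrow)$. Its long-side labels are $(2,2,\bullet),(1,2,\emptyset),(1,2,\bullet),(1,2,\leftarrow),(1,1,\leftarrow)$, with literal ranks $1,1,2,1,0$.

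The missing ingredient is the convention stated after \Cref{def:join-irr}: $(j,j,\bullet)$ is the degenerate element of type $\emptyset$ (a green label). So $\delta_{\sigma,\bullet}$ is $1$ only for the blue labels $(i,j,\bullet)$ with $i<j$, and $r(j,j,\bullet)=0$. With this convention, $r(s_0)=j-i'+1$ in all cases, and $s_2=(a-1,j,\bullet)$ is genuinely blue because $a\le j$. Hence
\[
  r(s_0),\ r(s_4)\ \le\ j-a\ <\ j-a+1=r(s_1)=r(s_3)\ <\ j-a+2=r(s_2).
\]
This is exactly the paper's computation: it writes $s_0$ as $(i'-1,j,\emptyset)$, tacitly using this convention.
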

\begin{proof}
  In a side of length $2$ of a polygon, the condition on the rank
  function is empty, so we only need to look at the long side of the
  heptagons in our lattice.

  Let \( [S,T] \) be a heptagon. In this heptagon, let
  \( S \lessdot U_1 \lessdot U_2 \lessdot U_3 \lessdot U_4 \lessdot T
  \) be a chain of length \( 5 \), and \( S \lessdot V \lessdot T \) a
  chain of length \( 2 \). As in the proof of
  Proposition~\ref{prop:polygonal}, we draw only the closest cells
  where arrows may move. Let \( (i,j) \) be the position of the middle
  cell among the five cells of \( S \). Let \( (i,j') \) and
  \( (i',j) \) be the positions of the cells containing the dot, and
  the up-arrow, respectively, for some \( i' > i \) and \( j' < j \).

  Pictorially, the five cells of the part of \( S \) under
  consideration (on the left) and the positions of three cells among
  them (on the right) can be represented as follows:
\[
  \begin{tikzpicture}[scale = .6]
    \def\h{1}
    \def\w{2}
    \newcommand{\putcell}[3]{%
      \draw
      ({(#2-1)+0.5-\w/2},{-(#1-1)-0.5-\h/2})
      rectangle
      ({(#2-1)+0.5+\w/2},{-(#1-1)-0.5+\h/2});
      \node
      at ({(#2-1)+0.5},{-(#1-1)-0.5}) {$#3$};
    }\putcell{1}{1}{\leftarrow}
    \putcell{1}{3}{}
    \putcell{1}{5}{\bullet}
    \putcell{0}{3}{}
    \putcell{2}{3}{\uparrow}
  \end{tikzpicture}
  \quad\raisebox{.8cm}{\( \Longrightarrow \)}\quad
  \begin{tikzpicture}[scale = .6]
    \def\h{1}
    \def\w{2}
    \newcommand{\putcell}[3]{%
      \draw
      ({(#2-1)+0.5-\w/2},{-(#1-1)-0.5-\h/2})
      rectangle
      ({(#2-1)+0.5+\w/2},{-(#1-1)-0.5+\h/2});
      \node
      at ({(#2-1)+0.5},{-(#1-1)-0.5}) {$#3$};
    }\putcell{1}{1}{}
    \putcell{1}{3}{(i,j)}
    \putcell{1}{5}{(i,j')}
    \putcell{0}{3}{}
    \putcell{2}{3}{(i',j)}
  \end{tikzpicture}
\]
  Then, using~\Cref{lem:join}, we can compute the join of two
  elements. For instance, let \( \jj \) be a join-irreducible of the
  form \( (i-1,j',\leftarrow) \). Then \( \jj_* = (i-1,j',\bullet) \),
  and we have
  \[
    V = S \vee \jj, \quad S = S \vee \jj_*.
  \]
  It follows that \( \eta(S \lessdot V) = (i-1, j', \leftarrow) \). In
  the same way, we obtain
  \begin{equation}\label{eq:heptagon_HH}
  \begin{aligned}
    \eta(S \lessdot U_1) &= \eta(V \lessdot T) = (i'-1,j,\emptyset), \\
    \eta(U_1 \lessdot U_2) &= (i-1,j,\emptyset), \\
    \eta(U_2 \lessdot U_3) &= (i-1,j,\bullet), \\
    \eta(U_3 \lessdot U_4) &= (i-1,j,\leftarrow), \\
    \eta(U_4 \lessdot T) &= \eta(S \lessdot V) = (i-1,j',\leftarrow).
  \end{aligned}
  \end{equation}
  See Figure~\ref{fig:heptagon} for an illustration.

  \tikzset{
    poset node/.style={inner sep=4pt, align=center}, 
    lab/.style={midway, fill=white, inner sep=1pt, font=\scriptsize}
  }

  \ytableausetup{boxsize = 1.1em}

  \begin{figure}
    \centering
    \begin{tikzpicture}[scale=.7]

      \def\s{1.2}
      \node[poset node] (S) at (0,0) {
        $\begin{ytableau}
          \none{}& &\none{}\\
          \leftarrow & & \bullet\\
          \none[] &\uparrow  \\
        \end{ytableau}$
      };
      \node[poset node] (T) at (0,15*\s) {
        $\begin{ytableau}
          \none{}& \uparrow&\none{}\\
          &&\leftarrow \\
          \none[] &  \end{ytableau}$
      };

      \node[poset node] (U1) at (-2*\s,3*\s) {
        $\begin{ytableau}
          \none{}& &\none{}\\
          \leftarrow &\uparrow & \bullet\\
          \none[] & \\
        \end{ytableau}$
      };
      \node[poset node] (U2) at (-2*\s,6*\s) {
        $\begin{ytableau}
          \none{}& \uparrow&\none{}\\
          \leftarrow & & \bullet\\
          \none[] & \\
        \end{ytableau}$
      };
      \node[poset node] (U3) at (-2*\s,9*\s) {
        $\begin{ytableau}
          \none{}& \uparrow&\none{}\\
          \leftarrow &\bullet & \bullet\\
          \none[] & \\
        \end{ytableau}$
      };
      \node[poset node] (U4) at (-2*\s,12*\s) {
        $\begin{ytableau}
          \none{}& \uparrow&\none{}\\
          &\leftarrow  & \bullet\\
          \none[] & \\
        \end{ytableau}$
      };

      \node[poset node] (V)  at (2.2*\s,7.5*\s) {
        $\begin{ytableau}
          \none{}& &\none{}\\
          &&\leftarrow\\
          \none[] &\uparrow \\
        \end{ytableau}$
      };

      \draw[->] (S)  -- node[left,yshift=-4pt] {\small\shortstack{\((i'-1,j,\emptyset)\)\\\red{\( j-i'+1 \)}}} (U1);
      \draw[->] (U1) -- node[left] {\small\shortstack{\((i-1,j,\emptyset)\)\\\red{\( j-i+1 \)}}} (U2);
      \draw[->] (U2) -- node[left] {\small\shortstack{\((i-1,j,\bullet)\)\\\red{\( j-i+2 \)}}} (U3);
      \draw[->] (U3) -- node[left] {\small\shortstack{\((i-1,j,\leftarrow)\)\\\red{\( j-i+1 \)}}} (U4);
      \draw[->] (U4) -- node[left] {\small\shortstack{\((i-1,j',\leftarrow)\)\\\red{\( j'-i+1 \)}}} (T);

      \draw[->] (S) -- node[right] {\small\shortstack{\((i-1,j',\leftarrow)\)\\\red{\( j'-i+1 \)}}} (V);
      \draw[->] (V) -- node[right] {\small\shortstack{\((i'-1,j,\emptyset)\)\\\red{\( j-i'+1 \)}}} (T);

    \end{tikzpicture}
    \caption{A heptagon. The black labels on the edges are the labels
      by the edge-labelling, and the red labels indicate the values of
      the rank function associated with each edge label. The middle
      cell is \( (i,j) \), with the cell to its right being
      \( (i,j') \) and the cell below it being \( (i',j) \). }
    \label{fig:heptagon}
  \end{figure}

 Since \( i' > i \) and
  \( j' < j \), the labels in \eqref{eq:heptagon_HH} satisfy
  \[
    r(\eta(S \lessdot U_1)), r(\eta(U_4 \lessdot T)) < r(\eta(U_1 \lessdot U_2)), r(\eta(U_3 \lessdot U_4)) < r(\eta(U_2 \lessdot U_3)).
  \]
  In~\Cref{fig:heptagon}, the red
  labels indicate the values of the rank function associated each edge
  label.
\end{proof}

By~\eqref{eq:heptagon_HH} in the proof above, the join-label of a
cover relation in \(\esTam_n\) is determined by the type of move as
follows. We indicate the label above the arrow \( \Rightarrow \) in
the examples in~\Cref{def:cover_relation}.
\begin{itemize}
\item For a move of type I, let \( (i,j) \) be the position of the
  changeable up-arrow. If \( (i,j) \) lies on the border, then the
  label is \( (i-1,j,\bullet) \); otherwise, it is
  \( (i-1,j,\emptyset) \).
  \[
    \begin{ytableau}
      \bullet & & \uparrow & \uparrow & \\
      & & \leftarrow & &*(gray)\uparrow\\
      \leftarrow &&\bullet &*(gray)\bullet \\
      \leftarrow &*(red){\uparrow} &*(gray)\bullet\\
      &*(gray)\leftarrow
    \end{ytableau} 
    \quad\overset{(3,4,\emptyset)}{\Longrightarrow}\quad
    \begin{ytableau}
      \bullet & & \uparrow & \uparrow & \\
      & & \leftarrow & &*(gray)\uparrow\\
      \leftarrow &*(red){\uparrow}&\bullet &*(gray)\bullet \\
      \leftarrow & &*(gray)\bullet\\
      &*(gray)\leftarrow
    \end{ytableau}\quad\overset{(2,4,\emptyset)}{\Longrightarrow}\quad
    \begin{ytableau}
      \bullet &*(red){\uparrow} & \uparrow & \uparrow & \\
      & & \leftarrow & &*(gray)\uparrow\\
      \leftarrow &&\bullet &*(gray)\bullet \\
      \leftarrow & &*(gray)\bullet\\
      &*(gray)\leftarrow
    \end{ytableau}
  \]
\item For a move of type II, let \( (i,j) \) be the position of the
  closest free cell, to the right of the changeable left-arrow, that
  contains a \( \bullet \). Then the label is
  \( (i-1,j,\leftarrow) \).
  \[
    \begin{ytableau}
      \bullet & & \uparrow & \uparrow & \\
      & & \leftarrow & &*(gray)\uparrow\\
      *(red)\leftarrow &&\bullet &*(gray)\bullet \\
      \leftarrow &{\uparrow} &*(gray)\bullet\\
      &*(gray)\leftarrow
    \end{ytableau}
    \quad\overset{(2,3,\leftarrow)}{\Longrightarrow}\quad
    \begin{ytableau}
      \bullet & & \uparrow & \uparrow & \\
      & & \leftarrow & &*(gray)\uparrow\\
      & &*(red)\leftarrow &*(gray)\bullet \\
      \leftarrow &{\uparrow}  &*(gray)\bullet\\
      &*(gray)\leftarrow
    \end{ytableau}\quad\overset{(2,2,\leftarrow)}{\Longrightarrow}\quad
    \begin{ytableau}
      \bullet & & \uparrow & \uparrow & \\
      & & \leftarrow & &*(gray)\uparrow\\
      && &*(red)\leftarrow \\
      \leftarrow &\uparrow &*(gray)\bullet\\
      &*(gray)\leftarrow
    \end{ytableau}
  \]
\item For a move of type III, let \( (i,j) \) be the position of the
  empty free cell. Then the label is \( (i-1,j,\bullet) \).
  \[
    \begin{ytableau}
      \bullet & & \uparrow & \uparrow & \\
      & & \leftarrow &*(red){} &*(gray)\uparrow\\
      \leftarrow& &   \bullet &*(gray)\bullet\\ \leftarrow &{\uparrow} &*(gray)\bullet\\
      &*(gray)\leftarrow
    \end{ytableau}\quad\overset{(1,2,\bullet)}{\Longrightarrow}\quad
    \begin{ytableau}
      \bullet & & \uparrow & \uparrow & \\
      & & \leftarrow & *(red)\bullet &*(gray)\uparrow\\
      \leftarrow &&\bullet &*(gray)\bullet \\
      \leftarrow &{\uparrow} &*(gray)\bullet\\
      &*(gray)\leftarrow
    \end{ytableau}\]
\end{itemize}

\section{Lattice of congruences}
\label{sec:lattice-congruences}

Recall the colored intervals of size \( n \) defined above
\Cref{thm:spine_colored_intervals}. We now endow them with another
partial order, different from the one introduced in
\Cref{sec:spine_rep}, as follows: for \(1\leq i \leq j\leq n-1\),
\begin{enumerate}
\item \([i,j]_{\mathtt{red}}\subseteq [i,k]_\sigma\) for all \(j\leq k\)
  and \(\sigma \in \{\mathtt{red}, \mathtt{blue}, \mathtt{green}\}\),
\item \([i,j]_{\mathtt{green}}\subseteq [k,j]_\sigma\) for all
  \(1\leq k\leq i\) and
  \(\sigma \in \{\mathtt{red}, \mathtt{blue}, \mathtt{green}\}\),
\item \([i,j]_\sigma\subseteq [k,\ell]_\tau\) if \(k<i\leq j < \ell\),
  \(\sigma\in \{\mathtt{red},\mathtt{green}\}\), and
  \(\tau \in \{\mathtt{red}, \mathtt{blue}, \mathtt{green}\}\).
\end{enumerate}
We refer to \Cref{fig:reflect_J_irr_poset} for an illustration in the
case \(n=6\). It is straightforward to check that this defines a
partial order on the set of colored intervals. We call it the
\defn{inclusion poset of colored intervals} of size \(n\). This name
emphasizes that this is a complicated version of the classical poset
of intervals ordered by inclusion. Note that the blue intervals are
always maximal elements in this poset. The main result of the section
is the following theorem.

\begin{theorem}\label{thm:con_jirr}
  The congruence lattice of \(\esTam_n\) is isomorphic to the lattice
  of order ideals on the opposite of the inclusion poset of colored
  intervals of size \( n \).
\end{theorem}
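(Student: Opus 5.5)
The plan is to use the standard theory of congruence uniform lattices with polygonal labelings. By \Cref{lem:HH_uniform-type} results (the HH-lattice criterion of \cite{CLM2004}), which we will invoke via \Cref{lem:HH-rank_func}, $\esTam_n$ is congruence uniform, so its congruence lattice is distributive. It is therefore isomorphic to the lattice of order ideals (down-sets) of its poset of join-irreducible congruences. Congruence uniformity gives that the join-irreducible congruences are exactly $\mathrm{con}(\jj_*,\jj)$ for $\jj$ join-irreducible, with distinct $\jj$ giving distinct congruences. So the task reduces to computing the forcing order on the $2\binom{n}{2}+\binom{n-1}{2}$ join-irreducibles $(i,j,\sigma)$, and identifying it, via $(i,j,\leftarrow)\mapsto[i,j]_{\mathtt{red}}$, $(i,j,\emptyset)$ and $(j,j,\bullet)\mapsto[i,j]_{\mathtt{green}}$, and $(i,j,\bullet)$ with $i<j\mapsto[i,j]_{\mathtt{blue}}$, with the opposite of the inclusion poset of colored intervals. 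The order-reversal is merely a matter of convention: $\mathrm{con}(\jj)\le\mathrm{con}(\jj')$ means that contracting $\jj'$ forces contracting $\jj$.

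For a polygonal lattice, the forcing relation between join-irreducible labels is generated by the polygons. In a polygon $[x,y]$ with the labelling $\eta$, contracting the label of a top or bottom edge (a label appearing on both sides by (PL1)) forces contracting all labels on the interior edges of the opposite long side. Quadrilaterals impose nothing new. So I will read off the generating relations from the heptagon labels \eqref{eq:heptagon_HH}. Contracting $(i'-1,j,\emptyset)$ or $(i-1,j',\leftarrow)$ forces $(i-1,j,\emptyset)$, $(i-1,j,\bullet)$ and $(i-1,j,\leftarrow)$, whenever a heptagon with middle cell $(i,j)$, lower up-arrow at $(i',j)$ ($i'>i$) and dot at $(i,j')$ ($j'<j$) exists. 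Reindexing with $a=i-1$, the green interval $[i'-1,j]$ forces the three colored intervals at $[a,j]$ with $a<i'-1$. The red interval $[a,j']$ forces the three at $[a,j]$ with $j>j'$. This matches items (1) and (2) of the inclusion order, read in the opposite direction. The task then is to show which such heptagons occur. I will exhibit explicit tableaux, built from $\hat 0$-like configurations with left-arrows in the first column and up-arrows on the border, realising each such triple $(i,i',j,j')$, including the boundary cases where $(i',j)$ is a border cell (giving the degenerate green $(j,j,\bullet)$ label).

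The main obstacle is item (3), the relation $[i,j]_\sigma\subseteq[k,\ell]_\tau$ for $k<i\le j<\ell$, and conversely showing that no extra relations arise. Item (3) should come from transitivity by chaining a green step and a red step through an intermediate interval. For example, green $[i,j]$ forces $[k,j]_{\mathtt{red}}$ (if $k<i$), which in turn forces $[k,\ell]_\tau$. The red case is symmetric. Blue intervals never appear as bottom/top labels of a heptagon, so they are maximal, consistent with the statement. For the converse, I will argue that the transitive closure of the heptagon relations is exactly the stated order. I will do this by checking that the set of relations (1)--(3) is already transitively closed, which is a finite case analysis on colors, and that every heptagon relation is an instance of (1) or (2). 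Here I must rely on the complete description of polygons in \Cref{prop:polygonal}, namely that only quadrilaterals and heptagons of the described local shape occur. Finally, I will check that the forcing order on join-irreducible congruences is the transitive closure of these polygon relations; this is the standard result for polygonal/HH lattices in \cite{CLM2004}.
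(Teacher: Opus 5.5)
Your proposal is correct and follows essentially the same route as the paper: congruence uniformity via the HH criterion, forcing generated by polygons, relations (1)--(2) read off the heptagon labels \eqref{eq:heptagon_HH} with (3) obtained by transitivity, and the converse obtained by exhibiting heptagons. The only difference is that the paper realizes just the cover relations of the inclusion poset ($i'=i+1$, $j'=j-1$, including the border case that yields $(i+1,i+1,\bullet)$), which suffices by transitivity, so realizing every triple $(i,i',j,j')$ is more than you need.
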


We refer to \cite{Reading2016} for the basic notation and fundamental
properties of lattice congruences. The \defn{congruence lattice}
\( \Con L \) is the poset of all congruences of \( L \), ordered by
inclusion of equivalence relations.
By~\cite[Proposition~9-5.13]{Reading2016}, if \( L \) is a lattice,
then \( \Con L \) is also a lattice. Moreover, the meet in
\( \Con L \) is given by the intersection of equivalence relations,
and the join in \( \Con L \) is given by the transitive closure of the
union of equivalence relations.

Since \( \Con L \) is a lattice, for any elements \( x \) and \( y \)
of \( L \), there exists a unique smallest congruence \( \ba \) on
\( L \) with \( x \equiv_\ba y \); we denote it by \( \con(x,y) \).
For a join-irreducible element \( \jj \) with a unique element \( \jj_* \)
such that \( \jj_* \lessdot \jj \), we write \( \con(\jj) = \con(\jj_*,\jj) \).
\begin{proposition}[{\cite[Proposition~9-5.14]{Reading2016}}]\label{pro:irr_conL}
  Let \( L \) be a finite lattice and \( \ba \in \Con L \). Then
  \( \ba \) is join-irreducible in \( \Con L \) if and only if
  \( \ba = \con(\jj) \) for some join-irreducible element \( \jj \) of
  \( L \).
\end{proposition}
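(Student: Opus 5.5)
The statement is a general fact about finite lattices, so I will not use anything specific to \(\esTam_n\). The plan has two halves: first show that every congruence is a join of congruences of the form \(\con(\jj)\), and then show that each \(\con(\jj)\) is itself join-irreducible in \(\Con L\). Throughout I use two standard facts. Congruence classes of a lattice congruence are convex sublattices. And the join in \(\Con L\) is the transitive closure of the union, as recalled above.

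\textbf{Step 1: reduction to covers.} Let \(\ba\in\Con L\). If \(x\equiv_\ba y\), then \(x\wedge y\equiv_\ba x\vee y\), and by convexity all elements of \([x\wedge y,x\vee y]\) lie in one class. Choosing a maximal chain in this interval shows that \(\ba\) is the join of the congruences \(\con(x,y)\) over covers \(x\lessdot y\) with \(x\equiv_\ba y\).

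\textbf{Step 2: each cover congruence is some \(\con(\jj)\).} Fix a cover \(x\lessdot y\). Choose \(\jj\) minimal among the elements \(\le y\) with \(\jj\not\le x\). If \(\jj=a\vee b\) with \(a,b<\jj\), then minimality gives \(a,b\le x\), hence \(\jj\le x\), a contradiction. So \(\jj\) is join-irreducible, and the same minimality argument gives \(\jj_*\le x\). Since \(x<x\vee\jj\le y\) and \(x\lessdot y\), we get \(x\vee\jj=y\).

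Now compare the two congruences.
\begin{itemize}
\item If \(\jj_*\equiv\jj\), then joining with \(x\) gives \(x=x\vee\jj_*\equiv x\vee\jj=y\).
\item Conversely, if \(x\equiv y\), then meeting with \(\jj\) gives \(x\wedge\jj\equiv y\wedge\jj=\jj\). Since \(x\wedge\jj<\jj\), we have \(x\wedge\jj\le\jj_*\). Convexity of the class then forces \(\jj_*\equiv\jj\).
\end{itemize}
Hence \(\con(x,y)=\con(\jj)\). By Step 1, every congruence is a finite join of such \(\con(\jj)\). In particular, a join-irreducible congruence must equal one of them.

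\textbf{Step 3: each \(\con(\jj)\) is join-irreducible.} This is the step needing the most care. First, \(\con(\jj)\) is not the bottom congruence, since \(\jj_*\neq\jj\). Suppose \(\con(\jj)=\beta\vee\gamma\). Then there is a chain \(\jj_*=z_0,z_1,\dots,z_k=\jj\) in which each consecutive pair is related by \(\beta\) or by \(\gamma\). Apply the map \(z\mapsto(z\wedge\jj)\vee\jj_*\) to this chain. This map preserves each of \(\beta\) and \(\gamma\), fixes both endpoints, and sends every element into \([\jj_*,\jj]=\{\jj_*,\jj\}\). Some consecutive pair must then change from \(\jj_*\) to \(\jj\), and that pair is related by \(\beta\) or by \(\gamma\). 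Therefore \(\con(\jj)\le\beta\) or \(\con(\jj)\le\gamma\), which proves join-irreducibility and completes the argument.
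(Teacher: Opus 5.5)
Your proof is correct and complete. The paper gives no argument of its own for this statement; it imports it from \cite[Proposition~9-5.14]{Reading2016}. Your write-up therefore supplies the standard self-contained proof: reduce to covers using convexity of congruence classes, identify each cover congruence with some \(\con(\jj)\), and project a transitive-closure chain onto \([\jj_*,\jj]\) via \(z\mapsto(z\wedge\jj)\vee\jj_*\).

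Your route also yields two things the citation does not make visible.
\begin{itemize}
\item Step~3 never uses that \(\jj\) is join-irreducible. It therefore shows that \(\con(x,y)\) is join-irreducible in \(\Con L\) for every cover \(x\lessdot y\).
\item The comparison argument in Step~2 uses only \(\jj\le y\), \(\jj\not\le x\), \(x\vee\jj=y\) and \(\jj_*\le x\). It therefore applies verbatim to the join-label \(\jj=\eta(x\lessdot y)\) of \Cref{sec:poly-label}, where \(\jj\le y\) and \(\jj\not\le x\) follow from \(y=x\vee\jj\neq x\). This gives \(\con(x,y)=\con(\eta(x\lessdot y))\) in any finite lattice, whether or not \(\jj\) is the minimal choice you made.
\end{itemize}
That identity is exactly what the paper uses implicitly in the proof of \Cref{thm:con_jirr}. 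There it reads forcing relations between join-irreducible elements directly off the edge labels of the heptagons, which is only legitimate because each edge's congruence coincides with the congruence of its label.
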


By~\Cref{pro:irr_conL}, the map \( \jj \mapsto \con(\jj) \) is
surjective, but it is not injective in general. For each
meet-irreducible \( \mm \), we can define \( \mm^* \), and
consequently define a map
\( \mm \mapsto \con(\mm,\mm^*) =: \con(\mm) \) in the same manner. A
finite lattice \( L \) is said to be \defn{congruence uniform} if both
two maps \( \jj \mapsto \con(\jj) \) and \( \mm \mapsto \con(\mm) \)
are bijective.

We now show that our poset is congruence uniform.
\begin{theorem}\label{thm:HH_uniform}
  The extra slow Tamari lattice \(\esTam_n\) is congruence uniform. 
\end{theorem}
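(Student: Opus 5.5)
The intended route is the one announced in the introduction: we show that \(\esTam_n\) is an HH-lattice in the sense of \cite{CLM2004}, using the polygonal labeling \((\eta,r)\) built in \Cref{sec:poly-label}, and then invoke the theorem of \cite{CLM2004} that a finite lattice which is polygonal and carries a polygonal labeling (equivalently, an HH-labeling) is congruence uniform. The ingredients are already in place. \Cref{prop:polygonal} shows that \(\esTam_n\) is polygonal, with quadrilaterals and heptagons only. \Cref{lem:join-HH-labelling} gives (PL1) for the join-labelling of any finite semidistributive lattice, and \Cref{thm:semidistributive} supplies the semidistributivity needed to define \(\eta\). \Cref{lem:HH-rank_func} gives (PL2) and (PL3) for the rank function \(r(i,j,\sigma)=j-i+\delta_{\sigma,\bullet}\).

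So the proof consists mainly of matching our conditions (PL1)--(PL3) with the exact hypotheses of the criterion in \cite{CLM2004}. Their criterion is stated for polygonal lattices with a labeling of edges by a set equipped with a map to integers. It requires three things in each polygon:
\begin{itemize}
\item opposite extremal edges carry the same labels;
\item the interior edges of each side have ranks strictly larger than the two labels at the bottom of the polygon;
\item the labels on each side are arranged unimodally as in (PL2) and (PL3).
\end{itemize}
I will check that quadrilaterals satisfy these vacuously once (PL1) holds. For the heptagons, the explicit labels \eqref{eq:heptagon_HH} show the interior ranks \(j-i+1, j-i+2, j-i+1\) strictly exceed both \(j-i'+1\) and \(j'-i+1\), since \(i'>i\) and \(j'<j\). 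If the criterion in \cite{CLM2004} is phrased with the inequality reversed, meaning labels in the interior must be smaller, I will replace \(r\) by \(N-r\) for a large constant \(N\). This is harmless because only the relative order matters.

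As a cross-check, and as a fallback if the HH criterion's hypotheses do not match exactly, I would verify congruence uniformity through the definition. Since \(\esTam_n\) is semidistributive, the edge labeling by join-irreducibles forces \(\jj\mapsto\con(\jj)\) to be injective as soon as the forcing relation among labels is acyclic. In a polygon, the labels of the interior edges are forced by the two bottom labels. By (PL2)/(PL3), forcing strictly increases \(r\) except at the identified extremal labels, which are equal by (PL1). Hence there are no cycles in the forcing preorder, so distinct join-irreducibles give distinct congruences. The meet side follows by applying the self-duality of \Cref{thm:selfdual}, which sends meet-irreducibles to join-irreducibles and preserves congruences.

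The main obstacle is the step where acyclicity of forcing is deduced from polygon-local data. That step needs every forcing between edge labels to be generated by polygons. This is exactly what polygonality provides, via Day's and Reading's results that in a polygonal lattice congruences are generated by forcing in polygons. Citing that result correctly, rather than reproving it, is where I would be most careful.
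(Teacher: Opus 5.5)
Your main route is the paper's proof: combine \Cref{thm:semidistributive}, \Cref{prop:polygonal} and the polygonal labeling of \Cref{lem:HH-rank_func} (with (PL1) coming from \Cref{lem:join-HH-labelling}), then conclude by \cite[Corollary~1]{CLM2004}. Your fallback is also correct and is essentially what that corollary packages: forcing is generated inside polygons by \cite[Proposition~9-6.5]{Reading2016}, and $r$ strictly increases along forcing except between opposite extremal edges, so $\jj\mapsto\con(\jj)$ is injective, with the meet side following by self-duality.
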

\begin{proof}
  By~\cite[Corollary~1]{CLM2004}, every semidistributive polygonal
  lattice which has a polygonal labeling is congruence uniform. In the
  terminology of that paper, such a lattice is called an
  \( \mathcal{HH} \)-lattice, and the polygons are called
  \(2\)-facets.

  By \Cref{thm:semidistributive} and \Cref{prop:polygonal}, the
  lattice \( \esTam_n \) is a semidistributive polygonal lattice. By
  \Cref{lem:HH-rank_func}, it admits a polygonal labeling. Therefore,
  \( \esTam_n \) is congruence uniform.
\end{proof}

In a lattice \( L \), we say that a cover relation \( a \lessdot b \)
\defn{forces} a cover relation \( c \lessdot d \) if
\( \con(a,b) \geq \con(c,d) \), or equivalently if
\( c \equiv_{\con(a,b)} d \). For join-irreducible elements \( \jj \)
and \( \jj' \) of \( L \), we simply say \( \jj \) \defn{forces}
\( \jj' \) if \( \con(\jj) \geq \con(\jj') \) in \( \Con L \). We
define the \defn{forcing relation} by \( \jj \geq \jj' \) if \( \jj \)
forces \( \jj' \). For a lattice \( L \), we define a new relation
\( F(L) \) on the join-irreducible elements of \( L \) as the
transitive closure of the forcing relation. For a general finite
lattice \( L \), the relation \( F(L) \) is a pre-ordered relation on
the join irreducible elements of \( L \). If two join-irreducible
elements \( \jj \) and \( \jj' \) satisfy
\( \con(\jj) = \con(\jj') \), we say that they are equivalent. Under
this equivalence relation, the forcing relation becomes a partial
order on the equivalence classes of join-irreducible elements. If
\( L \) is congruence uniform, then the map
\( \jj \mapsto \con(\jj) \) is a bijection and hence \( F(L) \) is a
partially ordered set on the join-irreducible elements of \( L \). The
lattice of congruences of a given lattice \(L\) is distributive, hence
by the Birkhoff representation theorem, we obtain the following
result.

\begin{proposition}[{\cite[Proposition~9-5.16]{Reading2016}}]\label{pro:cong_lattice_characterization}
  Let \( L \) be a finite lattice that is congruence uniform. Then
  \( \Con L \) is isomorphic to the poset on order ideals of
  \( F(L) \), ordered by inclusion.
\end{proposition}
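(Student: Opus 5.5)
The plan is to combine three standard facts: $\Con L$ is distributive, Birkhoff's representation theorem, and \Cref{pro:irr_conL}. Congruence uniformity then only serves to identify the poset of join-irreducible congruences with $F(L)$.

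\textbf{Step 1: $\Con L$ is a finite distributive lattice.} This is the classical theorem of Funayama and Nakayama. I would recall the argument rather than reprove it in detail. Meets in $\Con L$ are intersections. For $\ba\wedge(\bm\beta\vee\bm\gamma)\le(\ba\wedge\bm\beta)\vee(\ba\wedge\bm\gamma)$, one takes $x\equiv y$ in the left-hand side, with $x\le y$ after replacing the pair by $(x\wedge y,x\vee y)$. One then connects $x$ to $y$ by a finite chain $x=z_0\le z_1\le\dots\le z_k=y$ whose consecutive steps are each congruent modulo $\bm\beta$ or modulo $\bm\gamma$. Each step is also congruent modulo $\ba$: indeed $x\le z_i\le y$ and $x\equiv_\ba y$, and $\ba$-classes are intervals. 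Finiteness of $L$ makes $\Con L$ finite.

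\textbf{Step 2: apply Birkhoff.} Since $\Con L$ is finite and distributive, the map
\[
\ba\longmapsto \{\bm\theta \text{ join-irreducible in } \Con L : \bm\theta\le\ba\}
\]
is an isomorphism onto the lattice of order ideals of the subposet $J(\Con L)$ of join-irreducible congruences. Its inverse sends an ideal to the join of its elements.

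\textbf{Step 3: identify $J(\Con L)$ with $F(L)$.} By \Cref{pro:irr_conL}, $J(\Con L)=\{\con(\jj)\}$ as $\jj$ ranges over the join-irreducibles of $L$. Congruence uniformity makes $\jj\mapsto\con(\jj)$ injective, hence a bijection onto $J(\Con L)$. By definition, $\jj\ge\jj'$ in $F(L)$ exactly when $\con(\jj)\ge\con(\jj')$. This relation is already reflexive and transitive, since it is pulled back from a partial order, and injectivity gives antisymmetry. So the bijection is a poset isomorphism $F(L)\cong J(\Con L)$, and Step 2 then yields the claim.

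Concretely, the isomorphism sends $\ba$ to $\{\jj : \jj_*\equiv_\ba \jj\}$. This works because $\con(\jj)\le\ba$ if and only if $\jj_*\equiv_\ba\jj$, by minimality of $\con(\jj_*,\jj)$.

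The only nonformal ingredient is the distributivity in Step 1. Since the paper already quotes the result from \cite{Reading2016}, I would simply cite it there. Everything else is bookkeeping once injectivity from congruence uniformity is available.
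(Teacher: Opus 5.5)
Your proposal is correct and follows essentially the route the paper indicates just before citing \cite{Reading2016}: distributivity of $\Con L$, Birkhoff's representation theorem, and \Cref{pro:irr_conL} together with injectivity of $\jj\mapsto\con(\jj)$ to identify the join-irreducible congruences with $F(L)$. Your additional remarks are accurate: the forcing relation is already transitive, and the explicit isomorphism is $\ba\mapsto\{\jj : \jj_*\equiv_\ba\jj\}$. In Step 1 the monotone chain can be built from any connecting sequence $x=w_0,\dots,w_k=y$ by setting $z_i=(w_0\vee\cdots\vee w_i)\wedge y$, although your argument only needs $z_i\in[x,y]$.
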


In a polygon \( [x,y] \), the two edges incident to \( x \) are called
the \defn{bottom edges}, and the two edges incident to \( y \) are
called the \defn{top edges}. All remaining edges are called \defn{side
  edges}. Within the polygon, a cover relation \( a \lessdot b \)
forces \( c \lessdot d \) if one of the following conditions holds:
\begin{itemize}
\item \( a \lessdot b \) is a bottom edge of the polygon, and
  \( c \lessdot d \) is the opposite top edge or a side edge;
\item \( a \lessdot b \) is a top edge of the polygon, and
  \( c \lessdot d \) is the opposite bottom edge or a side edge.
\end{itemize}
For a polygonal lattice \( L \), the forcing relation to construct
\( F(L) \) has a nice property.

\begin{proposition}[{\cite[Proposition~9-6.5]{Reading2016}}]\label{pro:polygonal_forcing}
  Let \( L \) be a finite polygonal lattice. Then, if
  \( a \lessdot b \) forces \( c \lessdot d \), then there exists a
  sequence of cover relations
  \[
    (a \lessdot b) = (a_0 \lessdot b_0),~(a_1 \lessdot b_1),\dots,~(a_k \lessdot b_k) = (c \lessdot d),
  \]
  such that \( a_{i-1} \lessdot b_{i-1} \) forces
  \( a_{i} \lessdot b_{i} \) in a polygon for \( i = 1, \dots, k \).
\end{proposition}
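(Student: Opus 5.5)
The statement is a general fact about finite polygonal lattices. I would prove it by showing that the edges reachable from \(a\lessdot b\) through polygon-forcing steps already span a congruence. Let \(E\) be the set of cover relations \(c\lessdot d\) for which such a sequence
\[
  (a \lessdot b) = (a_0 \lessdot b_0),\ (a_1\lessdot b_1),\ \dots,\ (a_k \lessdot b_k) = (c \lessdot d)
\]
exists, with each step a forcing inside a polygon. We include \(a\lessdot b\) itself (\(k=0\)). By construction \(E\) is closed under polygon forcing. Let \(\theta\) be the equivalence relation on \(L\) generated by the edges of \(E\): \(x\equiv_\theta y\) if \(x\) and \(y\) are joined by an undirected path of \(E\)-edges in the Hasse diagram. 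The goal is to show that \(\theta\) is a congruence and that the edges it contracts are exactly those of \(E\). Then \(\con(a,b)\le\theta\), so every edge forced by \(a\lessdot b\) lies in \(E\), which is the claim.

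\textbf{Step 1 (local condition).} Take \(x\lessdot y_1\) in \(E\) and \(x\lessdot y_2\) with \(y_2\neq y_1\). Polygonality says \([x,y_1\vee y_2]\) is a polygon, and in it \(x\lessdot y_1\) is a bottom edge. Within the polygon it forces the opposite top edge and all side edges, so these lie in \(E\) by closure. The opposite top edge and the side edges form the chain from \(y_2\) up to \(y_1\vee y_2\), hence \(y_2\equiv_\theta y_1\vee y_2\). The dual argument handles \(x\lessdot y\) in \(E\) and \(z\lessdot y\), giving \(z\equiv_\theta x\wedge z\) by means of the polygon \([x\wedge z,y]\).

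\textbf{Step 2 (local to global).} Next I would show that these local conditions make \(\theta\) a congruence. It suffices to prove that \(x\equiv_\theta y\) implies \(x\vee z\equiv_\theta y\vee z\) for all \(z\), together with the dual statement. Since \(\theta\) is generated by edges, we may assume \(x\lessdot y\) is in \(E\). We then induct on the length of a maximal chain \(x=w_0\lessdot w_1\lessdot\dots\lessdot w_m=x\vee z\), which can be chosen below \(x\vee z\). At each step the current pair \(w_t\equiv_\theta w_t\vee y\) is transported to \(w_{t+1}\equiv_\theta w_{t+1}\vee y\) by Step 1, applied to every \(E\)-edge on the path joining \(w_t\) to \(w_t\vee y\). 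To keep the induction well founded, I would induct lexicographically on the rank of the bottom element, which works because \(L\) is finite. This is the classical argument that cover-local join and meet closure implies congruence in a finite lattice.

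\textbf{Step 3 (exactness).} We need that an edge \(c\lessdot d\) with \(c\equiv_\theta d\) is itself in \(E\); this is where I expect the main obstacle. The relation \(c\equiv_\theta d\) only provides a zigzag of \(E\)-edges from \(c\) to \(d\), and the edge \(c\lessdot d\) must be recovered from it. My first attempt would be to straighten the zigzag using polygons, by induction on the sum of distances of its vertices from \(c\wedge d\). Wherever the path has a local peak \(u\lessdot v\gtrdot u'\) or a local valley, Step 1 and its dual replace it by the other side of the corresponding polygon, again through \(E\)-edges. Eventually this yields a monotone chain from \(c\) to \(d\), which must be the single edge \(c\lessdot d\), since \(c\) is covered by \(d\).

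If the straightening gets stuck, I would instead use the standard fact that congruence classes are intervals and that \(\con(a,b)\) is the smallest congruence contracting \(a\lessdot b\). Together with Step 2, this shows that the set of edges contracted by \(\con(a,b)\) is itself closed under polygon forcing and contains \(a\lessdot b\). Comparing with the minimality of \(E\) then gives the required reverse inclusion.
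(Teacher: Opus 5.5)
The paper does not prove this statement; it quotes it from Reading, so your argument has to stand on its own. Its architecture is right: take \(E\) to be the polygon-closure of \(a\lessdot b\), take \(\theta\) to be the equivalence generated by \(E\), show that \(\theta\) is a congruence, and show that \(\theta\) contracts no edge outside \(E\).

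Steps~1 and~2 are sound in substance. Step~2 is only sketched, but it can be made precise. Take an \(E\)-edge \(u\lessdot v\) and any \(z\). There is nothing to prove if \(u\vee z=u\) or \(v\le u\vee z\). Otherwise pick \(u\lessdot w\le u\vee z\). Step~1 puts the whole chain from \(w\) to \(v\vee w\) into \(E\). Downward induction on the bottom element (all bottoms on that chain lie strictly above \(u\)) then gives \(u\vee z=w\vee z\equiv_\theta v\vee w\vee z=v\vee z\).

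The genuine gap is Step~3. It carries the real content of the proposition, and neither of your arguments proves it.
\begin{itemize}
\item The straightening procedure has no monovariant. Replacing a peak \(u\lessdot v\gtrdot u'\) by the route through \(u\wedge u'\) replaces the single vertex \(v\) by several new vertices (four, for the heptagons of this paper). It also pushes the path downward, possibly below \(c\) or into elements incomparable with \(c\) and \(d\). Replacing valleys pushes the path back up, so nothing prevents the two moves from undoing each other. The ``sum of distances from \(c\wedge d\)'' need not decrease.
\item The fallback is circular. If the set \(C\) of edges contracted by \(\con(a,b)\) is polygon-closed and contains \(a\lessdot b\), minimality of \(E\) yields only \(E\subseteq C\). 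That is the easy direction; the proposition needs \(C\subseteq E\).
\end{itemize}

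What is missing is an argument that uses the structure of the \(\theta\)-classes. Here is one way to close the gap.
\begin{itemize}
\item Upward \(E\)-steps are locally confluent. If \(x\lessdot y_1\) and \(x\lessdot y_2\) are both in \(E\), each bottom edge of the polygon \([x,y_1\vee y_2]\) forces the entire opposite side. So both \(y_i\) reach \(y_1\vee y_2\) through \(E\)-edges.
\item Since \(L\) is finite, Newman's lemma gives every element a unique upward normal form. This normal form is constant along \(E\)-edges, hence constant on each \(\theta\)-class. Since every element lies below its normal form, the normal form is the top of the class. In particular, every element of a class other than its top has an upward \(E\)-edge.
\item Now prove, by downward induction on \(c\), that every edge \(c\lessdot d\) with \(c\equiv_\theta d\) lies in \(E\). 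Choose an \(E\)-edge \(c\lessdot f\); if \(f=d\) you are done.
\item Otherwise consider the polygon \([c,d\vee f]\) and its top edge \(t\lessdot d\vee f\) on the side of \(f\). This edge is contracted by \(\theta\), because \(\theta\) is a congruence: \(d\vee f\equiv_\theta c\) and classes are convex. It also satisfies \(t\ge f>c\), so it lies in \(E\) by induction. Being a top edge, it forces the opposite bottom edge, which is \(c\lessdot d\).
\end{itemize}
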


Using the propositions above, we are now able to prove \Cref{thm:con_jirr}.

\begin{proof}[Proof of \Cref{thm:con_jirr}]
First, since our lattice is
congruence uniform by \Cref{thm:HH_uniform}, the map
\( \jj \mapsto \con(\jj) \) is a bijective, and therefore
\( F(\esTam_n) \) is a poset on the join-irreducible elements of
\( \esTam_n \).

Since we proved that the join-labelling given
in~\Cref{sec:poly-label} is a polygonal labelling, the
join-labelling coincides with the canonical labelling obtained by
assigning to each cover relation \( \jj_* \lessdot \jj \) the label
\( \jj \), such that in every polygon, the bottom edge and its
opposite top edge have the same label. Hence, it suffices to consider
the forcing orders that appear in the heptagons in our
poset. 

As illustrated in \Cref{fig:heptagon}, from a heptagon in
\( \esTam_n \), we obtain the following order in \( F(\esTam_n) \):
From the right bottom edge,
\[
  (i-1,j',\leftarrow) \geq (i-1,j,\sigma) \quad\text{for any \( \sigma \in \{\leftarrow, \bullet, \emptyset\} \) and \( j' < j \)},
\]
from the right top edge,
\[
  (i'-1, j, \emptyset) \geq (i-1,j,\sigma) \quad\text{for any \( \sigma \in \{\leftarrow, \bullet, \emptyset\} \) and \( i' > i \)}.
\]
This gives
\begin{enumerate}
\item \([i,j]_{\mathtt{red}}\subseteq [i,k]_\sigma\) for all
  \(k \geq j\) and
  \(\sigma \in \{\mathtt{red}, \mathtt{blue}, \mathtt{green}\}\), and
\item \([i,j]_{\mathtt{green}}\subseteq [k,j]_\sigma\) for all
  \(1\leq k\leq i\) and
  \(\sigma \in \{\mathtt{red}, \mathtt{blue}, \mathtt{green}\}\).
\end{enumerate}
By transitivity of the forcing order, we see that
\((k,\ell,d) \leq (i,j,c)\) for \( c \in \{\leftarrow, \emptyset\} \)
and \( d \in \{\leftarrow, \bullet, \emptyset\} \) implies that
\begin{enumerate}
\item[(3)] \([i,j]_\sigma\subseteq [k,\ell]_\tau\) for
  \(\sigma\in \{\mathtt{red},\mathtt{green}\}\), and
  \(\tau \in \{\mathtt{red}, \mathtt{blue}, \mathtt{green}\}\).
\end{enumerate}
It remains to prove the converse implication.

The cover relations in the inclusion poset of colored intervals are
given by
\begin{align*}
  [i,j]_{\mathtt{red}} &\subseteq [i,j+1]_\sigma \quad \text{for \( i \leq j < n-1 \)}, \\
  [i,j]_{\mathtt{green}} &\subseteq [i-1,j]_\sigma \quad \text{for \( 1 < i \leq j \)},
\end{align*}
for \(\sigma \in \{\mathtt{red}, \mathtt{blue}, \mathtt{green}\}\).
Therefore, it is enough to show that the corresponding
join-irreducible elements in \( F(\esTam_n) \) satisfy the opposite
cover relations.

A heptagon can be constructed whenever there is an empty cell with an
up-arrow below it, a left-arrow to its left, and a dot to its right.
This gives the following cover relations in \( F(\esTam_n) \). By
setting \( j' = j-1 \) and \( i' = i+1 \), we obtain the following.
For \( 1 \leq i, j \leq n-1 \), and \( j - i \geq 1 \),
\[
  (i,j,\leftarrow)\lessdot (i,j-1,\leftarrow), \quad (i,j,\bullet)\lessdot (i,j-1,\leftarrow), \quad\text{and}\quad (i,j,\emptyset) \lessdot (i,j-1,\leftarrow).
\]
For \( 1 \leq i, j \leq n-1 \), and \( j - i \geq 2 \),
\[
  (i,j,\leftarrow) \lessdot (i+1,j,\emptyset), \quad (i,j,\bullet) \lessdot (i+1,j,\emptyset), \quad\text{and}\quad (i,j,\emptyset) \lessdot (i+1,j,\emptyset).
\]
Finally, for \( 1 \leq i, j \leq n-1 \), and \( j - i = 1 \),
\[
  (i,i+1,\leftarrow) \lessdot (i+1,i+1,\bullet), \quad (i,i+1,\bullet) \lessdot (i+1,i+1,\bullet), \quad\text{and}\quad (i,i+1,\emptyset) \lessdot (i+1,i+1,\bullet).
\]
Thus, every cover relation in the inclusion poset is reversed in
\( F(\esTam_n) \), as desired. See~\Cref{fig:J_irr_poset} for an
illustration.
\begin{figure}
  \centering
  \begin{tikzpicture}[>=latex, scale=.65]
    \node (P11) at (-3,0) {
      $\begin{ytableau}
        \bullet&*(yellow)\uparrow&& \\
        &*(yellow)\leftarrow&&\uparrow \\
        \leftarrow&&\uparrow \\
        \leftarrow&\bullet
      \end{ytableau}$
    };
    \node (P12) at (0,0) {
      $\begin{ytableau}
        \bullet&*(yellow)\uparrow&& \\
        \leftarrow&*(yellow)\bullet&&\uparrow \\
        \leftarrow&&\uparrow \\
        \leftarrow&\bullet
      \end{ytableau}$
    };
    \node (P13) at (3,0) {
      $\begin{ytableau}
        \bullet&*(yellow)\uparrow&& \\
        \leftarrow&*(yellow)&&\uparrow \\
        \leftarrow&&\uparrow \\
        \leftarrow&\bullet
      \end{ytableau}$
    };
    \node (P21) at (-8,5) {
      $\begin{ytableau}
        \bullet&&& \\
        \leftarrow&*(yellow)\uparrow&&\uparrow \\
        &*(yellow)\leftarrow&\uparrow \\
        \leftarrow&\bullet
      \end{ytableau}$
    };

    \node (P22) at (-5,5) {
      $\begin{ytableau}
        \bullet&&& \\
        \leftarrow&*(yellow)\uparrow&&\uparrow \\
        \leftarrow&*(yellow)\bullet&\uparrow \\
        \leftarrow&\bullet
      \end{ytableau}$
    };
    \node (P23) at (-2,5) {
      $\begin{ytableau}
        \bullet&&& \\
        \leftarrow&*(yellow)\uparrow&&\uparrow \\
        \leftarrow&*(yellow)&\uparrow \\
        \leftarrow&\bullet
      \end{ytableau}$
    };
    \node (P24) at (2,5) {
      $\begin{ytableau}
        \bullet&&*(yellow)\uparrow& \\
        &&*(yellow)\leftarrow&\uparrow \\
        \leftarrow&&\bullet \\
        \leftarrow&\uparrow
      \end{ytableau}$
    };
    \node (P25) at (5,5) {
      $\begin{ytableau}
        \bullet&&*(yellow)\uparrow& \\
        \leftarrow&&*(yellow)\bullet&\uparrow \\
        \leftarrow&&\bullet \\
        \leftarrow&\uparrow
      \end{ytableau}$
    };

    \node (P26)  at (8,5) {
      $\begin{ytableau}
        \bullet&&*(yellow)\uparrow& \\
        \leftarrow&&*(yellow)&\uparrow \\
        \leftarrow&&\bullet \\
        \leftarrow&\uparrow
      \end{ytableau}$
    };
    \node (P31)  at (-8.5,10) {
      $\begin{ytableau}
        \bullet&&& \\
        \leftarrow&&&\uparrow \\
        \leftarrow&*(yellow)\uparrow&\uparrow \\
        &*(yellow)\leftarrow
      \end{ytableau}$
    };
    \node (P32)  at (-5.5,10) {
      $\begin{ytableau}
        \bullet&&& \\
        \leftarrow&&&\uparrow \\
        \leftarrow&*(yellow)\uparrow&\uparrow \\
        \leftarrow&*(yellow)\bullet
      \end{ytableau}$
    };
    \node (P33)  at (-1.5,10) {
      $\begin{ytableau}
        \bullet&&& \\
        \leftarrow&&*(yellow)\uparrow&\uparrow \\
        &&*(yellow)\leftarrow \\
        \leftarrow&\uparrow
      \end{ytableau}$
    };
    \node (P34)  at (1.5,10) {
      $\begin{ytableau}
        \bullet&&& \\
        \leftarrow&&*(yellow)\uparrow&\uparrow \\
        \leftarrow&&*(yellow)\bullet \\
        \leftarrow&\uparrow
      \end{ytableau}$
    };
    \node (P35)  at (5.5,10) {
      $\begin{ytableau}
        \bullet&&&*(yellow)\uparrow \\
        &&&*(yellow)\leftarrow \\
        \leftarrow&&\uparrow \\
        \leftarrow&\uparrow
      \end{ytableau}$
    };
    \node (P36)  at (8.5,10) {
      $\begin{ytableau}
        \bullet&&&*(yellow)\uparrow \\
        \leftarrow&&&*(yellow)\bullet \\
        \leftarrow&&\uparrow \\
        \leftarrow&\uparrow
      \end{ytableau}$
    };

    \draw (P11.north)  -- node {} (P23.south);
    \draw (P12.north)  -- node {} (P23.south);
    \draw (P13.north)  -- node {} (P23.south);
    \draw (P11.north)  -- node {} (P24.south);
    \draw (P12.north)  -- node {} (P24.south);
    \draw (P13.north)  -- node {} (P24.south);

    \draw (P21.north)  -- node {} (P32.south);
    \draw (P22.north)  -- node {} (P32.south);
    \draw (P23.north)  -- node {} (P32.south);
    \draw (P21.north)  -- node {} (P33.south);
    \draw (P22.north)  -- node {} (P33.south);
    \draw (P23.north)  -- node {} (P33.south);

    \draw (P24.north)  -- node {} (P34.south);
    \draw (P25.north)  -- node {} (P34.south);
    \draw (P26.north)  -- node {} (P34.south);
    \draw (P24.north)  -- node {} (P35.south);
    \draw (P25.north)  -- node {} (P35.south);
    \draw (P26.north)  -- node {} (P35.south);
  \end{tikzpicture}
  \caption{The poset \( F(\esTam_n) \) for \( n=4 \). Each
    join-irreducible \( (i,j,\sigma) \) is determined by the two
    yellow cells: the coordinates \( (i,j) \) of the up-arrow and the
    type \( \sigma \) of the cell immediately below the up-arrow.}
  \label{fig:J_irr_poset}
\end{figure}
\end{proof}

\subsection{Enumeration}
\label{sec:enumeration-cong-lattice}

We characterized the congruence lattice of \( \esTam_n \), and we now
proceed to determine the number of elements in this congruence
lattice. The order ideals of \( F(\esTam_n) \) are in bijection with
its antichains, which is equivalent to considering its dual order
ideals. To count elements, we represent the elements of the poset by
dots, as illustrated in \Cref{fig:reflect_J_irr_poset} (Left). Red,
blue, and green dots correspond respectively to the join-irreducible
element of the forms \( (*,*,\leftarrow) \), \( (*,*,\bullet) \), and
\( (*,*,\emptyset) \), except that the green dots in the bottom row
correspond to elements of the form \( (*,*,\bullet) \).

As illustrated in \Cref{fig:reflect_J_irr_poset} (Right), after
appropriately grouping the colors, we consider a cell to lie below the
path whenever at least one of its colors belongs to the given order
ideal. Consequently, each order ideal corresponds to a Dyck path with
colorings assigned to the cells below the path. The coloring of each
cell is defined as follows: in the bottom row, each cell is colored by
a subset of \( \{ \mathtt{red},\mathtt{blue} \} \), whereas in the higher
rows, each cell is colored by a subset of
\( \{ \mathtt{red},\mathtt{blue},\mathtt{green} \} \).

\begin{figure}
  \centering
\begin{minipage}[t]{0.48\textwidth}
    \centering
\begin{tikzpicture}[
  thicknode/.style={
    circle,          
    minimum size=7pt,
    inner sep=0pt    
  },
  rednode/.style={thicknode, fill=red},
  bluenode/.style={thicknode, fill=blue},
  greennode/.style={thicknode, fill=green!40!black},
  scale = .9
]
    \node[rednode] (P11) at (-4-.2,0) {};
    \node[greennode] (P12) at (-4+.2,0) {};
    \node[rednode] (P13) at (-2-.2,0) {};
    \node[greennode] (P14) at (-2+.2,0) {};
    \node[rednode] (P15) at (0-.2,0) {};
    \node[greennode] (P16) at (0+.2,0) {};
    \node[rednode] (P17) at (2-.2,0) {};
    \node[greennode] (P18) at (2+.2,0) {};
    \node[rednode] (P19) at (4-.2,0) {};
    \node[greennode] (P110) at (4+.2,0) {};

    \node[rednode] (P21) at (-3-.4,1) {};
    \node[bluenode] (P22) at (-3,1) {};
    \node[greennode] (P23) at (-3+.4,1) {};
    \node[rednode] (P24) at (-1-.4,1) {};
    \node[bluenode] (P25) at (-1,1) {};
    \node[greennode] (P26) at (-1+.4,1) {};
    \node[rednode] (P27) at (1-.4,1) {};
    \node[bluenode] (P28) at (1,1) {};
    \node[greennode] (P29) at (1+.4,1) {};
    \node[rednode] (P210) at (3-.4,1) {};
    \node[bluenode] (P211) at (3,1) {};
    \node[greennode] (P212) at (3+.4,1) {};

    \node[rednode] (P31) at (-2-.4,2) {};
    \node[bluenode] (P32) at (-2,2) {};
    \node[greennode] (P33) at (-2+.4,2) {};
    \node[rednode] (P34) at (0-.4,2) {};
    \node[bluenode] (P35) at (0,2) {};
    \node[greennode] (P36) at (0+.4,2) {};
    \node[rednode] (P37) at (2-.4,2) {};
    \node[bluenode] (P38) at (2,2) {};
    \node[greennode] (P39) at (2+.4,2) {};

    \node[rednode] (P41) at (-1-.4,3) {};
    \node[bluenode] (P42) at (-1,3) {};
    \node[greennode] (P43) at (-1+.4,3) {};
    \node[rednode] (P44) at (1-.4,3) {};
    \node[bluenode] (P45) at (1,3) {};
    \node[greennode] (P46) at (1+.4,3) {};

    \node[rednode] (P51) at (0-.4,4) {};
    \node[bluenode] (P52) at (0,4) {};
    \node[greennode] (P53) at (0+.4,4) {};

    \draw (P12)  -- node {} (P21); \draw (P12)  -- node {} (P22); \draw (P12)  -- node {} (P23);
    \draw (P13)  -- node {} (P21); \draw (P13)  -- node {} (P22); \draw (P13)  -- node {} (P23);
    \draw (P14)  -- node {} (P24); \draw (P14)  -- node {} (P25); \draw (P14)  -- node {} (P26);
    \draw (P15)  -- node {} (P24); \draw (P15)  -- node {} (P25); \draw (P15)  -- node {} (P26);
    \draw (P16)  -- node {} (P27); \draw (P16)  -- node {} (P28); \draw (P16)  -- node {} (P29);
    \draw (P17)  -- node {} (P27); \draw (P17)  -- node {} (P28); \draw (P17)  -- node {} (P29);
    \draw (P18)  -- node {} (P210); \draw (P18)  -- node {} (P211); \draw (P18)  -- node {} (P212);
    \draw (P19)  -- node {} (P210); \draw (P19)  -- node {} (P211); \draw (P19)  -- node {} (P212);

    \draw (P23)  -- node {} (P31); \draw (P23)  -- node {} (P32); \draw (P23)  -- node {} (P33);
    \draw (P24)  -- node {} (P31); \draw (P24)  -- node {} (P32); \draw (P24)  -- node {} (P33);
    \draw (P26)  -- node {} (P34); \draw (P26)  -- node {} (P35); \draw (P26)  -- node {} (P36);
    \draw (P27)  -- node {} (P34); \draw (P27)  -- node {} (P35); \draw (P27)  -- node {} (P36);
    \draw (P29)  -- node {} (P37); \draw (P29)  -- node {} (P38); \draw (P29)  -- node {} (P39);
    \draw (P210)  -- node {} (P37); \draw (P210)  -- node {} (P38); \draw (P210)  -- node {} (P39);

    \draw (P33)  -- node {} (P41); \draw (P33)  -- node {} (P42); \draw (P33)  -- node {} (P43);
    \draw (P34)  -- node {} (P41); \draw (P34)  -- node {} (P42); \draw (P34)  -- node {} (P43);
    \draw (P36)  -- node {} (P44); \draw (P36)  -- node {} (P45); \draw (P36)  -- node {} (P46);
    \draw (P37)  -- node {} (P44); \draw (P37)  -- node {} (P45); \draw (P37)  -- node {} (P46);

    \draw (P43)  -- node {} (P51); \draw (P43)  -- node {} (P52); \draw (P43)  -- node {} (P53);
    \draw (P44)  -- node {} (P51); \draw (P44)  -- node {} (P52); \draw (P44)  -- node {} (P53);
  \end{tikzpicture}

  \end{minipage}\hfill
  \begin{minipage}[t]{0.48\textwidth}
    \centering
\begin{tikzpicture}[
  thicknode/.style={
    circle,          
    minimum size=6pt,
    inner sep=0pt    
  },
  rednode/.style={thicknode, fill=red},
  bluenode/.style={thicknode, fill=blue},
  greennode/.style={thicknode, fill=green!40!black},
  scale = .7
]
    \node[rednode] (P11) at (-4-.2,0) {};
    \node[greennode] (P12) at (-4+.2,0) {};
    \node[rednode] (P13) at (-2-.2,0) {};
    \node[greennode] (P14) at (-2+.2,0) {};
    \node[rednode] (P15) at (0-.2,0) {};
    \node[greennode] (P16) at (0+.2,0) {};
    \node[rednode] (P17) at (2-.2,0) {};
    \node[greennode] (P18) at (2+.2,0) {};
    \node[rednode] (P19) at (4-.2,0) {};
    \node[greennode] (P110) at (4+.2,0) {};

    \node[rednode] (P21) at (-3-.4,1) {};
    \node[bluenode] (P22) at (-3,1) {};
    \node[greennode] (P23) at (-3+.4,1) {};
    \node[rednode] (P24) at (-1-.4,1) {};
    \node[bluenode] (P25) at (-1,1) {};
    \node[greennode] (P26) at (-1+.4,1) {};
    \node[rednode] (P27) at (1-.4,1) {};
    \node[bluenode] (P28) at (1,1) {};
    \node[greennode] (P29) at (1+.4,1) {};
    \node[rednode] (P210) at (3-.4,1) {};
    \node[bluenode] (P211) at (3,1) {};
    \node[greennode] (P212) at (3+.4,1) {};

    \node[rednode] (P31) at (-2-.4,2) {};
    \node[bluenode] (P32) at (-2,2) {};
    \node[greennode] (P33) at (-2+.4,2) {};
    \node[rednode] (P34) at (0-.4,2) {};
    \node[bluenode] (P35) at (0,2) {};
    \node[greennode] (P36) at (0+.4,2) {};
    \node[rednode] (P37) at (2-.4,2) {};
    \node[bluenode] (P38) at (2,2) {};
    \node[greennode] (P39) at (2+.4,2) {};

    \node[rednode] (P41) at (-1-.4,3) {};
    \node[bluenode] (P42) at (-1,3) {};
    \node[greennode] (P43) at (-1+.4,3) {};
    \node[rednode] (P44) at (1-.4,3) {};
    \node[bluenode] (P45) at (1,3) {};
    \node[greennode] (P46) at (1+.4,3) {};

    \node[rednode] (P51) at (0-.4,4) {};
    \node[bluenode] (P52) at (0,4) {};
    \node[greennode] (P53) at (0+.4,4) {};

    \draw (P12)  -- node {} (P21); \draw (P12)  -- node {} (P22); \draw (P12)  -- node {} (P23);
    \draw (P13)  -- node {} (P21); \draw (P13)  -- node {} (P22); \draw (P13)  -- node {} (P23);
    \draw (P14)  -- node {} (P24); \draw (P14)  -- node {} (P25); \draw (P14)  -- node {} (P26);
    \draw (P15)  -- node {} (P24); \draw (P15)  -- node {} (P25); \draw (P15)  -- node {} (P26);
    \draw (P16)  -- node {} (P27); \draw (P16)  -- node {} (P28); \draw (P16)  -- node {} (P29);
    \draw (P17)  -- node {} (P27); \draw (P17)  -- node {} (P28); \draw (P17)  -- node {} (P29);
    \draw (P18)  -- node {} (P210); \draw (P18)  -- node {} (P211); \draw (P18)  -- node {} (P212);
    \draw (P19)  -- node {} (P210); \draw (P19)  -- node {} (P211); \draw (P19)  -- node {} (P212);

    \draw (P23)  -- node {} (P31); \draw (P23)  -- node {} (P32); \draw (P23)  -- node {} (P33);
    \draw (P24)  -- node {} (P31); \draw (P24)  -- node {} (P32); \draw (P24)  -- node {} (P33);
    \draw (P26)  -- node {} (P34); \draw (P26)  -- node {} (P35); \draw (P26)  -- node {} (P36);
    \draw (P27)  -- node {} (P34); \draw (P27)  -- node {} (P35); \draw (P27)  -- node {} (P36);
    \draw (P29)  -- node {} (P37); \draw (P29)  -- node {} (P38); \draw (P29)  -- node {} (P39);
    \draw (P210)  -- node {} (P37); \draw (P210)  -- node {} (P38); \draw (P210)  -- node {} (P39);

    \draw (P33)  -- node {} (P41); \draw (P33)  -- node {} (P42); \draw (P33)  -- node {} (P43);
    \draw (P34)  -- node {} (P41); \draw (P34)  -- node {} (P42); \draw (P34)  -- node {} (P43);
    \draw (P36)  -- node {} (P44); \draw (P36)  -- node {} (P45); \draw (P36)  -- node {} (P46);
    \draw (P37)  -- node {} (P44); \draw (P37)  -- node {} (P45); \draw (P37)  -- node {} (P46);

    \draw (P43)  -- node {} (P51); \draw (P43)  -- node {} (P52); \draw (P43)  -- node {} (P53);
    \draw (P44)  -- node {} (P51); \draw (P44)  -- node {} (P52); \draw (P44)  -- node {} (P53);

    \fill[black,opacity=0.3] (P12) circle [radius=3mm];
    \fill[black,opacity=0.3] (P13) circle [radius=3mm];
    \fill[black,opacity=0.3] (P14) circle [radius=3mm];
    \fill[black,opacity=0.3] (P15) circle [radius=3mm];
    \fill[black,opacity=0.3] (P16) circle [radius=3mm];
    \fill[black,opacity=0.3] (P17) circle [radius=3mm];
    \fill[black,opacity=0.3] (P18) circle [radius=3mm];
    \fill[black,opacity=0.3] (P19) circle [radius=3mm];
    \fill[black,opacity=0.3] (P110) circle [radius=3mm];
    \fill[black,opacity=0.3] (P25) circle [radius=3mm];
    \fill[black,opacity=0.3] (P26) circle [radius=3mm];
    \fill[black,opacity=0.3] (P27) circle [radius=3mm];
    \fill[black,opacity=0.3] (P29) circle [radius=3mm];
    \fill[black,opacity=0.3] (P210) circle [radius=3mm];
    \fill[black,opacity=0.3] (P211) circle [radius=3mm];
    \fill[black,opacity=0.3] (P35) circle [radius=3mm];
    \fill[black,opacity=0.3] (P37) circle [radius=3mm];
    \fill[black,opacity=0.3] (P39) circle [radius=3mm];

    \draw[very thick] (-6,-1) -- (-4,1) -- (-3,0) -- (0,3) -- (1,2) -- (2,3) -- (6,-1);
    \draw[gray] (-6,-1) -- (0,5);
    \draw[gray] (-4,-1) -- (1,4);
    \draw[gray] (-2,-1) -- (2,3);
    \draw[gray] (0,-1) -- (3,2);
    \draw[gray] (2,-1) -- (4,1);
    \draw[gray] (4,-1) -- (5,0);
    \draw[gray] (6,-1) -- (0,5);
    \draw[gray] (4,-1) -- (-1,4);
    \draw[gray] (2,-1) -- (-2,3);
    \draw[gray] (0,-1) -- (-3,2);
    \draw[gray] (-2,-1) -- (-4,1);
    \draw[gray] (-4,-1) -- (-5,0);
  \end{tikzpicture}
  \end{minipage}

  \caption{(Left) A simplified representation of the dual of
    \( F(\esTam_n) \) for \( n=6 \) (Right) An example of its order
    ideal and the corresponding Dyck path.}
  \label{fig:reflect_J_irr_poset}
\end{figure}

We classify the cells according to the number of edges they share with
the path. Each cell can share at most two of its upper edges with the
path. A cell that shares both of its edges with the boundary is called
a \defn{corner cell}. A cell that shares exactly one edge with the
boundary is called a \defn{boundary cell}; if it shares the right
edge, we refer to it as a left boundary cell, and if it shares the
left edge, as a right boundary cell. A cell that shares no edge with
the boundary is called an \defn{inner cell}.

The \defn{height} of a cell is the height of its midpoint, where the
up-steps and down-steps of a Dyck path are regarded as \( (1,1) \) and
\( (1,-1) \), respectively.

\begin{theorem}\label{the:congruence_enumeration}
  We define the weight of a Dyck path as the product of the weights of
  all the cells below the path, where the weight \( \wt(c) \) of each
  cell \( c \) is given by
  \[
    \wt(c) = 
    \begin{cases}
     3 & \text{if \( c \) is a corner cell of height \( 1 \)},\\
     2 & \text{if \( c \) is a boundary cell of height \( 1 \)},\\
     1 & \text{if \( c \) is an inner cell of height \( 1 \)},\\
     7 & \text{if \( c \) is a corner cell of height \( > 1 \)}, \\
     4 & \text{if \( c \) is a boundary cell of height \( > 1 \)},\\
     2 & \text{if \( c \) is an inner cell of height \( > 1 \)}.
    \end{cases}
  \]
  The number of elements in the congruence lattice of \( \esTam_n \)
  equals the total weight sum of all Dyck paths from \( (0,0) \) to
  \( (2n,0) \).
\end{theorem}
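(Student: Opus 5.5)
By \Cref{thm:con_jirr} and \Cref{pro:cong_lattice_characterization}, it suffices to count order ideals of the opposite of the inclusion poset of colored intervals. Complementation puts these in bijection with the subsets \(I\) of colored intervals that are closed downward in the inclusion poset itself. So I will count such down-closed sets \(I\).

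\emph{Step 1: the elementary closure rules.} I first rewrite down-closure using only the cover relations listed in the proof of \Cref{thm:con_jirr}:
\[
  [i,j]_{\mathtt{red}} \subseteq [i,j+1]_\sigma \qquad\text{and}\qquad [i,j]_{\mathtt{green}} \subseteq [i-1,j]_\sigma \qquad \text{for all colors } \sigma.
\]
Since down-closure only needs to be checked on covers, \(I\) is down-closed if and only if the following holds. Whenever some colored copy of \([k,\ell]\) lies in \(I\), we must have \([k,\ell-1]_{\mathtt{red}}\in I\) (when \(\ell-1\ge k\)) and \([k+1,\ell]_{\mathtt{green}}\in I\) (when \(k+1\le \ell\)). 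The green element at \([i,i]\) stands for \((i,i,\bullet)\). Blue elements never appear on the left of a cover, so they impose no constraint on other cells.

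\emph{Step 2: supports are Dyck paths.} Define the \emph{support} of \(I\) as the set of intervals \([k,\ell]\) having at least one color in \(I\). The rule in Step 1 says that if \([k,\ell]\) is in the support, then so are \([k,\ell-1]\) and \([k+1,\ell]\). So the support is a down-set of the usual inclusion order on the \(\binom{n}{2}\) intervals of \(\{1,\dots,n-1\}\).

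Now place \([k,\ell]\) as a cell of height \(\ell-k+1\) in the triangular array of \Cref{fig:reflect_J_irr_poset}. The two intervals \([k,\ell-1]\) and \([k+1,\ell]\) are the two cells directly beneath \([k,\ell]\), and the two cells directly above \([k,\ell]\) are \([k-1,\ell]\) and \([k,\ell+1]\). Down-sets of this array are exactly the regions lying under a Dyck path from \((0,0)\) to \((2n,0)\); this is the classical bijection, since the \(n-1\) bottom cells sit between the \(n+1\) ground points. So the support determines and is determined by a Dyck path of semilength \(n\).

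\emph{Step 3: counting color sets for a fixed path.} Fix a Dyck path and count the sets \(I\) with that support. This is the main step, and the point is that the constraints decouple cell by cell.

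For a cell \(c=[k,\ell]\) under the path, the color set \(I\cap c\) must be nonempty. Beyond that, Step 1 forces exactly two things:
\begin{itemize}
\item red \(\in I\cap c\) if the upper-right neighbour \([k,\ell+1]\) is in the support;
\item green \(\in I\cap c\) if the upper-left neighbour \([k-1,\ell]\) is in the support.
\end{itemize}
Both conditions concern only whether a neighbour is in the support, not which colors it carries. So the choices in different cells are independent, and the count for a fixed path is the product over cells of the number of admissible subsets of that cell.

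Whether the upper neighbours are in the support is read off from how many upper edges of \(c\) lie on the path: a corner cell has neither upper neighbour, a boundary cell has exactly one, and an inner cell has both.
\begin{itemize}
\item At height \(>1\) the available colors are red, blue and green. A corner cell allows any nonempty subset, giving \(7\). A boundary cell has one color forced and the other two free, giving \(4\). An inner cell has red and green forced and blue free, giving \(2\).
\item At height \(1\) the available colors are only red and green (the latter meaning \((i,i,\bullet)\)). This gives \(3\), \(2\) and \(1\) respectively.
\end{itemize}
These are exactly the weights \(\wt(c)\). Summing over Dyck paths gives the theorem.

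The only delicate point I anticipate is the bookkeeping in Step 3: checking that the left/right labelling of upper neighbours matches the red/green constraints, and that the degenerate cells \([i,i]\) behave as stated. The weights are symmetric in left and right, so a swap in the orientation convention would not change the count.
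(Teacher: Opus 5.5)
Your proof is correct and follows the same route as the paper. You reduce via Theorem~\ref{thm:con_jirr} to order ideals of the inclusion poset of colored intervals, record the support as a Dyck path, and count admissible color sets cell by cell, with red and green forced by which upper neighbours lie under the path. Your explicit observation that these constraints depend only on the support, and hence decouple across cells, is the justification the paper leaves implicit; your left/right convention is the mirror image of the paper's, which is harmless because the weights are symmetric.
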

\begin{proof}
  For a cell at height \( 1 \), its color may be any subset of
  \( \{ \mathtt{red},\mathtt{blue} \} \). If there exists a cell
  immediately above it on the right, the cell must include blue in its
  coloring; similarly, if there exists a cell immediately above it on
  the left, it must include red. Likewise, for a cell at height
  greater than \( 1 \), its color may be any subset of
  \( \{ \mathtt{red},\mathtt{blue},\mathtt{green} \} \). If there exists a
  cell immediately above it on the right (resp. left), the cell must
  include green (resp. red) in its coloring. Under these conditions,
  we can classify all possible colorings as follows.
  For cells of height \( 1 \):
  \begin{itemize}
  \item If it is a corner cell, there are three possible colorings:
    nonempty subsets of \( \{ \mathtt{red},\mathtt{blue} \} \).
  \item If it is a boundary cell, there are two possible colorings:
    subsets of \( \{ \mathtt{red},\mathtt{blue} \} \) containing blue if
    it is a left boundary cell, or containing red if it is a right
    boundary cell.
  \item If it is an inner cell, there is one possible coloring:
    \( \{ \mathtt{red},\mathtt{blue} \} \).
  \end{itemize}
  For cells of height greater than \( 1 \):
  \begin{itemize}
  \item If it is a corner cell, there are seven possible colorings:
    nonempty subsets of
    \( \{ \mathtt{red},\mathtt{blue},\mathtt{green} \} \).
  \item If it is a boundary cell, there are four possible colorings:
    subsets of \( \{ \mathtt{red},\mathtt{blue},\mathtt{green} \} \)
    containing green if it is a left boundary cell, or containing red
    if it is a right boundary cell.
  \item If it is an inner cell, there are two possible colorings:
    subsets of \( \{ \mathtt{red},\mathtt{blue},\mathtt{green} \} \)
    containing both red and green.
  \end{itemize}
  This completes the proof.
\end{proof}

Using the weighted Dyck path model
in~\Cref{the:congruence_enumeration}, one can compute the number of
elements in the congruence lattice of \( \esTam_n \). For example,
consider the five Dyck paths from \( (0,0) \) to \( (6,0) \)
corresponding to the case \( n=3 \), where the weight of each cell is
represented as shown in
\[
  \begin{tikzpicture}[scale=.5]
    \draw (0,0) -- (6,0);
    \draw (0,0) -- ++(1,1) -- ++(1,-1) -- ++(1,1) -- ++(1,-1) -- ++(1,1) -- ++(1,-1);
  \end{tikzpicture}
  \quad
  \begin{tikzpicture}[scale=.5]
    \draw (0,0) -- (6,0);
    \draw (0,0) -- ++(1,1) -- ++(1,-1) -- ++(1,1) -- ++(1,-1) -- ++(1,1) -- ++(1,-1);
    \draw (1,1) -- (2,2) -- (3,1);
    \node at (2,1) {\( 3 \)};
  \end{tikzpicture}
  \quad
  \begin{tikzpicture}[scale=.5]
    \draw (0,0) -- (6,0);
    \draw (0,0) -- ++(1,1) -- ++(1,-1) -- ++(1,1) -- ++(1,-1) -- ++(1,1) -- ++(1,-1);
    \draw (3,1) -- (4,2) -- (5,1);
    \node at (4,1) {\( 3 \)};
  \end{tikzpicture}
  \quad
  \begin{tikzpicture}[scale=.5]
    \draw (0,0) -- (6,0);
    \draw (0,0) -- ++(1,1) -- ++(1,-1) -- ++(1,1) -- ++(1,-1) -- ++(1,1) -- ++(1,-1);
    \draw (1,1) -- (2,2) -- (3,1) -- (4,2) -- (5,1);
    \node at (2,1) {\( 3 \)};
    \node at (4,1) {\( 3 \)};
  \end{tikzpicture}
  \quad
  \begin{tikzpicture}[scale=.5]
    \draw (0,0) -- (6,0);
    \draw (0,0) -- ++(1,1) -- ++(1,-1) -- ++(1,1) -- ++(1,-1) -- ++(1,1) -- ++(1,-1);
    \draw (1,1) -- (2,2) -- (3,1) -- (4,2) -- (5,1);
    \draw (2,2) -- (3,3) -- (4,2);
    \node at (2,1) {\( 2 \)};
    \node at (4,1) {\( 2 \)};
    \node at (3,2) {\( 7 \)};
  \end{tikzpicture}.
\]
Thus, the number of elements in the congruence lattice of
\( \esTam_3 \) is \( 1 + 3 + 3 + 9 + 28 = 44 \). 

We now derive a continued fraction for the generating function
associated with the weighted Dyck path model of
\Cref{the:congruence_enumeration}. 

\begin{theorem}\label{prop:esTam_cong_cf}
  Let \( g(n) \) be the number of elements in the congruence lattice
  of \( \esTam_n \), and let
  \[
    G(x) := \sum_{n\ge 0} g(n) x^n.
  \]
  Then
  \[
    G(x)=
    \cfrac{1}{1-\cfrac{x}{1+x-\cfrac{4x}{1+x-\cfrac{8x}{1+2x-\cfrac{16x}{1+4x-\cfrac{32x}{\ddots}}}}}}.
  \]
\end{theorem}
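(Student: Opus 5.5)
The plan is to turn the cell weights of \Cref{the:congruence_enumeration} into weights on the \emph{steps} of the Dyck path, with the corner cells handled by inclusion--exclusion, and then apply Flajolet's continued fraction theorem for weighted Motzkin paths. The starting observation is that every weight in \Cref{the:congruence_enumeration} factors uniformly. For a cell $c$ at height $h$, let $e(c)\in\{0,1,2\}$ be the number of its upper edges lying on the path. Then
\[
\wt(c)=\beta_h\, 2^{e(c)}-[e(c)=2],\qquad \beta_1=1,\ \beta_h=2\ (h>1).
\]
Indeed, at height $1$ this gives $1,2,4-1=3$, and at height $>1$ it gives $2,4,8-1=7$.

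Expanding the product over cells, each corner cell chooses either its ``main'' term $\beta_h 2^{2}$ or the correction $-1$.

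\textbf{Main terms.} Without corrections, the weight factors as
\[
\prod_c \beta_{h(c)}\cdot 2^{\#\{\text{path steps bounding a cell}\}}.
\]
A step of the path lies on the upper edge of some cell exactly when it does not touch the $x$-axis, i.e. it goes between levels $h$ and $h+1$ with $h\ge1$. So the factor $2^{e}$ becomes a weight $2$ on every step not touching the axis, which contributes $4$ to each up/down pair at levels $\ge1$.

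Next I would count the cells of height $>1$, each carrying a factor $2$, by attributing them to up-steps. Under an up-step from level $h-1$ to $h$ there are $h-2$ cells of height $\ge2$ on the ``diagonal strip'' starting at that step (when $h\ge2$), and these strips partition the region. This gives a weight $2^{\max(h-2,0)}$ per up-step ending at level $h$.

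Combining the two factors, the up/down pair between levels $h-1$ and $h$ gets weight $\lambda_h=1$ for $h=1$, $4$ for $h=2$, and $4\cdot 2^{h-2}=2^h$ for $h\ge3$. This matches $(1,4,8,16,\dots)$.

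\textbf{Corrections.} Choosing the correction $-1$ at a corner cell of height $h$ (the cell under a peak at level $h+1$) means the two peak steps should no longer be counted as contributing to that cell. I would encode this by contracting the peak $UD$ at level $h+1$ into a single level step at level $h$. Contracting loses the main weight of those steps and of the cell, so the correction must carry a weight of exactly $-1$ relative to the remaining factors.

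The condition ``$e(c)=2$'' must be preserved; that is, a level step must arise only from a genuine peak. I expect this to hold because the contraction is a bijection between Dyck paths with marked corner cells and Motzkin paths whose level steps at level $\ge1$ are marked. I would check this carefully.

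A level step at level $h$ then has total weight $-a_h$, where $a_h$ collects the sign together with the leftover $\beta$ and area factors: the cells below the contracted peak at heights $\ge2$ contribute $2^{h-1}$ or so. This should give $a_0=0$, since there is no corner cell at height $0$, then $a_1=1$, $a_2=1$, $a_3=2$, $a_4=4,\dots$. Flajolet's theorem then yields exactly the $J$-fraction with denominators $1+a_hx$, and since $g(0)=1$ it gives the stated $G(x)$.

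The main obstacle is the bookkeeping in the correction step. I need to verify that, after contraction, the strip and area weights of the remaining up-steps are unchanged, so that the level-step weight depends only on its level, and that the leftover weight is exactly $2^{h-2}$ for $h\ge3$ and $1$ for $h=1,2$. I would settle this first by checking the small cases against the computed values $1,4,44,932$.
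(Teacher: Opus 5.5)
Your proof is correct and takes a genuinely different route from the paper. Once the bookkeeping is written out, it is complete.

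\textbf{The paper's route.} The paper first moves the factor \(2\) of each left boundary cell onto the matching right boundary cell. It then multiplies the cell weights along diagonals to obtain down-step weights. A down step from height \(2\) gets \(3\) or \(4\), and a down step from height \(h+1\ge 3\) gets \(7\cdot 2^{h-2}\) or \(8\cdot 2^{h-2}\), according to whether it descends from a peak. Finally it solves first-return equations, using the self-similarity \(D_3(x)=D_2(2x)\) to iterate \(D_2(x)=1/(1+x-8xD_2(2x))\).

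\textbf{Your route.} You split \(\wt(c)=\beta_h2^{e(c)}-[e(c)=2]\), so the uncorrected weight is a product \(M(p)=\prod\lambda_h\) over matched up/down pairs. You then turn the \(-1\) corrections into level steps, which lands you directly on Flajolet's J-fraction.

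\textbf{Closing the step you flagged.} The bookkeeping you called the main obstacle is immediate. \(M(p)\) depends only on the levels of the matched pairs, and deleting a peak does not change those levels. The two terms at each corner cell are also independent. Hence the term for a set \(S\) of corrected corners is \(M(p')\prod_{c\in S}\bigl(-\lambda_{h(c)+1}/(4\beta_{h(c)})\bigr)\), where \(p'\) is \(p\) with those peaks removed. This gives \(a_h=\lambda_{h+1}/(4\beta_h)\), that is, \(a_1=1\) and \(a_h=2^{h-2}\) for \(h\ge 2\). Your ``\(2^{h-1}\) or so'' is the strip factor under the peak's up-step, which still contains the corner's own \(\beta_h=2\). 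The contraction is also trivially bijective: replacing a level step by \(UD\) always creates a peak, and peaks at level \(\ge 2\) are exactly the corner cells.

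\textbf{What each buys.} Your route explains the continued-fraction coefficients combinatorially: \(\lambda_h\) are pair weights of the uncorrected model, and \(a_h\) are the signed weights of contracted peaks. It also treats all levels uniformly. The paper's route avoids inclusion--exclusion and Flajolet's theorem, using only first-return decompositions. The cost is the ad hoc redistribution of boundary factors and the rescaling trick \(x\mapsto 2x\).
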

\begin{proof}
  We reinterpret the weight in \Cref{the:congruence_enumeration} in
  terms of down steps and peaks.

  Consider a Dyck path together with the weights assigned to its
  cells. We redistribute the factor \( 2 \) from each left boundary
  cell to the corresponding right boundary cell; that is, we divide
  the weight of each lest boundary cell by \( 2 \) and multiply the
  weight of the corresponding right boundary cell by \( 2 \). For
  example,
  \[
    \begin{tikzpicture}[scale=.5]
      \draw (0,0) -- (14,0);
      \draw (0,0) -- ++(1,1) -- ++(1,1) -- ++(1,-1) -- ++(1,1) -- ++(1,1) -- ++(1,1) -- ++(1,1) -- ++(1,-1) -- ++(1,1) -- ++(1,-1) -- ++(1,-1) -- ++(1,-1) -- ++(1,-1) -- ++(1,-1);
      \draw (1,1) -- (2,0) -- (4,2);
      \draw (2,2) -- (4,0) -- (9,5);
      \draw (4,2) -- (6,0) -- (10,4);
      \draw (5,3) -- (8,0) -- (11,3);
      \draw (6,4) -- (10,0) -- (12,2);
      \draw (8,4) -- (12,0) -- (13,1);
      \node at (2,1) {\( 3 \)};
      \node at (4,1) {\( 2 \)};
      \node at (6,1) {\( 1 \)};
      \node at (8,1) {\( 1 \)};
      \node at (10,1) {\( 1 \)};
      \node at (12,1) {\( 2 \)};
      \node at (5,2) {\( 4 \)};
      \node at (7,2) {\( 2 \)};
      \node at (9,2) {\( 2 \)};
      \node at (11,2) {\( 4 \)};
      \node at (6,3) {\( 4 \)};
      \node at (8,3) {\( 2 \)};
      \node at (10,3) {\( 4 \)};
      \node at (7,4) {\( 7 \)};
      \node at (9,4) {\( 7 \)};
    \end{tikzpicture}
    \Longrightarrow
    \begin{tikzpicture}[scale=.5]
      \draw (0,0) -- (14,0);
      \draw (0,0) -- ++(1,1) -- ++(1,1) -- ++(1,-1) -- ++(1,1) -- ++(1,1) -- ++(1,1) -- ++(1,1) -- ++(1,-1) -- ++(1,1) -- ++(1,-1) -- ++(1,-1) -- ++(1,-1) -- ++(1,-1) -- ++(1,-1);
      \draw (1,1) -- (2,0) -- (4,2);
      \draw (2,2) -- (4,0) -- (9,5);
      \draw (4,2) -- (6,0) -- (10,4);
      \draw (5,3) -- (8,0) -- (11,3);
      \draw (6,4) -- (10,0) -- (12,2);
      \draw (8,4) -- (12,0) -- (13,1);
      \node at (2,1) {\( 3 \)};
      \node at (4,1) {\( 1 \)};
      \node at (6,1) {\( 1 \)};
      \node at (8,1) {\( 1 \)};
      \node at (10,1) {\( 1 \)};
      \node at (12,1) {\( 4 \)};
      \node at (5,2) {\( 2 \)};
      \node at (7,2) {\( 2 \)};
      \node at (9,2) {\( 2 \)};
      \node at (11,2) {\( 8 \)};
      \node at (6,3) {\( 2 \)};
      \node at (8,3) {\( 2 \)};
      \node at (10,3) {\( 8 \)};
      \node at (7,4) {\( 7 \)};
      \node at (9,4) {\( 7 \)};
    \end{tikzpicture}.
  \]
  We then multiply the weights of the cells along each diagonal. In
  this way, the cell weights can be reinterpreted as weights on down
  steps. For the Dyck path above, the corresponding weights of down
  steps are shown below:
  \[
    \begin{tikzpicture}[scale=.5]
      \draw (0,0) -- (14,0);
      \draw (0,0) -- ++(1,1) -- ++(1,1) -- ++(1,-1) -- ++(1,1) -- ++(1,1) -- ++(1,1) -- ++(1,1) -- ++(1,-1) -- ++(1,1) -- ++(1,-1) -- ++(1,-1) -- ++(1,-1) -- ++(1,-1) -- ++(1,-1);
      \draw (1,1) -- (2,0) -- (4,2);
      \draw (2,2) -- (4,0) -- (9,5);
      \draw (4,2) -- (6,0) -- (10,4);
      \draw (5,3) -- (8,0) -- (11,3);
      \draw (6,4) -- (10,0) -- (12,2);
      \draw (8,4) -- (12,0) -- (13,1);
      \node at (3,2) {\( 3 \)};
      \node at (13,2) {\( 4 \)};
      \node at (14,1) {\( 1 \)};
      \node at (12.3,3) {\( 8\cdot 2^0 \)};
      \node at (11.3,4) {\( 8\cdot 2^1 \)};
      \node at (8.3,5.2) {\( 7\cdot 2^2 \)};
      \node at (10.3,5.2) {\( 7\cdot 2^2 \)};
    \end{tikzpicture}
  \]

  Accordingly, for a Dyck path \(p\), define its weight \(\wt'(p)\) as
  the product of the weights of its steps, where
  \begin{itemize}
  \item every up step has weight \(1\);
  \item a down step \((a,1)\to(a+1,0)\) has weight \(1\);
  \item a down step \((a,2)\to(a+1,1)\) has weight \(3\) if \((a,2)\) is a peak, and weight \(4\) otherwise;
  \item for \(h\geq 2\), a down step \((a,h+1)\to(a+1,h)\) has weight \(7\cdot 2^{h-2}\) if \((a,h+1)\) is a peak, and weight \(8\cdot 2^{h-2}\) otherwise.
  \end{itemize}
  It is immediate that \( \wt(p) = \wt'(p) \), where \( \wt \) is the
  weight function obtained from the original cell model in
  \Cref{the:congruence_enumeration}.

  For \(h\ge 0\), let \(D_h(x)\) be the generating function for Dyck
  paths that start and end at height \(h\) and never go below height
  \(h\), with the above weight assignment shifted so that height \(h\)
  plays the role of the ground level.

  We first consider \(D_0(x)\). Any such path is either empty, or can
  be uniquely decomposed as \(Up_1Dp_2\), where \(p_1\) is a Dyck path
  based at height \(1\), and \(p_2\) is a Dyck path based at height
  \(0\). Since the down step returning to height \(0\) has weight
  \(1\), we obtain
  \[
    D_0(x)=1+xD_1(x)D_0(x),
  \]
  and hence
  \[
    D_0(x)=\frac{1}{1-xD_1(x)}.
  \]

  Next, consider \(D_1(x)\). A nonempty path based at height \(1\) is
  again of the form \(Up_1Dp_2\). If \(p_1\) is empty, then we obtain a
  peak at height \(2\), which contributes weight \(3\). If \(p_1\) is
  nonempty, then the returning down step is not part of a peak and has
  weight \(4\). Therefore
  \[
    D_1(x)=1+3xD_1(x)+4x\bigl(D_2(x)-1\bigr)D_1(x),
  \]
  so
  \[
    D_1(x)=\frac{1}{1+x-4xD_2(x)}.
  \]

  For heights \(h\geq 2\), the same decomposition applies, and thus
  the generating function \(D_2(x)\) satisfies
  \[
    D_2(x)=1+7xD_2(x)+8x\bigl(D_2(2x)-1\bigr)D_2(x),
  \]
  and hence
  \[
    D_2(x)=\frac{1}{1+x-8xD_2(2x)}.
  \]
  Iterating this relation yields
  \[
    D_2(x)=
    \cfrac{1}{1+x-\cfrac{8x}{1+2x-\cfrac{16x}{1+4x-\cfrac{32x}{\ddots}}}}.
  \]
  Substituting this into the formulas for \(D_1(x)\) and \(D_0(x)\)
  gives the desired continued fraction for \(D_0(x)\). Since
  \(D_0(x)\) is precisely the generating function for the total weight
  sum of Dyck paths in \Cref{the:congruence_enumeration}, the result
  follows.
\end{proof}

Expanding the continued fraction gives
\[
  G(x)=1+x+4x^2+44x^3+932x^4+35788x^5+2532868x^6 + 341012204x^7 + \cdots,
\]
so the numbers of elements in the congruence lattices of
\( \esTam_n \) for \( n \geq 1 \) begin with
\[
  1,~4,~44,~932,~35788,~2532868,~341012204,\dots .
\]
For the slow Tamari lattice \( \sTam_n \), there is also a nice
generating function for the number of elements in the congruence
lattice, expressed in terms of the generating function of the Narayana
polynomials; see~\Cref{the:1}.

\section{The slow Tamari lattice}\label{sec:slow_Tamari}

Recall that a fb-tableau \( T \) is said to be small if it contains
exactly one dot in the root. We proved in \Cref{prop:sublattices} that
the set \( \sTam_n \) of small fb-tableaux of size \( n \) with the
relation \( \lhd \) forms a sublattice of \( \esTam_n \), and in
\Cref{def:sTam}, we call the lattice \( (\sTam_n,\lhd) \) the slow
Tamari lattice.

In this section, we prove several properties of the slow Tamari
lattice. Since most of these properties can be proved by arguments
similar to those used for \( \esTam_n \), we focus on stating the
results and omit the detailed proofs.

\begin{lemma}[Description of cover relations]\label{lem:sTam_cover}
  For two small tableaux \( T \) and \( T' \), \( T \lhd T' \) is a
  cover relation if \( T' \) is obtained from \( T \) by one of the
  following two moves:
  \begin{itemize}
  \item An up-arrow in \( T \) moves upward within its column to the
    closest cell above it that is not pointed to by a left-arrow.
  \item A left-arrow in \( T \) moves rightward within its row to the
    closest cell to its right that is not pointed to by an up-arrow, if this cell is not a border cell. Otherwise, the left-arrow replaces its up-arrow, and the up-arrow of the border cell moves upward within its column to the
    closest cell above it that is not pointed to by a left-arrow.
  \end{itemize}

\end{lemma}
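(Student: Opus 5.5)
The plan is to imitate the proof of \Cref{thm:cover_relation_iff}, working inside the sublattice $\sTam_n$. In a small tableau every free cell is empty, since the only dot outside the border is the root. So for small $S,T$ condition (3) of $\lhd$ can fail only at a free border cell holding a dot. Concretely, $S\lhd T$ amounts to $\col(S)\subseteq\col(T)$, $\row(T)\subseteq\row(S)$, and no border cell is a dot in $S$, free in both, and non-dot in $T$; the last case would mean it is empty, which is impossible for a border cell.

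The first step is to check that each of the two moves yields a small fb-tableau $T'$ with $T\lhd T'$. The up-arrow move is the type I move of \Cref{def:cover_relation} restricted to small tableaux, and it keeps $T'$ small because a vacated border cell receives a $\bullet$ (and border dots are allowed). For the left-arrow move, if the target cell is not a border cell, it is a type II move followed by the removal of the dots created in between. This is the case when no free cell holding a dot is crossed, because small tableaux have none off the border. If instead the left-arrow reaches the border cell holding an up-arrow, the composite is: first the type I move lifting that border up-arrow, which creates a border $\bullet$, then the type II move of the left-arrow onto this $\bullet$. This is exactly the $\bullet$-detour in $\esTam_n$ that $\sTam_n$ has to skip. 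In both cases $\col$ weakly grows and $\row$ weakly shrinks, so $T\lhd T'$ follows directly from the definition.

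The second step is to show each relation is a cover in $\sTam_n$. Take a small $U$ with $T\lhd U\lhd T'$. Then $U$ has the same arrows as $T$ and $T'$ outside the affected row and column, and its up-arrow in the affected column lies between those of $T$ and $T'$ (respectively, its left-arrow in the affected row lies between theirs). Because every intermediate cell is pointed by an arrow or is a border cell, these arrow positions are forced. Since dots occur only on the border and are determined by the arrows, $U\in\{T,T'\}$. In the composite case the only intermediate candidate is the $\esTam_n$ tableau with a border $\bullet$ and the left-arrow still to its left. That tableau has a free non-border cell that must be empty, so it is small. I expect that checking which intermediate configurations are small is the main obstacle. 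I would handle it by a short case analysis on whether the cell just left of the border cell is pointed by an up-arrow.

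The third step, not strictly required by the ``if'' formulation, is the converse: given a cover $T_1\lessdot T_2$ in $\sTam_n$, reproduce the proof of \Cref{pro:cover}. If the column spaces differ, lift one up-arrow as in the type I case. Otherwise the row spaces differ, and I apply conjugation; this preserves smallness, since the conjugate of a tableau with only border dots and no free non-border cells stays small. The subtlety is that conjugating a small tableau with empty free cells fills those cells with dots. So this step should instead argue directly with left-arrows, placing the left-arrow at the first position that keeps $T$ small.
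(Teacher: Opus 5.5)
Your plan follows the paper's route, which is to adapt \Cref{lem:cover1} and \Cref{pro:cover} to the sublattice. The execution, however, rests on a misreading of \emph{small*} that makes the central step fail. In the paper's conventions the dots are the $\bullet$ entries wherever they sit, and a small tableau has exactly one of them, the root. In particular every border cell of a small tableau contains an arrow: for $n=2$, only two of the three elements of $\esTam_2$ are small, matching $n!$. So your claim in the first step, that lifting a border up-arrow keeps $T'$ small ``because the vacated border cell receives a $\bullet$'', is false. That tableau is not in $\sTam_n$. The first move has to be read for non-border up-arrows only; a border up-arrow can leave only through the compound second move. More seriously, in the second step you assert that the intermediate tableau of the compound move (up-arrow lifted, border cell $\bullet$, left-arrow not yet moved) is small. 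If that were true, it would be a small $U\neq T,T'$ with $T\lhd U\lhd T'$. Your argument would then prove that the second clause of the statement is \emph{not} a cover relation.

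The missing idea is that smallness is precisely what removes the middle elements. A small tableau has no $\bullet$ except the root, so condition (3) of $\lhd$ is vacuous on $\sTam_n$. Each compound move is a chain of two covers of $\esTam_n$ whose middle element carries a $\bullet$. In the non-border case it is a type III move filling the empty free target cell, followed by a type II move onto it. It is not ``a type II move followed by removing dots'', because a type II move requires the $\bullet$ to be present already. In the border case it is a type I move followed by a type II move. Now take a small $U$ with $T\lhd U\lhd T'$. The space inclusions force $U$ to have the arrows of $T$ outside the affected row and column. The moved arrows can only occupy their old or new positions, since the cells in between are pointed by arrows of $U$. In the compound case, the configuration with the left-arrow already on the border but the up-arrow not yet lifted is impossible. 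The configuration with the up-arrow lifted but the left-arrow not yet moved forces a $\bullet$ in the border cell, which is excluded. Hence $U\in\{T,T'\}$. For the converse you do not need a direct left-arrow argument. Since $\lhd$ on $\sTam_n$ reduces to the two space inclusions, the plain reflection (without flipping free cells, as in \Cref{lem:sTam_selfdual}) is an anti-automorphism of $\sTam_n$. Your concern about conjugation filling free cells therefore does not arise.
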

\begin{proof}
The proof is an adaptation to \Cref{lem:cover1} and \Cref{pro:cover}. For a similar result, stated in algebraic terms, one can also look at \cite[Corollary 9.9]{CMRS2021}.
\end{proof}

\begin{lemma}\label{lem:sTam_selfdual}
  The lattice \( \sTam_n \) is self-dual.
\end{lemma}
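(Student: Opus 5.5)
The plan is to build an explicit order-reversing involution on $\sTam_n$. The conjugation of \Cref{def:conjugate} does not work directly, because it turns every empty non-border free cell into a dot, so the conjugate of a small tableau is usually not small. I would instead keep only the geometric part of conjugation. For a small tableau $T$, let $\sigma(T)$ be obtained by:
\begin{itemize}
\item reflecting $T$ about the main diagonal, so the cell $(i,j)$ goes to $(n+1-j,n+1-i)$ and left-arrows and up-arrows are exchanged;
\item leaving every non-border free cell empty, and putting a $\bullet$ in every free border cell.
\end{itemize}
Equivalently, $\sigma(T)$ is $T'$ with the dots in its non-border free cells erased. Clearly $\sigma\circ\sigma=\mathrm{id}$.

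The first step is to check that $\sigma(T)$ is a small fb-tableau.
\begin{itemize}
\item The reflection sends the root to the root and the border cell $(i+1,i)$ to the border cell $(n+1-i,n-i)$, so it permutes the border cells.
\item As in the proof that conjugation is an involution, it sends cohooks to cohooks and free cells to free cells.
\item Every border cell of $T$ is nonempty, and its cohook contains an up-arrow and a left-arrow. This persists after reflection, so every nonempty cell of $\sigma(T)$ has at least two elements in its cohook.
\item No $\bullet$ is placed outside the root and the border, so $\sigma(T)$ is small.
\end{itemize}

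The second step, which is the key observation, is that on small tableaux the third condition in the definition of $\lhd$ is vacuous. In a small tableau the only cells that can carry a $\bullet$, other than the root, are border cells. A border cell is nonempty, so it carries a $\bullet$ exactly when it is free. Now let $c\in F(S,T)$ contain a dot in $S$. Then $c$ is a border cell, and since $c$ is free in $T$, it contains a dot in $T$ as well. Hence, for small $S$ and $T$,
\[
S\lhd T \iff \col(S)\subseteq\col(T) \text{ and } \row(T)\subseteq\row(S).
\]

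The third step concludes. As in the proof of \Cref{thm:selfdual}, the reflection sends $\col(t)$ to $\row(\sigma(t))$ and $\row(t)$ to $\col(\sigma(t))$. Therefore the two inclusions describing $S\lhd T$ become exactly the two inclusions describing $\sigma(T)\lhd\sigma(S)$. By the second step, this gives
\[
S\lhd T \iff \sigma(T)\lhd\sigma(S),
\]
so $\sigma$ is an anti-automorphism of $\sTam_n$ and $\sTam_n$ is self-dual.

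The only delicate point is the second step: one must be sure that no non-border free cell of a small tableau carries a $\bullet$, and that every free border cell carries one. Both follow directly from the definitions of small tableau and of border cells.
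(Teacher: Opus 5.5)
Your argument is correct and is essentially the paper's, whose proof simply says that self-duality is given by the usual conjugation of tableaux. Your $\sigma$ is exactly that map: on small tableaux it is the plain reflection, i.e.\ $T'$ with its non-border free dots erased. You also rightly note that the conjugation of \Cref{def:conjugate} must be adjusted to preserve smallness, and that condition (3) of $\lhd$ is vacuous on $\sTam_n$.

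One minor correction: by definition a small tableau has no $\bullet$ besides the root, not even in border cells. So it has no free border cells, your clause about placing $\bullet$s in free border cells never applies, and steps 1 and 2 only become easier.
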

\begin{proof}
The self-duality is given by the usual conjugation of tableaux. 
\end{proof}

\begin{lemma}[Description of join-irreducible elements]\label{lem:sTam_join_irr}
  Join-irreducible elements of \( \sTam_n \) can be expressed as
  \( (i,j,\sigma) \) defined in \Cref{def:join-irr}, with the
  restriction that \( \sigma \neq \bullet \). Concretely, for
  \( 1 \leq i \leq j < n \), we take
  \( \sigma \in \{\emptyset, \leftarrow \} \), but
  \( \sigma \not= \emptyset \) when \( i = j \). The corresponding
  tableau \( (i,j,\sigma) \) is obtained by placing all
  \( \uparrow \)s on the border except one at position \( (i,j) \),
  placing \( \sigma \) in the cell immediately below \( (i,j) \),
  placing a \( \leftarrow \) in the border cell of the column
  containing \( (i,j) \), and placing all remaining \( \leftarrow \)s
  in the leftmost column.
\end{lemma}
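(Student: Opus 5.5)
The plan is to characterize join-irreducibles of \(\sTam_n\) via its cover relations (\Cref{lem:sTam_cover}) and self-duality (\Cref{lem:sTam_selfdual}), following the route used for \(\esTam_n\). It is more convenient to work with meet-irreducibles, i.e.\ small tableaux covered by exactly one element, and transport the result by conjugation. The analogue of \Cref{thm:determined} is the key tool: a small tableau is determined by which of its arrows are ``movable''. By \Cref{lem:sTam_cover}, every up-arrow not in the first row can move, and a left-arrow can move unless no cell to its right is available, which happens exactly when it sits on the border cell of its row. So the covers of a small tableau \(T\) are in bijection with
\begin{itemize}
\item its up-arrows outside the first row, together with
\item its left-arrows outside the border cells.
\end{itemize}

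First I will verify this count carefully from \Cref{lem:sTam_cover}. The only point to check is that a left-arrow not on the border can always move, either to an unpointed cell or by the exchange move onto the border up-arrow. Next I will show that the tableau is reconstructible from the set of movable arrows. All non-movable up-arrows lie in the first row. All non-movable left-arrows lie on the border. The remaining left-arrows are then forced, as in the proof of \Cref{thm:determined}, since small tableaux have no dots to track.

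It follows that meet-irreducibles have exactly one movable arrow. In the up-arrow case, all other up-arrows are in row \(1\) and all other left-arrows are on the border, giving \(\binom n2\) tableaux. In the left-arrow case, the single movable left-arrow is at a non-border cell outside row \(1\) and all up-arrows are in row \(1\). Here a left-arrow on the border would be pointed by a row-1 up-arrow only if that border cell is nonempty, which is consistent. This gives \(\binom{n-1}{2}\) tableaux, one for each non-border interior cell not in the first column. The total is \((n-1)^2\), in agreement with the table in the introduction.

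Finally, I apply conjugation (\Cref{def:conjugate} restricted to small tableaux, which simply reflects, since there are no free-cell dots to toggle) to obtain the join-irreducibles. I then match them with the description in the statement:
\begin{itemize}
\item all up-arrows but one on the border, the exceptional one at \((i,j)\);
\item the left-arrow in the border cell of column \(j\), forced because the border cell below an up-arrow in a small tableau cannot hold a dot;
\item the remaining left-arrows in the first column;
\item the cell below \((i,j)\) either empty or holding a left-arrow, which is exactly \(\sigma\in\{\emptyset,\leftarrow\}\).
\end{itemize}
When \(i=j\), the cell below \((i,i)\) is the border cell itself, so only \(\sigma=\leftarrow\) survives. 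The counts match: \(\binom{n}{2}\) tableaux of type \(\leftarrow\) and \(\binom{n-1}{2}\) of type \(\emptyset\).

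The main obstacle I expect is the bookkeeping in translating between the two descriptions. One must check that the conjugate of each meet-irreducible is exactly \((i,j,\sigma)\) with the indexing of \Cref{def:join-irr}. One must also check that the left-arrow placement forced in the border cell does not create an element covering two tableaux. I would settle this by directly verifying from \Cref{lem:sTam_cover} that each listed \((i,j,\sigma)\) covers a unique element, namely the one obtained by moving the exceptional up-arrow down, or the left-arrow left, which is only one of the two depending on \(\sigma\).
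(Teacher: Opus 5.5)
Your route (classify the meet-irreducibles by a unique cover-producing arrow, reconstruct them, then dualize) is the natural one and agrees with the paper's one-line justification. The gap is that your rule for which arrows produce covers is wrong, and everything after it depends on that rule.

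\textbf{The cover rule fails in two ways.}
First, a border up-arrow never produces a cover on its own in \(\sTam_n\). If it moves up, the vacated border cell lies strictly below the new up-arrow and strictly right of the left-arrow of its row. That cell is then free, so it must receive a \(\bullet\), and the result is not small. Such an up-arrow can only move as part of the composite move attached to the left-arrow of its row.

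Second, a left-arrow off the border can be blocked. If the first cell to its right lying in the column space holds a non-border up-arrow, the left-arrow cannot move.

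Both failures already occur for \(n=3\):
\begin{enumerate}
\item The minimum of \(\sTam_3\) is covered by two elements, not the four your rule predicts.
\item The tableau with up-arrows at \((2,2),(2,1)\) and left-arrows at \((2,3),(3,2)\) has second row \(\leftarrow\,\uparrow\,\uparrow\). It is meet-irreducible, yet your rule gives it three movable arrows.
\end{enumerate}
So your reconstruction claim, that non-movable up-arrows lie in row \(1\) and non-movable left-arrows lie on the border, is false.

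\textbf{Both of your families are empty.}
If every left-arrow is on the border, no up-arrow can lie outside row \(1\). An up-arrow at \((r,k)\) with \(r\geq 2\) needs the left-arrow of row \(r\) strictly to its left, but the border cell is the rightmost cell of the row. If every up-arrow is in row \(1\) and the left-arrow of row \(r\) is off the border, then the border cell of row \(r\) is free and carries a \(\bullet\), so the tableau is not small.

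Hence the counts \(\binom n2\) and \(\binom{n-1}2\) are not actually derived. Your final check, that each listed \((i,j,\sigma)\) covers a unique element, only shows that these tableaux are join-irreducible. It does not show there are no others, so the completeness half of the lemma is missing.

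\textbf{The correct rule, and a smaller point.}
The correct rule is:
\begin{enumerate}
\item An up-arrow gives a cover if and only if it is neither in row \(1\) nor on the border.
\item A left-arrow gives a cover if and only if it is off the border and the first column-space cell to its right is either a free cell or the border cell.
\end{enumerate}
With this rule the reconstruction goes through. It yields \(\binom{n-1}{2}\) meet-irreducibles with a unique movable up-arrow, and \(\binom{n-1}{2}+(n-1)=\binom n2\) with a unique movable left-arrow. These are exactly the reflections of the \((i,j,\sigma)\).

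Separately, the conjugation defined for \(\esTam_n\) does not preserve smallness: it puts a \(\bullet\) in every empty free cell, and small tableaux usually have such cells. The self-duality of \(\sTam_n\) is the plain reflection. This works because condition (3) in the definition of \(\lhd\) is vacuous on small tableaux.
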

\begin{proof}
This is an easy consequence of \Cref{lem:sTam_cover} and \Cref{lem:sTam_selfdual}.
\end{proof}
\begin{lemma}\label{lem:stamjiir}
  The lattice \( \sTam_n \) has \( (n-1)^2 \) meet-irreducible elements and
  \( (n-1)^2 \) join-irreducible elements.
\end{lemma}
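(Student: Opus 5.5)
We count join-irreducibles directly from \Cref{lem:sTam_join_irr} and then transfer the count to meet-irreducibles via the self-duality of \Cref{lem:sTam_selfdual}.

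\textbf{Join-irreducibles.} By \Cref{lem:sTam_join_irr}, the join-irreducible elements of \(\sTam_n\) are the tableaux \((i,j,\sigma)\) with \(1\leq i\leq j<n\). Here \(\sigma\in\{\emptyset,\leftarrow\}\) when \(i<j\), and \(\sigma=\leftarrow\) when \(i=j\).

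Before counting, we check that the parametrization is injective. Two of these tableaux with different pairs \((i,j)\) differ in the position of their unique non-border up-arrow. Two with the same pair \((i,j)\) but different \(\sigma\) differ in the cell just below \((i,j)\). So distinct parameters give distinct tableaux.

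The count is then as follows.
\begin{enumerate}
\item Each of the \(\binom{n-1}{2}\) pairs with \(i<j\) contributes two elements.
\item Each of the \(n-1\) diagonal pairs \(i=j\) contributes one element.
\end{enumerate}
The total is
\[
2\binom{n-1}{2}+(n-1)=(n-1)(n-2)+(n-1)=(n-1)^2 .
\]
This agrees with the value \(\binom{n}{2}+\binom{n-1}{2}\) stated in the introduction.

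\textbf{Meet-irreducibles.} The conjugation anti-automorphism of \(\sTam_n\) from \Cref{lem:sTam_selfdual} reverses cover relations. It therefore sends elements covering a unique element to elements covered by a unique element, so it restricts to a bijection between join-irreducibles and meet-irreducibles. Hence there are also \((n-1)^2\) meet-irreducible elements.

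\textbf{Main obstacle.} The only real work is the content of \Cref{lem:sTam_join_irr}, namely that the listed tableaux are exactly the join-irreducibles. If we want to verify this independently, the natural route is to count meet-irreducibles directly using the cover description of \Cref{lem:sTam_cover}, in analogy with \Cref{thm:determined}. The moves available from a small tableau are the two types of \Cref{lem:sTam_cover}, and the key claim is that a small tableau is determined by its unique changeable arrow.

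There are \(\binom{n}{2}\) possible positions for a single changeable up-arrow, namely any cell outside the first row and first column. For a single changeable left-arrow, a left-arrow is always changeable except when everything to its right is blocked. Checking that the admissible left-arrow positions number \(\binom{n-1}{2}\) is the step we expect to require care, and we would first test it on \(n=3\), where the total should be \(4\). This matches \Cref{fig:slow_tam}: the six-element lattice there has four meet-irreducibles (the hexagon minus top and bottom).
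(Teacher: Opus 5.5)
Your argument is correct and follows the paper's route: count the join-irreducibles listed in \Cref{lem:sTam_join_irr} to get $2\binom{n-1}{2}+(n-1)=(n-1)^2$, then transfer the count to meet-irreducibles via the self-duality of \Cref{lem:sTam_selfdual}. One correction to your optional aside: in $\sTam_n$ a border up-arrow cannot move on its own, because the vacated border cell cannot receive a dot, so counting combined moves as left-arrow moves as in \Cref{lem:sTam_cover} gives $\binom{n-1}{2}$ up-arrow positions and $\binom{n}{2}$ left-arrow positions (one and three for $n=3$), not the reverse, although the total is unchanged.
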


\begin{lemma}\label{lem:sTam_trim}
  The lattice \( \sTam_n \) is semidistributive and extremal. Hence,
  it is trim.
\end{lemma}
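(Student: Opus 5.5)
Semidistributivity comes almost for free. By \Cref{prop:sublattices}, \(\sTam_n\) is a sublattice of \(\esTam_n\). Both join- and meet-semidistributivity are universal identities-with-hypotheses in \(\vee,\wedge\), so they pass to sublattices. Hence \Cref{thm:semidistributive} gives semidistributivity of \(\sTam_n\) at once.

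For extremality I would use \Cref{lem:stamjiir}: \(\sTam_n\) has exactly \((n-1)^2\) join-irreducible and \((n-1)^2\) meet-irreducible elements. Since the length of any chain is bounded by these numbers, it suffices to exhibit a chain of \((n-1)^2=\binom{n}{2}+\binom{n-1}{2}\) cover relations from \(\hat0\) to \(\hat1\) in \(\sTam_n\). Here \(\hat0\) and \(\hat1\) are the same tableaux as in \(\esTam_n\), and both are small. I would mimic the proof of \Cref{prop:extremal}, processing the columns from left to right and using the cover relations of \Cref{lem:sTam_cover}. In a column of length \(m\), first move the up-arrow from the border cell up one cell; the border cell now holds no dot, so to stay small the left-arrow of that row must jump onto the border via the second kind of move. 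Then alternately perform two kinds of unit move. One raises the up-arrow by one cell, which is allowed because the left-arrows of the previous rows have already been moved right and no longer point at that cell. The other moves the left-arrow of the vacated row one step right.

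Compared with \(\esTam_n\), the type III steps that inserted dots are now absent. The count per column should therefore be \((m-1)\) raises plus \((m-2)\) left-arrow unit moves, giving \(\sum_{m=2}^n (2m-3)=(n-1)^2\). I must check that the first step in each column is genuinely one cover relation and not two. By \Cref{lem:sTam_cover}, when the left-arrow reaches the border cell it replaces the up-arrow and simultaneously pushes it up, and this counts as a single cover. The bookkeeping of how many covers each column contributes is the main obstacle: the combined move in \Cref{lem:sTam_cover} merges what were two covers in \(\esTam_n\). I would recount carefully on \(n=3\), where \(\sTam_3\) in \Cref{fig:slow_tam} has a longest chain of length \(4=(3-1)^2\), and then adjust the ordering of moves so that each column of length \(m\) contributes exactly \(2m-3\) covers, all of them unit moves.

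Finally, by \Cref{pro:trim_condition}, a lattice that is semidistributive and extremal is trim, which gives the last assertion.
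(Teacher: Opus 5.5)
Your proposal is correct and follows the paper's proof. Semidistributivity is inherited from \(\esTam_n\) because \(\sTam_n\) is a sublattice. Extremality comes from a chain of \((n-1)^2\) covers built as in \Cref{prop:extremal}, together with the irreducible counts, and trimness then follows from \Cref{pro:trim_condition}. Your hesitation about the first step in each column is unnecessary: a border up-arrow cannot move alone in \(\sTam_n\), and the combined move of \Cref{lem:sTam_cover} (the left-arrow steps onto the border while the up-arrow rises one cell) is a single cover, so the chain you describe already gives exactly \(2m-3\) covers per column (check it against \Cref{fig:slow_tam} for \(n=3\)).
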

\begin{proof}
  Since \(\sTam_n\) is a sublattice of \(\esTam_n\), it follows from
  \Cref{thm:semidistributive} that \( \sTam_n \) is semidistributive.
  Trimness is not preserved by taking sublattices in general. However,
  it is easy to prove that \(\sTam_n\) is extremal, as in the proof of
  \Cref{prop:extremal}: start with the smallest tableau and move, as
  slowly as possible, all the arrows from bottom to top and from left
  to right.
\end{proof}

For the extra slow Tamari lattice, \Cref{pro:counting_spine} and
\Cref{thm:counting_spine} give a recursive formula for the number of
fb-tableaux of size \( n \) in the spine of \( \esTam_n \). We now
consider the slow Tamari lattice. The same approach applies in this
setting as well, but the recurrence becomes considerably simpler.

\begin{lemma}\label{lem:4}
  A small fb-tableau is on the spine in \( \sTam_n \) if and only if
  it does not contain a forbidden cohook.
\end{lemma}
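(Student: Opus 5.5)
We follow the proof of \Cref{thm:spine_iff}, adapting it to small tableaux, where dots cannot be used. First we determine the length of a longest chain in \(\sTam_n\). By \Cref{lem:sTam_trim} and \Cref{lem:stamjiir}, this length is \((n-1)^2=2\binom{n}{2}-\binom{n-1}{2}\cdot 0-(n-1)\)... More concretely, we count the total displacement of the arrows from \(\hat 0\) to \(\hat 1\) in \(\sTam_n\). These are the same tableaux as in \Cref{lem:slowest_way}: the up-arrows on the border in \(\hat 0\), and the left-arrows in the first column. The up-arrows must travel \(\binom{n}{2}\) cells in total, and the left-arrows also \(\binom{n}{2}\). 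By \Cref{lem:sTam_cover}, every cover moves an up-arrow up by at least one cell, or moves a left-arrow right by at least one cell. The exception is a left-arrow reaching a border cell, which is a combined move: the left-arrow advances and the border up-arrow simultaneously jumps. We check that in any chain such a combined move occurs exactly \(n-1\) times, once per border cell. Hence every chain has at most \(2\binom{n}{2}-(n-1)=(n-1)^2\) covers, with equality if and only if every move has length one. By "length one" we mean that an up-arrow rises one cell, a left-arrow advances one cell, and in a combined move the border up-arrow rises exactly one cell. This is the analogue of \Cref{lem:slowest_way}.

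Next, suppose \(T\) contains a forbidden cohook \(H(i,j)\). Then the left-arrow of row \(i\) lies strictly right of \((i,j)\), while the cell \((i,j)\) is empty and not pointed to by an up-arrow. The up-arrow of column \(j\), if there is one, lies weakly below row \(i\), since all cells of \(H(i,j)\) above \((i,j)\) are empty. Consider any chain from \(\hat 0\) to \(T\). Up-arrows only move upward, so throughout the chain the up-arrow of column \(j\) stays weakly below row \(i\). Also \((i,j)\) is not a border cell, because border cells are nonempty. So when the row-\(i\) left-arrow crossed column \(j\), the cell \((i,j)\) was empty and was not a valid stopping place. We must check that this forces the move to skip over \((i,j)\), giving a move of length at least two. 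This contradicts the characterization above, so \(T\) is not on the spine.

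Conversely, let \(T\) be small without a forbidden cohook. We build a chain from \(\hat 0\) to \(T\) made only of unit moves. First we raise the up-arrows one cell at a time to their positions in \(T\), processing columns from left to right so that intermediate tableaux remain fb-tableaux. Then we push each left-arrow right one cell at a time to its position in \(T\). The absence of forbidden cohooks ensures that every cell strictly between the first column and the final left-arrow has an up-arrow above it, and so each step is a valid unit move. Without dots this uses the condition directly, without the insert-a-dot trick of \Cref{thm:spine_iff}. We then apply the self-duality of \Cref{lem:sTam_selfdual} to get a chain from \(T\) to \(\hat 1\), noting that conjugation preserves the absence of forbidden cohooks.

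The main obstacle is the first step. We must pin down the longest chain length via the combined border moves, and show that every combined move in a longest chain is forced to be a unit move. The border interaction makes the counting subtler than in \(\esTam_n\). We would handle it by tracking, for each column, the sum of the up-arrow height and the number of left-arrows that have passed the border.
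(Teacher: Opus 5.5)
Your ``only if'' half is fine. Counting displacements, with each of the $n-1$ border cells switching from $\uparrow$ to $\leftarrow$ in exactly one combined move, gives the bound $2\binom{n}{2}-(n-1)=(n-1)^2$, and the left-arrow of row $i$ must jump over $(i,j)$. There is one slip: $(i,j)$ \emph{is} pointed to by an up-arrow, namely that of column $j$, which lies strictly below row $i$ and stays there. That is exactly why the left-arrow can never stop at $(i,j)$.

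The genuine gap is in the converse. Your two-phase chain, which first raises all up-arrows and then pushes the left-arrows, cannot be realized in $\sTam_n$. In $\hat 0$ every up-arrow sits on a border cell, and a border up-arrow can never move by itself, since the border cell would become empty. In $\esTam_n$ that cell is filled with a $\bullet$, which is exactly what small tableaux forbid. A border up-arrow can only leave through the combined move of \Cref{lem:sTam_cover}, in which the left-arrow of the same row enters the border cell. For this to be a unit move, that left-arrow must already stand next to the border. That in turn requires the up-arrows of all columns it crosses to have been raised above its row. Indeed, from $\hat 0$ the only unit cover is the combined move in row $n$. So the no-forbidden-cohook hypothesis has to be used while interleaving the two kinds of moves, not only in a final left-arrow phase.

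A schedule that works is to treat the columns $j=n-1,n-2,\dots,1$ in this order. Suppose that in $T$ the border cell $(j+1,j)$ holds the left-arrow of row $j+1$.
\begin{enumerate}
\item Walk this left-arrow one cell at a time from $(j+1,n)$ to $(j+1,j+1)$. Each cell $(j+1,k)$ it enters, with $j<k<n$, lies in an already treated column, whose up-arrow is at its position in $T$. Since $(j+1,k)$ is empty and lies to the left of the left-arrow of row $j+1$ in $T$, the absence of a forbidden cohook at $(j+1,k)$ forces that up-arrow strictly above row $j+1$. So the cell is free.
\item Perform the combined move. The up-arrow of column $j$ goes to $(j,j)$, exactly one cell up, because rows $1,\dots,j$ are still untouched and their left-arrows sit in column $n$.
\item Raise that up-arrow one cell at a time to its position in $T$, again passing only through untouched rows.
\end{enumerate}
After all columns are treated, walk the remaining left-arrows to their positions in $T$. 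The same cohook argument, together with the fb-condition in $T$ for the last cell, shows that each cell they enter lies strictly below its final up-arrow. With this replacement, and conjugation for the part from $T$ to $\hat 1$, your argument goes through.
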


\begin{proposition}\label{pro:1}
  The elements of the spine in \( \sTam_n \) is given by
  \( \sum_{ i + j = n+1 } f_{i,j} \), where \( f_{i,j} \) for
  \( i,j \geq 1 \), satisfies the recurrence
  \begin{equation}\label{eq:sTam_spine_rec}
    f_{i,j} = i f_{i,j-1} + j f_{i-1,j} - (i-1)(j-1) f_{i-1,j-1}, \quad \text{for \( i,j > 1 \)},
  \end{equation}
  with \( f_{i,1} = f_{1,j} = 1 \) for \( i,j > 0 \).
\end{proposition}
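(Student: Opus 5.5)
The plan is to follow the strategy used for $\esTam_n$ in \Cref{pro:counting_spine} and \Cref{thm:counting_spine}, with \Cref{lem:4} replacing \Cref{thm:spine_iff}. Throughout, a tableau is on the spine of $\sTam_n$ exactly when it is a small fb-tableau with no forbidden cohook.

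\textbf{Step 1: restrict the border.} In a small tableau no cell other than the root contains a $\bullet$. So every border cell holds an arrow, and the border $\nu_T$ is a word in $\{\uparrow,\leftarrow\}^{n-1}$. I claim that on the spine this word must have the form $\uparrow^{i-1}\leftarrow^{j-1}$ with $i+j=n+1$. Suppose instead the border cell $(r+1,r)$ contains $\leftarrow$ and a lower border cell $(s+1,s)$, with $s>r$, contains $\uparrow$.
\begin{itemize}
\item Since $(s+1,s)$ holds the up-arrow of column $s$, every cell of column $s$ above it is empty; in particular $(r+1,s)$ is empty.
\item Since $(r+1,r)$ holds the left-arrow of row $r+1$, every cell of that row to its left is empty. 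Column $s$ lies to the left of column $r$, so these are exactly the other cells of $H(r+1,s)$.
\item The root is not in $H(r+1,s)$, because $r+1\ge 2$.
\end{itemize}
Hence $H(r+1,s)$ is a forbidden cohook. So the border is a block of up-arrows followed by a block of left-arrows. Define $f_{i,j}$ to be the number of spine tableaux with border $\uparrow^{i-1}\leftarrow^{j-1}$; this gives the sum $\sum_{i+j=n+1}f_{i,j}$.

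\textbf{Step 2: boundary values.} Consider $f_{i,1}$, where the whole border consists of up-arrows. Each border up-arrow forces its column to be empty above it, and no dots are available. Then the left-arrows are forced to lie in the first column, exactly as in \Cref{lem:spine_uparrows}, and there is a unique such tableau. So $f_{i,1}=1$, and $f_{1,j}=1$ follows by the conjugation of \Cref{lem:sTam_selfdual}, which reverses the border and exchanges the arrow types.

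\textbf{Step 3: the recurrence.} I would mimic the proof of \Cref{pro:counting_spine}. Look at the column carrying the last border up-arrow and the row carrying the first border left-arrow, i.e. the two arms meeting at the junction of the two blocks. If both contained more than one arrow, the extra left-arrow in that column and the extra up-arrow in that row would produce a forbidden cohook. So at least one of them carries a single arrow.
\begin{itemize}
\item If the row carries a single arrow, then, since there are no dots, the row's data reduce to one position for its up-arrow. Removing the row should leave a spine tableau counted by $f_{i,j-1}$, and I expect exactly $i$ admissible positions, one for each position relative to the $i-1$ up-arrow rows (analogous to the factor $2^{k+1}-1$ there).
\item Symmetrically, if the column carries a single arrow, the contribution is $j\,f_{i-1,j}$.
\item The overlap, where both carry a single arrow, is subtracted. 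Because the two arms share the junction cell, they lose one choice each, giving $(i-1)(j-1)f_{i-1,j-1}$, in parallel with \eqref{eq:rec3_spine}.
\end{itemize}

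The main obstacle is Step 3. One must check that deleting the chosen row or column really yields a bijection with spine tableaux of the smaller border, and that the multiplicities are exactly $i$, $j$ and $(i-1)(j-1)$. This requires verifying that no new forbidden cohook is created or destroyed by the deletion, and that the small condition (no dots) removes the $2^{\cdot}$ factors that appear in the $\esTam_n$ case. I would first check the recurrence against small values, for instance $n=3,4$ giving $5$ and $16$, before writing out the bijection in detail.
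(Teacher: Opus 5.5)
Your outline is the paper's own route: the border is forced to be $\uparrow^{i-1}\leftarrow^{j-1}$, and then you run the $j=0$ inclusion--exclusion of \Cref{pro:counting_spine}. Your Step~1 is a correct, explicit proof of the border restriction, which the paper only uses implicitly, and Step~2 is fine. The gap is Step~3. It carries all the content of the proposition, you only conjecture it, and as written it fails.

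\textbf{The arms are the wrong ones.} The column carrying the last border up-arrow is column $i-1$, and the row carrying the first border left-arrow is row $i+1$. Each of these always contains exactly one arrow: its border arrow, with everything above it (resp.\ to its left) empty. So your dichotomy is vacuous. The arms that matter are column $i$ above the border cell $(i+1,i)$ and row $i$ to the left of the border cell $(i,i-1)$; they meet at $(i,i)$.

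\textbf{The reduction to a rectangle is missing.} To speak of these arms you first need the following. For $2\le r\le i$, the left-arrow of row $r$ lies in a column $\ge i$. Otherwise either $H(r,i-1)$ is a forbidden cohook, or the border up-arrow at $(i,i-1)$ is not the topmost element of its column. Dually, the up-arrows of columns $i,\dots,n-1$ lie in rows $\le i$. Hence everything outside the $i\times j$ rectangle $R$ (rows $1,\dots,i$, columns $i,\dots,n$) is forced, and $f_{i,j}$ counts fillings of $R$.

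\textbf{The multiplicities are attached to the wrong arm.} Inside $R$ the column has $i$ cells and the row has $j$. If the column holds a single arrow, it is its up-arrow, in one of $i$ positions, and this case gives $i\,f_{i,j-1}$. If the row holds a single arrow, it is its left-arrow, in one of $j$ positions, and this case gives $j\,f_{i-1,j}$. You assign ``one position for its up-arrow'', the factor $i$, and $f_{i,j-1}$ to the row.

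\textbf{What still has to be checked.} Suppose column $i$ of $R$ holds only its up-arrow.
\begin{itemize}
\item Every row $a\ge 2$ then has its left-arrow strictly to the left of column $i$. So the up-arrow is supported in any of the $i$ cells, and every cohook $H(a,i)$ contains that left-arrow.
\item Cohooks of cells $(a,b)\in R$ with $b>i$ never meet column $i$ or row $i+1$.
\item Therefore deleting column $i$ together with row $i+1$ (which contains only its border left-arrow) is a bijection onto spine tableaux with border $\uparrow^{i-1}\leftarrow^{j-2}$.
\end{itemize}
The row case is symmetric: delete row $i$ and column $i-1$.

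Two further facts complete the inclusion--exclusion.
\begin{itemize}
\item Both arms cannot hold two arrows. A left-arrow at $(a,i)$ with $a<i$ and an up-arrow at $(i,b)$ with $b>i$ make $H(a,b)$ empty.
\item In the overlap, the corner $(i,i)$ must be empty, since an arrow there would be a second arrow in the other arm. This leaves $(i-1)(j-1)$ choices and gives $(i-1)(j-1)f_{i-1,j-1}$.
\end{itemize}
With these checks supplied, your argument becomes the paper's.
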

\begin{proof}
  The proof follows the same decomposition as in
  \Cref{pro:counting_spine}, but in the slow Tamari case the situation
  is simpler since dots are no longer allowed. As a result, only the
  analogue of the case \( j=0 \) remains.

  More precisely, let \( g(i,0,k) \) denote the number of possible
  fillings of the remaining region after removing the left and right
  part, exactly in the proof of \Cref{pro:counting_spine}. Then this
  gives
  \[
    g(i,0,k) = (i+1) g(i,0,k-1) + (k+1) g(i-1,0,k) - ik \cdot g(i-1,0,k-1),
  \]
  with initial conditions \( g(k,0,0) = g(0,0,k) = 1 \) for
  \( k \geq 0 \).

  Setting \( f_{i,j} = g(i-1,0,j-1) \), we obtain
  \eqref{eq:sTam_spine_rec}. Summing over all possible decompositions
  of the border completes the proof.
\end{proof}

\begin{remark}\label{rem:sTam_spine}
  Interestingly, recursion~\eqref{eq:sTam_spine_rec} coincides with
  the recursion in \cite[Equation~(1)]{EW2009}. As stated in
  \cite[Theorem~6.8]{EW2009}, from the recursion, we also obtain the
  two-variable generating function
  \( F(u,v) = \sum_{ i,j \geq 1 } f_{i,j}u^i v^j \) for numbers
  \( f_{i,j} \) that satisfies
  \[
    F(u,v) = uv + uv \frac{\partial}{\partial u}F(u,v) + uv \frac{\partial}{\partial v}F(u,v)- u^2 v^2 \frac{\partial^2}{\partial u \partial v}F(u,v).
  \]
\end{remark}

\begin{lemma}\label{lem:1}
  The lattice \( \sTam_n \) is polygonal.
\end{lemma}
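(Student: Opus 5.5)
Since $\sTam_n$ is self-dual by \Cref{lem:sTam_selfdual}, it suffices to check the first polygonality condition: if $T$ is covered by two distinct small tableaux $T_1$ and $T_2$, then $[T,T_1\vee T_2]$ is a polygon. I would follow the proof of \Cref{prop:polygonal}, now using the two moves of \Cref{lem:sTam_cover}. For a move starting at a cell $c$, let $S_c$ be the set of cells travelled. For the composite second move (a left-arrow reaching the border and an up-arrow being pushed upward), I would take $S_c$ to include both the row segment and the column segment.

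\emph{Disjoint case.} If $S_{c_1}\cap S_{c_2}=\emptyset$, the two moves commute. Applying both gives a small tableau $U$ covering $T_1$ and $T_2$; it is small because no dot is created outside the root and border. I would show $U=T_1\vee T_2$ by computing the join via \Cref{lem:join}. The left-arrows of the join are the rightmost ones and the up-arrows the highest ones, and since the moves act on disjoint cells, no repositioning in step (c) is needed. Hence $[T,U]$ is a square.

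\emph{Overlapping case.} Here the moves interact, and I expect the only possibility to be an up-arrow move in some column meeting a left-arrow move in some row at a single cell $(i,j)$. This is the analogue of the heptagon configuration, now without dots: $\leftarrow$ at left, an empty cell $(i,j)$, the up-arrow below it, and to the right either the next cell or the border. I would compute both chains explicitly.

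Moving the up-arrow first puts it at $(i,j)$, blocking the left-arrow. The up-arrow must then move up again, and only afterwards can the left-arrow move one step to $(i,j)$ and then onward. In the extra slow lattice the intermediate dotted steps disappear: the steps "place $\bullet$" and "move $\leftarrow$ into the dot" merge into the single small-tableau move. So I expect a chain of length $4$ on one side. On the other side, moving the left-arrow first and then the up-arrow gives length $2$. The result is a hexagon with sides of lengths $2$ and $4$, matching the announced quadrilaterals and hexagons.

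I would verify that the top element of both chains equals $T_1\vee T_2$ computed by \Cref{lem:join}, and that the two chains meet only at their endpoints. The latter holds because the arrow positions along the long chain are distinct from those on the short chain.

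The main obstacle is the case analysis for the composite move of \Cref{lem:sTam_cover}, where a left-arrow reaches a border cell and triggers an up-arrow move. One must check that its interactions with another up-arrow move or another left-arrow move still produce only squares or this single hexagon shape, and that no longer chains appear in the interval. I would handle this by listing how the travelled sets can intersect. Two moves of the same arrow type act in distinct rows or distinct columns and cannot conflict. For a composite move, its column part can only conflict with an up-arrow move in the same column, which is impossible, since each column has one up-arrow. This reduces everything to the row/column crossing above. To rule out extra elements inside the interval, I would argue as in \Cref{lem:cover1}: any $U$ with $T\lhd U\lhd T_1\vee T_2$ is pinned by its row and column spaces to the finitely many listed configurations.
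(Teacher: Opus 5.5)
Your proposal is correct and follows the paper's proof, which also adapts the argument of \Cref{prop:polygonal}: moves with disjoint travelled sets give quadrilaterals, and the single row/column crossing gives a hexagon with sides of lengths $2$ and $4$. One small correction: a small tableau has no dots in its border cells either, so $U$ is small because neither move creates any dot, and a composite move cannot clash with a separate up-arrow move in its column because a border up-arrow never moves on its own in $\sTam_n$ (only through the composite move of \Cref{lem:sTam_cover}), not merely because each column has one up-arrow.
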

\begin{proof}
  The proof is similar to that of \Cref{prop:polygonal}. Two types of
  polygons in \( \sTam_n \) are quadrilaterals and hexagons
\[
  \begin{minipage}[t]{0.45\textwidth}
    \centering
    \begin{tikzpicture}[scale=.4]
      \tikzset{
        dot/.style={circle, fill=black, inner sep=0pt, minimum size=6pt},
        elabel/.style={midway, font=\scriptsize, inner sep=3pt} 
      }
      \def\h{6}
      \def\w{2}
      \node[dot] (x) at (0,0) {}; 
      \node[dot] (u) at (-\w,\h/2) {};
      \node[dot] (y) at (0,\h) {};
      \node[dot] (v) at (\w,\h/2) {};
      \draw (x) -- (u) -- (y);
      \draw (x) -- (v) -- (y);
    \end{tikzpicture}
  \end{minipage}
  \hfill
  \begin{minipage}[t]{0.45\textwidth}
    \centering
    \begin{tikzpicture}[scale=.4]
      \tikzset{
        dot/.style={circle, fill=black, inner sep=0pt, minimum size=6pt},
        elabel/.style={midway, font=\scriptsize, inner sep=3pt} 
      }
      \def\h{6}
      \def\w{2}
      \node[dot] (x) at (0,0) {}; 
      \node[dot] (u1) at (-\w,\h/4) {};
      \node[dot] (u2) at (-\w,2*\h/4) {};
      \node[dot] (u3) at (-\w,3*\h/4) {};
      \node[dot] (y) at (0,\h) {};
      \node[dot] (v) at (\w,\h/2) {};
      \draw (x) -- (u1) -- (u2) -- (u3) -- (y);
      \draw (x) -- (v) -- (y);
    \end{tikzpicture}
  \end{minipage}.
\]
\end{proof}

The join-labelling \( \eta \) defined in
\Cref{sec:polygonality} can also be considered for
\( \sTam_n \). The associated rank function \( r \) is given by the
same definition as in \eqref{eq:rank_func}. Since small tableaux
contain no dots except the root, it follows that for a
join-irreducible \( (i,j,\sigma) \) of \( \sTam_n \), we have
\( r(i,j,\sigma) = j-i \).

\begin{lemma}\label{lem:2}
  The join labeling \( \eta \) of \( \sTam_n \) together with its
  rank function \( r \), provides a polygonal labeling of
  \( \sTam_n \). Moreover, \( \sTam_n \) is congruence uniform.
\end{lemma}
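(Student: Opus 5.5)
We verify conditions (PL1)--(PL3) for the polygons of \(\sTam_n\) listed in \Cref{lem:1}, and then invoke \cite[Corollary~1]{CLM2004} exactly as in the proof of \Cref{thm:HH_uniform}. Condition (PL1) needs no new work. Since \(\sTam_n\) is a sublattice of \(\esTam_n\), it is semidistributive (\Cref{lem:sTam_trim}), so its join-labelling \(\eta\) exists. The proof of \Cref{lem:join-HH-labelling} uses only semidistributivity, so it applies verbatim to \(\sTam_n\). Note that the labels must be join-irreducibles of \(\sTam_n\), namely the \((i,j,\sigma)\) with \(\sigma\neq\bullet\) of \Cref{lem:sTam_join_irr}, not those of \(\esTam_n\).

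For quadrilaterals, (PL2) and (PL3) are vacuous, since each side has length \(2\). The whole content therefore lies in the hexagons, whose long side has length \(4\). With \(p=3\), condition (PL2) reads
\[
r(s_0),r(s_3)<r(s_1),r(s_2).
\]
To check it, I will describe the hexagons explicitly, following \Cref{prop:polygonal} and the moves of \Cref{lem:sTam_cover}. The local configuration is \(T\) with a left-arrow at \((i,j'')\), an empty cell at \((i,j)\), and the up-arrow of column \(j\) at \((i',j)\) with \(i'>i\). The cell to the right of \((i,j)\), say \((i,j')\) with \(j'<j\), plays the role the dot played in \(\esTam_n\): it is either the next cell not pointed by an up-arrow, or a border cell, which triggers the compound move.

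The long side should be: raise the up-arrow from \((i',j)\) to \((i,j)\); raise it again above row \(i\); move the left-arrow to \((i,j)\); move it on to \((i,j')\). This is the heptagon of \Cref{fig:heptagon} with the dot-insertion step deleted. The short side is: move the left-arrow directly past \((i,j)\), then raise the up-arrow.

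Next I compute the labels with \Cref{lem:join}, as in \eqref{eq:heptagon_HH}. The expected labels along the long side are \((i'-1,j,\emptyset)\), \((i-1,j,\emptyset)\), \((i-1,j,\leftarrow)\), \((i-1,j',\leftarrow)\), with ranks \(j-i'+1\), \(j-i+1\), \(j-i+1\), \(j'-i+1\). Since \(i'>i\) and \(j'<j\), the outer ranks are strictly smaller than the inner ones, which gives (PL2). For the short side, (PL3) is vacuous.

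The main obstacle is making sure this is the only hexagon shape. In particular I must handle the case where \((i,j')\) is a border cell, where the second move of \Cref{lem:sTam_cover} is compound, and confirm that each label is a genuine join-irreducible of \(\sTam_n\), computing \(\jj_*\) inside \(\sTam_n\). I would do this by enumerating pairs of changeable arrows whose traversed sets \(S_{c_1},S_{c_2}\) intersect, as in \Cref{prop:polygonal}, treating the border case separately. In that case I expect the last label to be \((i-1,j',\leftarrow)\) with \(j'=i-1\)-type degenerate indices, and the rank inequality still to hold because \(j'<j\). Congruence uniformity then follows from semidistributivity, \Cref{lem:1}, and the polygonal labeling via \cite[Corollary~1]{CLM2004}.
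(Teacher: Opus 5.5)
Your proposal is correct and follows the paper's intended (largely omitted) argument. (PL1) comes from semidistributivity of the sublattice \(\sTam_n\). On the hexagon of \Cref{lem:1}, the long-side labels are \((i'-1,j,\emptyset)\), \((i-1,j,\emptyset)\), \((i-1,j,\leftarrow)\), \((i-1,j',\leftarrow)\), and with rank \(r(i,j,\sigma)=j-i\) the outer ranks are smaller than the inner ones; \cite[Corollary~1]{CLM2004} then gives congruence uniformity. The paper also remarks that congruence uniformity alone already follows from \cite[Theorem~4.3]{day1979}, since \(\sTam_n\) is a sublattice of \(\esTam_n\).

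One point to add to your border-case check: the vertical move can itself be compound, with the up-arrow leaving the border cell, so \(i'=j+1\). In that case the first label is \((j,j,\leftarrow)\) rather than \((i'-1,j,\emptyset)\). Its rank \(0=j-i'+1\) is still strictly below \(j-i+1\), so the inequality is unaffected.
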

\begin{remark}
  Since \(\sTam_n\) is a sublattice of \(\esTam_n\), it follows from
  \cite[Theorem 4.3]{day1979} that \(\sTam_n\) is congruence uniform.
  Therefore, the polygonal labeling is not strictly necessary here.
  However, it is helpful for understanding the forcing order.
\end{remark}
We now turn to the congruence lattice of \( \sTam_n \). For
\( \esTam_n \), we obtained the forcing relations among
join-irreducible elements from the heptagon, described the cover
relations in the poset \( F(\esTam_n) \), and then used this
description to express the number of elements in the congruence
lattice of \( \esTam_n \) in terms of weighted Dyck paths of length
\( 2n \). By the same method, in the case of \( \sTam_n \), the
corresponding forcing relations arise from the following hexagon:
\tikzset{ poset node/.style={inner sep=4pt, align=center},
  lab/.style={midway, fill=white, inner sep=1pt, font=\scriptsize} }
\ytableausetup{boxsize = 1.1em}
\[
  \begin{tikzpicture}[scale=.7, transform shape]
    \def\s{.8}
    \node[poset node] (S) at (0,0) {
      $\begin{ytableau}
        \none{}& &\none{}\\
        \leftarrow & & \\
        \none[] &\uparrow  \\
      \end{ytableau}$
    };
    \node[poset node] (T) at (0,12*\s) {
      $\begin{ytableau}
        \none{}& \uparrow&\none{}\\
        &&\leftarrow \\
        \none[] &  \end{ytableau}$
    };

    \node[poset node] (U1) at (-2*\s,3*\s) {
      $\begin{ytableau}
        \none{}& &\none{}\\
        \leftarrow &\uparrow & \\
        \none[] & \\
      \end{ytableau}$
    };
    \node[poset node] (U2) at (-2*\s,6*\s) {
      $\begin{ytableau}
        \none{}& \uparrow&\none{}\\
        \leftarrow & & \\
        \none[] & \\
      \end{ytableau}$
    };
    \node[poset node] (U4) at (-2*\s,9*\s) {
      $\begin{ytableau}
        \none{}& \uparrow&\none{}\\
        &\leftarrow  & \\
        \none[] & \\
      \end{ytableau}$
    };

    \node[poset node] (V)  at (2.2*\s,6*\s) {
      $\begin{ytableau}
        \none{}& &\none{}\\
        &&\leftarrow\\
        \none[] &\uparrow \\
      \end{ytableau}$
    };

    \draw[->] (S)  -- node[left,yshift=-4pt] {\((i'-1,j,\emptyset)\)} (U1);
    \draw[->] (U1) -- node[left] {\((i-1,j,\emptyset)\)} (U2);
    \draw[->] (U2) -- node[left] {\((i-1,j,\leftarrow)\)} (U4);
    \draw[->] (U4) -- node[left] {\((i-1,j',\leftarrow)\)} (T);

    \draw[->] (S) -- node[right] {\((i-1,j',\leftarrow)\)} (V);
    \draw[->] (V) -- node[right] {\((i'-1,j,\emptyset)\)} (T);

  \end{tikzpicture}
\]
and we likewise obtain the cover relations in the poset
\( F(\sTam_n) \) of join-irreducible elements, as stated below.

\begin{lemma}\label{lem:3}
  The cover relations of the poset \( F(\sTam_n) \) are as follows.
  For \( 1 \leq i, j \leq n-1 \) and \( j-i > 1 \),
\[
  (i,j,\leftarrow) \lessdot (i+1,j,\emptyset) \qand (i,j,\emptyset) \lessdot (i+1,j,\emptyset),
\]
and
\[
  (i,j,\leftarrow) \lessdot (i,j-1,\leftarrow) \qand (i,j,\emptyset) \lessdot (i,j-1,\leftarrow).
\]
For the case when \( j-i = 1 \), we can write cover relations as
\[
  (i-1,i,\sigma) \lessdot (i,i,\leftarrow) \qand (i,i+1,\sigma) \lessdot (i,i,\leftarrow),
\]
for any \( \sigma \in \{\leftarrow, \emptyset\} \) and
\( 1 \leq i \leq n-1 \). See~\Cref{fig:sTam_J_irr}~(Left).
\end{lemma}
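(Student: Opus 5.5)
This follows the same route as the proof of \Cref{thm:con_jirr}. By \Cref{lem:2}, $\sTam_n$ is congruence uniform, so $F(\sTam_n)$ is a genuine partial order on the join-irreducibles $(i,j,\sigma)$ with $\sigma\in\{\emptyset,\leftarrow\}$ described in \Cref{lem:sTam_join_irr}. Since $\sTam_n$ is polygonal (\Cref{lem:1}), \Cref{pro:polygonal_forcing} says every forcing is a chain of forcings inside single polygons. Quadrilaterals only force an edge to its opposite edge, and these carry the same join-label. So the whole order is generated by the hexagons.

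The first step is to compute the join-labels on a general hexagon. The shape is the one displayed above, with middle cell $(i,j)$, the empty non-free cell $(i,j')$ to its right (so $j'<j$), and the up-arrow at $(i',j)$ (so $i'>i$). For each edge I would compute $S\vee \jj$ via \Cref{lem:join} for a candidate $\jj$, exactly as in \Cref{lem:HH-rank_func}. Because $\sTam_n$ is a sublattice, its joins agree with those of $\esTam_n$, so the labels are the ones shown. Reading off forcings from bottom edges to side and opposite top edges, and from top edges to side and opposite bottom edges, gives two families of relations. From the right bottom edge, $(i-1,j',\leftarrow)$ forces $(i-1,j,\emptyset)$ and $(i-1,j,\leftarrow)$. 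From the right top edge, $(i'-1,j,\emptyset)$ forces the same two labels. The left-chain edges force nothing new beyond their opposites.

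The second step is to find the minimal such relations. A hexagon exists whenever a cell has a left-arrow somewhere to its left, an up-arrow below it, and no dot or arrow in between. Taking $j'=j-1$ and $i'=i+1$ is realizable, by an explicit small tableau with the other arrows in the first column or on the border. Reindexing $i-1\mapsto i$ then yields the listed covers for $j-i>1$. Every other hexagon relation, with $j'<j-1$ or $i'>i+1$, is the transitive composite of these unit steps. I would show this by chaining through $(i-1,j-1,\leftarrow)$, or through $(i,j,\emptyset)$, each of which is again a hexagon label. One must also verify that none of these relations is itself a composite, i.e. that they are covers. Here the rank function $r(i,j,\sigma)=j-i$ helps: every listed relation changes $r$ by exactly one.

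The main obstacle is the boundary case $j-i=1$. There, $(i+1,j,\emptyset)=(j,j,\emptyset)$ is not a join-irreducible. The corresponding hexagon instead has its cell $(i',j)$ on the border, and by \Cref{lem:sTam_cover} that move is the combined ``left-arrow replaces up-arrow'' move. I would redraw the hexagon in that configuration and recompute the labels. I expect the top and bottom edges to carry $(i,i,\leftarrow)$, which forces both $(i-1,i,\sigma)$ and $(i,i+1,\sigma)$, matching the stated formula. This local computation needs care, together with a check that no additional forcings arise there.
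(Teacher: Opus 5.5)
Your proposal is correct and takes the paper's route: the paper reads the forcing relations off the labelled hexagon displayed just before the lemma (setting $j'=j-1$, $i'=i+1$), exactly as in the proof of \Cref{thm:con_jirr}. Your rank-function argument for coverness and your separate treatment of $j-i=1$ supply details the paper leaves implicit, with two small corrections: the cell $(i,j')$ in $S$ is a free (empty) cell, not a non-free one, and in the boundary hexagon with middle cell $(i+1,i+1)$ (lemma indexing) both moves out of $S$ are ``left-arrow replaces up-arrow'' moves, so its bottom and top edges carry the two distinct labels $(i,i,\leftarrow)$ and $(i+1,i+1,\leftarrow)$, its side edges carry $(i,i+1,\emptyset)$ and $(i,i+1,\leftarrow)$, and $(i,i,\leftarrow)$ therefore gets its two boundary relations from two adjacent hexagons rather than from a single one.
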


\ytableausetup{boxsize = 1.2 em}
\begin{figure}
  \centering
  \begin{minipage}[t]{0.48\textwidth}
    \centering
    \begin{tikzpicture}[>=latex, scale=.65, transform shape]
      \def\h{3.5}
      \node (P11) at (-1,0) {
        $\begin{ytableau}
          \bullet&*(yellow)\uparrow&& \\
          &*(yellow)\leftarrow&&\uparrow \\
          \leftarrow&&\uparrow \\
          &\leftarrow
        \end{ytableau}$
      };
      \node (P13) at (1,0) {
        $\begin{ytableau}
          \bullet&*(yellow)\uparrow&& \\
          \leftarrow&*(yellow)&&\uparrow \\
          \leftarrow&&\uparrow \\
          &\leftarrow
        \end{ytableau}$
      };
      \node (P21) at (-3.5,\h) {
        $\begin{ytableau}
          \bullet&&& \\
          \leftarrow&*(yellow)\uparrow&&\uparrow \\
          &*(yellow)\leftarrow&\uparrow \\
          &\leftarrow
        \end{ytableau}$
      };
      \node (P23) at (-1.5,\h) {
        $\begin{ytableau}
          \bullet&&& \\
          \leftarrow&*(yellow)\uparrow&&\uparrow \\
          \leftarrow&*(yellow)&\uparrow \\
          &\leftarrow
        \end{ytableau}$
      };
      \node (P24) at (1.5,\h) {
        $\begin{ytableau}
          \bullet&&*(yellow)\uparrow& \\
          &&*(yellow)\leftarrow&\uparrow \\
          &&\leftarrow \\
          \leftarrow&\uparrow
        \end{ytableau}$
      };
      \node (P26)  at (3.5,\h) {
        $\begin{ytableau}
          \bullet&&*(yellow)\uparrow& \\
          \leftarrow&&*(yellow)&\uparrow \\
          &&\leftarrow \\
          \leftarrow&\uparrow
        \end{ytableau}$
      };
      \node (P31)  at (-5,2*\h) {
        $\begin{ytableau}
          \bullet&&& \\
          \leftarrow&&&\uparrow \\
          \leftarrow&*(yellow)\uparrow&\uparrow \\
          &*(yellow)\leftarrow
        \end{ytableau}$
      };
      \node (P33)  at (0,2*\h) {
        $\begin{ytableau}
          \bullet&&& \\
          \leftarrow&&*(yellow)\uparrow&\uparrow \\
          &&*(yellow)\leftarrow \\
          \leftarrow&\uparrow
        \end{ytableau}$
      };
      \node (P35)  at (5,2*\h) {
        $\begin{ytableau}
          \bullet&&&*(yellow)\uparrow \\
          &&&*(yellow)\leftarrow \\
          \leftarrow&&\uparrow \\
          \leftarrow&\uparrow
        \end{ytableau}$
      };

      \draw (P11.north)  -- node {} (P23.south);
      \draw (P13.north)  -- node {} (P23.south);
      \draw (P11.north)  -- node {} (P24.south);
      \draw (P13.north)  -- node {} (P24.south);

      \draw (P21.north)  -- node {} (P31.south);
      \draw (P23.north)  -- node {} (P31.south);
      \draw (P21.north)  -- node {} (P33.south);
      \draw (P23.north)  -- node {} (P33.south);

      \draw (P24.north)  -- node {} (P33.south);
      \draw (P26.north)  -- node {} (P33.south);
      \draw (P24.north)  -- node {} (P35.south);
      \draw (P26.north)  -- node {} (P35.south);
    \end{tikzpicture}
  \end{minipage}\hfill
  \begin{minipage}[t]{0.48\textwidth}
    \centering
    \begin{tikzpicture}[
      thicknode/.style={
        circle,          
        minimum size=6pt,
        inner sep=0pt    
      },
      rednode/.style={thicknode, fill=red},
      bluenode/.style={thicknode, fill=blue},
      greennode/.style={thicknode, fill=green!40!black},
      scale = .7
      ]
      \node[rednode] (P11) at (-4,0) {};
      \node[rednode] (P13) at (-2,0) {};
      \node[rednode] (P15) at (0,0) {};
      \node[rednode] (P17) at (2,0) {};
      \node[rednode] (P19) at (4,0) {};

      \node[rednode] (P21) at (-3-.2,1) {};
      \node[greennode] (P23) at (-3+.2,1) {};
      \node[rednode] (P24) at (-1-.2,1) {};
      \node[greennode] (P26) at (-1+.2,1) {};
      \node[rednode] (P27) at (1-.2,1) {};
      \node[greennode] (P29) at (1+.2,1) {};
      \node[rednode] (P210) at (3-.2,1) {};
      \node[greennode] (P212) at (3+.2,1) {};

      \node[rednode] (P31) at (-2-.2,2) {};
      \node[greennode] (P33) at (-2+.2,2) {};
      \node[rednode] (P34) at (0-.2,2) {};
      \node[greennode] (P36) at (0+.2,2) {};
      \node[rednode] (P37) at (2-.2,2) {};
      \node[greennode] (P39) at (2+.2,2) {};

      \node[rednode] (P41) at (-1-.2,3) {};
      \node[greennode] (P43) at (-1+.2,3) {};
      \node[rednode] (P44) at (1-.2,3) {};
      \node[greennode] (P46) at (1+.2,3) {};

      \node[rednode] (P51) at (0-.2,4) {};
      \node[greennode] (P53) at (0+.2,4) {};

      \draw (P11)  -- node {} (P21);  \draw (P11)  -- node {} (P23);
      \draw (P13)  -- node {} (P21);  \draw (P13)  -- node {} (P23);
      \draw (P13)  -- node {} (P24);  \draw (P13)  -- node {} (P26);
      \draw (P15)  -- node {} (P24);  \draw (P15)  -- node {} (P26);
      \draw (P15)  -- node {} (P27);  \draw (P15)  -- node {} (P29);
      \draw (P17)  -- node {} (P27);  \draw (P17)  -- node {} (P29);
      \draw (P17)  -- node {} (P210);  \draw (P17)  -- node {} (P212);
      \draw (P19)  -- node {} (P210);  \draw (P19)  -- node {} (P212);

      \draw (P23)  -- node {} (P31);  \draw (P23)  -- node {} (P33);
      \draw (P24)  -- node {} (P31);  \draw (P24)  -- node {} (P33);
      \draw (P26)  -- node {} (P34);  \draw (P26)  -- node {} (P36);
      \draw (P27)  -- node {} (P34);  \draw (P27)  -- node {} (P36);
      \draw (P29)  -- node {} (P37);  \draw (P29)  -- node {} (P39);
      \draw (P210)  -- node {} (P37);  \draw (P210)  -- node {} (P39);

      \draw (P33)  -- node {} (P41);  \draw (P33)  -- node {} (P43);
      \draw (P34)  -- node {} (P41);  \draw (P34)  -- node {} (P43);
      \draw (P36)  -- node {} (P44);  \draw (P36)  -- node {} (P46);
      \draw (P37)  -- node {} (P44);  \draw (P37)  -- node {} (P46);

      \draw (P43)  -- node {} (P51);  \draw (P43)  -- node {} (P53);
      \draw (P44)  -- node {} (P51);  \draw (P44)  -- node {} (P53);

      \fill[black,opacity=0.3] (P11) circle [radius=3mm];
      \fill[black,opacity=0.3] (P13) circle [radius=3mm];
      \fill[black,opacity=0.3] (P15) circle [radius=3mm];
      \fill[black,opacity=0.3] (P17) circle [radius=3mm];
      \fill[black,opacity=0.3] (P19) circle [radius=3mm];
      \fill[black,opacity=0.3] (P26) circle [radius=3mm];
      \fill[black,opacity=0.3] (P27) circle [radius=3mm];
      \fill[black,opacity=0.3] (P29) circle [radius=3mm];
      \fill[black,opacity=0.3] (P210) circle [radius=3mm];
      \fill[black,opacity=0.3] (P34) circle [radius=3mm];
      \fill[black,opacity=0.3] (P37) circle [radius=3mm];
      \fill[black,opacity=0.3] (P39) circle [radius=3mm];

      \draw[very thick] (-6,-1) -- (-4,1) -- (-3,0) -- (0,3) -- (1,2) -- (2,3) -- (6,-1);
      \draw[gray] (-6,-1) -- (0,5);
      \draw[gray] (-4,-1) -- (1,4);
      \draw[gray] (-2,-1) -- (2,3);
      \draw[gray] (0,-1) -- (3,2);
      \draw[gray] (2,-1) -- (4,1);
      \draw[gray] (4,-1) -- (5,0);
      \draw[gray] (6,-1) -- (0,5);
      \draw[gray] (4,-1) -- (-1,4);
      \draw[gray] (2,-1) -- (-2,3);
      \draw[gray] (0,-1) -- (-3,2);
      \draw[gray] (-2,-1) -- (-4,1);
      \draw[gray] (-4,-1) -- (-5,0);
    \end{tikzpicture}
  \end{minipage}

  \caption{(Left) The poset \( F(\sTam_n) \) for \( n = 4 \); (Right)
    A simplified representation of the dual of \( F(\sTam_n) \) for
    \( n=6 \) with its order ideal and the corresponding Dyck path.}
  \label{fig:sTam_J_irr}
\end{figure}

Moreover, the number of elements in the congruence lattice of
\( \sTam_n \) can also be expressed as a weighted sum over Dyck paths.
See~\Cref{fig:sTam_J_irr}~(Right). In this setting the model is
simpler, which allows us to derive an explicit generating function as
well.

\begin{proposition}\label{pro:2}
  We define the weight of a Dyck path as the product of the weights of
  all the cells below the path, where the weight \( \wt(c) \) of each
  cell \( c \) is given by
  \[
    \wt(c) = 
    \begin{cases}
     1 & \text{if \( c \) is any cell of height \( 1 \)},\\
     3 & \text{if \( c \) is a corner cell of height \( > 1 \)}, \\
     2 & \text{if \( c \) is a boundary cell of height \( > 1 \)},\\
     1 & \text{if \( c \) is an inner cell of height \( > 1 \)}.
    \end{cases}
  \]
  The number of elements in the congruence lattice of \( \sTam_n \)
  equals the total weight sum of all Dyck paths from \( (0,0) \) to
  \( (2n,0) \).
\end{proposition}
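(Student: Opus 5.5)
We will follow the proof of \Cref{the:congruence_enumeration}, with \(F(\sTam_n)\) in place of \(F(\esTam_n)\). By \Cref{lem:2}, \(\sTam_n\) is congruence uniform, so \Cref{pro:cong_lattice_characterization} identifies \(\Con \sTam_n\) with the order ideals of \(F(\sTam_n)\). Equivalently, it is the lattice of order ideals of the dual poset, whose cover relations are the reverses of those listed in \Cref{lem:3}.

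First we set up the triangular picture of \Cref{fig:sTam_J_irr} (Right). Each position of height \(1\) holds a single red element \((i,i,\leftarrow)\). Each position of height \(h>1\) holds a pair: a red element \((i,j,\leftarrow)\) and a green element \((i,j,\emptyset)\), with \(j-i=h-1\). Reading \Cref{lem:3} in the dual, the two elements at a position of height \(>1\) are both below the red element at the position diagonally down-left. They are also both below the green element at the position diagonally down-right. At height \(2\) they are simply below the two adjacent height-\(1\) red elements. Given an order ideal \(I\) of the dual poset, we declare a cell to lie below the path if at least one of its elements belongs to \(I\). Downward closure then says: if a cell is occupied, both cells diagonally beneath it are occupied. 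So the occupied cells form the region under a Dyck path of length \(2n\), and the map \(I\mapsto\) (Dyck path, colouring of occupied cells) is injective.

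Next we count, for a fixed path, the colourings compatible with \(I\) being an ideal. A height-\(1\) cell has only one element, so it is coloured red whenever it is occupied; its weight is \(1\). For a cell of height \(>1\), the colour is a nonempty subset of \(\{\mathtt{red},\mathtt{green}\}\). If the cell immediately above it on the right is occupied, the cell must contain red (up to choosing the left/right convention to match \Cref{lem:3}). If the cell immediately above it on the left is occupied, the cell must contain green. For a corner cell neither constraint applies, giving \(3\) colourings. For a boundary cell exactly one constraint applies, giving \(2\). For an inner cell both apply, forcing \(\{\mathtt{red},\mathtt{green}\}\), giving \(1\). These constraints are the only ones imposed by the covers, so ideals are in bijection with weighted configurations. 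Summing over paths gives the claimed formula.

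The main point to check carefully is the translation of the cover relations in \Cref{lem:3} into the ``neighbours above'' constraints. Two things need care. One is that the degenerate case \(j-i=1\) (the height-\(2\) pairs sitting above the single red elements \((i,i,\leftarrow)\)) fits the same pattern. The other is that every element of a higher cell lies above every element of the relevant lower neighbour, so that occupancy of a cell forces a single specific colour in the lower cell rather than a more complicated condition. As a sanity check, for \(n=2\) there is one height-\(1\) cell and we get \(1+1=2\). For \(n=3\) we get \(1+1+1+1+3=7\), matching \Cref{tab:comparison}.
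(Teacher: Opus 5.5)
Your proposal is correct and follows the paper's own argument: \Cref{lem:2} gives congruence uniformity, which reduces the count to order ideals of the dual of \(F(\sTam_n)\), and you encode these as Dyck paths with cell colourings constrained by the covers of \Cref{lem:3}, exactly as in the proof of \Cref{the:congruence_enumeration}. One wording slip: when you say the elements at height \(>1\) are ``below'' the red and green elements of their lower neighbours ``in the dual'', you are describing the order of \(F(\sTam_n)\) itself, since in the dual they lie above them, which is the direction your downward-closure argument actually uses.
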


We derive a generating function for the number of elements in the
congruence lattice of \( \sTam_n \).

\begin{proposition}\label{the:1}
  Let \( f(n) \) be the number of elements in the congruence lattice
  of \( \sTam_n \). Then, the generating function for \( f(n) \) is
  given by
\begin{equation}\label{eq:gf_cong_lattice_slow1}
  F(x) := \sum_{ n \geq 0 } f(n) x^n = \cfrac{1}{1 - \cfrac{x}{1 - x N\left(\frac{3}{4},4x\right)}},
\end{equation}
where \( N(\ell,x) \) is the \defn{Narayana polynomial} defined by
\[
  N(\ell,x) := \frac{1 + x - \ell x - \sqrt{1-2(\ell +1)x + (\ell -1)^2 x^2}}{2x}.
\]
Equivalently,
\begin{equation}\label{eq:gf_cong_lattice_slow2}
  F(x) = \cfrac{1}{1 - \cfrac{x}{1 - \cfrac{x}{1+x - \cfrac{4x}{1+x- \cfrac{4x}{1+x - \cfrac{4x}{1+x- \cdots}}}} }}.
\end{equation}

\end{proposition}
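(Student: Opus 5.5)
The plan is to rerun the argument of \Cref{prop:esTam_cong_cf}, using the simpler cell weights of \Cref{pro:2}. First I turn cell weights into step weights. Cells of height $1$ all have weight $1$, so only cells of height $\geq 2$ matter. At each height $h\geq 2$, I move the factor $2$ of every left boundary cell onto a corresponding right boundary cell at the same height, exactly as in the proof of \Cref{prop:esTam_cong_cf}. After this, left boundary cells have weight $1$ and right boundary cells have weight $4$.

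Next I multiply the weights along the diagonal of cells lying under each down step. The top cell of that diagonal is a corner cell if the down step starts at a peak, and a right boundary cell otherwise. Every other cell on the diagonal is inner or left boundary, and so now has weight $1$. This gives a weight $\wt'(p)$, a product over the steps of $p$, defined as follows:
\begin{enumerate}
\item up steps, and down steps ending at height $0$ or $1$, have weight $1$;
\item a down step $(a,h+1)\to(a+1,h)$ with $h\geq 2$ has weight $3$ if $(a,h+1)$ is a peak and $4$ otherwise.
\end{enumerate}
Then $\wt(p)=\wt'(p)$ for every Dyck path $p$.

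The main obstacle is justifying the redistribution. One must check that at each height $h\geq 2$, the left boundary cells and the non-corner right boundary cells can be matched bijectively. I would do this by counting. Within each excursion above level $h-1$, up steps and down steps crossing between heights $h-1$ and $h$ alternate. Hence left boundary cells, which sit under an up step that is not immediately followed by a down step, are equinumerous with right boundary cells within each maximal region at that height. This is the same matching used implicitly in \Cref{prop:esTam_cong_cf}.

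With $D_h(x)$ the generating function of paths based at height $h$, the first-return decomposition $U p_1 D p_2$ gives three equations. First,
\[
D_0=\frac{1}{1-xD_1}, \qquad D_1=\frac{1}{1-xD_2}.
\]
For $D_2$, the weight of the returning down step does not depend on the height, so
\[
D_2=1+3xD_2+4x(D_2-1)D_2, \qquad\text{that is,}\qquad D_2=\frac{1}{1+x-4xD_2}.
\]
Iterating this last relation gives \eqref{eq:gf_cong_lattice_slow2}.

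For \eqref{eq:gf_cong_lattice_slow1}, I solve the quadratic $4xD_2^2-(1+x)D_2+1=0$ and take the root that is a power series:
\[
D_2=\frac{1+x-\sqrt{1-14x+x^2}}{8x}.
\]
Substituting $\ell=3/4$ and $4x$ into the definition of $N$ gives
\[
N\!\left(\tfrac34,4x\right)=\frac{1+x-\sqrt{1-14x+x^2}}{8x},
\]
because $1-2\cdot\tfrac74\cdot 4x+\tfrac1{16}\cdot16x^2=1-14x+x^2$. So $D_2=N(\tfrac34,4x)$, and substituting into $D_1$ and $D_0$ yields $F=D_0$ in the stated form.
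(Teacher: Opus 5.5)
Your proposal is correct and follows essentially the same route as the paper. It uses the same conversion of the cell weights of \Cref{pro:2} into step weights: your weight $3$ for a down step leaving a peak above height $2$ is the paper's peak weight $\tfrac34$ times its down-step weight $4$. It then uses the same first-return decomposition, with your $D_0,D_1,D_2$ playing the roles of the paper's $F_2,F_1,F_0$.

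The differences are minor:
\begin{enumerate}
\item You identify $D_2$ with $N(\tfrac34,4x)$ by solving the quadratic, instead of quoting the peak-counting interpretation of Narayana polynomials.
\item Your relation $D_2=1/(1+x-4xD_2)$ is the correct form of the paper's misprinted $F_0=1/(1-x+4xF_0)$.
\item In your redistribution argument, the cells at height $h$ have their upper edges between levels $h$ and $h+1$, not between $h-1$ and $h$. This index shift does not affect the conclusion.
\end{enumerate}
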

\begin{proof}
  Since every inner cell has weight \( 1 \), we can re-express the
  weight of a Dyck path in terms of down steps and peaks. For a Dyck
  path \( p \), define the weight \( \wt_s(p) \) to be the product of
  the weights of all steps and peaks of \( p \), with the following
  assignment:
  \begin{itemize}
  \item every up step has weight \( 1 \);
  \item a down step has weight \( 1 \) if it starts at height
    \( \leq s \), and weight \( 4 \) if it starts at height \( >s \);
  \item a peak has weight \( 1 \) if it is at height \( \leq s \), and
    weight \( \frac{3}{4} \) if it is at height \( >s \).
  \end{itemize}
  Let \( f_s(n) \) be the weight sum of all Dyck paths from
  \( (0,0) \) to \( (2n,0) \) with the weight function \( \wt_s \),
  and let
  \[
    F_s(x) = \sum_{ n \geq 0 } f_s(n) x^n.  
  \]

  Then, the weight \( \wt(p) \) in \Cref{pro:2} is equal to
  \( \wt_2(p) \); this reformulation can be understood as follows. The
  two boundary cell contributions of weight \( 2 \) at height \( >1 \)
  combine into a single factor \( 4 \), which we assign to the
  corresponding down step. A corner cell of weight \( 3 \) at height
  \( >1 \) can be regarded as the product of the weights of the
  adjacent peak and down step, namely \( (3/4)\cdot 4 = 3 \). Hence,
  \( F(x) = F_2(x) \).

  Every Dyck path is either the path of length \( 0 \) (one point) or
  begins with an up step. Moreover, any nonempty Dyck path \( p \)
  admits the standard first-return decomposition of the form
  \( p = Up_1Dp_2 \), where \( U \) and \( D \) denote an up step and
  a down step, and \( p_1, p_2 \) are Dyck paths. Under this
  decomposition, the subpath \( p_1 \) begins with height \( 1 \), hence
  we have \( \wt_s(p) = \wt_{s-1}(p_1) \wt_s(p_2) \). It follows that
  \[
    F(x) = F_2(x) = 1 + x F_1(x)F_2(x).
  \]

  Applying the same first-return decomposition gives
  \[
    F_1(x) = 1 + x F_0(x) F_1(x).
  \]
  Combining the two equations above, we have
  \begin{equation}\label{eq:sTam_cong1}
    F(x) = \frac{1}{1-xF_1(x)} = \frac{1}{1-\frac{x}{1-x F_0(x)}}.
  \end{equation}
  The Narayana polynomial \( N(\ell,x) \) is known to be the
  generating function for Dyck paths in which each peak has weight
  \( \ell \) and each up and down step has weight \( 1 \). Hence, we
  have
  \begin{equation}\label{eq:sTam_cong2}
    F_0(x) = N\left(\frac{3}{4}, 4x\right).
  \end{equation}
  By \eqref{eq:sTam_cong1} and \eqref{eq:sTam_cong2}, we
  obtain~\eqref{eq:gf_cong_lattice_slow1}.

  \Cref{eq:gf_cong_lattice_slow2} can be obtained by using a similar path decomposition. 
  Indeed, we obtains
  \[
    F_0(x) = 1 + 3x F_0(x) + 4x (F_0(x)-1)F_0(x),
  \]
  and hence
  \[
    F_0(x) = \frac{1}{1-x + 4x F_0(x)}.
  \]
  Together with~\eqref{eq:sTam_cong1}, the claim follows.
\end{proof}

\bibliographystyle{alpha}
\bibliography{sample.bib}

\end{document}